\setlist{itemsep=6pt,parsep=0pt,topsep=3pt,partopsep=0pt}
\setlist[enumerate,1]{label=(\alph*), ref=(\alph*)}
\theoremstyle{plain}
\newtheorem{theorem}{Theorem}[section]
\newtheorem{lemma}[theorem]{Lemma}
\newtheorem{corollary}[theorem]{Corollary}
\theoremstyle{remark}
\newtheorem{remark}[theorem]{Remark}
\newtheorem*{remark*}{Remark}
\theoremstyle{definition}
\newtheorem{definition}[theorem]{Definition}
\newtheorem{miniremark}[theorem]{}
\newcommand{\ccspace}[1]{\mathscr{K}(#1)}
\newcommand{\dspace}[2]{\mathscr{D}(#1,#2)}
\newcommand{\project}[1]{{#1}_\natural}
\newcommand{\eqproject}[1]{(#1)_\natural}
\newcommand{\perpproject}[1]{#1_\natural^\perp}
\newcommand{\grass}[2]{\mathbf{G}(#1,#2)}
\newcommand{\measureball}[2]{{#1}\,{#2}}
\newcommand{\Var}[1]{\mathbf{V}_{#1}}     
\newcommand{\RVar}[1]{\mathbf{RV}_{#1}}   
\newcommand{\IVar}[1]{\mathbf{IV}_{#1}}   
\DeclareMathOperator{\VarTan}{VarTan}     
\newcommand{\var}[1]{\mathbf{v}_{#1}}     
\newcommand{\oball}[2]{\mathbf{U}(#1,#2)}
\newcommand{\cball}[2]{\mathbf{B}(#1,#2)}
\newcommand{\textint}[2]{{\textstyle\int_{#1}^{#2}}}
\newcommand{\nat}{\mathscr{P}}
\newcommand{\integers}{\mathbf{Z}}
\newcommand{\adm}[1]{\mathfrak{D}({#1})}
\newcommand{\R}{\mathbf{R}}
\newcommand{\CF}{\mathds{1}}
\newcommand{\Ch}{\mathbf{\check H}}
\newcommand{\LM}{\mathscr{L}}
\newcommand{\HM}{\mathscr{H}}
\newcommand{\HD}{{d_{\HM}}}
\newcommand{\HDK}[1]{{d_{\HM,#1}}}
\newcommand{\density}{\boldsymbol{\Theta}}
\newcommand{\BFconst}{\boldsymbol{\beta}}
\newcommand{\unitmeasure}[1]{\boldsymbol{\alpha}(#1)}
\newcommand{\restrict}{ \mathop{ \rule[1pt]{.5pt}{6pt} \rule[1pt]{4pt}{0.5pt} }\nolimits }
\newcommand{\ud}{\ensuremath{\,\mathrm{d}}}
\newcommand{\uD}{\ensuremath{\mathrm{D}}}
\newcommand{\id}[1]{\mathrm{id}_{#1}}
\newcommand{\lIm}{[}
\newcommand{\rIm}{]}
\newcommand{\scale}[1]{\boldsymbol{\mu}_{#1}}
\newcommand{\trans}[1]{\boldsymbol{\tau}_{#1}}
\newcommand{\tbwedge}{{\textstyle \bigwedge}}
\newcommand{\tbcup}{\mathop{{\textstyle \bigcup}}}
\newcommand{\tbcap}{\mathop{{\textstyle \bigcap}}}
\newcommand{\Clos}[1]{\mathop{\mathrm{Clos}}#1}
\newcommand{\side}[1]{\mathbf{l}(#1)}
\newcommand{\centre}[1]{\mathbf{c}(#1)}
\newcommand{\vertex}[1]{\mathbf{o}(#1)}
\newcommand{\cInt}[1]{\operatorname{Int}_{\mathrm{c}}(#1)}
\newcommand{\cBdry}[1]{\operatorname{Bdry}_{\mathrm{c}}(#1)}
\newcommand{\cubes}{\mathbf{K}}
\newcommand{\WF}{\mathbf{WF}}
\newcommand{\CX}{\mathbf{CX}}
\DeclareMathOperator{\trace}{trace}
\DeclareMathOperator{\Hom}{Hom}
\DeclareMathOperator{\Bdry}{Bdry}
\DeclareMathOperator{\ap}{ap}
\DeclareMathOperator{\lin}{span}
\newcommand{\cnt}{\mathscr{C}}
\newcommand{\orthproj}[2]{\mathbf{O}^\ast({#1},{#2})}
\newcommand{\orthgroup}[1]{\mathbf{O}({#1})}
\DeclareMathOperator{\Int}{Int}
\DeclareMathOperator{\dmn}{dmn}
\DeclareMathOperator{\grad}{grad}
\DeclareMathOperator{\spt}{spt}
\DeclareMathOperator{\conv}{conv}
\DeclareMathOperator{\Tan}{Tan}
\DeclareMathOperator{\Lip}{Lip}
\DeclareMathOperator{\dist}{dist}
\DeclareMathOperator{\diam}{diam}
\DeclareMathOperator{\sgn}{sgn}
\DeclareMathOperator{\without}{\sim}
\DeclareMathOperator{\im}{im}
\date{\today}
\begin{document}


\title{Existence of solutions to a general geometric elliptic variational problem}

\author{
  Yangqin Fang
  \and S{\l}awomir Kolasi{\'n}ski}

\maketitle
\begin{abstract}
    We consider the problem of minimising an inhomogeneous anisotropic elliptic
    functional in a class of closed $m$~dimensional subsets of~$\mathbf{R}^n$
    which is stable under taking smooth deformations homotopic to the identity
    and under local Hausdorff limits. We~prove that the minimiser exists inside
    the class and is an $(\mathscr H^m,m)$~rectifiable set in the sense of
    Federer. The class of competitors encodes a notion of spanning
    a~boundary. We admit unrectifiable and non-compact competitors and
    boundaries, and we make no restrictions on the dimension~$m$ and the
    co-dimension $n-m$ other than~\mbox{$1 \le m < n$}. An~important tool for
    the proof is a novel smooth deformation theorem. The skeleton of the proof
    and the main ideas follow Almgren's 1968 paper. In~the end we show that
    classes of sets spanning some closed set~$B$ in homological and
    cohomological sense satisfy our axioms.
\end{abstract}

\tableofcontents


\section{Introduction}
\label{sec:intro}

The Plateau problem is about finding a minimiser of the area amongst the
surfaces which span a given boundary. The notions of ``area'', ``surface'', and
``spanning a boundary'' of~course need to be precisely defined so this problem
actually has many different incarnations. Its history spans over two centuries
and we have no intention of enumerating numerous prominent developments in~this
field. For the presentation of the classical formulations and solutions, their
drawbacks, and the modern reformulations of the problem we refer the reader to
the excellent expository articles by David~\cite{Dav2014} as well as by Harrison
and Pugh~\cite{HP2016surv,HP2016op}. These sources contain also an extensive list of
references. We shall focus mainly on the comparison of our methods and results
with the ones used in the papers published in the last years.

In this article we~deal with an abstract Plateau's problem which encompasses,
e.g., the formulation of Adams and Reifenberg; cf.~\cite{Rei60}. The notion of
``area'' of a competitor~$S$ is replaced by the value of a functional $\Phi_F$
on~$S$ which is defined by integrating an~\emph{elliptic} integrand $F : \R^n
\times \grass nm \to [0,\infty]$ with respect to the Hausdorff measure~$\HM^m$
over~$S$ -- if $S$ is $(\HM^m,m)$~rectifiable, then we feed $F$ with pairs
$(x,T)$, where $x \in S$ and $T$ is the approximate tangent plane to~$S$
at~$x$. The integrand~$F$ provides an inhomogeneous (depending on the location)
and anisotropic (depending on the tangent direction) weight for the Hausdorff
measure. If $F(x,T) = 1$ for all $(x,T) \in \R^n \times \grass nm$, then we call
$F$ the \emph{area integrand}. Ellipticity means roughly that a~flat
$m$-dimensional disc~$D$ minimises $\Phi_F$ amongst surfaces that cannot be
retracted onto the boundary of~$D$, i.e., $(m-1)$-dimensional sphere;
see~\ref{def:elliptic}. It can be seen as a geometric counterpart of
quasi-convexity; see~\cite{Mor1966}. The~``surfaces'' and ``boundaries'' are, in
our case, quite arbitrary closed subsets of~$\R^n$ -- not necessarily
rectifiable nor compact. The notion of ``spanning a boundary'' does not appear
at all. Our~main theorem (see~\ref{thm:main}) provides existence of an
$(\HM^m,m)$~rectifiable set which minimises $\Phi_F$ (with $F$ elliptic) inside
an axiomatically defined class of competitors (see~\ref{def:goodclass}).

Section~\ref{sec:main} contains all the definitions and the precise statement of
the main theorem. In~section~\ref{sec:cech-spanning} we show that naturally
defined (using \v{C}ech homology and cohomology) classes of sets spanning
a~given boundary (which may be an arbitrary closed set in~$\R^n$) satisfy our
axioms.

Similar results were obtained recently by other authors. Harrison~\cite{Har2014}
suggested a new formulation of the problem, defined spanning employing the
linking number, and used differential chains, developed earlier
in~\cite{Har2015}, to find minimisers of the Hausdorff measure in co-dimension
one. Harrison and Pugh~\cite{HP2016,HP2016rp} proved existence of minimisers for
the area integrand in arbitrary dimension and co-dimension using \v{C}ech
cohomology to define spanning.

The same authors proved independently, in~\cite{HP2016gm}, a very similar result
to ours. They showed existence of minimisers of an elliptic anisotropic and
inhomogeneous functional in an abstractly defined class of competitors of any
dimension and co-dimension. Their result admits non-Euclidean ambient metric
spaces but, on the other hand, is restricted to the case when all competitors
are compact and $(\HM^m,m)$~rectifiable (however, Jenny Harrison informed the
authors that the methods of~\cite{HP2016gm} extend also to the case of
non-compact competitors).

Even though, the main result of the current paper is so similar
to~\cite{HP2016gm}, we emphasis that the method of the proof is different.
In~particular, we make extensive use of varifolds (especially, Almgren's theory
of slicing) and we provide a~\emph{smooth} deformation theorem.

De~Lellis, Ghiraldin, and Maggi~\cite{DGM2017} formulated the problem abstractly
and showed existence of minimisers of the $m$-dimensional Hausdorff measure in
any family of subsets of~$\R^{m+1}$ containing enough competitors;
see~\cite[Definition~1]{DGM2017}. Later De~Philippis, De~Rosa, and
Ghiraldin~\cite{DDG2016} generalised this result to any co-dimension assuming,
roughly, that the set of competitors is closed under taking deformations which
are uniform limits of maps $\cnt^1$~isotopic to identity;
see~\cite[Definition~1.2]{DDG2016}. After that, De~Lellis, De~Rosa, and
Ghiraldin~\cite{DDG2017} obtained also minimisers for an inhomogeneous and
anisotropic problem in co-dimension one. Finally, De~Philippis, De~Rosa,
and~Ghiraldin~\cite{DDG2017b} also solved the problem in full generality.

These works all consider axiomatically defined families of competitors, which
include, e.g., sets that span a~boundary in the sense of Harrison~\cite{Har2014}
and \emph{sliding} competitors of David~\cite{Dav2014}. However, in the latter
case the results of~\cite{DGM2017,DDG2016,DDG2016,DDG2017b} do not ensure that
the minimiser is a sliding deformation of the initial competitor.

The origin of our project was a mini-course that we conducted in~2014.
We~undertook the effort to understand Almgren's existence result presented on
the first 30 pages of~\cite{Alm68}. Enlightened by his brilliant ideas we
decided to present his approach to the Plateau problem in a sequence of
lectures. The~present manuscript was, at first, meant to serve as lecture notes
for the mini-course but, with time, it grew into a~full-fledged research paper
containing new results.

The skeleton of the proof is the same as in~\cite{Alm68} and our proofs of the
intermediate steps are quite similar to Almgren's but we also divert
from~\cite{Alm68} in many places. First of all we separated the abstract
existence result from the application to a specific class of sets which
homologically span a~given boundary. Actually, in the definition of the
\emph{good class} of competitors, we do not use any notion of ``spanning a
boundary'' -- we only assume the class is closed under local Hausdorff
convergence, and under taking images of sets with respect to certain
\emph{admissible} deformations; see~\ref{def:goodclass}. Moreover, we make no
use of currents, flat chains, or $G$-varifolds in this paper. Second, we filled
up most of the details that Almgren left to the reader. In~particular, we had to
develop a new \emph{smooth} deformation theorem, which might be of separate
interest (see~\ref{thm:deformation}) and we proved a perturbation lemma
(see~\ref{lem:reduce-unrect}) which allows to show rectifiability of
minimisers. Third, we improved the main theorem by allowing for non-compact and
unrectifiable competitors and boundaries.

Our deformation theorem~\ref{thm:deformation} takes some $m$~dimensional sets
$\Sigma_1$, \ldots, $\Sigma_l$ and a~finite subfamily $\mathcal A$ of dyadic
Whitney cubes and provides a $\cnt^{\infty}$~smooth homotopy $f : [0,1] \times
\R^n \to \R^n$ between the identity and a map which deforms some
neighbourhood~$G$ of~$\bigcup_{i=1}^l\Sigma_i \cap \bigcup \mathcal A$ onto an
$m$~dimensional skeleton of~$\mathcal A$. Furthermore, for each $t \in [0,1]$
the map~$f(t,\cdot)$ equals the identity outside some neighbourhood of~$\bigcup
\mathcal A$. The main novelty with respect to well known constructions of this
sort is that~$f$ is $\cnt^{\infty}$~smooth. This is important for two
reasons. First, the push-forward by~$f$ defines a \emph{continuous} map $f_{\#}
: \Var{m}(\R^n) \to \Var{m}(\R^n)$ on the space of $m$~dimensional varifolds
in~$\R^n$. This allows to transfer certain estimates valid for the limit
varifold (which, a~priori, is not a competitor) onto elements of the minimising
sequence; see, e.g., ~\ref{lem:drb:aux}. Second, since the image of~$G$
under~$f(1,\cdot)$ is $m$~dimensional, we may use a~perturbation argument
(see~\ref{lem:reduce-unrect}) to find another smooth map which is arbitrarily
close to~$f(1,\cdot)$ in $\cnt^1$~topology and almost kills the measure of the
unrectifiable part of $\Sigma_i \cap \bigcup \mathcal A$. This allows to show
that the minimiser coming from the main theorem~\ref{thm:main} is
$(\HM^m,m)$~rectifiable.

In~contrast to the classical Federer--Fleming deformation
theorem~\cite[4.2.9]{Fed69} and Almgren's deformation
theorem~\cite[Chapter~1]{Alm1986} ours works for quite arbitrary sets $\Sigma$
in~$\R^n$ which may not carry the structure of a rectifiable current. It differs
also from a~similar result of David and~Semmes~\cite[Theorem 3.1]{DS2000}
because we perform the deformation inside Whitney cubes of varying sizes and,
in~case $\Sigma$ is $(\HM^m,m)$~rectifiable, we provide estimates on the
$\HM^{m+1}$~measure of the whole deformation, i.e., on $\HM^{m+1}(f \lIm (0,1)
\times \Sigma \rIm)$. Moreover, our theorem is tailored especially for the use
with varifolds which might not be rectifiable, so we actually prove estimates
not on the $\HM^m$~measure of $f(1,\cdot)\lIm \Sigma \rIm$ but rather on the
integral $\int_{\Sigma} \|\uD f(1,\cdot)\|^m \ud \HM^m$.

In the course of the proof of the main theorem we try to mimic, as much as
possible, Almgren's original ideas. In particular, rectifiability of the
minimiser is proven employing the deformation theorem together with
a~perturbation argument based on the Besicovitch--Federer projection theorem;
see section~\ref{sec:rectifiability}. This point of the proof seems to make
a~lot of trouble in other works. De~Lellis, Ghiraldin, and Maggi
in~\cite{DGM2017} and De~Philippis, De~Rosa, and Ghiraldin in~\cite{DDG2016}
used the famous Preiss' rectifiability theorem~\cite{Pre1987}. De~Lellis,
De~Rosa, and~Ghiraldin in~\cite{DDG2016} employed the theory of Caccioppoli
sets, which is possible in co-dimension one. The first author in~\cite{Fang2013}
used a~very complex construction of Feuvrier~\cite{Feu2012} to modify
a~minimising sequence into a~sequence consisting of quasi-minimal sets;
cf.~\cite{Alm76}. Harrison and Pugh~\cite{HP2016rp,HP2016gm} choose a very
special subsequence, which they call~\emph{Reifenberg regular sequence}, of the
minimising sequence, whose elements enjoy good bounds on density ratios down to
certain scale. We simply follow Almgren's ideas.

In their final paper~\cite{DDG2017b} De Philippis, De~Rosa, and~Ghiraldin make
use their very interesting result~\cite{DDG2016rect} yielding rectifiability for
minima of certain elliptic functionals. Actually, in~\cite{DDG2016rect} they
acquired a sufficient and necessary condition (called the \emph{atomic
  condition}) on the integrand so that varifolds whose first variation with
respect to such~$F$ induces a Radon measure are rectifiable. Later,
De~Rosa~\cite{DeR2016} showed that if~$F$ satisfies the atomic condition, then
an $F$-minimising sequence of integral varifolds contains a sub-sequence
converging to an integral varifold.


\section{Notation}
\label{sec:notation}

In~principle we shall follow the notation of Federer;
see~\cite[pp.~669--671]{Fed69}. However, we will use the standard abbreviations
for intervals in~$\R$, i.e., $(a,b) = \{ t \in \R : a < t < b\}$, $[a,b) = \{ t
\in \R : a \le t < b\}$ etc. We reserve the symbol~$I = [0,1]$ for the closed
unit interval. We will also write $\{ x \in X : P(x) \}$ rather than $X \cap \{
x : P(x) \}$ to denote the set of those $x \in X$ which satisfy
predicate~$P$. For the identity map on some set~$X$ we use the symbol~$\id{X} :
X \to X$ and the characteristic function of~$X$ is denoted $\CF_X$ and is
defined by $\CF_X(x) = 1$ if $x \in X$ and $\CF_X(x) = 0$ if $x \notin X$. If $U
\subseteq \R^m$ and $V \subseteq \R^n$, the set of maps $f : U \to V$ with
continuous $k^{\mathrm{th}}$~order derivatives is denoted by $\cnt^{k}(U,V)$.
If~$f \in \cnt^k(U,V)$, we say that $f$ is of \emph{class~$\cnt^k$}. In~contrast
to~\cite[2.9.1]{Fed69} given two Radon measures~$\mu$ and~$\nu$ over~$\R^n$ we
write
\begin{equation}
    \label{eq:RNder}
    \mathbf{D}(\mu,\nu,x) = \lim_{r \downarrow 0} \frac{\measureball{\mu}{\cball xr}}{\measureball{\nu}{\cball xr}}
    \quad \text{for $x \in \R^n$} \,.
\end{equation}

Concerning varifolds we shall follow Allard's notation;
see~\cite{All72}. In~particular, if~$U \subset \R^n$ is open, we write
$\Var{k}(U)$, $\IVar{k}(U)$, and $\RVar{k}(U)$ for the space of $k$~dimensional
varifolds, integral varifolds, and rectifiable varifolds in~$U$ following the
definitions~\cite[3.1, 3.5]{All72}. Also $\VarTan(V,x)$ shall denote the set of
varifold tangents as defined in~\cite[3.4]{All72}. In~contrast
to~\cite[3.5]{All72}, we shall write~$\var{k}(S)$ instead of~$\mathbf{v}(S)$ to
highlight the dimension of of the resulting varifold; cf.~\ref{def:var-S}.

We recall some notation of Federer. As in~\cite[2.2.6]{Fed69} we use the symbol
$\nat$ to denote the set of positive integers. The~symbols $\oball ar$
and~$\cball ar$ denote respectively the open and closed ball with centre~$a$ and
radius~$r$; see~\cite[2.8.1]{Fed69}. We use the notation $\trans{a}$
and~$\scale{s}$ for the translation by~$a \in \R^n$ and the homothety with ratio
$s \in \R$ respectively; see~\cite[2.7.16, 4.2.8]{Fed69}. For the Hausdorff
metric on compact subsets of~$\R^n$ we write~$\HD$ and for the $k$~dimensional
Hausdorff measure~$\HM^k$; cf.~\cite[2.10.21 and 2.10.2]{Fed69}. The scalar
product of $u,v \in \R^n$ is denoted $u \bullet v$. The space of maps $p \in
\Hom(\R^n,\R^m)$ such that $p^* u \bullet p^* v = u \bullet v$ for all $u,v \in
\R^m$ (i.e. $p$ is an orthogonal projection) is denoted $\orthproj nm$;
see~\cite[1.7.4]{Fed69}.

Following~\cite{Alm68} and~\cite{Alm00} if $S \in \grass nm$ is an
$m$~dimensional linear subspace of~$\R^n$, then $\project S \in \Hom(\R^n,\R^n)$
shall denote the orthogonal projection onto~$S$. In~particular if $p \in
\orthproj nm$ is such that $\im p^* = S$, then $\project S = p^* \circ p$.

Whenever $\mu$ is a (Radon) measure over some set $U \subseteq \R^n$ we
sometimes use the same symbol~$\mu$ to denote the (not necessarily Radon)
measure $j_{\sharp}\mu$ over~$\R^n$, where $j : U \to \R^n$ is the
inclusion~map. Nonetheless, the support of $\mu$ is always a subset
of~$U$, i.e., $\spt \mu \subseteq U$.

If $A$ and $B$ are subsets of some vectorspace, then we write $A + B$ for the
\emph{algebraic sum} of~$A$ and~$B$, i.e., the set~$\{ a + b : a \in A, b \in
B\}$. If $X$ and $Y$ are vectorspaces, we write $X \oplus Y$ for the the
\emph{direct sum} of~$X$ and~$Y$; see~\cite[Chapter~I, \S{}2]{MR0050886}.


\section{Statement of the main result}
\label{sec:main}

\begin{definition}
    \label{def:adm-deform}
    Let $U \subseteq \R^n$ be open. We say that $f : \R^n \to \R^n$ is
    a~\emph{basic deformation in~$U$} if $f$ is of class~$\cnt^1$ and there
    exists a bounded convex open set $V \subseteq U$ such that
    \begin{displaymath}
        f(x) = x \quad \text{for $x \in \R^n \without V$}
        \quad \text{and} \quad
        f \lIm V \rIm  \subseteq V \,.
    \end{displaymath}

    If $f \in \cnt^1(\R^n,\R^n)$ is a composition of a finite number of basic
    deformations, then we say that $f$ is an~\emph{admissible deformation
      in~$U$}. The set of all such deformations shall be denoted $\adm{U}$.
\end{definition}

\begin{remark}
    In most cases the bounded convex set~$V$ shall be a~cube or a~ball.
\end{remark}

\begin{definition}
    Whenever $K \subseteq \R^n$ is compact and $A,B \subseteq \R^n$, we
    define~$\HDK{K}(A,B)$ by
    \begin{displaymath}
        \HDK{K}(A,B) = \max \bigl\{
        \sup \{ \dist(x,A) : x \in K \cap B \} \,,\,
        \sup \{ \dist(x,B) : x \in K \cap A \}
        \bigr\} \,.
    \end{displaymath}
\end{definition}

\begin{definition}
    \label{def:goodclass}
    Let $U \subseteq \R^n$ be an open set. We say that $\mathcal C$ is
    a~\emph{good class in $U$} if
    \begin{enumerate}
    \item
        \label{i:gc:nonempty}
        $\mathcal C \ne \varnothing$;
    \item
        \label{i:gc:compact}
        each $S \in \mathcal C$ is a closed subset of~$\R^n$;
    \item
        \label{i:gc:deformation}
        if $S \in \mathcal C$ and $f \in \adm{U}$, then $f \lIm S \rIm \in
        \mathcal C$;
    \item
        \label{i:gc:hd}
        if $S_i \in \mathcal C$ for $i \in \nat$, and $S \subseteq \R^n$, and
        $\lim_{i \to \infty} \HDK{K}(S_i \cap U, S \cap U) = 0$ for all compact sets
        $K \subseteq U$, then $S \in \mathcal C$.
    \end{enumerate}
\end{definition}

\begin{remark}
    One example of a good class is given in~\ref{le:CechGood}. We expect that
    the methods presented in this article could work also if we assumed that
    admissible deformations are uniform limits of diffeomorphisms (so called
    \emph{monotone maps}) as in~\cite[Definition~1.1]{DDG2016}. In such case,
    the class denoted $\mathcal F(H,\mathcal C)$ defined
    in~\cite[Definition~1.4]{DDG2016} would also be good. However, we had
    trouble checking that all the deformations we use are
    monotone. In~particular, the deformations constructed in~\ref{lem:drb:aux}
    are clearly not monotone and there is no easy way to fix that. We anticipate
    that one could modify the deformation theorem~\ref{thm:deformation} to
    handle the situation from~\ref{lem:drb:aux} and provide an appropriate
    monotone map but, given the overall complexity of the already presented
    material, we chose not to do that. A related result, allowing to approximate
    the cone construction by a sequence of \emph{diffeomorphisms} and to get an
    isoperimetric inequality similar to~\eqref{eq:di:lower}, was obtained
    recently by Pugh~\cite{Pugh2017}.
\end{remark}

\begin{definition}[\protect{cf.~\cite[1.2]{Alm68}}]
    \label{def:Ck-integrand}
    A function $F : \R^n \times \grass{n}{m} \to [0,\infty)$ of
    class~$\cnt^k$ for some non-negative integer~$k \in \nat$ is called
    a~\emph{$\cnt^k$~integrand}.

    If additionally $\inf \im F / \sup \im F \in (0,\infty)$, then we say that
    $F$ is \emph{bounded}.
\end{definition}

\begin{definition}[\protect{cf.~\cite[3.1]{Alm68}}]
    \label{def:pull-back}
    If $\varphi \in \cnt^{1}(\R^n,\R^n)$ and $F$ is an integrand, then 
    the \emph{pull-back} integrand~$\varphi^\#F$ is given by by
    \begin{displaymath}
        \varphi^\#F(x,T) = \left\{
            \begin{aligned}
                &F\bigl( \varphi(x), \uD\varphi(x)\lIm T \rIm \bigr) 
                \| {\textstyle \bigwedge_m} \uD\varphi(x) \circ \project{T} \|
                && \text{if } \dim \uD\varphi(x)\lIm T \rIm = m
                \\
                &0 
                && \text{if } \dim \uD\varphi(x)\lIm T \rIm < m \,.
            \end{aligned}
        \right.
    \end{displaymath}
    If $\varphi$ is a diffeomorphism, then the \emph{push-forward} integrand is
    given by $\varphi_\#F = (\varphi^{-1})^\#F$.
\end{definition}

\begin{definition}[\protect{cf.~\cite[1.2]{Alm68}}]
    \label{def:Fx-integrand}
    If $F$ is a $\cnt^k$~integrand and $x \in \R^n$, then we define
    another $\cnt^k$~integrand $F^x$ by the formula
    \begin{displaymath}
        F^x(y,S) = F(x,S)
        \quad \text{for $y \in \R^n$ and $S \in \grass nm$} \,.
    \end{displaymath}
\end{definition}

\begin{remark}
    Recall that $\boldsymbol{\gamma}_{n,m}$ denotes the canonical
    probability measure on $\grass nm$ invariant under the action of the
    orthogonal group $\orthgroup n$; see~\cite[2.7.16(6)]{Fed69}.
\end{remark}

\begin{definition}[\protect{cf.~\cite[3.5]{All72}}]
    \label{def:var-S}
    Assume $S \subseteq \R^n$ is $\HM^m$~measurable and such that $\HM^m(S \cap
    K) < \infty$ for any compact set $K \subseteq \R^n$. We define $\var{m}(S)
    \in \Var{m}(\R^n)$ in the following way: first decompose $S$ into a~sum $S_u
    \cup S_r$, where $S_u$ is purely $(\HM^m,m)$~unrectifiable and $S_r$ is
    countably $(\HM^m,m)$ rectifiable and Borel (cf.~\cite[3.2.14]{Fed69}); then
    set
    \begin{multline*}
        \var{m}(S)(\alpha) = \int_{S_r} \alpha(x, \Tan^m(\HM^m \restrict S_r, x)) \ud \HM^m(x)
        \\
        + \int_{S_u} \int \alpha(x, T) \ud \boldsymbol{\gamma}_{n,m}(T) \ud \HM^m(x) 
        \quad \text{for $\alpha \in \ccspace{\R^n \times \grass nm}$}
        \,.
    \end{multline*}
\end{definition}

\begin{definition}
    \label{def:Phi-F}
    If $F$ is a $\cnt^k$~integrand, we define the functional $\Phi_F :
    \Var{m}(\R^n) \to [0,\infty]$ by the formula
    \begin{displaymath}
        \Phi_F(V) = \int F(x,S) \ud V(x,S) \,.
    \end{displaymath}
\end{definition}

\begin{remark}
    If $\spt \|V\|$ is compact we have $\Phi_F(V) = V(\gamma F)$, whenever
    $\gamma \in \dspace{\R^n}{\R}$ is such that $\spt \|V\| \subseteq \Int
    \gamma^{-1} \{ 1 \}$.
\end{remark}

\begin{definition}
    \label{def:Phi-F-on-S}
    If $S \subseteq \R^n$ is $\HM^m$~measurable, satisfies $\HM^m(S \cap K) <
    \infty$ for all compact sets $K \subseteq \R^n$, and $S_u \subseteq S$ is
    a~purely $(\HM^m,m)$~unrectifiable part of~$S$, then we set
    \begin{gather*}
        \Phi_F(S) = \Phi_F(\var{m}(S)) \,,
        \\
        \Psi_{F}(S) = \Phi_{F}(S)
        + \int_{S_u} \bigl( \sup \im F^x - \textint{}{} F(x,T) \ud \boldsymbol{\gamma}_{n,m}(T) \bigr)\ud \HM^m(x)
        \,.
    \end{gather*}
    For any other subset $S$ of $\R^n$ we set $\Psi_F(S) = \Phi_F(S) = \infty$.
\end{definition}

\begin{remark}
    \label{rem:pull-back}
    Assume $V \in \Var{m}(\R^n)$, $\varphi : \R^n \to \R^n$ is of class~$\cnt^1$,
    and $F$ is a~$\cnt^0$~integrand. Then
    \begin{displaymath}
        \Phi_{\varphi^{\#}F}(V) = \Phi_F(\varphi_{\#}V) \,.
    \end{displaymath}
    If $S \subseteq \R^n$ is $\HM^m$~measurable and satisfies $\HM^m(S \cap K) <
    \infty$ for all compact sets $K \subseteq \R^n$, then
    \begin{displaymath}
        \varphi_{\#}\var{m}(S) = \var{m}(\varphi \lIm S \rIm) 
    \end{displaymath}
    given $S$ is countably $(\HM^m,m)$~rectifiable or $\varphi = \scale{r}$ for
    some $r \in (0,\infty)$ or $\varphi = \trans{a}$ for some $a \in \R^n$.
\end{remark}

\begin{remark}
    \label{rem:Psi-Phi}
    If $S$ is $\HM^m$~measurable, $\HM^m(S \cap K) < \infty$ for any compact $K
    \subseteq \R^n$, $S = S_u \cup S_r$, where $S_u$ is purely
    $(\HM^m,m)$~unrectifiable and $S_r$ is countably $(\HM^m,m)$~rectifiable,
    $F$ is an integrand, $x \in \R^n$, then
    \begin{displaymath}
        \Psi_{F^x}(S) = \Phi_{F^x}(S_r) + \HM^m(S_u) \sup \im F^x \,.
    \end{displaymath}
\end{remark}

We shall use the following notion of ellipticity based on the definition given
by Almgren; see~\cite[IV.1(7)]{Alm76} and~\cite[1.6(2)]{Alm68}. It can be
understood as a~geometric version of quasi-convexity;
cf.~\cite{Mor1966}. Similar notion for currents can be found
in~\cite[5.1.2]{Fed69}.

\begin{definition}
    \label{def:elliptic}
    An $\cnt^0$~integrand $F$ is called \emph{elliptic} if there exists
    a~continuous function $c : \R^n \to (0,\infty)$ such that for all $T \in
    \grass{n}{m}$ we have
    \begin{displaymath}
        \Psi_{F^x}(S) - \Psi_{F^x}(D) \ge c(x) \bigl( \HM^m(S) - \HM^m(D) \bigr)
    \end{displaymath}
    whenever
    \begin{enumerate}
    \item
        \label{i:ell:D}
        $D = \cball 01 \cap T$ is a unit $m$~dimensional disc in~$T$;
    \item
        \label{i:ell:S}
        $S$ is a compact subset of $\R^n$, $\HM^m(S) < \infty$, and $S$ cannot
        be deformed onto~$R = \Bdry \cball 01 \cap T$ by any Lipschitz map $f :
        \R^n \to \R^n$ satisfying $f(x) = x$ for $x \in R$.
    \end{enumerate}
\end{definition}

\begin{remark} Note the following observations.
    \label{rem:elliptic}
    \begin{itemize}
    \item The \emph{area integrand} $F \equiv 1$ is elliptic.

    \item If $F$ is an integrand and $\varphi \in \cnt^{1}(\R^n,\R^n)$ is
        a~diffeomorphism, then $F$ is elliptic if and only if $\varphi_\#F$ is
        elliptic; cf.~\cite[4.3]{Alm68}.

    \item Any convex combination of elliptic integrands is elliptic.
    \end{itemize}
\end{remark}

\begin{remark}
    \label{rem:elliptic2}
    In the original definition of Almgren one assumes also that~$S$ is
    $(\HM^m,m)$~rectifiable. Since we want to work with possibly unrectifiable
    competitors we need to drop this assumption. For the same reason we used
    $\Psi_{F^x}$ instead of $\Phi_{F^x}$ in~\ref{def:elliptic}.

    Assume $x \in \R^n$ is fixed and $T \in \grass nm$ is such that $F(x,T) = M
    = \sup\{ F(x,P) : P \in \grass nm\}$. Set $E = \int F(x,P) \ud
    \boldsymbol{\gamma}_{n,m}(P)$, $D = \cball 01 \cap T$, and $R = \Bdry \cball
    01 \cap T$. Assume $E < M$ and $S = D \without F \cup W$
    satisfies~\ref{def:elliptic}\ref{i:ell:S}, where $F \subseteq D$ is closed
    and $W$ is purely $(\HM^m,m)$~unrectifiable and closed, and $W \cap (D
    \without F) = \varnothing$. Then
    \begin{displaymath}
        \Phi_{F^x}(S) - \Phi_{F^x}(D) = \Phi_{F^x}(W) - \Phi_{F^x}(F) 
        = E \HM^m(W) - M \HM^m(F) \,.
    \end{displaymath}
    If one could perform this construction ensuring $\HM^m(W) < M/E \HM^m(F)$,
    then the above quantity would become negative. Hence, assuming such
    construction is possible, if we used~$\Phi_{F^x}$ in~place of~$\Psi_{F^x}$
    in~\ref{def:elliptic}, then there would be no elliptic integrands depending
    non-trivially on the second variable. In some directions, filling a hole
    with purely unrectifiable set would always be better then filling it with
    a~flat disc. Nonetheless, we believe this is not possible.

    In a forthcoming article, the second author and Antonio De Rosa show that
    one obtains an equivalent definition of ellipticity if one assumes
    in~\ref{def:elliptic}\ref{i:ell:S} that~$S$ is $(\HM^m,m)$~rectifiable. More
    precisely: if $F$ is a $\cnt^0$ integrand, there exists a continuous
    function $c : \R^n \to (0,1)$, and for all~$S$ and~$D$
    satisfying~\ref{def:elliptic}\ref{i:ell:D}\ref{i:ell:S} such that $S$ is
    $(\HM^m,m)$~rectifiable there holds
    \begin{displaymath}
        \Phi_{F^x}(S) - \Phi_{F^x}(D) \ge c(x) \bigl( \HM^m(S) - \HM^m(D) \bigr) \,,
    \end{displaymath}
    then $F$ is elliptic in the sense of~\ref{def:elliptic}.
\end{remark}

\begin{remark}
    It is not clear whether strict convexity of~$F$ in the second variable is
    enough to ensure ellipticity of~$F$ as is the case in the context of
    currents; see~\cite[5.1.2]{Fed69}. De~Lellis, De~Rosa, and Ghiraldin where
    able to prove their existence result assuming $F$ is uniformly convex in the
    second variable and of class~$\cnt^2$; see~\cite[Definition~2.4]{DDG2016}.
    De~Philippis, De~Rosa, and Ghiraldin defined the so called \emph{atomic
      condition} for the integrand; see~\cite[Definition~1]{DDG2016rect}.
    In~co-dimension one this condition is equivalent to strict convexity of~$F$
    in the second variable; see~\cite[Theorem~1.3]{DDG2016rect}. Moreover,
    varifolds whose first variation with respect to~$F$, satisfying the atomic
    condition, induces a Radon measure are rectifiable;
    see~\cite[Theorem~1.2]{DDG2016rect}.
\end{remark}

Our main theorem reads.

\begin{theorem}
    \label{thm:main}
    Let $U \subset \R^n$ be an open set, $\mathcal C$ be a~good class
    in~$U$, and $F$ be a~bounded elliptic $\cnt^0$~integrand. Set $\mu =
    \inf\bigl\{ \Phi_F(T \cap U) : T \in \mathcal C \bigr\}$.

    If $\mu \in (0,\infty)$, then there exist $S \in \mathcal C$ and a~sequence
    $\{ S_i \in \mathcal C : i \in \nat \}$ such that
    \begin{enumerate}
    \item
        \label{i:main:rect}
        $S \cap U$ is $(\HM^m,m)$ rectifiable. In~particular $\HM^m(S \cap U) < \infty$.
    \item
        \label{i:main:minimal}
        $\lim_{i \to \infty} \Phi_F(S_i \cap U) = \Phi_F(S \cap U) = \mu$.
    \item
        \label{i:main:varifold}
        $\lim_{i \to \infty} \var{m}(S_i \cap U) = \var{m}(S \cap U)$ in $\Var{m}(U)$.
    \item
        \label{i:main:hausdorff}
        $\lim_{i \to \infty} \HDK{K}(S_i \cap U, S \cap U) = 0$ for any compact
        set $K \subseteq U$.
    \end{enumerate}

    Furthermore, if $\R^n \without U$ is compact and there exists a
    $\Phi_F$-minimising sequence in $\mathcal C$ consisting only of compact sets
    (but not necessarily uniformly bounded), then
    \begin{displaymath}
        \diam(\spt\|V\|) < \infty
        \quad \text{and} \quad
        \sup \bigl\{ \diam(S_i \cap U) : i \in \nat \bigr\} < \infty \,.
    \end{displaymath}
\end{theorem}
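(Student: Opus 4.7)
The plan follows the skeleton of \cite{Alm68}. First fix a $\Phi_F$-minimising sequence $T_i \in \mathcal C$ with $\Phi_F(T_i \cap U) \to \mu$ and replace it by a regularised sequence $S_i \in \mathcal C$ as follows: take a Whitney decomposition of~$U$ and, at each scale, apply the smooth deformation theorem~\ref{thm:deformation} to push $T_i$ onto the $m$-skeleton inside those cubes where the local density ratio $\HM^m(T_i \cap \cball xr)/r^m$ is too large compared to that of a flat disc, and leave $T_i$ untouched elsewhere. Axiom~\ref{def:goodclass}\ref{i:gc:deformation} ensures $S_i \in \mathcal C$, while the quantitative estimates in~\ref{thm:deformation} together with boundedness of~$F$ keep $\Phi_F(S_i \cap U) \to \mu$ and impose a uniform upper bound on the density ratios of $S_i$ at the relevant scales. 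Local compactness of closed subsets of~$\R^n$ in the Hausdorff metric and weak-$\ast$ compactness of Radon measures give, after extraction, a closed set $S \subseteq \R^n$ with $S_i \cap U \to S \cap U$ locally in Hausdorff distance and a $V \in \Var{m}(U)$ with $\var{m}(S_i \cap U) \to V$; axiom~\ref{def:goodclass}\ref{i:gc:hd} yields $S \in \mathcal C$, and the upper density bounds combined with the uniform lower density bounds that ellipticity imposes on~$V$ identify $\spt \|V\| = S \cap U$, which already proves~\ref{i:main:hausdorff} and gives $\Phi_F(S \cap U) \ge \mu$.

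The crux is the identification $V = \var{m}(S \cap U)$ and rectifiability~\ref{i:main:rect}. Split $S \cap U = S_r \cup S_u$ into its rectifiable and purely unrectifiable parts (cf.~\ref{def:var-S}) and argue by contradiction: assuming $\HM^m(S_u) > 0$, apply the perturbation lemma~\ref{lem:reduce-unrect}, whose proof combines the smooth deformation of~\ref{thm:deformation} with a Besicovitch--Federer projection trick to produce a map $g \in \adm{U}$, arbitrarily close to the identity in $\cnt^1$-topology, that almost annihilates the unrectifiable contribution of $S$ while barely increasing the rectifiable one. Because $g$ is of class~$\cnt^\infty$, $g_\#$ is continuous on $\Var{m}(U)$, so using~\ref{rem:pull-back} and~\ref{def:goodclass}\ref{i:gc:deformation} one has $g \lIm S_i \rIm \in \mathcal C$ and $\var{m}(g \lIm S_i \cap U \rIm) = g_\# \var{m}(S_i \cap U) \to g_\# V$, and ellipticity together with the unrectifiable-versus-disc comparison built into~\ref{def:elliptic} and~\ref{rem:elliptic2} show $\Phi_F(g \lIm S_i \cap U \rIm) < \mu$ for large~$i$, contradicting minimality. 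Hence $S \cap U$ is rectifiable; lower semicontinuity of $\Phi_F$ under varifold convergence together with $S \in \mathcal C$ yields $\Phi_F(S \cap U) = \mu$, and a similar tangent-plane comparison excludes any discrepancy between $V$ and $\var{m}(S \cap U)$ on the rectifiable part, giving~\ref{i:main:minimal} and~\ref{i:main:varifold}.

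For the additional assertion, assume $\R^n \without U$ is compact and that a minimising sequence of compact sets in~$\mathcal C$ exists. Choose a large ball $\cball 0R$ containing $\R^n \without U$; for any competitor intersecting $\R^n \without \cball{0}{R'}$ with $R' \gg R$, an admissible deformation supported in a convex set disjoint from $\cball 0R$ built by coning onto a suitable face (as in the isoperimetric argument alluded to in the remark following~\ref{def:goodclass}, see also~\cite{Pugh2017}) reduces $\Phi_F$ by a quantity proportional to the mass outside $\cball{0}{R'}$. Applying this both to $S_i$ and passing to the limit bounds $\diam(S_i \cap U)$ and $\diam(\spt \|V\|)$ uniformly.

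The hard part is unquestionably the rectifiability step: it requires simultaneously (i) keeping all perturbations inside the good class~$\mathcal C$, which restricts one to admissible deformations, and (ii) transferring a quantitative gain obtained on the varifold limit~$V$ back to the elements $\var{m}(S_i \cap U)$ of the minimising sequence. Both become feasible only because~\ref{thm:deformation} furnishes $\cnt^\infty$ maps, rendering $g_\#$ continuous on $\Var{m}(U)$ and making~\ref{lem:reduce-unrect} effective; working with merely Lipschitz deformations, as in the Federer--Fleming or David--Semmes constructions, would break this continuity and force one to resort to structural results such as Preiss' theorem or the atomic condition, which we prefer to avoid.
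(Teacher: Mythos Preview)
Your outline has the right large-scale architecture but misplaces ellipticity and, as a result, leaves the actual decisive step unproved while claiming several things that are either false or not what happens in the paper.

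\medskip
\textbf{Where ellipticity enters.} You invoke ellipticity twice: for the lower density bound on~$V$ and for the rectifiability contradiction. In the paper neither of these uses ellipticity. Both the upper and lower density bounds (\ref{thm:ludrb}) come from boundedness of~$F$ alone, via differential inequalities obtained by comparing $V$ with admissible deformations of elements of the minimising sequence. Likewise the rectifiability of $\spt\|V\|$ (\ref{thm:rect}) uses only boundedness. The single place ellipticity is needed is the \emph{unit density} step~\ref{thm:density}: showing $\density^m(\|V\|,x)=1$ and $\VarTan(V,x)=\{\var{m}(T)\}$ at generic points. This step is absent from your sketch except for the throwaway line ``a similar tangent-plane comparison excludes any discrepancy between $V$ and $\var{m}(S\cap U)$''. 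Without it you cannot conclude $V=\var{m}(S\cap U)$, and hence cannot get $\Phi_F(S\cap U)=\mu$; your appeal to ``lower semicontinuity'' only gives information about $\Phi_F(V)$, not $\Phi_F(S\cap U)$.

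\medskip
\textbf{The Hausdorff limit need not be $\spt\|V\|$.} You assert that the density bounds ``identify $\spt\|V\|=S\cap U$''. This is not true: density bounds constrain $\|V\|$ on its support but say nothing about $S\cap U\setminus\spt\|V\|$, which could carry ``hair'' of zero $\HM^m$ measure or worse. The paper handles this by a separate ``hair-combing'' construction (\ref{lem:ud:combing}, \ref{cor:ud:good-ms}): one post-composes the minimising sequence with admissible deformations that push the part of $S_i$ lying in Whitney cubes of $U\setminus\spt\|V\|$ onto lower-dimensional skeleta, so that the Hausdorff limit $S$ automatically satisfies $\HM^m(S\cap U\setminus\spt\|V\|)=0$. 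Your proposed regularisation (deforming where density ratios are too large) does not accomplish this and is not what the paper does.

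\medskip
\textbf{A technical slip in the rectifiability transfer.} You write $\var{m}(g\lIm S_i\cap U\rIm)=g_\#\var{m}(S_i\cap U)$ citing~\ref{rem:pull-back}, but that remark says precisely that this identity \emph{fails} for general $\cnt^1$ maps when $S_i$ is not rectifiable. The paper's rectifiability argument (\ref{thm:rect}) avoids this by a different mechanism: after deformation, both $g\lIm S\rIm$ and $g\lIm S_i\rIm$ lie in a fixed $m$-dimensional cubical skeleton, so local Hausdorff convergence forces $\HM^m$-convergence directly (the ``cheap trick''~\eqref{eq:cheap-trick}); continuity of $g_\#$ on varifolds is used only on the annular region, via integration of $\|\uD g\|^m$.

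\medskip
\textbf{The boundedness clause.} Your coning/isoperimetric sketch is not how the paper proceeds: boundedness of $\spt\|V\|$ follows immediately from the lower density bound in~\ref{cor:ludrb} (finite total mass forces bounded support), and the uniform diameter bound on $S_i$ is part of the hair-combing construction~\ref{lem:ud:combing}.
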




\section{Unrectifiable sets under submersions}

Assume $K \subseteq \R^n$ is purely $(\HM^m,m)$~unrectifiable with
$\HM^m(K) < \infty$ and $f \in \cnt^{1}(\R^n, \R^n)$ is such that
$\uD f(x)$ is of rank at most~$m$ for $x \in \R^n$. We construct an arbitrarily
small $\cnt^1$~perturbation $\tilde f$ of~$f$ such that
$\HM^m(\tilde f\lIm K \rIm)$ becomes very small. Additionally, we ensure that
$\tilde f$ is of the form $\tilde f = f \circ \rho$, where $\rho$ is
a~diffeomorphism of~$\R^n$ such that $\Lip(\rho - \id{\R^n})$ is very small.
This provides a useful feature of $\tilde f$, namely that
$\im \tilde f \subseteq \im f$.

A~similar result was proven recently by Pugh~\cite{Pugh16}. It could be possible
to obtain $\HM^m(\tilde f\lIm K \rIm) = 0$ as was shown by
Ga{\l}\k{e}ski~\cite{Galeski2017} but for our purposes it suffices only to make
the measure small. Also the map constructed in~\cite{Galeski2017} kills only the
measure of the part of~$K$ on which $\dim \im \uD f(x) = m$ and we need to take
care also of the part where the rank of $\uD f$ is strictly less
than~$m$. Finally, we should mention that Almgren alluded that such result
should hold already in~\cite[2.9(b1)]{Alm68}.

In the next preparatory lemma we construct a smooth map
$M : \R \to \orthgroup n$ which continuously rotates a given $m$-plane $S$
onto another given $m$-plane $T$.  We~also derive estimates on~$M'$ as well as
on $\|M(\cdot) - \id{\R^n}\|$ in terms of $\|\project S - \project T\|$.

\begin{lemma}
    \label{lem:rotation}
    Let $n$ and $m$ be positive integers such that $0 < m \le n$. There exists
    $\Gamma \in (0,\infty)$ such that for all $S,T \in \grass nm$
    there exists $M : \R \to \Hom(\R^n,\R^n)$ of class $\cnt^\infty$
    satisfying
    \begin{gather}
        M(0) = \id{\R^n} \,,
        \quad
        M(1)\lIm S \rIm = T \,,
        \quad
        \forall \tau \in \R \quad M(\tau) \in \orthgroup n \,,
        \\
        \label{eq:rot:estimates}
        \forall \tau \in \R \quad
        \|M(\tau) - \id{\R^n}\| \le \Gamma |\tau| \|\project{S} - \project{T}\|
        \quad \text{and} \quad
        \|M'(\tau)\| \le \Gamma \|\project{S} - \project{T}\| \,.
    \end{gather}
\end{lemma}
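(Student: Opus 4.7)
My plan is to diagonalise the pair $(S,T)$ by a CS~decomposition (equivalently, by principal angles between subspaces) and then define $M(\tau)$ as an explicit block-diagonal family of planar rotations. To begin, I~would produce an integer $0 \le k \le \min\{m,n-m\}$, angles $\theta_1, \ldots, \theta_k \in (0,\pi/2]$, and an orthonormal basis $u_1, \ldots, u_n$ of $\R^n$ with the properties that $u_1, \ldots, u_m$ span~$S$, the vectors $u_{m+1}, \ldots, u_{m+k}$ lie in $S^\perp$, and $T$ is spanned by the vectors $\cos\theta_i\, u_i + \sin\theta_i\, u_{m+i}$ for $i=1,\ldots,k$ together with $u_{k+1}, \ldots, u_m$.

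Next I~would set $M(\tau)$ to act as the planar rotation by angle $\tau\theta_i$ in each $2$-plane $\lin\{u_i, u_{m+i}\}$ for $i=1,\ldots,k$, and as the identity on the orthogonal complement of their sum. This family is manifestly of class~$\cnt^\infty$ on all of~$\R$, takes values in $\orthgroup n$, satisfies $M(0) = \id{\R^n}$, and $M(1)$ sends $u_i$ to the chosen spanning vector of~$T$ for every $i$; hence $M(1)\lIm S\rIm = T$.

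For the estimates, in each principal $2$-plane the restriction of $M(\tau) - \id{\R^n}$ has operator norm $2|\sin(\tau\theta_i/2)| \le |\tau\theta_i|$, while the restriction of $M'(\tau)$ equals $\theta_i$ times an orthogonal transformation; on the orthogonal complement of the principal planes both vanish. Consequently $\|M(\tau) - \id{\R^n}\| \le |\tau|\max_i \theta_i$ and $\|M'(\tau)\| \le \max_i \theta_i$. The key linear-algebra input is that the non-zero singular values of $\project{S} - \project{T}$ are precisely the numbers $\sin\theta_i$ (each with multiplicity two), so $\sin\theta_i \le \|\project{S} - \project{T}\|$ for every~$i$; combining this with the elementary bound $\theta \le (\pi/2)\sin\theta$ valid on $[0,\pi/2]$ yields $\theta_i \le (\pi/2)\|\project{S} - \project{T}\|$. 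Taking $\Gamma = \pi/2$ then closes~\eqref{eq:rot:estimates}.

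The only obstacle I~foresee is careful bookkeeping of the CS decomposition in the degenerate cases where $S \cap T$ or $S^\perp \cap T$ is non-trivial: those directions must be absorbed into the identity part of $M(\tau)$ so that the rotations are really supported on the $k$ principal $2$-planes. Once this is set up, the construction reduces to a finite orthogonal sum of two-dimensional rotations, for which smoothness, orthogonality, and the required norm bounds are all immediate from the explicit formulae.
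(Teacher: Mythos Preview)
Your proposal is correct and follows essentially the same approach as the paper: both decompose $\R^n$ into principal $2$-planes for the pair $(S,T)$ and define $M(\tau)$ as a direct sum of planar rotations through the angles $\tau\theta_i$. The only difference is packaging---the paper builds the principal vectors inductively via the extremal characterisation of~\cite[8.9(3)]{All72} and arrives at $\Gamma=8$, whereas you invoke the CS decomposition as a black box and read off the sharper $\Gamma=\pi/2$ from the identity $\|\project{S}-\project{T}\|=\max_i\sin\theta_i$.
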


\begin{proof}
    We shall construct the map $M$ similarly as in~\cite[8.9(3)]{All72}. First
    set
    \begin{gather}
        A = S \cap T \,,
        \quad
        B = S^{\perp} \cap T^{\perp} = (S+T)^{\perp} \,,
        \\
        C = (S^{\perp} \cap T) \oplus (T^{\perp} \cap S) \,,
        \quad
        D = (S+T) \cap A^{\perp} \cap C^{\perp} \,.
    \end{gather}
    Observe that $A$, $B$, $C$, $D$ are pairwise orthogonal and sum up to the
    whole of~$\R^n$, i.e.,
    \begin{displaymath}
        \forall X,Y \in \{ A, B, C, D \} \quad X = Y \text{ or } X \perp Y \,,
        \qquad
        \R^n = A \oplus B \oplus C \oplus D \,.
    \end{displaymath}
    Note also that there exist natural numbers $k$ and $l$ such that
    \begin{gather*}
        k = \dim(S \cap D) = \dim(T \cap D) \,,
        \quad
        \dim(D) = \dim(S \cap D) + \dim(T \cap D) = 2k \,,
        \\
        l = \dim(S^{\perp} \cap T) = \dim(T^{\perp} \cap S) \,,
        \quad
        \dim(C) = 2l \,.
    \end{gather*}
    For our convenience we set $S_0 = S$ and $T_0 = T$. If $k > 0$, then we shall
    construct inductively
    \begin{itemize}
    \item subspaces $S \supseteq S_1 \supseteq S_2 \supseteq \cdots \supseteq S_k$
        and $T \supseteq T_1 \supseteq T_2 \supseteq \cdots \supseteq T_k$
        and $V_1, \ldots, V_k \subseteq S + T$,
    \item and vectors $s_1 \in S_1$, \ldots, $s_k \in S_k$ and $t_1 \in T_1$, \ldots,
        $t_k \in T_k$.
    \end{itemize}
    To start the construction we set $S_1 = S \cap D$ and $T_1 = T \cap D$. Then
    we use~\cite[8,9(3)]{All72} to find $s_1 \in S_1$ so that $|s_1| = 1$ and
    $|\project{(T^{\perp}_1)}s_1| = \|\project{S_1} - \project{T_1}\|$. Note that
    $\|\project{S_1} - \project{T_1}\| < 1$ because the spaces $S_1$ and $T_1$ are
    orthogonal to~$C$. Hence, we may define $t_1 = \project{T_1} s_1
    |\project{T_1} s_1|^{-1}$ and $V_1 = \lin\{ s_1, t_1 \}$. Assuming we have
    constructed $S_1$, \ldots, $S_i$ and $T_1$, \ldots, $T_i$ and $s_1$, \ldots,
    $s_i$ and $t_1$, \ldots, $t_i$ for some $i \in \{1,\ldots,k-1\}$ we proceed by
    requiring
    \begin{gather*}
        S_{i+1} = S_i \cap V_i^{\perp} \,,
        \quad
        T_{i+1} = T_i \cap V_i^{\perp} \,,
        \quad
        s_{i+1} \in S_{i+1} \,,
        \quad
        |s_{i+1}| = 1 \,,
        \\
        |\project{T_{i+1}^{\perp}}s_{i+1}| = \|\project{S_{i+1}} - \project{T_{i+1}}\| \,,
        \quad
        t_{i+1} = \frac{\project{T_{i+1}} s_{i+1}}{|\project{T_{i+1}} s_{i+1}|} \,,
        \quad
        V_{i+1} = \lin\{ s_{i+1} , t_{i+1} \} \,.
    \end{gather*}
    Observe that
    \begin{equation}
        \forall  i \in \{0,1,\ldots,k-1\}
        \ 
        \forall s \in S_{i+1} \subseteq S_i \quad
        \project{(T_i^{\perp})}s = \project{T_{i+1}^{\perp}}s \,;
    \end{equation}
    thus,
    \begin{multline}
        \|\project{S_{i+1}} - \project{T_{i+1}}\|
        = \sup\{ |\project{T_{i+1}}s| : s \in S_{i+1} ,\, |s| = 1 \}
        \\
        = \sup\{ |\project{T_{i}}s| : s \in S_{i+1} ,\, |s| = 1 \}
        \le \sup\{ |\project{T_{i}}s| : s \in S_{i} ,\, |s| = 1 \} \,.
    \end{multline}
    Therefore,
    \begin{equation}
        \forall  i \in \{0,1,\ldots,k\}
        \quad
        \|\project{S_{i}} - \project{T_{i}}\| \le \|\project{S} - \project{T}\| \,.
    \end{equation}
    Clearly $(s_1,\ldots,s_k)$ and $(t_1,\ldots,t_k)$ are orthonormal bases of $S
    \cap D$ and $T \cap D$ respectively. Next, we choose arbitrary orthonormal
    bases $(s_{k+1},\ldots,s_{k+l})$ of $S \cap T^{\perp}$ and
    $(t_{k+1},\ldots,t_{k+l})$ of $T \cap S^{\perp}$ and
    $(e_1,\ldots,e_{n-2(k+l)})$ of $A \oplus B$. We also define
    \begin{displaymath}
        \alpha_i = \arccos(s_i \bullet t_i)
        \quad \text{for $i \in \{1,\ldots,k+l\}$}
    \end{displaymath}
    and note that $0 < \alpha_i \le \pi/2$ by construction.  Now we are in
    position to define $M$. It shall be the identity on $A \oplus B$ and on each
    $V_i = \lin\{s_i,t_i\}$ it will be the rotation sending $s_i$ to $t_i$ for $i
    = 1,2,\ldots,k+l$. More precisely we set
    \begin{displaymath}
        \hat{s}_i = \frac{t_i - (t_i \bullet s_i) s_i}{|t_i - (t_i \bullet s_i) s_i|}
        \quad \text{for $i \in \{1,\ldots,k\}$} \,,
        \quad
        \hat{s}_i = t_i
        \quad \text{for $i \in \{k+1,\ldots,k+l\}$}  \,,
    \end{displaymath}
    and define for $\tau \in \R$
    \begin{gather*}
        M(\tau)s_i = \cos(\tau \alpha_i) s_i + \sin(\tau \alpha_i) \hat{s}_i
        \quad \text{for $i=1,\ldots,k+l$} \,,
        \\
        M(\tau)\hat{s}_i = -\sin(\tau \alpha_i) s_i + \cos(\tau \alpha_i) \hat{s}_i
        \quad \text{for $i=1,\ldots,k+l$} \,,
        \\
        M(\tau) e_i = e_i \quad \text{for $i=1,\ldots,n-2(k+l)$}
    \end{gather*}
    Since
    $\{s_1,\ldots,s_{k+l},\hat{s}_1,\ldots,\hat{s}_{k+l},e_1,\ldots,e_{n-2(k+l)}\}$
    is an orthonormal basis of $\R^n$ we see that $M(\tau) \in \orthgroup{n}$
    for each $\tau \in \R$. It is also immediate that $M(0) = \id{\R^n}$ and
    $M(1) \lIm S \rIm = T$.

    To prove~\eqref{eq:rot:estimates} we first estimate~$\alpha_i$. Recall that $1
    - \cos x = 2 \sin^2(x/2)$ for $x \in \R$ and $|x| \le 2 |\sin x|$ whenever
    $|x| \le \pi/2$; hence, for $i = 1,\ldots,k+l$
    \begin{multline}
        \alpha_i \le 4 \sin(\alpha_i/2)
        = 2 \sqrt 2 \bigl(1 - \cos(\alpha_i)\bigr)^{1/2}
        = 2 \sqrt 2 \bigl(1 - s_i \bullet t_i\bigr)^{1/2}
        \\
        = 2 \sqrt 2 \bigl(1 - \bigl(1 - |\project{T_i^{\perp}}s_i|^2\bigr)^{1/2}\bigr)^{1/2}
        \le 2 \sqrt 2 \bigl(1 - \bigl(1 - \|\project{T} - \project{S}\|^2\bigr)^{1/2}\bigr)^{1/2} \,.
    \end{multline}
    If $\|\project{T} - \project{S}\| < 1/2$, then we use standard estimates
    $\exp(x) \ge 1 + x$ and $\log(1+x) \ge x/(1+x)$ valid for $x > -1$ to derive
    \begin{equation}
        \label{eq:rot:alpha-est}
        \alpha_i
        \le 2 \sqrt 2 \frac{\|\project{T} - \project{S}\|}{\bigl(1 - \|\project{T} - \project{S}\|^2\bigr)^{1/2}}
        \le 8 \|\project{T} - \project{S}\| \,.
    \end{equation}
    If $\|\project{T} - \project{S}\| \ge 1/2$, we have $\bigl(1 - \bigl(1 -
    \|\project{T} - \project{S}\|^2\bigr)^{1/2}\bigr)^{1/2} \le 1 \le 2
    \|\project{T} - \project{S}\|$ so~\eqref{eq:rot:alpha-est} holds also in this
    case.

    Using $|\sin x|\le |x|$ for $x \in \R$ and~\eqref{eq:rot:alpha-est} we
    obtain for $i = 1,2,\ldots,k+l$ and $\tau \in \R$
    \begin{equation}
        |s_i - M(\tau)s_i| = 2 |\sin(\tau \alpha_i/2)|
        \le |\tau| \alpha_i
        \le 8 |\tau| \|\project{T} - \project{S}\| \,.
    \end{equation}
    Thus, whenever $v \in \R^n$ and $|v| = 1$,
    \begin{equation}
        |v - M(\tau)v|^2
        = \sum_{i=1}^{k+l} |\project{V_i}v - M(\tau)(\project{V_i}v)|^2
        = \sum_{i=1}^{k+l} |\project{V_i}v|^2 |s_i - M(\tau)s_i|^2
        \le \bigl( 8 |\tau| \|\project{T} - \project{S}\| \bigr)^2 \,,
    \end{equation}
    which proves the first part of~\eqref{eq:rot:estimates}. By direct computation
    we obtain $|M'(\tau) s_i| = |M'(\tau) \hat{s}_i| = \alpha_i$ for $\tau \in
    \R$ and $i = 1,2,\ldots,k+l$. Therefore, employing~\eqref{eq:rot:alpha-est},
    \begin{equation}
        \|M'(\tau)\| = \max\{ \alpha_i : i = 1,2,\ldots,k+l\}
        \le 8 \|\project{T} - \project{S} \| \,.
        \qedhere
    \end{equation}
\end{proof}

The following technical lemma is a localised and reparameterised version
of~\ref{lem:rotation}. Rough\-ly speaking, we~construct a~diffeomorphism $\rho$~of
$\R^n$ which acts as a rotation inside a~given ball and is the identity
outside some neighbourhood of that ball. To be able to
utilise~\ref{lem:unrect-local} in~\ref{lem:reduce-unrect} we need to perform the
rotation in different coordinates, which is accomplished by passing through
a~diffeomorphism $\varphi$. We use estimates from~\ref{lem:rotation} to bound
$\Lip(\rho - \id{\R^n})$.

\begin{lemma}
    \label{lem:unrect-local}
    Assume
    \begin{gather}
        k \in \nat \,,
        \quad
        U \subseteq \R^n \text{ is open} \,,
        \quad
        q \in \orthproj nm \,,
        \quad
        T = \im q^* \,,
        \quad
        S \in \grass nm \,,
        \\
        a \in U \,,
        \quad
        \tilde r, r \in \R \,,
        \quad
        0 < r < \dist(a, \R^n \without U) \,,
        \quad
        0 < \tilde r < r \,,
        \\
        \varphi \in \cnt^k(U,\R^n) \text{ is a~diffeomorphism onto its image} \,,
        \\
        L = \sup\bigl\{ \max\{ \|\uD\varphi(x)\|, \|\uD\varphi(x)^{-1}\| \} : x \in \cball ar \bigr\} \,,
        \\
        \omega(s) = \sup \bigl\{ \|\uD\varphi(x) - \uD\varphi(y)\| : x,y \in \cball ar ,\, |x-y| \le s \bigr\} 
        \quad \text{for $s \in \R$, $s \ge 0$} \,,
        \\
        \label{eq:un:ST-small}
        \Gamma = \Gamma(L, r, \tilde r) = \bigl( 2 L^2 r / (r - \tilde r) + 1\bigr) \Gamma_{\ref{lem:rotation}} \,,
        \quad 
        \Gamma \|\project S - \project T\| < 1 \,.
    \end{gather}
    Then there exist a diffeomorphism $\rho \in \cnt^k(\R^n,\R^n)$ and
    $p \in \orthproj nm$ such that
    \begin{gather}
        \label{eq:un:rho-id}
        \rho(x) = x \quad \text{for $x \in \R^n \without \oball ar$} \,,
        \quad
        \im p^* = S \,,
        \\
        \label{eq:un:rho-proj}
        q \circ \varphi \circ \rho (x) = p(\varphi(x) - \varphi(a)) + q(\varphi(a))
        \quad \text{for $x \in \cball a{\tilde r}$} \,,
        \\
        \label{eq:un:rho-est}
        \sup\bigl\{ \|\uD\rho(x) - \id{\R^n}\| : x \in \R^n \bigr\} 
        \le 2L \omega(L r \Gamma \|\project S - \project T\|) 
        + L^2 \Gamma \|\project S - \project T\| \,.
    \end{gather}
\end{lemma}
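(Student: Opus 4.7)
The strategy is to produce $\rho$ as the composition $\varphi^{-1} \circ \psi$, where $\psi$ rotates points about $\varphi(a)$ inside a ball using the smooth rotation family $M$ from Lemma~\ref{lem:rotation} localised through a cut-off function $\eta$. First, apply Lemma~\ref{lem:rotation} to the pair $(S,T)$ to obtain $M : \R \to \orthgroup n$ of class $\cnt^\infty$ with $M(0) = \id{\R^n}$, $M(1)\lIm S\rIm = T$, $\|M(\tau) - \id{\R^n}\| \le \Gamma_{\ref{lem:rotation}}|\tau|\|\project S - \project T\|$, and $\|M'(\tau)\| \le \Gamma_{\ref{lem:rotation}}\|\project S - \project T\|$. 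Set $p := q \circ M(1)$; since $p \circ p^* = q \circ M(1) \circ M(1)^* \circ q^* = q \circ q^* = \id{\R^m}$, one has $p \in \orthproj nm$, and $\im p^* = M(1)^{\ast}\lIm T\rIm = S$ because $M(1)$ is orthogonal.

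Next, pick a smooth cut-off $\eta \in \cnt^\infty(\R^n,[0,1])$ with $\eta \equiv 1$ on $\cball a{\tilde r}$, $\spt \eta \subseteq \oball ar$, and $\|\uD\eta(x)\| \le 2/(r - \tilde r)$, and define $\psi(x) := M(\eta(x))(\varphi(x) - \varphi(a)) + \varphi(a)$ for $x \in U$. Then $\psi = \varphi$ on $U \without \oball ar$, and
$\uD\psi(x) = M(\eta(x))\,\uD\varphi(x) + \uD\eta(x) \otimes M'(\eta(x))(\varphi(x) - \varphi(a))$.
Using $|\varphi(x) - \varphi(a)| \le Lr$ on $\cball ar$, the rank-one correction has norm at most $(2Lr\Gamma_{\ref{lem:rotation}}/(r-\tilde r))\|\project S - \project T\|$. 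A short computation using the definition of $\Gamma$ shows the hypothesis $\Gamma\|\project S - \project T\| < 1$ forces this correction to have norm strictly less than $1/L$, so $\uD\psi(x)$ is invertible everywhere on $\cball ar$. Combined with $\psi = \varphi$ outside $\oball ar$ and a continuity argument (tracking the homotopy $t \mapsto M(t\eta(\cdot))$ joining $\varphi$ to $\psi$), $\psi$ is a $\cnt^k$ diffeomorphism of $U$ onto an open subset of $\varphi\lIm U\rIm$. Hence $\rho := \varphi^{-1}\circ\psi$ on $U$, extended by the identity on $\R^n \without U$, is a $\cnt^k$ diffeomorphism of $\R^n$ satisfying \eqref{eq:un:rho-id}. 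On $\cball a{\tilde r}$, $\eta \equiv 1$ yields $\varphi(\rho(x)) = M(1)(\varphi(x) - \varphi(a)) + \varphi(a)$, whence applying $q$ gives \eqref{eq:un:rho-proj}.

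For the gradient bound \eqref{eq:un:rho-est}, differentiate $\varphi \circ \rho = \psi$ to get $\uD\rho(x) - \id{\R^n} = (\uD\varphi(\rho(x)))^{-1}[\uD\psi(x) - \uD\varphi(\rho(x))]$. Decompose the bracket as
(I) $[M(\eta(x)) - \id{\R^n}]\,\uD\varphi(x)$, of norm at most $L\Gamma_{\ref{lem:rotation}}\|\project S - \project T\|$;
(II) $\uD\varphi(x) - \uD\varphi(\rho(x))$, of norm at most $\omega(|x - \rho(x)|) \le \omega(Lr\Gamma\|\project S - \project T\|)$ since $|x - \rho(x)| \le L|\varphi(x)-\psi(x)| \le L^2 r\Gamma_{\ref{lem:rotation}}\|\project S - \project T\| \le Lr\Gamma\|\project S - \project T\|$;
(III) the rank-one correction, of norm at most $(2Lr\Gamma_{\ref{lem:rotation}}/(r-\tilde r))\|\project S - \project T\|$.
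Multiplying through by $L = \|(\uD\varphi)^{-1}\|$ and noting $L \ge 1$, the pieces (I) and (III) together contribute at most $L^2(1 + 2r/(r - \tilde r))\Gamma_{\ref{lem:rotation}}\|\project S - \project T\|$, which the choice $\Gamma = (2L^2 r/(r-\tilde r) + 1)\Gamma_{\ref{lem:rotation}}$ absorbs into $L^2\Gamma\|\project S - \project T\|$, while the contribution of (II) fits under $2L\omega(Lr\Gamma\|\project S - \project T\|)$.

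The principal obstacle lies in the last step: orchestrating the three-term decomposition so that the individual bounds, each with its own $L$-factor and $r/(r-\tilde r)$ dependence, fit cleanly under the prescribed $\Gamma$. A secondary subtlety, handled in the second paragraph, is to deduce from the single inequality $\Gamma\|\project S - \project T\| < 1$ that $\psi$ is a \emph{global} $\cnt^k$ diffeomorphism mapping into the open image $\varphi\lIm U\rIm$, not merely a local one, so that $\varphi^{-1}\circ\psi$ is well defined on all of $U$.
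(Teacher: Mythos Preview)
Your approach is essentially the paper's: both build $\rho$ by conjugating a cut-off rotation (your $\psi$, the paper's $\pi\circ\varphi$) through $\varphi^{-1}$, and your three-term decomposition for the derivative estimate matches the paper's computation and yields the stated bound correctly.

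The one genuine gap is the step you flag yourself as a ``secondary subtlety''. Local invertibility of $\uD\psi$ (your rank-one perturbation argument) together with a vaguely indicated homotopy does not by itself give that $\psi$ is a global diffeomorphism of $U$ onto $\varphi\lIm U\rIm$; a local diffeomorphism equal to $\varphi$ outside a compact set need not be injective. The paper closes this cleanly by working in the \emph{target}: it writes $\rho=\varphi^{-1}\circ\pi\circ\varphi$ where $\pi(y)=M(\zeta\circ\eta(y))(y-\varphi(a))+\varphi(a)$ is defined on all of $\R^n$, and then the very computation that produces your derivative bound shows $\Lip(\pi-\id_{\R^n})\le\Gamma\|\project S-\project T\|<1$, so $\pi$ is automatically a global $\cnt^k$~diffeomorphism of $\R^n$ fixing $\R^n\without\varphi\lIm\oball ar\rIm$. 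In your set-up this amounts to observing that $\psi=g\circ\varphi$ with $g(y)=M(\eta(\varphi^{-1}(y)))(y-\varphi(a))+\varphi(a)$ extended by the identity, and that $\|\uD g-\id_{\R^n}\|\le\Gamma\|\project S-\project T\|<1$; this is exactly what the hypothesis is designed for, and it replaces your homotopy argument with a one-line contraction estimate.
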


\begin{proof}
    Employ~\ref{lem:rotation} to obtain a smooth map
    $M : \R \to \Hom(\R^n,\R^n)$ such that $M(1)\lIm S \rIm = T$ and
    $M(\tau) \in \orthgroup n$ for each $\tau \in \R$. Let
    $\zeta : \R \to \R$ be of class $\cnt^\infty$ and satisfy $\zeta(t) = 0$
    for $t \le 0$, and $\zeta(t) = 1$ for $t \ge 1$, and $0 \le \zeta'(t) \le 2$
    for $t \in \R$, and $0 \in \Int \zeta^{-1} \{0\}$, and
    $1 \in \Int \zeta^{-1} \{1\}$. Define $\pi : \R^n \to \R^n$, and
    $\eta : \R^n \to \R$, and $p \in \Hom(\R^n,\R^m)$ by requiring
    \begin{gather}
        \eta(x) = (r - |\varphi^{-1}(x) - a|)/(r - \tilde r) 
        \quad \text{if $x \in \varphi \lIm \cball ar \rIm$ } \,,
        \\ 
        \eta(x) = 0
        \quad \text{if $x \in \R^n \without \varphi \lIm \cball ar \rIm$ } \,,
        \quad 
        p = q \circ M(1) \,,
        \\
        \pi(x) = M \circ \zeta \circ \eta(x) (x - \varphi(a)) + \varphi(a)
        \quad \text{for } x \in \R^n \,.
    \end{gather}
    Note that $\eta$ is Lipschitz continuous and $\pi$ is of class $\cnt^k$
    because $0 \in \Int \zeta^{-1} \{0\}$ and $1 \in \Int \zeta^{-1} \{1\}$.
    Moreover, $p \in \orthproj nm$ and
    \begin{gather}
        \label{eq:unrect:pTS}
        \im p^* = M(1)^* \circ q^* \lIm \R^m \rIm = M(1)^{-1}\lIm T \rIm = S \,,
        \\
        \label{eq:unrect:pi-id}
        \pi(x) = x
        \quad \text{whenever $x \in \R^n \without \varphi\lIm \oball ar \rIm$} \,,
        \\
        \label{eq:unrect:pi-proj}
        q \circ \pi (x) = p(x - \varphi(a)) + \varphi(a)
        \quad \text{for $x \in \varphi\lIm \cball a{\tilde r} \rIm$} \,.
    \end{gather}
    Hence, we can set
    \begin{displaymath}
        \rho = \varphi^{-1} \circ \pi \circ \varphi \,.
    \end{displaymath}
    Clearly~\eqref{eq:unrect:pTS}, \eqref{eq:unrect:pi-id},
    \eqref{eq:unrect:pi-proj} imply~\eqref{eq:un:rho-id}
    and~\eqref{eq:un:rho-proj} and we only need to check~\eqref{eq:un:rho-est}.
    Recalling~\eqref{eq:rot:estimates} and~\eqref{eq:un:ST-small} and
    $\Lip(\varphi|_{\cball ar}) \le L$ we estimate for $x \in \cball ar$
    \begin{gather}
        \Lip(\eta) \le \Lip\bigl( (\varphi|_{\cball ar})^{-1} \bigr) / (r - \tilde r)
        \le L / (r - \tilde r) \,,
        \\
        \label{eq:unrect:Dpi-id}
        \begin{aligned}
            \| \uD(\pi - \id{\R^n})(\varphi(x)) \|
            &\le \Lip(\zeta) \Lip(\eta) \|M'(\zeta \circ \eta(\varphi(x)))\| L r
            + \|M(\zeta \circ \eta(\varphi(x))) - \id{\R^n}\|
            \\
            &\le \bigl( 2 L^2 r / (r - \tilde r) + 1\bigr) \Gamma_{\ref{lem:rotation}}\|\project S - \project T\|
            = \Gamma \|\project S - \project T\| < 1 \,.
        \end{aligned}
    \end{gather}
    In particular, using~\eqref{eq:unrect:pi-id} we conclude that
    $\Lip(\pi - \id{\R^n}) < 1$, so $\pi$ and $\rho$ are diffeomorphisms.
    Employing~\eqref{eq:unrect:Dpi-id} we see also that if $x \in \cball ar$,
    then
    \begin{equation}
        \label{eq:unrect:pi-close-id}
        |\pi(\varphi(x)) - \varphi(x)|
        = |(\pi - \id{\R^n})(\varphi(x)) - (\pi - \id{\R^n})(\varphi(y))|
        \le L r \Gamma \|\project S - \project T\| \,,
    \end{equation}
    where $y \in U \without \oball ar$ is any point such that
    $|x-y| = \dist(x, \R^n \without \cball ar) \le r$.
    Utilising~\eqref{eq:unrect:Dpi-id} and~\eqref{eq:unrect:pi-close-id} we see
    that $\Lip(\pi) \le 2$ and for $x \in \cball ar$
    \begin{multline}
        \|\uD\rho(x) - \id{\R^n}\| 
        \le \bigl\| \bigl(\uD\varphi^{-1}(\pi\circ\varphi(x)) - \uD\varphi^{-1}(\varphi(x))\bigr) \circ \uD\pi(\varphi(x)) \bigr\|
        \|\uD\varphi(x)\|
        \\
        + \bigl\| \uD\varphi^{-1}(\varphi(x)) \circ \bigl( \uD\pi(\varphi(x)) - \id{\R^n} \bigr) \bigr\|
        \|\uD\varphi(x)\|
        \\
        \le 2L \omega(|\pi(\varphi(x)) - \varphi(x)|)
        + L^2 \| \uD(\pi - \id{\R^n})(\varphi(x)) \|
        \\
        \le 2L \omega(L r \Gamma \|\project S - \project T\|)
        + L^2 \Gamma \|\project S - \project T\| \,.
        \qedhere
    \end{multline}
\end{proof}

In the next lemma given a purely $(\HM^m,m)$~unrectifiable set~$K$ with
$\HM^m(K) < \infty$ and a map $f \in \cnt^{k}(\R^n, \R^n)$ such that
$\uD f(x)$ is of rank at most~$m$ for $x \in \R^n$ we employ the constant rank
theorem together with a~Vitali covering theorem to get a family of balls in each
of which we apply~\ref{lem:unrect-local} and the Besicovitch--Federer projection
theorem to construct a~diffeomorphism $\rho$ of~$\R^n$ such that
$f \circ \rho \lIm K \rIm$ has significantly less $\HM^m$ measure than $K$
itself. Since, in general, $f$~may map the set where $\uD f(x)$ has rank strictly
less than $m$ into a set of positive $\HM^m$~measure we need to additionally
assume that this does not happen or assume $k \ge n - m + 1$ and employ the
Morse--Sard theorem; see~\ref{rem:Sard}.

\begin{lemma}
    \label{lem:reduce-unrect}
    Let $K \subseteq \R^n$ be purely $(\HM^m,m)$~unrectifiable
    with $\HM^m(K) < \infty$. Let $f : \R^n \to \R^n$ be of
    class $\cnt^{k}$ with $k \ge 1$. Suppose there exists an open set $U
    \subseteq \R^n$ such that
    \begin{gather}
        \label{eq:reduce:rank}
        K \subseteq U 
        \quad \text{and} \quad
        \dim \im \uD f(x) \le m \text{ for all $x \in U$}
        \\
        \label{eq:reduce:sard}
        \text{and} \quad
        \HM^m(f\lIm \{ x \in K : \dim \im \uD f(x) < m \} \rIm) = 0 \,.
    \end{gather}

    Then for any $\varepsilon \in (0,\infty)$ there exists a diffeomorphism
    $\rho_\varepsilon : \R^n \to \R^n$ of class $\cnt^{k}$ such that
    \begin{gather*}
        \HM^m(f \circ \rho_\varepsilon\lIm K \rIm) \le \varepsilon \HM^m(K) \,,
        \quad
        \rho_{\varepsilon}(x) = x
        \quad \text{for } x \in \R^n \without U \,,
        \\
        |x - \rho_\varepsilon(x)| \le \varepsilon
        \quad \text{and} \quad
        \|\id{\R^n} - \uD\rho_\varepsilon(x)\| \le \varepsilon
        \quad \text{for } x \in \R^n 
        \,.
    \end{gather*}
\end{lemma}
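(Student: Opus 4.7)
The plan is to combine the constant rank theorem (locally $f$ factors through an orthogonal projection onto some $m$-plane $T$) with the Besicovitch--Federer projection theorem (a generic $m$-plane $S$ kills the $\HM^m$~measure of any purely unrectifiable set of finite measure) via~\ref{lem:unrect-local}, which constructs a compactly supported $\cnt^k$~diffeomorphism swapping the $T$-direction for an $S$-direction. First I split $K = K_{<m} \cup K_m$ with $K_{<m} = \{ x \in K : \dim \im \uD f(x) < m \}$ and $K_m = K \without K_{<m}$. The set $\Omega = \{ x \in U : \dim \im \uD f(x) = m \}$ is open by lower semicontinuity of the rank, and by~\eqref{eq:reduce:rank} one has $K_m \subseteq \Omega$ whereas $K_{<m} \subseteq U \without \Omega$. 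I shall build $\rho_\varepsilon$ so that its support lies in $\Omega$; then $\rho_\varepsilon$ fixes $K_{<m}$ pointwise and therefore $(f \circ \rho_\varepsilon)\lIm K_{<m}\rIm = f\lIm K_{<m}\rIm$ has $\HM^m$~measure zero by~\eqref{eq:reduce:sard}.

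For each $x_0 \in K_m \subseteq \Omega$, $\uD f$ has constant rank $m$ on a whole neighbourhood, so the constant rank theorem (applicable since $f \in \cnt^k$, $k \ge 1$) yields $r > 0$ with $\cball{x_0}{r} \subseteq \Omega$, an element $q \in \orthproj nm$, a $\cnt^k$~diffeomorphism $\varphi$ of $\oball{x_0}{r}$ onto its image, and a $\cnt^k$~map $g$ such that $f|_{\oball{x_0}{r}} = g \circ q \circ \varphi$. Set $T = \im q^*$. For any $\tilde r \in (0,r)$ the set $\varphi\lIm K_m \cap \cball{x_0}{\tilde r} \rIm$ is purely $(\HM^m,m)$~unrectifiable of finite $\HM^m$~measure, so by the Besicovitch--Federer projection theorem $\boldsymbol{\gamma}_{n,m}$-almost every $S \in \grass nm$ satisfies $\HM^m(p\lIm \varphi\lIm K_m \cap \cball{x_0}{\tilde r} \rIm \rIm) = 0$ for any $p \in \orthproj nm$ with $\im p^* = S$. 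Picking such an $S$ arbitrarily close to $T$ so that $\Gamma \|\project S - \project T\| < 1$, \ref{lem:unrect-local} delivers a $\cnt^k$~diffeomorphism $\rho_{x_0}$ supported in $\oball{x_0}{r}$ with $q \circ \varphi \circ \rho_{x_0}(x) = p(\varphi(x) - \varphi(x_0)) + q(\varphi(x_0))$ for $x \in \cball{x_0}{\tilde r}$. Hence $(f \circ \rho_{x_0})\lIm K_m \cap \cball{x_0}{\tilde r} \rIm$ is the image under the locally Lipschitz map $g$ of an $\HM^m$~null set, so has $\HM^m$~measure zero; and by~\eqref{eq:un:rho-est} the deviation $\|\uD \rho_{x_0} - \id{\R^n}\|$ is arbitrarily small provided $r$ and $\|\project S - \project T\|$ are small enough.

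A Vitali-type covering argument for the finite Radon measure $\HM^m \restrict K_m$ next extracts a countable subfamily $\{(x_i, r_i, \tilde r_i)\}_{i \in I}$ whose support balls $\oball{x_i}{r_i} \subseteq \Omega$ are pairwise disjoint and whose inner balls $\cball{x_i}{\tilde r_i}$ cover $K_m$ up to an $\HM^m$~null set. I define $\rho_\varepsilon$ to agree with $\rho_{x_i}$ on each $\oball{x_i}{r_i}$ and with the identity elsewhere; disjoint supports make this a well-defined $\cnt^k$~diffeomorphism of $\R^n$. Taking all $r_i < \varepsilon/2$ and each $\|\project{S_i} - \project{T_i}\|$ small enough and applying~\eqref{eq:un:rho-est} yield $|x - \rho_\varepsilon(x)| \le \varepsilon$ and $\|\uD \rho_\varepsilon(x) - \id{\R^n}\| \le \varepsilon$. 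To estimate the image, decompose $(f \circ \rho_\varepsilon)\lIm K \rIm$ into $f\lIm K_{<m}\rIm$, $\bigcup_i (f \circ \rho_\varepsilon)\lIm K_m \cap \cball{x_i}{\tilde r_i}\rIm$, and $(f \circ \rho_\varepsilon)\lIm K_m \without \bigcup_i \cball{x_i}{\tilde r_i}\rIm$; these have $\HM^m$~measure zero by~\eqref{eq:reduce:sard}, by the local construction, and because the preimage is $\HM^m$~null while $f \circ \rho_\varepsilon$ is $\cnt^1$ (hence locally Lipschitz) respectively. In fact the argument yields the stronger $\HM^m((f \circ \rho_\varepsilon)\lIm K \rIm) = 0$.

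The main obstacle is the covering step: producing pairwise disjoint \emph{outer} balls $\oball{x_i}{r_i}$ whose \emph{inner} concentric balls $\cball{x_i}{\tilde r_i}$ still cover $K_m$ up to an $\HM^m$~null set, while simultaneously respecting the constraint $\Gamma(L_i, r_i, \tilde r_i)\|\project{S_i} - \project{T_i}\| < 1$ of~\ref{lem:unrect-local} and ensuring that the local deviations aggregate into a single global bound $\|\uD \rho_\varepsilon - \id{\R^n}\| \le \varepsilon$. The conceptual core -- constant rank, Besicovitch--Federer, and~\ref{lem:unrect-local} -- is essentially self-assembling; the remaining work is careful bookkeeping.
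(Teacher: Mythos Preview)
Your overall strategy matches the paper's: constant rank theorem to straighten $f$ locally, Besicovitch--Federer to find a nearby null-projection direction, Lemma~\ref{lem:unrect-local} to realise the direction swap by a compactly supported diffeomorphism, and a Vitali covering to globalise. The gap is precisely in the covering step, which you correctly flag as ``the main obstacle'' but then dismiss as bookkeeping.

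The covering you ask for---pairwise disjoint \emph{outer} balls $\oball{x_i}{r_i}$ whose \emph{inner} balls $\cball{x_i}{\tilde r_i}$ still cover $K_m$ up to an $\HM^m$-null set---is not what Vitali provides, and there is no cheap way to obtain it. Vitali gives disjoint closed balls $\cball{x_i}{r_i}$ covering $K_m$ almost everywhere; once you shrink to $\tilde r_i < r_i$ (forced, since~\ref{lem:unrect-local} needs $\tilde r < r$ strictly and the projection identity~\eqref{eq:un:rho-proj} holds only on $\cball{a}{\tilde r}$, while $\Gamma_{\ref{lem:unrect-local}}$ blows up as $\tilde r \uparrow r$), the annuli $\cball{x_i}{r_i} \without \cball{x_i}{\tilde r_i}$ can carry positive $\HM^m$-mass of $K_m$. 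Your claimed strengthening $\HM^m((f \circ \rho_\varepsilon)\lIm K \rIm) = 0$ therefore does not follow from this scheme.

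The paper resolves this not by improving the covering but by controlling the annular error. It restricts the Vitali family to balls satisfying $\lim_{s \uparrow r}\HM^m(K \cap \cball{a}{r} \without \cball{a}{s}) = 0$ (condition~\eqref{eq:balls3}), so that after extracting the disjoint Vitali balls $\cball{a}{r_a}$ one can choose $\tilde r_a < r_a$ with $\HM^m(K \cap \cball{a}{r_a} \without \cball{a}{\tilde r_a}) < (2L_a)^{-m}\varepsilon\,\HM^m(K \cap \cball{a}{r_a})$. On that annulus $f \circ \rho_a$ has Lipschitz constant at most $2L_a$, so its image contributes at most $\varepsilon\,\HM^m(K \cap \cball{a}{r_a})$; summing over the disjoint balls yields $\varepsilon\,\HM^m(K)$. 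This annular-error bookkeeping is the genuinely missing ingredient in your sketch, and it is exactly why the lemma asserts $\le \varepsilon\,\HM^m(K)$ rather than $= 0$.
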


\begin{proof}
    Let $\varepsilon \in (0,\infty)$ and let
    $q : \R^m \times \R^{n - m} \to \R^m$ be given by
    $q(x,y) = x$. Set
    \begin{displaymath}
        A = \{ x \in U : \dim \im \uD f(x) = m \} \,.
    \end{displaymath}
    Since $\dim \im \uD f(x) \le m$ for all $x \in U$ we see that
    $A = \{ x \in U :  \bigwedge_{m} \uD f(x) \ne 0 \}$ is open. Hence, for every
    $a \in A$ the constant rank theorem~\cite[3.1.18]{Fed69} ensures the
    existence of open sets $U_a \subseteq U$, $V_a \subseteq \R^n$, maps
    $\varphi_a : U_a \to \R^n$, $\psi_a : V_a \to \R^n$ which are
    diffeomorphisms onto their respective images, and orthogonal projections
    $p_a \in \orthproj{n}{m}$ such that
    \begin{equation}
        \label{eq:f-decomp}
        a \in U_a \,,
        \quad
        f(a) \in V_a \,,
        \quad
        f|_{U_a} = \psi_a^{-1} \circ p_a^\ast \circ q \circ \varphi_a \,.
    \end{equation}
    Applying the Vitali covering theorem (see \cite[2.8.16,\,2.8.18]{Fed69} or
    alternatively \cite[2.8]{MR1333890}) to the measure $\HM^m \restrict K$
    and the family of all the closed balls $\cball ar$ satisfying
    \begin{gather}
        \label{eq:balls1}
        a \in K \cap A \,, \quad
        0 < r < \min\{ 1 , \varepsilon \} \,, \quad
        \cball ar \subseteq U_a \,,
        \\
        \label{eq:balls3}
        \lim_{s \uparrow r} \HM^m \bigl( K \cap \cball ar \without \cball as \bigr) = 0
    \end{gather}
    we obtain a countable disjointed collection $\mathcal B$ of closed balls
    having the properties \eqref{eq:balls1}, \eqref{eq:balls3}, and additionally
    \begin{equation}
        \label{eq:K-cap-A-without-B}
        \HM^m\bigl( (K \cap A) \without \tbcup \mathcal B \bigr) = 0 \,.
    \end{equation}
    Whenever $\cball ar \in \mathcal B$ we set
    \begin{equation}
        \label{eq:phi-psi-lip}
        L_a = \max\left\{
            \Lip(\varphi_a|_{\cball ar}) ,\,
            \Lip\bigl((\varphi_a|_{\cball ar})^{-1}\bigr) ,\,
            \Lip(f|_{\cball ar})
        \right\} \,.
    \end{equation}

    Set
    $I = \{ a \in \R^n : \cball{a}{r} \in \mathcal B \text{ for some } r \in
    \R \}$
    and $T = \im(q^*) = \R^m \times \{0\} \in \grass nm$. Whenever $a \in I$
    define $r_a \in \R$ to be the unique number such that
    $\cball{a}{r_a} \in \mathcal B$. Suppose $a \in I$. Since $\varphi_a$ is
    a~diffeomorphism onto its image, we see that
    $\varphi_a\lIm K \cap \cball{a}{r_a} \rIm$ is purely
    $(\HM^m,m)$~unrectifiable. Hence, the Besicovitch--Federer
    projection theorem (see~\cite[3.3.15]{Fed69} or
    alternatively~\cite[18.1]{MR1333890}) allows us to find a sequence of
    $m$-planes $S_{a,i} \in \grass nm$ such that
    $\|\project{S_{a,i}} - \project{T}\| \to 0$ as $i \to \infty$ and
    \begin{equation}
        \label{eq:ri-meas-zero}
        \HM^m(\project{S_{a,i}}\lIm \varphi_a\lIm K \cap \cball{a}{r_a} \rIm \rIm) = 0
        \quad \text{for all $i \in \nat$} \,.
    \end{equation}
    Using~\eqref{eq:balls3} we find $\tilde{r}_a \in \R$ such that
    $0 < \tilde{r}_a < r_a$ and
    \begin{gather}
        \label{eq:annulus-epsilon}
        \HM^m\bigl( K \cap \cball{a}{r_a} \without \cball a{\tilde{r}_a} \bigr) 
        < (2L_a)^{-m} \varepsilon \HM^m\bigl( K \cap \cball{a}{r_a} \bigr) \,.
    \end{gather}
    Set $\Delta = \Gamma_{\ref{lem:unrect-local}}(L_a, r_a, \tilde{r}_a)$ and
    \begin{displaymath}
        \omega_a(s) = \sup \bigl\{ \|\uD\varphi_a(x) - \uD\varphi_a(y)\| : x,y \in \cball{a}{r_a} ,\, |x-y| \le s \bigr\} 
        \quad \text{for $s \in \R$, $s \ge 0$} \,.
    \end{displaymath}
    Choose $i_a \in \nat$ so big that
    \begin{gather}
        \Delta \|\project{S_{a,i_a}} - \project{T}\| < 1 \,,
        \\
        \label{eq:reduce:Drho-est}
        2L_a \omega_a(L_a r_a \Delta \|\project{S_{a,i_a}} - \project T\|) 
        + L_a^2 \Delta \|\project{S_{a,i_a}} - \project T\| < \min\{ 1, \varepsilon \} \,.
    \end{gather}
    Employ~\ref{lem:unrect-local} with~$S_{a,i_a}$, $\varphi_a$, $r_a$,
    $\tilde{r}_a$ in place of~$S$, $\varphi$, $r$, $\tilde r$ to obtain
    a~diffeomorphism $\rho = \rho_{a} \in \cnt^k(\R^n,\R^n)$ and
    a~projection $p = p_{a} \in \orthproj nm$ satisfying~\eqref{eq:un:rho-id},
    \eqref{eq:un:rho-proj}, \eqref{eq:un:rho-est}.

    To finish the construction, we set
    \begin{displaymath}
        \rho_{\varepsilon}(x) = 
        \left\{
            \begin{aligned}
                &\rho_a(x) && \text{if } x \in \cball{a}{r_a} \in \mathcal B \,,
                \\
                &x && \text{if } x \in \R^n \without \tbcup \mathcal B \,.
            \end{aligned}
        \right.
    \end{displaymath}
    Since $\mathcal B$ is disjointed and each $\rho_a$ is the identity outside
    the corresponding ball $\cball{a}{r_a} \in \mathcal B$, we see that
    $\rho_{\varepsilon}$ is a well defined diffeomorphism of class $\cnt^k$.
    Moreover, using~\eqref{eq:reduce:sard} and~\eqref{eq:K-cap-A-without-B},
    then \eqref{eq:un:rho-proj} and \eqref{eq:un:rho-id} together
    with~\eqref{eq:ri-meas-zero} and finally~\eqref{eq:un:rho-est} combined
    with~\eqref{eq:annulus-epsilon} we obtain
    \begin{multline}
        \HM^m\bigl(f \circ \rho_\varepsilon \lIm K \rIm\bigr)
        \le \HM^m\bigl(f \lIm K \without A \rIm\bigr) 
        + \HM^m\bigl(f \lIm (K \cap A) \without \tbcup \mathcal B \rIm\bigr)
        + \sum_{B \in \mathcal B} \HM^m\bigl(f \circ \rho_{\varepsilon}\lIm K \cap B \rIm\bigr)
        \\
        = \sum_{a \in I} \HM^m\bigl(f \circ \rho_{a}\lIm K \cap \cball{a}{r_a} \rIm\bigr)
        = \sum_{a \in I} \HM^m\bigl(f \circ \rho_{a}\lIm K \cap \cball{a}{r_a} \without \cball a{\tilde{r}_a} \rIm\bigr)
        \\
        \le \sum_{a \in I} (2 L_a)^m \HM^m\bigl(K \cap \cball{a}{r_a} \without \cball a{\tilde{r}_a}\bigr)
        \le \varepsilon \sum_{a \in I} \HM^m\bigl(K \cap \cball{a}{r_a}\bigr)
        \le \varepsilon \HM^m(K) \,.
    \end{multline}
    Recalling~\eqref{eq:un:rho-est} and~\eqref{eq:reduce:Drho-est} we see also
    \begin{displaymath}
        \sup\{ \|\uD\rho_{\varepsilon}(x) - \id{\R^n} \| \}
        = \sup\{ \|\uD\rho_{a}(x) - \id{\R^n}\| : a \in I ,\, x \in \cball{a}{r_a} \}
        \le \varepsilon
    \end{displaymath}
    and
    \begin{multline}
        \sup\{ \|\rho_{\varepsilon}(x) - x\| \}
        = \sup\{ \|\rho_{a}(x) - x\| : a \in I ,\, x \in \cball{a}{r_a} \}
        \\
        \le \sup\{ \Lip(\rho_a - \id{\R^n}) r_a : a \in I \}
        < \varepsilon \sup \{ r_a : a \in I \} \le \varepsilon \,.
        \qedhere
    \end{multline}
\end{proof}

\begin{remark}
    \label{rem:Sard}
    If $k \ge n - m + 1$, then the Morse--Sard
    theorem~\cite[3.4.3]{Fed69} implies that
    $\HM^m(f\lIm \{ x \in K : \dim \im \uD f(x) < m \} \rIm) = 0$ and
    assumption~\eqref{eq:reduce:sard} becomes redundant.
\end{remark}

\begin{corollary}
    \label{cor:perturb-est}
    Set $g = f \circ \rho_{\varepsilon}$ and
    \begin{equation}
        \label{eq:mod-cont-Df}
        \omega(r) = \sup \{ \| \uD f(x) - \uD f(y) \| : x,y \in U \,,\, |x-y| < r \} 
        \quad \text{for $r > 0$} \,.
    \end{equation}
    Then for $x \in \R^n$ we obtain
    \begin{multline}
        \| \uD g(x) - \uD f(x) \| 
        = \| (\uD f(\rho_{\varepsilon}(x)) - \uD f(x)) \circ \uD \rho_{\varepsilon}(x)
        + \uD f(x) \circ ( \uD \rho_{\varepsilon}(x) - \id{\R^n} ) \|
        \\
        \le 2 \omega(\varepsilon) + \|\uD f(x)\| \varepsilon \,.
    \end{multline}
    In particular, for $x \in \R^n$
    \begin{multline}
        \|\uD g(x)\|^m
        \le \bigl( \|\uD f(x)\| + \|\uD g(x) - \uD f(x)\| \bigr)^m
        \le \bigl( (1 + \varepsilon) \|\uD f(x)\| + 2 \omega(\varepsilon) \bigr)^m
        \\
        \le 2^{2m-1} \|\uD f(x)\|^m + 2^{2m-1} \omega(\varepsilon)^m  \,.
    \end{multline}
\end{corollary}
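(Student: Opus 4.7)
The plan is to derive both estimates directly from the chain rule together with the pointwise bounds $|\rho_\varepsilon(x) - x| \le \varepsilon$ and $\|\uD\rho_\varepsilon(x) - \id{\R^n}\| \le \varepsilon$ provided by \ref{lem:reduce-unrect}. Since $g = f \circ \rho_\varepsilon$, the chain rule gives $\uD g(x) = \uD f(\rho_\varepsilon(x)) \circ \uD\rho_\varepsilon(x)$, so nothing deeper than the triangle inequality and the elementary bound $(a+b)^m \le 2^{m-1}(a^m + b^m)$ (a consequence of the convexity of $t \mapsto t^m$) will be needed.

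For the first estimate I~would add and subtract $\uD f(x) \circ \uD\rho_\varepsilon(x)$ to decompose
\begin{displaymath}
    \uD g(x) - \uD f(x) = \bigl(\uD f(\rho_\varepsilon(x)) - \uD f(x)\bigr) \circ \uD\rho_\varepsilon(x) + \uD f(x) \circ \bigl(\uD\rho_\varepsilon(x) - \id{\R^n}\bigr) \,.
\end{displaymath}
The norm of the first summand is at most $\|\uD f(\rho_\varepsilon(x)) - \uD f(x)\| \cdot \|\uD\rho_\varepsilon(x)\|$; the first factor is bounded by $\omega(\varepsilon)$ by the definition of $\omega$ together with $|\rho_\varepsilon(x) - x| \le \varepsilon$, and $\|\uD\rho_\varepsilon(x)\| \le 1 + \varepsilon \le 2$ provided we restrict attention to $\varepsilon \le 1$, which is the only relevant regime. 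The second summand is bounded by $\varepsilon \|\uD f(x)\|$. Summing yields the announced inequality $\|\uD g(x) - \uD f(x)\| \le 2 \omega(\varepsilon) + \varepsilon \|\uD f(x)\|$.

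For the $m$-th power estimate I~would first apply the triangle inequality and the previous bound to obtain $\|\uD g(x)\| \le (1+\varepsilon)\|\uD f(x)\| + 2\omega(\varepsilon)$, then invoke $(a+b)^m \le 2^{m-1}(a^m + b^m)$ with $a = (1+\varepsilon)\|\uD f(x)\|$ and $b = 2\omega(\varepsilon)$, and finally absorb the factor $(1+\varepsilon)^m \le 2^m$ into the constant. This produces the common constant $2^{2m-1}$ on both summands. There is no real obstacle in the proof: the statement is a short bookkeeping consequence of \ref{lem:reduce-unrect}, whose purpose is presumably to control integrals of the form $\int \|\uD g\|^m \ud \HM^m$ against $\int \|\uD f\|^m \ud \HM^m$ when the perturbation lemma is later fed into the rectifiability argument for the main theorem.
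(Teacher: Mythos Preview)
Your proposal is correct and matches the paper's approach exactly: the corollary in the paper has no separate proof because the computation is entirely displayed within the statement, and your decomposition via adding and subtracting $\uD f(x)\circ\uD\rho_\varepsilon(x)$, followed by the convexity bound $(a+b)^m\le 2^{m-1}(a^m+b^m)$ and $(1+\varepsilon)^m\le 2^m$, reproduces precisely those lines.
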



\section{Smooth almost retraction of $\R^n$ onto a cube}
\label{sec:retract}

In this section we construct, in~\ref{lem:smooth-retract-on-Q},
a~$\cnt^{\infty}$ function which maps all of $\R^n$ onto the cube~$Q =
[-1,1]^n$. This mapping is \emph{not} a~retraction because it moves points
inside the cube~$Q$. Its main features are that it is smooth and it preserves
all the lower dimensional skeletons of~$Q$ and even the skeletons of the
neighbouring dyadic cubes of side length~$1$. As a corollary
of~\ref{lem:smooth-retract-on-Q} we produce, in~\ref{cor:retr+}, a~function
which maps a small neighbourhood of~$Q$ onto~$Q$ and is the identity outside a
bit larger neighbourhood of~$Q$. We also carefully track the Lipschitz constants
of the mappings.

First we need to introduce some notation to be able to conveniently handle
various faces of the cube~$[-1,1]^n$ and its dyadic neighbours.

\begin{miniremark}
    \label{mrem:cube-setup}
    Let $e_1$, \ldots, $e_n$ be the standard basis of $\R^n$. Set $Q = \{ x
    \in \R^n : x \bullet e_j \le 1 \text{ for } j = 1,2,\ldots,n \} =
    [-1,1]^n$. For $\kappa = (\kappa_1, \ldots, \kappa_n) \in \{-1,0,1\}^n$
    define
    \begin{gather*}
        C_{\kappa} = \left\{ x \in \R^n :
            \begin{array}{c}
                \text{ for } j = 1,\ldots,n 
                \\
                \text{ either }
                \kappa_j \ne 0 \text{ and } (x \bullet e_j) \kappa_j \ge 1
                \\
                \text{ or }
                \kappa_j = 0 \text{ and } |x \bullet e_j| < 1
           \end{array}
        \right\} \,,
        \\
        F_{\kappa} = C_{\kappa} \cap Q \,,
        \quad
        T_{\kappa} = \lin \bigl\{ e_j : \kappa_j = 0 \bigr\} \,,
        \quad
        c_{\kappa} = \sum_{j=1}^n \kappa_j e_j \,.
    \end{gather*}
    Observe that the sets $C_{\kappa}$ for $\kappa \in \{-1,0,1\}^n$ are convex,
    have nonempty interiors, are pairwise disjoint, and form a partition
    of~$\R^n$, i.e.,
    \begin{gather*}
        \bigcup \bigl\{ C_{\kappa} : \kappa \in \{-1,0,1\}^n \bigr\} = \R^n 
        \quad \text{and} \quad
        C_{\kappa} \cap C_{\lambda} = \varnothing \text{ for } \lambda \ne \kappa \,.
    \end{gather*}
    For $\kappa \in \{-1,0,1\}^n$ we have $\dim(T_{\kappa}) = \HM^0(\{ j : \kappa_j =
    0 \})$, and $F_{\kappa}$ is a~$\dim(T_{\kappa})$ dimensional face of~$Q$ lying
    in the affine space $c_{\kappa} + T_{\kappa}$, and $F_{\kappa}$ is relatively
    open in $c_{\kappa} + T_{\kappa}$, and $c_{\kappa}$ is the centre
    of~$F_{\kappa}$. In~particular, $C_{(0,0,\ldots,0)} = F_{(0,0,\ldots,0)} =
    \Int(Q)$.

    For $\lambda \in \{-2,-1,1,2\}^n$ define
    \begin{gather*}
        R_{\lambda} = \left\{ x \in \R^n :
            \begin{array}{c}
                \text{ for } j = 1,\ldots,n 
                \\
                \text{ either }
                |\lambda_j| = 1 \text{ and } 1 \le (x \bullet e_j) \lambda_j \le 2
                \\
                \text{ or }
                |\lambda_j| = 2 \text{ and } 0 \le (x \bullet e_j) \lambda_j \le 2
            \end{array}
        \right\} \,.
    \end{gather*}
    Note that $R_\lambda$ is isometric to~$[0,1]^n$ and if $\lambda \notin
    \{-2,2\}^n$, then $R_{\lambda}$ is one of the neighbouring cubes of~$Q$ with
    side length equal to half the side length of~$Q$.

    Set
    \begin{gather*}
        f_{\kappa}(x) = \eqproject{T_{\kappa}}x + c_{\kappa} 
        \quad
        \text{and} \quad
        f(x) = \sum_{\kappa \in \{-1,0,1\}^n} \CF_{C_{\kappa}}(x) f_{\kappa}(x) 
        \quad \text{for } x \in \R^n \,,
    \end{gather*}
    where $\CF_{C_{\kappa}}$ is the characteristic function of $C_{\kappa}$. 
\end{miniremark}

\begin{remark}
    \label{rem:f-lipschitz}
    Observe that $f$ is simply the nearest point projection from $\R^n$
    onto~$Q$. Since $Q$ is convex it has infinite reach
    and~\cite[4.8(4)(8)]{MR0110078} shows that $f$ is Lipschitz continuous.
    However, we shall need the above decomposition of~$f$ to be able to
    effectively smoothen the~singularities, see~\ref{lem:smooth-retract-on-Q}.
\end{remark}

In the next lemma we construct a smooth mapping from $\R^n$ onto~$[-1,1]^n$.
This is achieved by post-composing the nearest point projection~$f$ with
a~smooth function which has zero derivative exactly in the directions in which
the derivative of~$f$ is undefined.

\begin{lemma}
    \label{lem:smooth-retract-on-Q}
    Let $e_1$, \ldots, $e_n$, $f$, $C_{\kappa}$, $F_{\kappa}$, $Q$ be as
    in~\ref{mrem:cube-setup}. Assume $s : \R \to \R$ is a function of
    class~$\cnt^{\infty}$ such that $\uD^i s(-1) = 0$ and $\uD^i s(1) = 0$ for
    $i \in \nat$. Define $h(x) = \sum_{j=1}^n s(x \bullet e_j) e_j$ for $x \in
    \R^n$. Then
    \begin{enumerate}[label=(\alph*)]
    \item
        \label{i:srQ:smooth}
        $g = h \circ f$ is of class $\cnt^\infty$ with $\uD^i g = \uD^i h \circ
        f$ for $i \in \nat$.

    \item
        \label{i:srQ:preserve}
        If $s$ is monotone increasing and $s(t) = t$ for $t \in \{-2,-1,0,1,2\}$,
        then for each $\kappa \in \{-1,0,1\}^n$ and $\lambda \in
        \{-2,-1,1,2\}^n$ if $C_{\kappa}$, $F_{\kappa}$, $T_{\kappa}$,
        $c_{\kappa}$, $R_{\lambda}$ are as in~\ref{mrem:cube-setup}, then
        \begin{gather*}
            g\lIm C_{\kappa} \rIm = F_{\kappa} \,,
            \quad
            g|_{F_{\kappa}} : F_{\kappa} \to F_{\kappa} \quad \text{is a~homeomorphism} \,,
            \\
            g\lIm T_\kappa \rIm \subseteq T_\kappa \,,
            \quad
            g\lIm R_\lambda \rIm \subseteq R_\lambda \,,
            \quad
            g\lIm c_{\kappa} + T_{\kappa} \rIm \subseteq c_{\kappa} + T_{\kappa} \,.
        \end{gather*}

    \item 
        \label{i:srQ:estimates}
        Let $\varepsilon \in (0,1)$. Assume $s$ satisfies $0 \le s'(t) \le 1 +
        \varepsilon$ and $|s(t) - t| \le \varepsilon$ for $t \in \R$ and
        $s'(t) > 0$ for $t \in \R \without \{-1,1\}$. Then
        \begin{gather*}
            |g(x) - x| \le (1 + \sqrt n) \varepsilon
            \quad \text{for $x \in Q + \cball 0{\varepsilon}$} \,,
            \\
            \Lip(g) = \Lip(h|_{Q}) \le 1 + \varepsilon \,,
            \quad
            \Lip(g - \id{\R^n}) \le 1 \,,
            \\
            \Lip(g|_{K} - \id{K}) < 1
            \quad \text{for each compact set $K \subseteq \Int Q$} \,.
        \end{gather*}
    \end{enumerate}
\end{lemma}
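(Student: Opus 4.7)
The plan is to exploit a coordinatewise decomposition. Since $Q = [-1,1]^n$ is a product of intervals, the nearest point projection factors through the clamp maps $\pi_j : \R \to [-1,1]$, $\pi_j(t) = \max(-1,\min(1,t))$; that is, $f(x) \bullet e_j = \pi_j(x \bullet e_j)$ for every $j$. Consequently
\begin{equation*}
    g(x) = \sum_{j=1}^n (s \circ \pi_j)(x \bullet e_j)\, e_j \,,
\end{equation*}
which decouples completely into $n$ independent one-variable problems, and $\uD g(x)$ is diagonal with entries $(s\circ \pi_j)'(x \bullet e_j)$. This observation dissolves essentially all the difficulty.

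For part~\ref{i:srQ:smooth} I would note that each $s \circ \pi_j$ is constant on $\R \without [-1,1]$ and equals~$s$ on $[-1,1]$, so the matching at $\pm 1$ is $\cnt^\infty$ precisely because $\uD^i s(\pm 1) = 0$ for all $i \in \nat$. Thus $g \in \cnt^\infty(\R^n,\R^n)$ and a direct comparison of the diagonal forms of $\uD^i g(x)$ and $\uD^i h(f(x))$ yields $\uD^i g = \uD^i h \circ f$.

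For part~\ref{i:srQ:preserve} I would first simplify: on $C_\kappa$ we have $f(x)\bullet e_j = \kappa_j$ for $\kappa_j \ne 0$ and $f(x)\bullet e_j = x\bullet e_j$ for $\kappa_j = 0$, and $s(\kappa_j) = \kappa_j$ since $\kappa_j \in \{-1,0,1\}$, so $g(x) = c_\kappa + \sum_{\kappa_j = 0} s(x\bullet e_j)\, e_j$. Monotonicity of $s$ with $s(\pm 1) = \pm 1$ makes $s$ a self-homeomorphism of $[-1,1]$ carrying $(-1,1)$ onto $(-1,1)$, whence $g\lIm C_\kappa \rIm = F_\kappa$ and $g|_{F_\kappa}$ is a homeomorphism; the inclusions $g\lIm T_\kappa \rIm \subseteq T_\kappa$ and $g\lIm c_\kappa + T_\kappa \rIm \subseteq c_\kappa + T_\kappa$ follow from $s(0)=0$ and $s(\kappa_j)=\kappa_j$. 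For $R_\lambda$ I would argue coordinate by coordinate: in coordinates with $|\lambda_j|=1$ the value $x\bullet e_j$ lies in $[\lambda_j, 2\lambda_j]$, so $\pi_j$ collapses it to $\lambda_j$ and $s(\lambda_j) = \lambda_j$; in coordinates with $|\lambda_j| = 2$ we have $x \bullet e_j \in [-1,1]$ on the correct side of $0$, so $\pi_j$ is the identity and monotonicity of $s$ combined with $s(0) = 0$, $s(\pm 1) = \pm 1$ preserves the required interval.

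For part~\ref{i:srQ:estimates} I would use that $\Lip(f) = 1$ as in~\ref{rem:f-lipschitz}, and that $\uD g$ is diagonal with entries in $[0, 1+\varepsilon]$. Then $\Lip(g) \le \Lip(h|_Q)\Lip(f) \le 1+\varepsilon$ with equality because $g = h|_Q$ on $Q$; the diagonal entries of $\uD g(x) - \id{\R^n}$ lie in $[-1,\varepsilon]$, giving $\Lip(g - \id{\R^n}) \le \max(1,\varepsilon) = 1$; the pointwise bound splits as $|g(x)-x| \le |h(f(x)) - f(x)| + |f(x) - x| \le \sqrt n\, \varepsilon + \varepsilon$ using $|s(t)-t| \le \varepsilon$ componentwise and $\dist(x,Q) \le \varepsilon$; and on compact $K \subseteq \Int Q$ the projection of $K$ to each axis lies in a compact subset of $(-1,1)$ on which $s' \ge c$ for some $c > 0$, sharpening the previous bound to $\max(1-c,\varepsilon) < 1$. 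The only conceptually delicate point is the smoothness in~\ref{i:srQ:smooth}, since $f$ itself is merely Lipschitz, but the coordinatewise decoupling circumvents this without the need for an ad~hoc gluing argument.
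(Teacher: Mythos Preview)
Your proof is correct, and for part~\ref{i:srQ:smooth} it is genuinely more economical than the paper's. The paper does not use the coordinatewise factorisation $g(x)\bullet e_j = (s\circ\pi_j)(x\bullet e_j)$; instead it fixes $x\in C_\lambda$, splits a small increment $u$ into its $T_\lambda$ and $T_\lambda^\perp$ components, identifies the region $C_\kappa$ containing $x+\perpproject{T_\lambda}u$, verifies $T_\lambda\subseteq T_\kappa$, and then checks by hand that $g(x+u)-g(x)-\uD h(f(x))u = o(|u|)$ using the key relation $\uD h(f(x))u = \uD h(f(x))\project{T_\lambda}u$ (which holds because $s'(\pm 1)=0$). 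This yields $\uD g(x)=\uD h(f(x))$ at each point, and then induction on the order of differentiation. Your reduction to the one-variable smoothness of $s\circ\pi_j$ at $\pm 1$ bypasses all of this bookkeeping and makes the role of the hypothesis $\uD^i s(\pm 1)=0$ transparent. What the paper's argument buys is that it never explicitly invokes the product structure of $Q$, working instead from the region decomposition $\R^n=\bigcup_\kappa C_\kappa$ set up in~\ref{mrem:cube-setup}; this is closer in spirit to how the subsequent lemmas (\ref{lem:smooth-centr-proj} in particular) are organised, but for the present statement your route is cleaner. For parts~\ref{i:srQ:preserve} and~\ref{i:srQ:estimates} the two arguments are essentially identical.
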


\begin{proof}
    Since $s'(1) = 0 = s'(-1)$ we have
    \begin{gather}
        \label{eq:Dh-in-Cl}
        \uD h(y)u 
        = \sum_{j : \lambda_j = 0} s'(y \bullet e_j) (u \bullet e_j) e_j = \uD h(y)\bigl(\eqproject{T_{\lambda}}u\bigr)
        \in T_{\lambda}  \quad \text{for $y \in C_{\lambda}$} \,.
    \end{gather}

    First, assume $x \in C_{\lambda}$ and $\dim(T_{\lambda}) > 0$. Define $p =
    \eqproject{T_{\lambda}}$ and $q = \eqproject{T_{\lambda}^{\perp}}$. Observe
    that for $j = 1,2,\ldots,n$
    \begin{gather}
        \label{eq:lambda-def}
        \begin{aligned}
            \lambda_j = \sgn(x \bullet e_j) \quad &\text{if and only if} \quad |x \bullet e_j| \ge 1
            \,,
            \\
            \text{and} \quad
            \lambda_j = 0 \quad &\text{if and only if} \quad |x \bullet e_j| < 1
            \,.
        \end{aligned}
    \end{gather}
    Since $\dim(T_{\lambda}) > 0$ there exists $j \in \{1,2,\ldots,n\}$ such
    that $\lambda_j = 0$. Let $u \in \R^n$ be such that
    \begin{gather}
        \label{eq:u-small}
        0 < |u| < \min\bigl\{ 1 - |x \bullet e_j| : \lambda_j = 0 \bigr\} < 1 \,.
    \end{gather}
    Let $\kappa \in \{-1,0,1\}^n$ be defined by
    \begin{gather}
        \label{eq:kappa-def}
        \begin{aligned}
            \kappa_j = \sgn((x+qu) \bullet e_j) \quad &\text{if and only if} \quad |(x+qu) \bullet e_j| \ge 1
            \,,
            \\
            \text{and} \quad
            \kappa_j = 0 \quad &\text{if and only if} \quad |(x+qu) \bullet e_j| < 1
            \,.
        \end{aligned}
    \end{gather}
    Then $x + tqu \in C_{\kappa}$ for all $t \in (0,1)$ because $C_{\kappa}$ is
    convex; hence, $x \in \Clos(C_{\kappa})$. Moreover,
    using~\eqref{eq:lambda-def}, \eqref{eq:u-small}, \eqref{eq:kappa-def},
    we~see that for $j = 1,2,\ldots,n$
    \begin{gather*}
        \lambda_j = 0
        \quad \text{implies} \quad
        |(x+qu) \bullet e_j| = |x \bullet e_j| < 1
        \quad \text{implies} \quad
        \kappa_j = 0 \,,
        \\
        \lambda_j \ne 0
        \quad \text{implies} \quad
        ((x+qu) \bullet e_j) \lambda_j = (x \bullet e_j) \lambda_j + (u \bullet e_j) \lambda_j > 0 \,;
        \\
        \quad \text{hence, either} \quad
        \kappa_j = \lambda_j \ne 0 
        \quad \text{or} \quad
        \kappa_j = 0 \text{ and } \lambda_j \ne 0
        \,.
    \end{gather*}
    Therefore, $T_{\lambda} \subseteq T_{\kappa}$. Recalling $x \in
    \Clos(C_{\kappa}) \cap C_{\lambda}$, we see that if $j \in \{1,2,\ldots,n\}$
    and $\lambda_j \ne \kappa_j$, then $(x \bullet e_j) = \lambda_j$. Thus,
    \begin{gather}
        \label{eq:Tk-Tl}
        \eqproject{T_{\kappa}}x - \eqproject{T_{\lambda}}x 
        = \sum_{j : \kappa_j = 0} (x \bullet e_j) e_j - \sum_{j : \lambda_j = 0} (x \bullet e_j) e_j 
        = \sum_{j : \kappa_j = 0, \lambda_j \ne 0} \lambda_j e_j 
        = c_{\lambda} - c_{\kappa} \,.
    \end{gather}
    Using~\eqref{eq:Tk-Tl} we derive
    \begin{gather*}
        \label{eq:f-perp-inc}
        f(x + qu) - f(x) = f_{\kappa}(x + qu) - f_{\lambda}(x)
        = \eqproject{T_{\kappa}}(qu)
        \in T_{\kappa} \cap T_{\lambda}^{\perp} \,.
    \end{gather*}
    Moreover, since $pu \in T_{\kappa}$ and $x+qu \in C_{\kappa}$ and,
    by~\eqref{eq:u-small}, $x+qu+pu \in C_{\kappa}$ we obtain
    \begin{gather*}
        f(x + qu + pu) - f(x + qu) 
        = \eqproject{T_{\kappa}}(x + qu + pu) - \eqproject{T_{\kappa}}(x + qu)
        = pu \in T_{\lambda} \,.
    \end{gather*}
    Thus,
    \begin{gather*}
        \xi = f(x+u) - f(x)
        = f(x+qu+pu)-f(x+qu) + f(x+qu)-f(x)
        = pu + \eqproject{T_{\kappa}}(qu)
    \end{gather*}
    and, recalling~\eqref{eq:Dh-in-Cl},
    \begin{multline*}
        |g(x+u) - g(x) - \uD h(f(x))u|
        = |h(f(x) + \xi) - h(f(x)) - \uD h(f(x))(pu)|
        \\
        = |h(f(x) + \xi) - h(f(x)) - \uD h(f(x))(\xi)|
    \end{multline*}
    Since $|\xi| \le \Lip(f)|u|$ and $h$ is of class~$\cnt^1$ we obtain
    \begin{gather*}
        \lim_{u \to 0} |u|^{-1} |g(x+u) - g(x) - \uD h(f(x))u| = 0 \,.
    \end{gather*}
    This shows that $\uD g(x) = \uD h(f(x))$ in case $\dim(T_{\lambda}) > 0$.

    Now we shall deal with the case when $x \in C_{\lambda}$ and
    $\dim(T_{\lambda}) = 0$. This means that $f(x) \in F_{\lambda}$ is one of
    the vertexes of~$Q$. Since $h$ is of class~$\cnt^1$ and $\Lip(f) < \infty$
    (see~\ref{rem:f-lipschitz}) and $\uD h(f(x)) = 0$ by~\eqref{eq:Dh-in-Cl}, we
    get
    \begin{gather*}
        |g(x+u) - g(x)| = |h(f(x+u)) - h(f(x)) - \uD h(f(x))(f(x+u) - f(x))| \,.
    \end{gather*}
    Hence, in this case we also get $\uD g(x) = \uD h(f(x))$

    Now we know that $\uD g(x) = \uD h(f(x))$ for all $x \in \R^n$ and since $f$ is
    continuous we see that $g$ is of class~$\cnt^1$. Repeating the whole
    argument with $g = h \circ f$ replaced by $\uD g = \uD h \circ f$ and proceeding
    by induction we see that $g$ is of class~$\cnt^\infty$ so~\ref{i:srQ:smooth}
    is proven.

    Item~\ref{i:srQ:preserve} readily follows from the definition of~$g$.

    Consider now $\varepsilon$ and $s$ as in~\ref{i:srQ:estimates}. For $x \in Q +
    \cball 0{\varepsilon}$ we have
    \begin{multline*}
        |g(x) - x| \le |f(x) - x| + |h(f(x)) - f(x)|
        \\
        \le \varepsilon + \Bigl( \sum_{i=1}^n \bigl( s(f(x) \bullet e_i) - f(x) \bullet e_i \bigr)^2 \Bigr)^{1/2}
        \le (1 + \sqrt n)\varepsilon  \,.
    \end{multline*}
    For $y \in \R^n$ and $u \in \R^n$ with $|u|=1$
    \begin{gather*}
        |\uD h(y)u|^2 = \sum_{i=1}^n s'(y \bullet e_i)^2 (u \bullet e_i)^2 \le 1 + \varepsilon \,;
        \quad \text{hence,} \quad
        \Lip(g) = \Lip(h|_Q) \le 1 + \varepsilon \,.
    \end{gather*}
    For any $y \in Q$ and $u \in \R^n$, recalling $-1 \le s'(t) - 1 \le
    \varepsilon < 1$ for $t \in \R$, we have
    \begin{gather*}
        |\uD h(y)u - u|^2 
        = \sum_{i=1}^n \bigl(s'(y \bullet e_i) - 1\bigr)^2 (u \bullet e_i)^2
        \le 1  \,;
        \quad \text{hence,} \quad
        \Lip(g - \id{\R^n}) \le 1 \,.
    \end{gather*}
    For $K \subseteq \Int Q$ compact and $y \in K$ we have $-1 < s'(y \bullet
    e_i) - 1 < \varepsilon < 1$ so $|\uD h(y)u - u|^2 < 1$ whenever $u \in \R^n$
    satisfies $|u|=1$. Consequently, $\Lip(g|_K - \id{K}) < 1$.
\end{proof}

Next, using~\ref{lem:smooth-retract-on-Q}, we construct another function which
maps some neighbourhood of~$Q=[-1,1]^n$ onto~$Q$ and is the identity a bit
further away from~$Q$. To this end we put~$Q$ inside a~convex open set~$V$ with
smooth boundary and use the distance from~$V$, which is smooth away from the
boundary of~$V$, to interpolate between the mapping constructed
in~\ref{lem:smooth-retract-on-Q} and the identity.

\begin{corollary}
    \label{cor:retr+}
    Let $n \in \nat$. For each $\varepsilon \in (0,1)$ there exists a map $l :
    \R^n \to \R^n$ of class~$\cnt^\infty$ such that if $\Gamma = 16 \sqrt
    n$, then
    \begin{enumerate}[label=(\alph*)]
    \item
        \label{i:rr:id}
        $l(x) = x$ for $x \in \R^n$ satisfying $\dist(x,Q) > \varepsilon$.
    \item 
        \label{i:rr:preserve}
        For each $\kappa \in \{-1,0,1\}^n$ and $\lambda \in \{-2,-1,1,2\}^n$ if
        $C_{\kappa}$, $F_{\kappa}$, $T_{\kappa}$, $c_{\kappa}$, $R_{\lambda}$
        are as in~\ref{mrem:cube-setup}, then
        \begin{gather*}
            l\lIm T_{\kappa} \rIm \subseteq T_{\kappa} \,,
            \quad
            l\lIm F_{\kappa} \rIm \subseteq F_{\kappa} \,,
            \quad
            l\lIm C_{\kappa} \rIm \subseteq C_{\kappa} \,,
            \quad 
            l\lIm c_{\kappa} + T_{\kappa} \rIm \subseteq c_{\kappa} + T_{\kappa} \,,
            \\ 
            l\lIm R_{\lambda} \rIm \subseteq R_{\lambda} \,,
            \quad 
            l\lIm \{ x \in C_{\kappa} : \dist(x, Q) \le \varepsilon/\Gamma \} \rIm \subseteq F_{\kappa} \,.
        \end{gather*}
    \item 
        \label{i:rr:diffeo}
        $l|_{\Int Q} : \Int Q \to \Int Q$ is a diffeomorphism such that for
        each compact set $K \subseteq \Int Q$ we have $\Lip(l|_K - \id{K}) < 1$.
    \item 
        \label{i:rr:lip-on-Q}
        $\Lip(l|_Q - \id{Q}) \le 1$ and $\Lip(l|_Q) \le 1 + \varepsilon$.
    \item
        \label{i:rr:lip-all}
        $\Lip(l) < \Gamma$.
    \item
        \label{i:rr:small}
        $|l(x) - x| \le \varepsilon$ for $x \in \R^n$.
    \item
        \label{i:rr:dist-Q}
        $\dist(l(x), Q) \le \dist(x,Q)$ for $x \in \R^n$.
    \end{enumerate}
\end{corollary}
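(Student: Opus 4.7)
My plan is to apply~\ref{lem:smooth-retract-on-Q} to obtain a~smooth $g \colon \R^n \to Q$ with the requisite preservation properties, and then to interpolate linearly between~$g$ (near~$Q$) and~$\id{\R^n}$ (far from~$Q$) via a~smooth cutoff~$\phi$ built from the distance to a~convex neighbourhood $V \supseteq Q$ having $\cnt^\infty$ boundary. The final map will be $l(x) = x + \phi(x)(g(x) - x)$; then items of~\ref{i:rr:preserve} will be inherited from~$g$ by convexity of the sets $T_\kappa$, $F_\kappa$, $C_\kappa$, $c_\kappa + T_\kappa$, $R_\lambda$, while the remaining estimates will come from a~direct derivative computation.

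First, I~fix $\varepsilon' \in \bigl(0, \varepsilon/(4\sqrt n)\bigr)$ and pick~$s$ as in \ref{lem:smooth-retract-on-Q}\ref{i:srQ:estimates}, producing the $\cnt^\infty$ map $g = h \circ f$ of that lemma; it satisfies $\Lip(g|_Q) \le 1 + \varepsilon$, $\Lip(g - \id{\R^n}) \le 1$, the strict contraction estimate $\Lip(g|_K - \id{K}) < 1$ for compact $K \subseteq \Int Q$, and $|h(y) - y| \le \sqrt n\, \varepsilon'$ for $y \in Q$. Second, I~build a~convex open $V$ with $\cnt^\infty$ boundary squeezed into $Q + \cball 0{\varepsilon/\Gamma} \subseteq V \subseteq Q + \cball 0{\varepsilon/4}$ -- for example as a~sublevel set $\bigl\{x : \textstyle\sum_{i=1}^n (x \bullet e_i)^{2k} < t \bigr\}$ for sufficiently large $k \in \nat$ and $t$ close to~$1$. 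Then $\dist(\cdot, V)$ is $\cnt^\infty$ on $\R^n \without \Clos V$, so composing with any fixed $\cnt^\infty$ cutoff $\psi \colon \R \to [0, 1]$ having $\psi = 1$ on $(-\infty, 0]$, $\psi = 0$ on $[\varepsilon/4, \infty)$, and $|\psi'| \le 8/\varepsilon$, yields a~globally $\cnt^\infty$ function $\phi = \psi \circ \dist(\cdot, V)$ that is identically~$1$ on a~neighbourhood of~$\Clos V$ and vanishes outside $\{\dist(\cdot, V) < \varepsilon/4\} \subseteq Q + \cball 0{\varepsilon/2}$.

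With this setup the verification proceeds as follows. Property~\ref{i:rr:id} holds since $\dist(x, Q) > \varepsilon$ forces $\dist(x, V) > 3\varepsilon/4$, hence $\phi(x) = 0$. For~\ref{i:rr:preserve} note that $l(x)$ is always a~convex combination of $x$ and $g(x) \in F_\kappa$, and each listed set is convex (or an~affine subspace) invariant under~$g$ by \ref{lem:smooth-retract-on-Q}\ref{i:srQ:preserve}; for the last inclusion, $\dist(x, Q) \le \varepsilon/\Gamma$ forces $x \in V$, so $\phi(x) = 1$ and $l(x) = g(x) \in F_\kappa$. On $\Int Q$ one has $\phi = 1$ and $l = g$, so \ref{lem:smooth-retract-on-Q}\ref{i:srQ:preserve} combined with the inverse function theorem applied via the strict contraction estimate delivers~\ref{i:rr:diffeo}, and~\ref{i:rr:lip-on-Q} is immediate. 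For~\ref{i:rr:lip-all} I~differentiate, obtaining $\uD l = \id{\R^n} + (g - \id{\R^n}) \otimes \nabla\phi + \phi\, \uD(g - \id{\R^n})$; on the support of~$\phi$ one has $|g(x) - x| \le \sqrt n\, \varepsilon' + \dist(x, Q) \le 3\varepsilon/4$, giving $\|\uD l(x)\| \le 1 + (3\varepsilon/4)(8/\varepsilon) + 1 = 8 < \Gamma$. The same bound on $|g - \id{\R^n}|$ yields~\ref{i:rr:small}, and~\ref{i:rr:dist-Q} follows from convexity of $\dist(\cdot, Q)$ together with $\dist(g(x), Q) = 0$.

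The main technical obstacle is producing the convex set~$V$ with a~genuinely $\cnt^\infty$ boundary squeezed inside the narrow prescribed annulus: because~$Q$ has corners, the standard Minkowski-sum neighbourhoods $Q + \cball 0\delta$ have only $\cnt^{1,1}$ boundary, which would leave $\dist(\cdot, V)$ only~$\cnt^1$ off~$\Clos V$ and spoil the smoothness of~$\phi$. The polynomial sublevel-set route above gives the needed smoothness; an~alternative is to mollify the characteristic function of a~slightly smaller set and take an~appropriate superlevel set of the mollification.
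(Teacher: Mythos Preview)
Your proof is correct and follows essentially the same approach as the paper: construct $g$ from~\ref{lem:smooth-retract-on-Q}, choose a convex $V$ with smooth boundary in a thin annulus around~$Q$, and interpolate between $g$ and $\id{\R^n}$ via a smooth cutoff built from $\dist(\cdot,V)$. The paper writes the interpolation as $l(x)=g(x)+(x-g(x))\alpha(2\dist(x,V)/\iota)$, which is your formula with $\phi=1-\alpha(\cdots)$; the parameter choices differ slightly (yielding your cleaner bound $\Lip(l)\le 8$ versus the paper's $11\sqrt n$), but the argument is the same.
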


\begin{proof}
    Let $n \in \nat$ and $\varepsilon \in (0,1)$. Set $\iota = \varepsilon/(2(1
    + \sqrt n))$. Let $\alpha : \R \to \R$ be map of class~$\cnt^\infty$
    such that
    \begin{gather*}
        \alpha(t) = 0 \quad \text{for $t \le 0$} \,,
        \quad
        \alpha(t) = 1 \quad \text{for $t \ge 1$} \,,
        \quad
        0 < \alpha'(t) \le 1 + \iota  \quad \text{for $t \in (0,1)$} \,.
    \end{gather*}
    Let $s : \R \to \R$ be a homeomorphism of class~$\cnt^\infty$ such that
    \begin{gather*}
        0 \le s'(t) \le 1 + \iota 
        \quad \text{and} \quad
        |s(t) - t| \le \iota
        \quad \text{for $t \in \R$} \,,
        \quad 
        s(t) = t \quad \text{for $t \in \{-2,-1,0,1,2\}$} \,,
        \\
        s'(t) > 0  \quad \text{for $t \in \R \without \{-1,1\}$} \,,
        \quad
        \uD^js(-1) = 0 = \uD^js(1)
        \quad \text{for each $j \in \nat$} \,.
    \end{gather*}
    Choose an open convex set $V \subseteq \R^n$ such that $Q + \cball
    0{\iota/4} \subseteq V \subseteq Q + \cball 0{\iota/2}$ and $\Bdry V$ is a
    submanifold of $\R^n$ of class~$\cnt^{\infty}$.  Define $g$ as
    in~\ref{lem:smooth-retract-on-Q} using~$s$ and set
    \begin{gather*}
        \delta(x) = \dist(x,V) \,, \quad
        l(x) = g(x) + (x - g(x))\alpha(2\delta(x)/\iota) \,.
    \end{gather*}
    Since $V$ is convex and $\Bdry V$ is of class~$\cnt^\infty$, we see that $l$
    is of class~$\cnt^\infty$. Clearly $l(x) = x$ whenever $\dist(x,Q) \ge
    \varepsilon \ge \iota$ which establishes~\ref{i:rr:id}.

    \emph{Proof of~\ref{i:rr:preserve}.} Since $T_\kappa$, $F_\kappa$,
    $C_\kappa$, $c_\kappa + T_\kappa$, $R_\lambda$ are convex the inclusions
    $l\lIm T_{\kappa} \rIm \subseteq T_{\kappa}$, $l\lIm F_{\kappa} \rIm
    \subseteq F_{\kappa}$, $l\lIm C_{\kappa} \rIm \subseteq C_{\kappa}$, $l\lIm
    c_{\kappa} + T_{\kappa} \rIm \subseteq c_{\kappa} + T_{\kappa}$, $l\lIm
    R_{\lambda} \rIm \subseteq R_{\lambda}$ readily follow
    from~\ref{lem:smooth-retract-on-Q}\ref{i:srQ:preserve}. For $x \in V$ we
    have $l(x) = g(x)$ so, noting $\iota/4 \ge \varepsilon/(16 \sqrt n)$, we see
    that $l(x) = g(x) \in F_{\kappa}$ whenever $x \in C_{\kappa}$ and $\dist(x,
    Q) \le \varepsilon/\Gamma$.

    Employing~\cite[4.8(3)]{MR0110078}, we see that $\Lip(\delta) = 1$; hence,
    we obtain for $x \in Q + \cball 0\varepsilon$
    \begin{gather*}
        \| \uD l(x) \| = \|\uD g(x)\| + \|\uD g(x) - \id{\R^n}\| 
        + |x - g(x)| (1+\iota) 2/\iota 
    \end{gather*}
    Items~\ref{i:rr:diffeo} and~\ref{i:rr:lip-on-Q} follow immediately
    from~\ref{lem:smooth-retract-on-Q}\ref{i:srQ:estimates} noting that $l(x) =
    g(x)$ for $x \in Q$.
    Recalling~\ref{lem:smooth-retract-on-Q}\ref{i:srQ:estimates} we have
    \begin{gather*}
        \Lip(l) \le 1 + \iota + 1 + 2 (1 + \sqrt n) \iota (1 + \iota) / \iota
        \le 11 \sqrt n
    \end{gather*}
    so~\ref{i:rr:lip-all} holds. For $x \in \R^n$ with $\dist(x,Q) \le \varepsilon$
    we have
    \begin{gather*}
        |l(x) - x| \le |g(x) - x| + |x - g(x)| \le 2(1 + \sqrt n) \iota = \varepsilon \,,
    \end{gather*}
    which proves~\ref{i:rr:small}. To verify~\ref{i:rr:dist-Q} note that $l(x)
    \in \conv\{x,g(x)\}$ and $g(x) \in Q$ for $x \in \R^n$.
\end{proof}


\section{Central projections}
\label{sec:c-proj}

Here we study analytic properties of the central projection from the origin onto
the boundary of a~bounded convex set $V$ containing~$0$.
In~\ref{cor:centr-proj} we derive formulas and estimates for the derivative of
such projection in terms of the position of the origin with respect to the
boundary~$\Bdry V$ and the shape of $\Bdry V$. Then in~\ref{cor:proj+} we
interpolate between a central projection and the identity to get a map which
acts as the central projection inside $V$ and is the identity outside given
neighbourhood of~$V$.

We start by deriving a formula for the derivative of the central projection onto
the boundary of a half-space.

\begin{remark}
    \label{rem:proj-plane}
    Let $\nu, y \in \R^n$ be such that $|\nu| = 1$ and $\nu \bullet y > 0$.
    Define
    \begin{gather*}
        U = \{ z \in \R^n : \nu \bullet z > 0 \} \,,
        \quad
        s : U \to \R \,, \quad
        s(z) = \frac{\nu \bullet y}{\nu \bullet z}
        \quad \text{for $z \in U$} \,,
        \\
        \pi : U \to \R^n \,, \quad
        \pi(z) = s(z) z 
        \quad \text{for $z \in U$} \,.
    \end{gather*}
    Then $s$ and $\pi$ are maps of class $\cnt^{\infty}$ and $\pi$ is the
    central projection from the origin onto the plane $y + T$. A~straightforward
    computation shows also that for $z \in U$, $u \in \R^n$
    \begin{gather*}
        \uD s(z)u = -\frac{(\nu \bullet y) (\nu \bullet u)}{(\nu \bullet z)^2} \,,
        \quad
        \| \uD\pi(z) \| \le \frac{1}{|z|} \biggl(
            \frac{\nu \bullet y}{\nu \bullet \frac{z}{|z|}}
            + \frac{\nu \bullet y}{\bigl(\nu \bullet \frac{z}{|z|}\bigr)^2}
        \biggr) \,.
    \end{gather*}
\end{remark}

\begin{definition}
    \label{def:centr-proj}
    Let $V \subseteq \R^n$ be an open bounded convex set such that $0 \in V$.
    We say that a pair of maps $p : \R^n \without \{0\} \to \R^n$ and $t : \R^n
    \without \{0\} \to (0,\infty)$ defines the \emph{central projection onto
      $\Bdry V$} if
    \begin{displaymath}
        p(x) = t(x)x 
        \quad \text{and} \quad
        t(x) = \sup \bigl\{ t \in (0,\infty) : tx \in V \bigr\} 
        \quad \text{for $x \in \R^n \without \{0\}$} \,.
    \end{displaymath}
\end{definition}

In the next lemma we prove that the derivative at some point~$x$ of the central
projection onto the boundary of a~convex set~$V$ depends only on the affine
tangent plane of~$\Bdry V$ at~$x$ (assuming it exists) and, actually, coincides
with the derivative of the central projection onto that tangent plane.

\begin{lemma}
    \label{lem:convex-plane-comp}
    Let $V \subseteq \R^n$ be an open bounded convex set with $0 \in V$.
    Assume $y \in \Bdry V$, and $\Tan(\Bdry V, y) \in \grass{n}{n-1}$, and $\nu
    \in \Tan(\Bdry V, y)^{\perp}$ is the outward pointing unit normal to~$\Bdry
    V$ at~$y$; in particular $|\nu| = 1$ and $\nu \bullet y > 0$. Suppose $U$,
    $s$, $\pi$ are defined as in~\ref{rem:proj-plane} and $p$, $t$ define the
    central projection onto~$\Bdry V$.

    If $x \in \R^n \without \{0\}$ satisfies $p(x) = y$, then $p$ is differentiable
    at~$x$ and
    \begin{gather*}
        x \in U \,,
        \quad
        \pi(x) = p(x) = y \,,
        \quad
        \uD\pi(x) = \uD p(x) \,.
    \end{gather*}
 \end{lemma}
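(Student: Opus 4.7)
The plan is to first dispatch the easy assertions $x \in U$ and $\pi(x) = p(x) = y$, and then prove the differentiability claim by comparing $p$ with $\pi$ using a local graph parametrisation of $\Bdry V$ over the affine tangent plane $y + T$, where $T = \Tan(\Bdry V,y)$.

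For the easy part, since $0 \in V$ and $y \in \Bdry V$ with outward unit normal $\nu$, convexity of $V$ gives $\nu \bullet y > 0$. From $p(x) = t(x) x = y$ with $t(x) > 0$ we see that $x$ is a positive multiple of $y$, hence $\nu \bullet x > 0$, so $x \in U$. Furthermore $s(x) = (\nu \bullet y)/(\nu \bullet x) = t(x)$, whence $\pi(x) = s(x)x = t(x)x = y = p(x)$.

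For the main assertion I would exploit that, locally near $y$, the convex set $\Bdry V$ is the graph of a convex function over $y + T$. More precisely, there exist $r_0 > 0$ and a convex function $c : T \cap \oball{0}{r_0} \to [0,\infty)$ with $c(0) = 0$ such that
\begin{displaymath}
    \Bdry V \cap \oball{y}{r_1} = \{ y + u - c(u) \nu : u \in T \cap \oball{0}{r_0} \} \cap \oball{y}{r_1}
\end{displaymath}
for some $r_1 > 0$. The hypothesis $\Tan(\Bdry V,y) = T$ forces $c$ to be differentiable at $0$ with $\uD c(0) = 0$, i.e., $c(u) = o(|u|)$ as $u \to 0$. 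For $h$ small the ray from the origin through $x+h$ meets $\Bdry V$ in the graph portion, yielding $t(x+h)(x+h) = y + u(h) - c(u(h))\nu$ for a uniquely determined $u(h) \in T$. Taking the $\nu$-component gives $t(x+h) = s(x+h) - c(u(h))/(\nu \bullet (x+h))$, and therefore
\begin{displaymath}
    p(x+h) - \pi(x+h) = -\frac{c(u(h))}{\nu \bullet (x+h)}(x+h).
\end{displaymath}

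To conclude I would show $c(u(h)) = o(|h|)$. For this it suffices to prove that $p$ is locally Lipschitz at $x$, because then $|u(h)| \le |p(x+h) - y| = O(|h|)$ and the differentiability of $c$ at $0$ with vanishing derivative upgrades to $c(u(h)) = o(|u(h)|) = o(|h|)$. The local Lipschitz continuity of $p$ follows from the observation that $t = 1/\mu_V$ where $\mu_V$ is the Minkowski gauge of $V$, which is a convex positively homogeneous function on $\R^n$, finite and positive on $\R^n \without \{0\}$, hence locally Lipschitz there. Combining all this with the smoothness of $\pi$ from Remark \ref{rem:proj-plane} yields
\begin{displaymath}
    p(x+h) - p(x) = \pi(x+h) - \pi(x) + o(|h|) = \uD\pi(x)h + o(|h|),
\end{displaymath}
which gives both the differentiability of $p$ at $x$ and the identity $\uD p(x) = \uD\pi(x)$. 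The main obstacle will be setting up cleanly the local convex graph representation of $\Bdry V$ over $y + T$ and extracting from the Federer tangent hypothesis the sharp differentiability $c(u) = o(|u|)$; once that is in place the remaining estimates are routine.
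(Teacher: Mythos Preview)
Your proposal is correct and the overall architecture is the same as the paper's: establish $p(x)=\pi(x)=y$, obtain a local Lipschitz bound for $p$ near $x$, and then show $|p(x+h)-\pi(x+h)|=o(|h|)$ using the tangency hypothesis. The execution, however, differs in two interesting ways.

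First, for the Lipschitz bound the paper argues by hand: it proves continuity of $p$ at $x$ via a contradiction using convexity of~$V$, and then sets up auxiliary points $a,z,b=p(x+h)$ together with projection estimates between the planes $\lin\{x\}^{\perp}$, $T$, and $\lin\{x+h\}^{\perp}$ to extract $|p(x+h)-y|\le\Gamma|h|$. Your observation that $t=1/\mu_V$ with $\mu_V$ the Minkowski gauge of~$V$ gives local Lipschitz continuity of $p$ on $\R^n\setminus\{0\}$ in one line is a genuine simplification and bypasses that whole computation.

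Second, for the comparison $p-\pi$ the paper works with the decay quantity $\beta(r)=r^{-1}\sup\{|\perpproject{T}(z-y)|:z\in\Bdry V\cap\cball yr\}$, while you use a convex graph $c$ over $y+T$ and the identity $p(x+h)-\pi(x+h)=-\frac{c(u(h))}{\nu\bullet(x+h)}(x+h)$. These are equivalent formulations of the same estimate, since $c(u(h))=-\nu\bullet(p(x+h)-y)$ and $c(u)=o(|u|)$ is exactly $\beta(r)\to 0$. Note that your formula already follows from $p(x+h)-\pi(x+h)=(t(x+h)-s(x+h))(x+h)$ and taking $\nu$-components, so you do not strictly need the graph representation; the existence of the graph (which does require the tangency hypothesis, since at a genuine corner it fails) is the only place where some care is needed, and you correctly flag this.
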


\begin{proof}
    Fix $x \in \R^n \without \{0\}$ such that $p(x) = y$. Set
    \begin{gather*}
        \eta = \frac{x}{|x|} \,,
        \quad
        S = \lin\{\eta\}^{\perp} \,,
        \quad
        T = \lin\{\nu\}^{\perp} \,,
        \quad
        \theta = \| \project S - \project T \| = \bigl( 1 - (\eta \bullet \nu)^2 \bigr)^{1/2} < 1 \,.
    \end{gather*}
    Let $\delta \in \R$ satisfy $0 < \delta \le 2^{-10} (\eta \bullet
    \nu)^{1/2}$. For $h \in \R^n$ with $|h| \le \delta |x|$ define
    \begin{gather*}
        \gamma_h = \frac{x+h}{|x+h|} \,,
        \quad
        Z_h = \lin\{\gamma_h\} \,,
    \end{gather*}
    and note that
    \begin{gather*}
        \gamma_h \bullet \eta
        = \biggl( 1 + \frac{|\project{S} h|^2}{|\project{S^{\perp}}(x+h)|^2} \biggr)^{-1/2}
        \ge \biggl( 1 + \frac{\delta^2}{(1 - \delta)^2} \biggr)^{-1/2} > 0 \,,
        \\
        \gamma_h \bullet \nu
        \ge \eta \bullet \nu - |\gamma_h - \eta|
        = \eta \bullet \nu - 2\bigl( 1 - \gamma_h \bullet \eta \bigr)
        \ge \eta \bullet \nu - \frac{\delta^2}{(1-\delta)^2} 
        \ge \bigl(1 - 2^{-18}\bigr) \eta \bullet \nu  > 0 \,.
    \end{gather*}
    For $h \in \R^n$ with $|h| \le \delta |x|$ we have
    \begin{gather*}
        \| \project S - \eqproject{Z_h^{\perp}} \|
        = |\project S \gamma_h| 
        = \bigl( 1 - (\eta \bullet \gamma_h)^2 \bigr)^{1/2} < 1 \,,
        \\
        \| \project T - \eqproject{Z_h^{\perp}} \|
        = |\project T \gamma_h| 
        = \bigl( 1 - (\nu \bullet \gamma_h)^2 \bigr)^{1/2} < 1 \,;
    \end{gather*}
    hence, we can define
    \begin{gather*}
        \lambda = \sup\bigl\{ \| \project S - \eqproject{Z_h^{\perp}} \|
        : h \in \R^n \,,\, |h| \le \delta |x| \bigr\} < 1 \,,
        \\
        \varphi = \sup\bigl\{ \| \project T - \eqproject{Z_h^{\perp}} \|
        : h \in \R^n \,,\, |h| \le \delta |x| \bigr\} < 1 \,.
    \end{gather*}
    Next, set
    \begin{gather*}
        \beta(r) = \frac 1r 
        \sup\bigl\{ |\project{T^{\perp}}(z-y)|
        : z \in \Bdry V \cap \cball yr \bigr\} \,.
    \end{gather*}
    Since $T = \Tan(\Bdry V,y) \in \grass{n}{n-1}$ we know that $\beta(r) \to 0$
    as $r \downarrow 0$ and there exists $0 < r_0 < 1$ such that $\beta(r) \le
    \frac 12 (1 - \theta)$ for $0 < r < r_0$.

    Observe that $p$ is continuous at~$x$. If it were not, there would exist
    a~sequence $h_i \in \R^n$ such that $|h_i| \to 0$ as $i \to \infty$ but $y_i
    = p(x+h_i)$ would not converge to~$y = p(x)$. Then $y_i$ would be in the
    cone $\{ tw : t > 0 , |x-w| \le |h_i|\}$ so, since $V$ is bounded, one could
    choose a subsequence of~$y_i$ which would converge to some point $y_0 \in
    S^{\perp}$ and $y_0 \ne y$. Since $V$ is convex, this would imply that $\{
    ty + (1-t)y_0 : 0 \le t \le1 \} \subseteq \Bdry V$. This, in turn, would
    mean that $\eta \in T = \Tan(\Bdry V, y)$ which is impossible because
    $|\project{T^{\perp}} \eta| = \eta \bullet \nu > 0$.

    Knowing that $p$ is continuous we can find $\rho_0 > 0$ such that $|p(x+h) -
    p(x)| \le r_0$ whenever $|h| \le \rho_0$. Fix $h \in \R^n$ with $|h| \le
    \min\{ \delta |x| , \rho_0 \}$ and let $b = p(x+h)$. Set
    \begin{gather*}
        \Gamma = 2 \biggl(1 + \frac{\theta}{1 - \varphi}\biggr)
        \biggl(\frac{|y|}{|x|} + \frac{1}{2|x|}\biggr) \,.
    \end{gather*}
    We shall show that $|b-y| = |p(x+h) - p(x)| \le \Gamma |h|$.

    Let $a,z \in \R^n$ be such that
    \begin{gather*}
        \{z\} = (b + T) \cap S^{\perp} \,,
        \quad
        \{a\} = (z + S) \cap \{ t b : t > 0\} \,;
        \quad \text{then} \quad b \in z + T \,.
    \end{gather*}
    Since $\eqproject{Z^\perp_h} (b-a) = 0$ and $\project{S^\perp} (z-a) = 0$
    and $\project{T^{\perp}}(b-z) = 0$ we obtain
    \begin{multline*}
        |b-a| \le |\project{T}(b-a)| 
        + |\project{T^{\perp}}(b-a)|
        \\
        \le |(\project{T} - \eqproject{Z^{\perp}_h})(b-a)| 
        + |\project{T^{\perp}}(b-z)| 
        + |(\project{T^{\perp}} - \project{S^{\perp}})(z-a)|
        \le \varphi |b-a| + \theta |a-z| \,.
    \end{multline*}
    Thus
    \begin{gather*}
        |b-a| \le \frac{\theta}{1 - \varphi} |a-z|
        \quad \text{and} \quad
        |b-z| \le |b-a| + |a-z| \le \Bigl(1 + \frac{\theta}{1 - \varphi}\Bigr) |a-z| \,.
    \end{gather*}
    Directly from the definition of $a$ it follows that 
    \begin{gather*}
        |a-z| = \frac{|z|}{|x|} |\project{S} h| \le \frac{|z|}{|x|} |h| \,;
        \quad \text{hence,} \quad
        |b-z| \le \Bigl(1 + \frac{\theta}{1 - \varphi}\Bigr) \frac{|z|}{|x|} |h| \,.
    \end{gather*}
    Recall that $|h| \le \rho_0$ so $|b-y| \le r_0 < 1$ so $\beta(|b-y|) \le
    \frac 12 (1-\theta)$ and we can write
    \begin{gather*}
        |z-y| \le |\project{T^{\perp}} (z-y)| + |\project{T} \project{S^{\perp}} (z-y)|
        \le \beta(|b-y|)|b-y| + \theta |z-y| \,;
        \quad \text{so} \quad
        |z| \le |y| + \tfrac 12 \,.
    \end{gather*}
    In consequence
    \begin{gather*}
        |b-y| \le \biggl(1 + \frac{\theta}{1 - \varphi}\biggr)
        \biggl(\frac{|y|}{|x|} + \frac 1{2|x|}\biggr) |h|
        + \tfrac 12 |b-y| 
        \quad \text{so} \quad
        |b-y| \le \Gamma |h| \,.
    \end{gather*}

    Let $h \in \R^n$ be such that $|h| \le \min\{ \delta|x|, \rho_0\}$. Now we
    are ready to estimate $|p(x+h) - \pi(x+h)|$. Set $u = p(x+h) - \pi(x+h)$ and
    observe that
    \begin{gather*}
        |\project{T^{\perp}} (p(x+h) - y)| \le \beta(\Gamma |h|) \Gamma |h| \,,
        \quad
        u \in Z_h \,,
        \quad
        |\project{T} u| = |\project{T} \project{Z} u| \le \varphi |u| \,,
        \\
        |\project{T^\perp} u|
        \le |\project{T^\perp} (p(x+h) - y)| + |\project{T^\perp} (y - \pi(x+h))| 
        = |\project{T^{\perp}} (p(x+h) - y)| \le \beta(\Gamma |h|) \Gamma |h| \,,
        \\
        |u| \le |\project{T} u| + |\project{T^\perp} u| 
        \le \varphi |u| + \beta(\Gamma |h|) \Gamma |h| \,,
        \\
        |p(x+h) - \pi(x+h)| = |u| \le \frac{1}{1 - \varphi} \beta(\Gamma |h|) \Gamma |h| \,.
    \end{gather*}
    It is clear from the definitions that $p(x) = \pi(x) = y$. In consequence
    \begin{multline*}
        \lim_{h \to 0} \frac{| p(x+h) - p(x) - \uD\pi(x)h |}{|h|}
        \\
        \le \lim_{h \to 0} \frac{| \pi(x+h) - \pi(x) - \uD\pi(x)h |}{|h|}
        + \frac{| p(x+h) - \pi(x+h) |}{|h|} = 0 \,,
    \end{multline*}
    which shows that $p$ is differentiable at~$x$ and $\uD p(x) = \uD\pi(x)$.
\end{proof}

\begin{corollary}
    \label{cor:centr-proj}
    Suppose $V \subseteq \R^n$ is an open bounded convex set, and $0 \in V$,
    and $\Bdry V$ is an $(n-1)$~dimensional submanifold of $\R^n$ of
    class~$\cnt^\infty$, and $p$, $t$ define the central projection onto~$\Bdry
    V$, and $\nu(y)$ is the outward pointing unit normal to~$\Bdry V$ at~$y$ for
    $y \in \Bdry V$. Then $p$ and~$t$ are of class~$\cnt^\infty$ and
    \begin{gather*}
        \uD t(x)u 
        = - \frac{\bigl(\nu(p(x)) \bullet p(x)\bigr) \bigl(\nu(p(x)) \bullet u\bigr)}
        {(\nu(p(x)) \bullet x)^2} \,,
        \quad
        \|\uD p(x)\| 
        \le \frac{|p(x)|}{|x|}
        \biggl( 1 +  \frac{1}{\nu(p(x)) \bullet \frac{x}{|x|}} \biggr) \,.
    \end{gather*}
    for $x \in \R^n \without \{0\}$ and $u \in \R^n$. 
\end{corollary}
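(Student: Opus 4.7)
The plan is to combine the pointwise identification of the derivative provided by Lemma~\ref{lem:convex-plane-comp} with the explicit computations of Remark~\ref{rem:proj-plane}, after first establishing $\cnt^\infty$~regularity of $t$ and~$p$ from the smoothness of~$\Bdry V$.

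First I~would prove that $t$ and hence $p(x) = t(x) x$ are of class~$\cnt^\infty$ on $\R^n \without \{0\}$. Fix $x_0 \ne 0$ and let $\Phi$ be a~$\cnt^\infty$ defining function for~$\Bdry V$ on a~neighbourhood of $y_0 = p(x_0)$, so that $\Phi(y_0) = 0$ and $\grad \Phi(y_0) = \lambda \nu(y_0)$ for some $\lambda > 0$. Since $0 \in V$ is an interior point of a convex body, the ray $\R_+ x_0$ meets $\Bdry V$ transversally; equivalently, $\nu(y_0) \bullet x_0 > 0$. The implicit function theorem applied to the equation $\Phi(t x) = 0$ near $(t(x_0), x_0)$ therefore yields a~$\cnt^\infty$~solution~$t$, and this local solution must agree with the globally defined~$t$ of~\ref{def:centr-proj} by convexity of~$V$.

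Second, I~would identify the derivatives. Fix $x \in \R^n \without \{0\}$ and set $y = p(x)$ and $\nu = \nu(y)$; let $s$ and~$\pi$ be the maps of Remark~\ref{rem:proj-plane} associated to this pair. The relation $p(x) = t(x) x$ with $y \in \nu^\perp + y$ gives $\nu \bullet y = t(x) (\nu \bullet x)$, hence $s(x) = t(x)$ and $\pi(x) = p(x) = y$. Lemma~\ref{lem:convex-plane-comp} gives $\uD p(x) = \uD \pi(x)$. To~obtain $\uD t(x) = \uD s(x)$, differentiate the identity $\Phi(p(x)) = 0$ to find that $\nu \bullet \uD p(x) u = 0$ for all~$u$; combining this with $\uD p(x) u = \uD t(x) u \cdot x + t(x) u$ and projecting along~$\nu$ yields
\begin{equation}
    \uD t(x) u = -\frac{t(x)(\nu \bullet u)}{\nu \bullet x}
    = -\frac{(\nu \bullet y)(\nu \bullet u)}{(\nu \bullet x)^2} \,,
\end{equation}
which is the asserted formula for~$\uD t$ and coincides with $\uD s(x) u$ of Remark~\ref{rem:proj-plane}.

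Third, I~would derive the norm bound on $\uD p(x)$. By the previous step $\|\uD p(x)\| = \|\uD \pi(x)\|$, and Remark~\ref{rem:proj-plane} supplies
\begin{equation}
    \|\uD \pi(x)\| \le \frac{1}{|x|}\biggl( \frac{\nu \bullet y}{\nu \bullet x/|x|} + \frac{\nu \bullet y}{(\nu \bullet x/|x|)^2} \biggr) \,.
\end{equation}
The identities $t(x) x = y$ and $\nu \bullet y = t(x)(\nu \bullet x)$ give $(\nu \bullet y)/(\nu \bullet x/|x|) = t(x)|x| = |y| = |p(x)|$, so the first term inside the brackets divided by~$|x|$ equals $|p(x)|/|x|$ and the second equals $|p(x)|/(|x|\,\nu \bullet x/|x|)$, matching the stated estimate. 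The only place where a~genuine argument is required is the first paragraph; once smoothness of~$t$ is in hand, everything else is a bookkeeping exercise on top of Lemma~\ref{lem:convex-plane-comp} and Remark~\ref{rem:proj-plane}, so I~do not expect a substantive obstacle beyond that.
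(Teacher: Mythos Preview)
Your proof is correct and uses the same core ingredients as the paper: Lemma~\ref{lem:convex-plane-comp} to identify $\uD p(x)$ with $\uD\pi(x)$, and Remark~\ref{rem:proj-plane} for the explicit formulas and the bound. The one genuine difference is how $\cnt^\infty$~regularity is obtained: you invoke the implicit function theorem on a local defining function $\Phi$ for~$\Bdry V$ to get smoothness of~$t$ directly, whereas the paper first writes down the explicit expression for $\uD p(x)u$ coming from Lemma~\ref{lem:convex-plane-comp}, observes it is continuous in~$x$ (hence $p$ is $\cnt^1$), deduces the formula for~$\uD t$ from $t(x) = p(x)\bullet x / |x|^2$, and then bootstraps inductively using smoothness of~$\nu$. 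Both routes are standard and equally short; yours gives smoothness up front, while the paper's keeps everything internal to the derivative formulas already derived.
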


\begin{proof}
    Since $\Bdry V$ is of class~$\cnt^1$ we can
    apply~\ref{lem:convex-plane-comp} and~\ref{rem:proj-plane} at any single
    point $x \in \R^n \without \{0\}$ to see that
    \begin{gather*}
        \uD p(x)u = \frac{\nu(p(x)) \bullet p(x)}{\nu(p(x)) \bullet x} u 
        - \frac{\bigl(\nu(p(x)) \bullet p(x)\bigr) \bigl(\nu(p(x)) \bullet u\bigr)}{(\nu(p(x)) \bullet x)^2} x 
    \end{gather*}
    for $u \in \R^n$. Noting $t(x) = p(x) \bullet x / (x \bullet x)$ one
    derives the formula for~$\uD t(x)$. This shows that $p$ and $t$ are of
    class~$\cnt^1$. Since $\nu$ is of class $\cnt^{\infty}$, proceeding by
    induction, we see that $p$ and~$t$ are of class $\cnt^\infty$.
\end{proof}

Next, we construct a map which interpolates between the central projection onto
$\Bdry V$ and identity outside some neighbourhood of $V$. 

\begin{corollary}
    \label{cor:proj+}
    Let $\varepsilon \in (0,1)$ and $V \subseteq \R^n$ be open convex with $0
    \in V$ and $p$, $t$ define the central projection onto $\Bdry V$. Assume
    $\Bdry V$ is an $n-1$~dimensional submanifold of $\R^n$ of
    class~$\cnt^\infty$. Then there exists a map $q : \R^n \without \{0\} \to
    \R^n$ of class~$\cnt^\infty$ such that
    \begin{enumerate}[label=(\alph*)]
    \item
        \label{i:pr:id}
        $q(x) = x$ for $x \in \R^n \without V$.
    \item
        \label{i:pr:coincide}
        $q(x) = p(x)$ for $x \in V \without \{0\}$ with $\dist(x, \R^n \without V) \ge \varepsilon$.
    \item
        \label{i:pr:exists-t}
        For each $x \in \R^n \without \{0\}$ there exists $t \in [1,\infty)$ such that $q(x) = tx$.
    \item
        \label{i:pr:conv}
        $q(x) \in \conv\{ x, p(x)\}$ for each $x \in \R^n \without \{0\}$.
    \item
        \label{i:pr:qp-comp}
        $|q(x) - x| \le |p(x) - x|$ whenever $x \in V \without \{0\}$.
    \item
        \label{i:pr:deriv-bound}
        $\|\uD q(x)\| \le 5 |p(x)| |x|^{-1} \Delta$ for $x \in V
        \without \{0\}$, where $\Delta = \inf \bigl\{ \nu(y) \bullet
        \frac{y}{|y|} : y \in \Bdry V \bigr\}^{-1}$.
    \end{enumerate}
\end{corollary}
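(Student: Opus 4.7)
The natural strategy is to define $q$ by a smooth radial rescaling of~$p$. Set $r(x) = |x|/|p(x)| = 1/t(x)$, which by~\ref{cor:centr-proj} is of class~$\cnt^\infty$ on~$\R^n \without \{0\}$, satisfies $r(x) < 1$ iff $x \in V \without \{0\}$, $r(x) = 1$ iff $x \in \Bdry V$, and $r(x) > 1$ iff $x \notin \Clos V$. Let $M = \sup\{|y| : y \in \Bdry V\}$, pick $\mu \in (0,1]$ with $\mu \le \varepsilon/M$ (if $\varepsilon > M$ then condition~\ref{i:pr:coincide} is vacuous and we take $\mu = 1$), and fix a~$\cnt^\infty$~cutoff $\alpha : \R \to [0,1]$ with $\alpha(s) = 0$ for $s \le 0$, $\alpha(s) = 1$ for $s \ge 1$, all derivatives vanishing at the endpoints, and $\sup|\alpha'| \le 2$. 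Define
\begin{gather*}
    \sigma(r) = r + (1-r)\alpha((1-r)/\mu) \,,
    \qquad
    q(x) = \sigma(r(x)) \, p(x) \,.
\end{gather*}
Since all derivatives of~$\alpha$ vanish at~$0$ and~$1$, $\sigma \in \cnt^\infty(\R)$ with $\sigma(r) = r$ for $r \ge 1$ and $\sigma(r) = 1$ for $r \le 1-\mu$; hence $q \in \cnt^\infty(\R^n \without \{0\}, \R^n)$.

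Items~\ref{i:pr:id}--\ref{i:pr:qp-comp} all follow from the identity $q(x) = (1 - \beta(x)) x + \beta(x) p(x)$, where $\beta(x) = \alpha((1-r(x))/\mu) \in [0,1]$ (valid since $\sigma(r) p = ((1-\beta) r + \beta) p = (1-\beta) x + \beta p$). For~\ref{i:pr:id}, $x \notin V$ yields $r(x) \ge 1$ so $\beta = 0$ and $q(x) = x$. For~\ref{i:pr:coincide}, combining $\varepsilon \le \dist(x, \R^n \without V) \le |p(x) - x| = (1 - r(x))|p(x)| \le (1-r(x)) M$ gives $1 - r(x) \ge \mu$, hence $\beta = 1$ and $q(x) = p(x)$. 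Items~\ref{i:pr:conv} and~\ref{i:pr:qp-comp} are immediate from the convex combination; item~\ref{i:pr:exists-t} follows by rewriting $q(x) = (\sigma(r(x))/r(x))\, x$ with $\sigma(r)/r \ge 1$, since $\sigma(r) \ge r$ for $r \le 1$ and $\sigma(r) = r$ for $r \ge 1$.

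The substantive point is~\ref{i:pr:deriv-bound}. From~\ref{cor:centr-proj} together with $x = r(x) p(x)$ and the observation $\nu(p(x)) \bullet x/|x| = \nu(p(x)) \bullet p(x)/|p(x)| \ge 1/\Delta$, one obtains $\|\uD r(x)\| \le \Delta/|p(x)|$ (after differentiating $r = 1/t$) and $\|\uD p(x)\| \le (|p(x)|/|x|)(1 + \Delta) \le 2 \Delta |p(x)|/|x|$ (using $\Delta \ge 1$). Writing $s = (1-r)/\mu$, a direct computation gives $\sigma'(r) = (1 - \alpha(s)) - s \alpha'(s)$ with both summands nonnegative, so $|\sigma'(r)| \le \max(1 - \alpha(s),\, s \alpha'(s)) \le \sup|\alpha'| \le 2$. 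The product rule then yields
\begin{displaymath}
    \|\uD q(x)\|
    \le |\sigma'(r(x))| \, |p(x)| \, \|\uD r(x)\| + \sigma(r(x)) \|\uD p(x)\|
    \le 2 \Delta + 2 \Delta |p(x)|/|x| \,,
\end{displaymath}
and since $|p(x)|/|x| = 1/r(x) \ge 1$ for $x \in V \without \{0\}$, the right-hand side is bounded by $4 \Delta |p(x)|/|x| \le 5 \Delta |p(x)|/|x|$. The main obstacle is achieving the explicit constant~$5$; this is why the particular form $\sigma(r) = r + (1-r)\alpha((1-r)/\mu)$ is chosen, as it yields the partial sign-cancellation in~$\sigma'$ that bounds $|\sigma'|$ by $\sup|\alpha'|$ rather than by twice its value.
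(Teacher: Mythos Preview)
Your proof is correct and follows essentially the same approach as the paper: both construct $q$ as a smooth radial rescaling interpolating between the identity near $\Bdry V$ and the central projection~$p$ deeper inside~$V$. The paper writes $q(x) = \alpha(t(x))\,x$ for a suitable $\cnt^\infty$ function~$\alpha$ of the scaling factor~$t$, whereas you write $q(x) = \sigma(r(x))\,p(x)$ in terms of $r = 1/t$; these are equivalent formulations. Your threshold choice $\mu \le \varepsilon/M$ is a bit more direct than the paper's two-step definition via auxiliary quantities~$\iota$ and~$\delta$, and your specific form of~$\sigma$ (engineered so that $\sigma'(r) = (1-\alpha(s)) - s\alpha'(s)$ is a difference of two nonnegative terms each bounded by~$\sup|\alpha'|$) yields the derivative estimate with constant~$4$, comfortably inside the stated~$5$.
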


\begin{proof}
    Set
    \begin{gather*}
        \iota = \min \bigl\{ \varepsilon \,,\, \inf\bigl\{ \dist(x, \R^n \without V) :
        x \in V \,,\,
        t(x) \ge 1 + \varepsilon \bigr\} \bigr\} \,,
        \\
        \delta = \inf\bigl\{ t(x) : x \in V \,,\, \dist(x,\R^n \without V) \ge \iota \bigr\}
        \,.
    \end{gather*}
    Then for $x \in \R^n \without \{0\}$
    \begin{gather}
        \label{eq:t-small-impl}
        1 < t(x) < \delta
        \quad \text{implies} \quad
        \dist(x , \R^n \without V) < \iota \le \varepsilon \,,
        \\
        \label{eq:dist-small-impl}
        \dist(x, \R^n \without V) < \iota 
        \quad \text{implies} \quad
        t(x) - 1 < \varepsilon \,,
        \\
        \label{eq:dist-big-impl}
        \dist(x, \R^n \without V) \ge \iota
        \quad \text{implies} \quad
        t(x) \ge \delta \,.
    \end{gather}
    Choose $\alpha : \R \to \R$ of class~$\cnt^\infty$ such that
    \begin{gather*}
        \alpha(t) = t \quad \text{for $t \ge \delta$} \,,
        \quad
        \alpha(t) \le t \quad \text{for $t \ge 1$} \,,
        \\
        \alpha(t) = 1 \quad \text{for $t \le 1$} \,,
        \quad
        0 \le \alpha'(t) \le 2 \quad \text{for $t \in \R$} \,.
    \end{gather*}
    Set $q(x) = \alpha(t(x)) x$.

    Clearly $q : \R^n \without \{0\} \to \R^n$ is of class~$\cnt^\infty$ and
    $q(x) \in \conv\{x, p(x)\}$ for $x \in \R^n \without \{0\}$ and $q(x) = x$
    for $x \in \R^n \without V$. Moreover, $q(x) = p(x)$ for $x \in V \without
    \{0\}$ satisfying $\dist(x, \R^n \without V) \ge \varepsilon$, which
    follows by~\eqref{eq:dist-big-impl}.  For $x \in V \without \{0\}$ with $0 <
    \dist(x,\R^n \without V) \le \iota$ we have,
    using~\eqref{eq:dist-small-impl},
    \begin{gather*}
        |q(x) - x| = |x| (\alpha(t(x)) - 1) \le |x| (t(x) - 1) = |p(x) - x|
    \end{gather*}
    For $x \in V \without \{0\}$, using the formulas from~\ref{cor:centr-proj}
    and the identity $t(x) = |p(x)| |x|^{-1}$, we get
    \begin{gather*}
        \|\uD p(x)\| \le |p(x)| |x|^{-1} \bigl( 1 + \Delta \bigr) 
        \le 2 |p(x)| |x|^{-1} \Delta  \,,
        \\
        |\uD q(x)u| 
        \le |\alpha'(t(x)) \uD p(x)u| + |\alpha'(t(x))t(x)u| + |\alpha(t(x))u| 
        \quad \text{for $u \in \R^n$, $|u|=1$} \,,
        \\
        \|\uD q(x)\| \le 5 |p(x)| |x|^{-1} \Delta \,.
        \qedhere
    \end{gather*}
\end{proof}


\section{Smooth deformation theorem}
\label{sec:deform}

Here we prove a version of Federer--Fleming projection theorem suited for our
purposes. The~proof follows the scheme of~\cite[4.2.6-9]{Fed69}. A~similar
result was also proven in~\cite[Theorem 3.1]{DS2000}. However, we need the
deformation to be smooth, we need to work in Whitney cubes rather than in a grid
of cubes of the same size, and we also need estimates on the measure of the
whole deformation.

To deal with families of dyadic cubes it will be convenient to introduce some
more notation. We shall follow~\cite[1.1--1.9]{Alm1986}.

\begin{definition}
    \label{def:cube}
    Let $k \in \{0,1,\ldots,n\}$ and $Q = [0,1]^k \subseteq \R^k$. We say that
    $R \subseteq \R^n$ is a \emph{cube} if there exist $p \in \orthproj nk$ and
    $o \in \R^n$ and $l \in (0,\infty)$ such that $R = \trans{o} \circ p^* \circ
    \scale{l} \lIm Q \rIm$. We call $\vertex R = o$ the \emph{corner} of~$R$ and
    $\side R = l$ the~\emph{side-length} of~$R$. We~also set
    \begin{itemize}
    \item $\dim(R) = k$ -- the \emph{dimension} of $R$,
    \item $\centre R = \vertex R + \frac 12 \side R (1,1,\ldots,1)$ -- the \emph{centre} of~$R$,
    \item $\cBdry R = \trans{\vertex R} \circ p^* \circ \scale{\side R} \lIm \Bdry Q \rIm$
        -- the \emph{boundary} of~$R$,
    \item $\cInt R = R \without \cBdry R$ -- the \emph{interior} of~$R$.
    \end{itemize}
\end{definition}

\begin{definition}
    \label{def:dyadic-cubes}
    Let $k \in \{0,1,\ldots,n\}$, and $N \in \integers$, and $Q = [0,1]^k
    \subseteq \R^k$, and $e_1$, \ldots, $e_n$ be the standard basis
    of~$\R^n$, and $f_1$, \ldots, $f_k$ be the standard basis of~$\R^k$.

    We~define $\cubes_k(N)$ to be the set of all cubes $R \subseteq \R^n$ of
    the form $R = \trans{v} \circ p^* \circ \scale{2^{-N}} \lIm Q \rIm$, where
    $v \in \scale{2^{-N}}\lIm \integers^n \rIm$ and $p \in \orthproj nk$ is such
    that $p^*(f_i) \in \{ e_1,\ldots,e_n\}$ for $i = 1,2,\ldots,k$.

    We also set
    \begin{gather*}
        \cubes_k = \tbcup\bigl\{ \cubes_k(N) : N \in \integers \bigr\} \,,
        \quad
        \cubes = \cubes_n \,,
        \quad
        \cubes_* = \tbcup\bigl\{ \cubes_k : k \in \{0,1,\ldots,n\} \bigr\} \,.
    \end{gather*}
\end{definition}

\begin{definition}
    \label{def:face}
    Let $k \in \{0,1,\ldots,n\}$, $N \in \integers$, and $K \in \cubes_k(N)$.
    We~say that $L \in \cubes_*$ is a~\emph{face} of~$K$ if and only if $L
    \subseteq K$ and $L \in \cubes_j(N)$ for some $j \in \{0,1,\ldots,k\}$.
\end{definition}

\begin{definition}[\protect{cf.~\cite[1.5]{Alm1986}}]
    \label{def:admissible}
    A family of top-dimensional cubes $\mathcal F \subseteq \cubes$ is said to
    be \emph{admissible} if
    \begin{enumerate}
    \item $K,L \in \mathcal F$ and $K \ne L$ implies $\cInt K \cap \cInt L = \varnothing$,
    \item $K,L \in \mathcal F$ and $K \cap L \ne \varnothing$ implies $\frac 12 \le \side L / \side K \le 2$,
    \item $K \in \mathcal F$ implies $\cBdry K \subseteq \bigcup \{ L \in \mathcal F : L \ne K \}$.
    \end{enumerate}
\end{definition}

\begin{definition}[\protect{cf.~\cite[1.6]{Alm1986}}]
    \label{def:whitney}
    Let $U \subseteq \R^n$ be an open set and $e_1$, \ldots, $e_n$ be the
    standard basis of~$\R^n$. We define the \emph{Whitney family} $\WF(U)$
    corresponding to~$U$ to consist of those cubes $K \in \cubes$ for which
    \begin{itemize}
    \item $\dist_{\infty}(K,\R^n \without U) > 2 \side K$,
    \item if $K \subseteq L \in \cubes$ and $\side L = 2 \side K$, then
        $\dist_{\infty}(L,\R^n \without U) \le 4 \side K$.
    \end{itemize}
    where $\dist_{\infty}(x,y) = \max\{ | (x-y) \bullet e_i| : i = 1,2,\ldots,n
    \}$ for $x,y \in \R^n$ and $\dist_{\infty}(A,B) = \inf\{
    \dist_{\infty}(x,y) : x \in A \,,\, y \in B\}$ for $A,B \subseteq \R^n$.
\end{definition}

\begin{remark}
    If $U \subseteq \R^n$ is open, then the Whitney family $\WF(U)$ is
    admissible.
\end{remark}

\begin{definition}[\protect{cf.~\cite[1.8]{Alm1986}}]
    \label{def:cube-complex}
    Let $\mathcal F \subseteq \cubes$ be admissible. We define the \emph{cubical
      complex $\CX(\mathcal F)$ of $\mathcal F$} to consist of all those cubes
    $K \in \cubes_*$ for which
    \begin{itemize}
    \item $K$ is a face of some cube from $\mathcal F$,
    \item if $\dim(K) > 0$, then $\side K \le \side L$ whenever $L$ is a face of
        some cube in $\mathcal F$ with $\dim(K) = \dim(L)$ and $\cInt K \cap
        \cInt L \ne \varnothing$.
    \end{itemize}
\end{definition}

\begin{remark}
    The second item of~\ref{def:cube-complex} means that whenever two top
    dimensional cubes $P$ and $Q$ from~$\mathcal F$ touch and $P$ is smaller
    than $Q$ and $F$ is a lower dimensional face of $Q$ such that $Q \cap P
    \subseteq F$, then the cubical complex $\CX(\mathcal F)$ does not
    contain~$F$ but rather cubes coming from subdivision of~$F$. This is a key
    property allowing to construct deformations onto skeletons of~$\CX(\mathcal
    F)$.
\end{remark}

Now we are ready to construct a map which is the main building block for the
deformation theorem~\ref{thm:deformation}. Given $a \in (-1,1)^n$ we need to
construct a~$\cnt^\infty$~smooth function $\R^n \to \R^n$ which maps the pointed
cube~$Q \without \{a\} = [-1,1]^n \without \{a\}$ onto $\Bdry Q$, preserves all
the lower dimensional skeletons of~$Q$, preserves the neighbouring dyadic cubes
of side length~$1$, is very close to the identity on~$\Bdry Q$, and is the
identity outside a small neighbourhood of~$Q$. Moreover, if $\dist(a,\Bdry Q)
\ge 1/2$, then we need to control the derivative at~$x \in Q \without \{a\}$ of
this function by a quantity of magnitude $|x-a|^{-1}$. To achieve all this we
proceed as follows. First we apply a diffeomorphism of~$\R^n$ which
preserves~$Q$ and moves~$a$ onto the origin -- this step is necessary to
preserve the neighbouring cubes. Then, we use the ``central projection''
constructed in~\ref{cor:proj+} to map $Q \without \{a\}$ onto the boundary of
some convex set~$V$ with smooth boundary such that $Q \subseteq V$. Finally, we
employ the ``smooth retraction'' produced in~\ref{cor:retr+} to map~$\Bdry V$
onto~$\Bdry Q$.

\begin{lemma}
    \label{lem:smooth-centr-proj}
    Let $Q = [-1,1]^n$ and $\varepsilon \in \bigl(0,\frac 14\bigr)$. There
    exists $\Gamma = \Gamma(n) \in (0,\infty)$ such that for each $a \in
    \Int(Q)$ there exists $\varphi_{a,\varepsilon} : \R^n \without \{a\} \to
    \R^n$ of class~$\cnt^\infty$ such that
    \begin{enumerate}
    \item
        \label{i:scp:id-outside}
        $\varphi_{a,\varepsilon}(x) = x$ for $x \in \R^n$ with $\dist(x,Q) \ge \varepsilon$.
    \item
        \label{i:scp:small-out}
        $|\varphi_{a,\varepsilon}(x) - x| \le \varepsilon$ 
        for $x \in \R^n \without Q$.
    \item
        \label{i:scp:Q-im}
        $\varphi_{a,\varepsilon} \lIm Q \without \{a\} \rIm = \Bdry Q$.
    \item
        \label{i:scp:Fk-im}
        If $\kappa \in \{-1,0,1\}^n \without \{ (0,0,\ldots,0) \}$ and $\lambda
        \in \{-2,-1,1,2\}^n \without \{-2,2\}^n $ and $C_{\kappa}$,
        $F_{\kappa}$, $T_{\kappa}$, $c_{\kappa}$, $R_\lambda$ are defined as
        in~\ref{mrem:cube-setup}, then
        \begin{gather*}
            \varphi_{a,\varepsilon}\lIm F_{\kappa} \rIm \subseteq \Clos F_{\kappa} \,,
            \quad
            \varphi_{a,\varepsilon}\lIm C_{\kappa} \rIm \subseteq \Clos C_{\kappa} \,,
            \quad
            \varphi_{a,\varepsilon}\lIm c_{\kappa} + T_{\kappa} \rIm \subseteq c_{\kappa} + T_{\kappa} \,,
            \\
            \varphi_{a,\varepsilon}\lIm R_{\lambda} \rIm \subseteq R_{\lambda} \,,
            \quad
            \varphi_{a,\varepsilon}\lIm T_{\kappa} \without Q \rIm \subseteq T_{\kappa} \,,
            \quad
            a \in T_{\kappa}
            \quad \text{implies} \quad
            \varphi_{a,\varepsilon}\lIm T_{\kappa} \without \{a\} \rIm \subseteq T_{\kappa} \,.
        \end{gather*}
    \item
        \label{i:scp:deriv-bound}
        For $x \in \Int Q \without \{a\}$ we have
        \begin{displaymath}
            \|\uD\varphi_{a,\varepsilon}(x)\| 
            \le \frac{\Gamma}{2 |x-a| \dist(a, \R^n \without Q)} \,.
        \end{displaymath}
    \item
        \label{i:scp:lip-varphi}
        For $x \in \R^n$ with $\dist(x, \Bdry Q) \le \min\{ \frac 12 \dist(a,
        \R^n \without Q), \frac 14 \}$ there holds
        \begin{displaymath}
            \|\uD \varphi_{a,\varepsilon}(x)\| \le \Gamma \,.
        \end{displaymath}
    \item
        \label{i:scp:small-bdry}
        For $x \in \R^n$ with $\dist(x, \Bdry Q) \le \min\{ \frac 12 \dist(a,
        \R^n \without Q), \frac 14 \}$ we obtain
        \begin{displaymath}
            |\varphi_{a,\varepsilon}(x) - x| \le \Gamma (\dist(x,\Bdry Q) + \varepsilon) \,.
        \end{displaymath}
    \item 
        \label{i:scp:bdry-behav}
        If $x \in \R^n$, $\delta \in \R$, $0 < \delta < \min \{ \frac 12
        \dist(a, \R^n \without Q), \frac 14 \}$, and $\dist(x, \Bdry Q) \le
        \delta$, then
        \begin{displaymath}
            \conv \bigl\{ x, \varphi_{a,\varepsilon}(x) \bigr\} 
            \subseteq \Bdry Q + \cball 0{\delta} \,.
        \end{displaymath}
        In~particular $\dist(\varphi_{a,\varepsilon}(x), \Bdry Q) \le \dist(x,
        \Bdry Q)$ for each $x \in \R^n \without \{a\}$.
    \item 
        \label{i:scp:a-cont}
        Let $X \subseteq \Bdry Q$ be compact and convex, $\kappa \in
        \{-1,0,1\}^n$, $T_{\kappa}$ be defined as in~\ref{mrem:cube-setup}.
        For~$a \in \Int Q$ define $E_a = \varphi_{a,\varepsilon}^{-1}\lIm X
        \rIm$. Then
        \begin{equation}
            \label{eq:scp:HD-conv}
            \lim_{b \to a} \HD \bigl( E_a, E_b \bigr) = 0 
            \quad 
            \text{for $a \in \Int Q$} \,.
        \end{equation}
        Moreover, if~$\mu$ is a~Radon measure over~$\R^n$ and $\dim T_{\kappa} =
        k$, then for $\HM^k$~almost all $a \in \bigl(-\tfrac 12, \tfrac 12
        \bigr)^n \cap T_{\kappa}$
        \begin{equation}
            \label{eq:scp:Radon}
            \mu\bigl(
            {\textstyle \bigcap_{\delta > 0} \bigcup_{b \in \cball{a}{\delta} \cap T_{\kappa}}}
            ((E_a \without E_b) \cup (E_b \without E_a)) \cap T_{\kappa}
            \bigr) = 0 \,.
        \end{equation}
    \end{enumerate}
\end{lemma}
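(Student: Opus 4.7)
The plan is to realise $\varphi_{a,\varepsilon}$ as the conjugate
\begin{equation*}
\varphi_{a,\varepsilon} = \psi_a^{-1} \circ l \circ q \circ \psi_a \,,
\end{equation*}
where $\psi_a$ is a $\cnt^{\infty}$ diffeomorphism of~$\R^n$ compactly supported in the smaller cube $(1-\delta) Q = [-(1-\delta),1-\delta]^n$ and sending $a$ to~$0$, $q$ is the smoothed central projection produced by~\ref{cor:proj+} for an auxiliary open convex set $V$ with $Q \subseteq V$ and $\Bdry V$ of class~$\cnt^{\infty}$, and $l$ is the smooth almost-retraction onto~$Q$ from~\ref{cor:retr+}. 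Put $\delta = \tfrac14 \dist_{\infty}(a, \R^n \without Q)$, and choose $V$ so that $Q \subseteq V \subseteq Q + \cball 0{\eta}$ for some $\eta \in \bigl(0, \varepsilon/(4\Gamma_{\ref{cor:retr+}})\bigr)$ (for instance by smoothing out the corners of $(1+\eta)Q$); then apply~\ref{cor:retr+} with the lemma's~$\varepsilon$ to produce~$l$, and~\ref{cor:proj+} with the parameter $\tfrac12 \dist(Q, \R^n \without V)$ to produce~$q$.

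The diffeomorphism $\psi_a$ is taken to be the time-one map of the flow of a $\cnt^{\infty}$ compactly supported time-dependent vector field on $\R^n$ whose single distinguished trajectory carries $a$ to~$0$ along a curve inside $(1-\delta) Q$; when $a \in T_{\kappa}$ we route the curve inside $T_{\kappa}$ and arrange the vector field of product type there so that $\psi_a$ preserves each such $T_{\kappa}$. Since every face $F_{\kappa}$ with $\kappa \ne 0$, every plane $c_{\kappa} + T_{\kappa}$ with $\kappa \ne 0$, every neighbouring cube $R_{\lambda}$ with at least one $|\lambda_j|=1$, and $\Bdry Q$ itself are all disjoint from $(1-\delta) Q$, the map~$\psi_a$ is automatically the identity on all of them. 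A routine estimate gives $\Lip(\psi_a), \Lip(\psi_a^{-1}) \le C(n)\, \delta^{-1}$; by designing the flow to behave near~$a$ as a linear rescaling of norm~$\sim \delta^{-1}$, one also obtains $|\psi_a(x)| \ge c(n)\, \delta^{-1} |x - a|$ for $x \in Q$.

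Items~\ref{i:scp:id-outside}--\ref{i:scp:Q-im} follow at once: outside $Q + \cball 0{\eta}$ all three factors $\psi_a, q, l$ are the identity, and~\ref{cor:retr+}\ref{i:rr:small} controls the displacement. For surjectivity in~\ref{i:scp:Q-im}, $\psi_a$ bijects $Q \without \{a\}$ onto $Q \without \{0\}$, $q$ sends the latter onto $\Bdry V$, $l$ sends $\Bdry V \subseteq Q + \cball 0{\eta}$ onto~$\Bdry Q$ by~\ref{cor:retr+}\ref{i:rr:preserve} combined with a degree argument, and $\psi_a^{-1}$ is the identity on~$\Bdry Q$. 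For item~\ref{i:scp:Fk-im}, the crucial observation is that for $x \in c_{\kappa} + T_{\kappa}$ with $\kappa \ne 0$, the map $\psi_a$ is the identity, and the ray-image $q(x) = \alpha x$ with $\alpha \ge 1$ lies in some $C_{\lambda}$ with $\lambda_j = \kappa_j$ whenever $\kappa_j \ne 0$ (since $\alpha > 0$ preserves the signs of nonzero coordinates); hence $l(q(x)) \in F_{\lambda} \subseteq c_{\lambda} + T_{\lambda} \subseteq c_{\kappa} + T_{\kappa}$; the remaining stratification inclusions follow similarly from~\ref{cor:retr+}\ref{i:rr:preserve}. Items~\ref{i:scp:lip-varphi}--\ref{i:scp:bdry-behav} are verified near~$\Bdry Q$, where $\psi_a = \id_{\R^n}$, so that the estimates reduce to the Lipschitz bounds on $l$ and $q$ together with the fact that $q$ displaces points by at most~$\eta$.

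The main obstacle is the sharp derivative bound~\ref{i:scp:deriv-bound}. Since $l \circ q \circ \psi_a(x) \in \Bdry Q$ and $\psi_a^{-1}$ is the identity near~$\Bdry Q$, the chain rule reduces to $\uD\varphi_{a,\varepsilon}(x) = \uD l \cdot \uD q \cdot \uD \psi_a(x)$. Corollary~\ref{cor:proj+}\ref{i:pr:deriv-bound} supplies $\|\uD q(\psi_a(x))\| \le C_1 |\psi_a(x)|^{-1}$ with $C_1 = C_1(n)$, while $\|\uD l\| \le \Gamma_{\ref{cor:retr+}}$. Combining $\|\uD \psi_a(x)\| \le C(n)\,\delta^{-1}$ with $|\psi_a(x)| \ge c(n)\,\delta^{-1}|x - a|$ causes the two $\delta^{-1}$ factors to cancel in the product $\|\uD \psi_a(x)\| \cdot |\psi_a(x)|^{-1}$, which is then $O(|x-a|^{-1})$; the overall bound is $\Gamma(n)/|x-a|$, stronger than the stated bound since $\dist(a, \R^n \without Q) \le 2$. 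Finally, item~\ref{i:scp:a-cont} follows from the continuous dependence of~$\psi_a$ on~$a$ in~$\cnt^1$~topology: the Hausdorff convergence~\eqref{eq:scp:HD-conv} is immediate, and the Radon-measure assertion~\eqref{eq:scp:Radon} is a~Fubini-type argument noting that the ``jump set'' of $b \mapsto E_b$ at~$a$ lies in the boundary of the smooth set $\varphi_{a,\varepsilon}^{-1}\lIm X \rIm$, which meets $T_{\kappa}$ in a~$\mu$-null set for $\HM^k$-almost every~$a$.
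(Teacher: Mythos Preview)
Your overall architecture matches the paper's: $\varphi_{a,\varepsilon} = l \circ q \circ f_a$ where $f_a$ is a diffeomorphism sending $a$ to $0$ and equal to the identity near $\Bdry Q$ (your outer $\psi_a^{-1}$ is harmless but redundant, since it is the identity on $\im(l\circ q) \subseteq \Bdry Q \cup (\R^n \without Q)$). The paper, however, builds $f_a$ \emph{coordinate-wise}, $f_a(x) = \sum_i f_{a,i}(x\bullet e_i)\,e_i$, with each $f_{a,i}$ a mollified piecewise-affine increasing homeomorphism of~$\R$; this is what makes items~\ref{i:scp:deriv-bound} and~\ref{i:scp:a-cont} go through cleanly.

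Your derivation of~\ref{i:scp:deriv-bound} contains a genuine error. The estimate $|\psi_a(x)| \ge c(n)\,\delta^{-1}|x-a|$ for all $x \in Q$ is false: take $a$ with $\dist_\infty(a,\R^n\without Q) = 4\delta$ small and $x$ on $\Bdry\bigl((1-\delta)Q\bigr)$ far from $a$; then $\psi_a(x) = x$, so $|\psi_a(x)| \le \sqrt n$, whereas $c(n)\delta^{-1}|x-a|$ is of order $\delta^{-1}$. Your conclusion $\|\uD\varphi_{a,\varepsilon}(x)\| \le \Gamma(n)/|x-a|$ is therefore unproven, and in fact too strong. Concretely, with a tube-supported translation flow along the segment $[0,a]$, take $x$ near $0$ but just outside the tube (distance $\sim 2\delta$ from the segment); then $\psi_a(x) = x$ with $|\psi_a(x)| \sim \delta$, hence $\|\uD q(\psi_a(x))\| \sim \delta^{-1}$ and $\|\uD\varphi_{a,\varepsilon}(x)\| \sim \delta^{-1}$, while $|x-a| \sim |a|$ is of order~$1$. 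The lemma's stated bound $\Gamma/\bigl(|x-a|\dist(a,\R^n\without Q)\bigr)$ absorbs exactly this, and it is essentially sharp. The paper obtains it from the two \emph{separate} estimates $\|\uD f_a\| \le 2/\dist(a,\R^n\without Q)$ and $|f_a(x)| \ge \tfrac12|x-a|$; the second holds because each $f_{a,i}$ has slope bounded below by a positive absolute constant. A flow-based $\psi_a$ can be arranged to satisfy $|\psi_a(x)| \ge c|x-a|$ (no $\delta^{-1}$), but not with a simple tube-supported translation, and even then you only recover the lemma's bound, not your stronger one.

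Your sketch of~\ref{i:scp:a-cont} is also incomplete. The paper's argument for~\eqref{eq:scp:Radon} exploits the coordinate structure in an essential way: for fixed $t$, the map $b \mapsto f_{b,i}^{-1}(t)$ is strictly monotone along each line parallel to $e_i$ in $T_\kappa$, so the relative boundaries of $E_b \cap T_\kappa$ for distinct $b$ on such a line are pairwise disjoint; hence at most countably many carry positive $\mu$-mass, and Fubini finishes. A generic flow-based family $b \mapsto \psi_b$ has no such monotonicity, and your ``Fubini-type argument'' would need a genuine substitute.
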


\begin{proof}
    For each $\delta \in [0,\infty)$ let $p_\delta$, $t_\delta$ define the
    central projection onto $\Bdry( Q + \oball 0 \delta )$ as
    in~\ref{def:centr-proj}. Observe that
    \begin{itemize}
    \item $t_\delta$ is continuous for each $\delta \in [0,\infty)$,
    \item $t_\delta(x) \le t_\sigma(x)$ whenever $x \in \R^n \without \{0\}$ and $0 \le \delta \le \sigma < \infty$,
    \item $\lim_{\delta \downarrow 0} t_\delta(x) = t_0(x)$ for $x \in \R^n \without \{0\}$.
    \end{itemize}
    Employing the Dini Theorem (cf.~\cite[7.13]{MR0385023}), we see that
    $t_\delta$ converge uniformly to $t_0$ as $\delta \downarrow 0$ on compact
    subsets of $\R^n \without \{0\}$. Therefore, there exists $\delta_0 \in
    (0,\varepsilon)$ such that
    \begin{gather}
        \label{eq:good-delta}
        \Bdry Q \subseteq \{ x \in \R^n : 1 < t_\delta(x) < 1 + 2^{-10} n^{-1/2} \varepsilon \}
        \quad \text{for $\delta \in (0,\delta_0]$}
        \,.
    \end{gather}

    Set $\iota = \min\{ \delta_0, 2^{-10} \varepsilon /
    \Gamma_{\ref{cor:retr+}}\}$.  Choose an open convex set $V \subseteq \R^n$
    with $\cnt^\infty$ smooth boundary and such that $Q + \cball 0{\iota/4}
    \subseteq V \subseteq Q + \cball 0\iota$. Such $V$ is easily constructed,
    e.g., by~taking first the set $\tilde V = Q + \cball 0{\iota/2}$,
    representing $\Bdry \tilde V$ locally as (rotated) graph of some convex
    function, and then mollifying this function. Employ~\ref{cor:retr+} with
    $2^{-10} \varepsilon$ in place of~$\varepsilon$ to obtain the~map $l \in
    \cnt^{\infty}(\R^n,\R^n)$. Apply~\ref{cor:proj+} with $2^{-10}\iota$, $V$ in
    place of $\varepsilon$, $V$ to construct the map $q \in \cnt^{\infty}(\R^n
    \without \{0\},\R^n)$.

    Fix a symmetric non-negative mollifier $\phi \in \cnt^{\infty}(\R,\R)$ whose
    support equals~$[-1,1]$ and set $\phi_{\rho}(s) = \rho^{-1} \phi(s/\rho)$
    for $s \in \R$ and $\rho > 0$. Let $e_1,\ldots,e_n$ be the standard basis
    of~$\R^n$. For $i = 1,2,\ldots,n$ set $\rho_i = \min \bigl\{ \frac 12, 1 -
    |a \bullet e_i| \bigr\}$ and let $\bar f_{a,i} : \R \to \R$ be
    the~continuous piece-wise affine map satisfying
    \begin{gather}
        \bar f_{a,i}(t) = t  \quad \text{if $t \ge 1 - \tfrac 58 \rho_i$ or $t \le -1 + \tfrac 58 \rho_i$} \,,
        \\
        \bar f_{a,i}(t) = t - a \bullet e_i  \quad \text{if $|t - a \bullet e_i| \le \tfrac 18 \rho_i$} \,,
        \\
        \text{$\bar f_{a,i}$ is affine on $[-1+\tfrac 58 \rho_i, a \bullet e_i - \tfrac 18 \rho_i]$
          and on $[a \bullet e_i + \tfrac 18 \rho_i, 1-\tfrac 58 \rho_i]$} \,.
    \end{gather}
    Next, define $f_{a,i} = \bar f_{a,i} * \phi_{\rho_i/8} \in \cnt^{\infty}(\R,\R)$.
    We obtain
    \begin{itemize}
    \item if $a \bullet e_i = 0$, then $f_{a,i}(t) = t$ for $t \in \R$;
    \item if $|a \bullet e_i| > 0$, then
        \begin{gather*}
            f_{a,i}(a \bullet e_i) = 0 \,,
            \quad
            f_{a,i}(t) = t \quad \text{for $t \in \R$ with $|t| \ge 1 - \tfrac 12 \rho_i$} \,,
            \\
            \frac{1}{2(2-\rho_i)} < f_{a,i}'(t) < \frac{2}{\rho}
            \quad \text{and} \quad
            |f_i(t)| \ge \tfrac 12 |t - a \bullet e_i|
            \quad \text{for $t \in \R$} \,.
        \end{gather*}
    \end{itemize}
    Define the map $f_a : \R^n \to \R^n$ by $f_a(x) = \sum_{i=1}^n f_{a,i}(x
    \bullet e_i) e_i$. Then
    \begin{gather}
        f_a(a) = 0 \,,
        \quad
        |f_a(x)| \ge \tfrac 12 |x-a| \,,
        \quad
        \text{$f_a$ is a diffeomorphism of class~$\cnt^{\infty}$} \,,
        \\
        f_a(x) = x \quad \text{for $x \in \R^n$ with $2 \dist(x, \R^n \without Q) \le \dist(a,\R^n \without Q)$} \,,
        \\
        \label{eq:f-bounds}
        \text{and} \quad
        \frac{1}{2(2 - \dist(a, \R^n \without Q))} \le \|\uD f_a(x)\| \le \frac{2}{\dist(a, \R^n \without Q)} 
        \quad \text{for $x \in Q$} \,.
    \end{gather}
    Define $\varphi_{a,\varepsilon} = l \circ q \circ f_a$. Clearly
    $\varphi_{a,\varepsilon} : \R^n \without \{a\} \to \R^n$ is of
    class~$\cnt^\infty$.

    \emph{Proof of~\ref{i:scp:id-outside}}: If $x \in \R^n$ satisfies $\dist(x, Q) \ge
    \varepsilon$, then $x \in \R^n \without V$, so $q(f_a(x)) = q(x) = x$ and
    $\varphi_{a,\varepsilon}(x) = l(x) = x$.

    \emph{Proof of~\ref{i:scp:small-out}}: Let $x \in \R^n \without Q$. Then $f_a(x) =
    x$. If $x \in \R^n \without V$, then $q(x) = x$ and
    $|\varphi_{a,\varepsilon}(x) - x| = |l(x) - x| \le 2^{-10} \varepsilon$
    by~\ref{cor:retr+}\ref{i:rr:small}. Let $p$, $t$ define the central
    projection onto $\Bdry V$ as in~\ref{def:centr-proj}. If $x \in V \without
    Q$, then by~\ref{cor:proj+}\ref{i:pr:qp-comp} and~\eqref{eq:good-delta} we
    have
    \begin{gather*}
        |q(x) - x|
        \le |p(x) - x| 
        = (t(x) - 1) |x| \le 2^{-10} n^{-1/2} \varepsilon 2 \sqrt n 
        \le 2^{-9} \varepsilon \,.
    \end{gather*}
    Hence, $|\varphi_{a,\varepsilon}(x) - x| \le |l(q(x)) - q(x)| + |q(x) - x|
    \le \varepsilon$ by~\ref{cor:retr+}\ref{i:rr:small}.

    \emph{Proof of~\ref{i:scp:Q-im}}: If $x \in Q \without \{a\}$, then $f_a(x) \in Q
    \without \{0\}$ and $\dist(f_a(x),\R^n \without V) \ge \iota/4$, so $q(f_a(x))
    \in \Bdry V$ by~\ref{cor:proj+}\ref{i:pr:coincide}. Consequently
    $\dist(q(f_a(x)),Q) \le \iota \le 2^{-10}\varepsilon/ \Gamma_{\ref{cor:retr+}}$,
    which implies $\varphi_{a,\varepsilon}(x) = l(q(f_a(x))) \in \Bdry V$
    by~\ref{cor:retr+}\ref{i:rr:preserve}.
   
    \emph{Proof of~\ref{i:scp:Fk-im}}: For $\kappa \in \{-1,0,1\}^n$ let $F_{\kappa}$,
    $C_{\kappa}$, $T_{\kappa}$, and $c_{\kappa}$ be defined as
    in~\ref{mrem:cube-setup}. Fix a~$\kappa \in \{-1,0,1\}^n$ with $\kappa \ne
    (0,\ldots,0)$. Let $x \in F_{\kappa}$ and note that $f_a(x) = x$. Recall
    $\bigcup \{ C_{\lambda} : \lambda \in \{-1,0,1\}^n \without \{(0,\ldots,0)\}
    \} = \R^n \without \Int Q$, so there exists $\lambda \in \{-1,0,1\}^n$
    such that $q(x) \in C_{\lambda}$. Since $q(x) = tx$ for some $t > 1$, it
    follows from the definition of~$C_{\lambda}$ and~$C_{\kappa}$ that
    $\lambda_j = \kappa_j$ whenever $\kappa_j \ne 0$, which implies that
    $F_{\lambda} \subseteq \Clos{F_{\kappa}}$. Since $l\lIm F_{\lambda} \rIm
    \subseteq F_{\lambda}$ we get $\varphi_{a,\varepsilon}(x) = l(q(x)) \in
    \Clos(F_{\kappa})$. Similarly, if $x \in C_{\kappa}$, then $f_a(x) = x$ and
    there exists $\lambda \in \{-1,0,1\}^n$ such that $q(x) \in C_{\lambda}$ and
    $q(x) = tx$ for some $t \ge 1$; hence, $C_{\lambda} \subseteq
    \Clos{C_{\kappa}}$ and $\varphi_{a,\varepsilon}(x) = l(q(x)) \in \Clos(C_{\kappa})$
    because $l\lIm C_{\lambda} \rIm \subseteq C_{\lambda}$.

    For $y \in (c_{\kappa} + T_{\kappa}) \without V$ we have $q(f_a(y)) = q(y) =
    y$ and then $\varphi_{a,\varepsilon}(y) = l(y) \in c_{\kappa} + T_{\kappa}$
    by~\ref{cor:retr+}\ref{i:rr:preserve}. If $y \in (c_{\kappa} + T_{\kappa}) \cap
    V$, then $q(f_a(y)) = q(y) = ty$ for some $t \ge 1$,
    by~\ref{cor:proj+}\ref{i:pr:conv}, and, as before, $q(y) \in C_{\lambda}$
    for some $\lambda \in \{-1,0,1\}^n$ such that $\lambda_j = \kappa_j$
    whenever $\kappa_j \ne 0$. In~this case $\dist(y,Q) \le
    \varepsilon/\Gamma_{\ref{cor:retr+}}$ and we can
    apply~\ref{cor:retr+}\ref{i:rr:preserve} to see that $l(q(y)) \in F_{\lambda}
    \subseteq \Clos{F_{\kappa}} \subseteq c_{\kappa} + T_{\kappa}$.

    Assume now $\lambda \in \{-2,-1,1,2\}^n \without \{-2,2\}^n$ and $x \in
    R_\lambda$. If $x \notin V$, then $\varphi_{a,\varepsilon}(x) = l(x) \in
    R_\lambda$ by~\ref{cor:retr+}\ref{i:rr:preserve}. Thus, assume $x \in
    R_\lambda \cap V$. Let $\kappa \in \{-1,0,1\}^n$ be such that $\kappa_i =
    \lambda_i$ if $|\lambda_i| = 1$ and $\kappa_i = 0$ if $|\lambda_i| = 2$ for
    $i = 1,2,\ldots,n$. We already know that $l(q(x)) \in \Clos{F_\kappa}$ so it
    suffices to show that $(q(x) \bullet e_j) \lambda_j \ge 0$ whenever
    $|\lambda_j| = 2$ for some $j \in \{1,\ldots,n\}$ but this is clear since
    $q(x) = tx$ for some $t > 0$.

    If $a \in T_{\kappa}$, then $f_a\lIm T_{\kappa} \without \{a\} \rIm \subseteq
    T_{\kappa} \without \{0\}$. For $x \in T_{\kappa} \without \{0\}$ we have
    $q(x) = tx$ for some $t \in [1,\infty)$, so $q\lIm T_{\kappa} \without \{0\}
    \rIm \subseteq T_{\kappa}$. Finally, $l\lIm T_{\kappa} \without \{0\} \rIm
    \subseteq T_{\kappa}$; hence, $\varphi_{a,\varepsilon}\lIm T_{\kappa}
    \without \{0\} \rIm \subseteq T_{\kappa}$. If $x \in T_\kappa \without Q$,
    then $f_a(x) = x$ so $\varphi_{a,\varepsilon}\lIm T_{\kappa} \without Q \rIm
    \subseteq T_{\kappa}$ as before.

    \emph{Proof of~\ref{i:scp:deriv-bound}}: Set
    $\Delta = \inf \bigl\{\nu(y) \bullet \tfrac{y}{|y|} : y \in \Bdry V
    \bigr\}^{-1}$.
    Note that $\Delta$ can be bounded by a constant depending only on~$n$ and,
    in~particular, independently of~$\varepsilon$.
    Employ~\ref{cor:proj+}\ref{i:pr:deriv-bound} to get for
    $z \in V \without \{0\}$
    \begin{equation}
        \label{eq:Dqz-bound}
        \|\uD q(z)\| 
        \le  \frac{5 \Delta \sup\{ |y| : y \in \Bdry V \}}{|z|}
        \le \frac{10 \Delta}{|z|} \,.
    \end{equation}
    Hence, combining~\ref{cor:retr+}\ref{i:rr:lip-all}, \eqref{eq:f-bounds},
    and~\eqref{eq:Dqz-bound}, for $x \in Q \without \{a\}$
    \begin{multline}
        \|\uD\varphi_{a,\varepsilon}(x)\| \le \|\uD l(q \circ f_a(x))\| \cdot \|\uD q(f_a(x))\| \cdot \|\uD f_a(x)\|
        \\
        \le \frac{10 \Gamma_{\ref{cor:retr+}} \Delta}{|f_a(x)| \dist(a, \R^n \without Q)} 
        \le \frac{20 \Gamma_{\ref{cor:retr+}} \Delta}{|x-a| \dist(a, \R^n \without Q)}  \,.
    \end{multline}

    \emph{Proof of~\ref{i:scp:lip-varphi}}: If $z \in \R^n \without V$,
    then $\|\uD q(z)\| = 1$ and $|z| > 1$. If $z \in V$ and $\dist(z,\Bdry Q) \le
    \frac 14$, then $|z| \ge \frac 34$, so $\|\uD q(z)\| \le 15 \Delta$.
    Altogether, for $x \in \R^n$ satisfying
    \begin{displaymath}
        \dist(x, \Bdry Q) \le \min\bigl\{ \tfrac 12 \dist(a, \R^n \without Q), \tfrac 14 \bigr\}
    \end{displaymath}
    we have $f_a(x) = x$ and, by~\ref{cor:retr+}\ref{i:rr:lip-all},
    \begin{displaymath}
        \|\uD\varphi_{a,\varepsilon}(x)\| \le 15 \Delta \Gamma_{\ref{cor:retr+}} \,.
    \end{displaymath}

    \emph{Proof of~\ref{i:scp:small-bdry}}: Let $x \in \R^n$ satisfy
    $\dist(x, \Bdry Q) \le \min\{ \frac 12 \dist(a, \R^n \without Q), \frac 14
    \}$. Let $y \in \R^n$ be such that $\dist(y,Q) = \varepsilon$ and $|x-y|
    \le \dist(x,\Bdry Q) + \varepsilon$. Then, $\|\uD\varphi_{a,\varepsilon}(tx +
    (1-t)y)\| \le 15 \Delta \Gamma_{\ref{cor:retr+}}$ for each $t \in [0,1]$ and
    $\varphi_{a,\varepsilon}(y) = y$, so
    \begin{displaymath}
        |\varphi_{a,\varepsilon}(x) - x|
        \le |\varphi_{a,\varepsilon}(x) - \varphi_{a,\varepsilon}(y)| + |y - x|
        \le (15 \Delta \Gamma_{\ref{cor:retr+}} + 1) (\dist(x,\Bdry Q) + \varepsilon) \,.
    \end{displaymath}

    \emph{Proof of~\ref{i:scp:bdry-behav}}: Let $x \in \R^n$, $\delta \in
    \R$, $0 < \delta < \min \{ \frac 12 \dist(a, \R^n \without Q), \frac 14
    \}$, and $\dist(x, \Bdry Q) \le \delta$; then $f_a(x) = x$.

    In case $x \in \R^n \without Q$, if $\kappa \in \{-1,0,1\}^n$ is such that
    $x \in C_{\kappa}$, then $\dist(x, Q) = \dist(x,F_{\kappa})$ and
    $\varphi_{a,\varepsilon}(x) \in \Clos{C_{\kappa}}$. Since $\Clos{C_{\kappa}} \cap
    (\Clos{F_{\kappa}} + \cball 0\delta) \subseteq \Bdry Q + \cball 0\delta$ is
    convex and contains both~$x$ and~$\varphi_{a,\varepsilon}(x)$, we see that
    $\conv\{x,\varphi_{a,\varepsilon}(x)\} \subseteq \Bdry Q + \cball 0\delta$.

    Assume now $x \in Q$. Observe that there exists $\kappa \in \{-1,0,1\}^n$
    such that $\varphi_{a,\varepsilon}(x) \in \Clos{F_{\kappa}}$ and
    $\dist(x,\Bdry Q) = \dist(x, F_{\kappa})$ -- this is because $q$ acts on~$x$
    as central projection with centre at the origin. As before, we see that $x$
    and~$\varphi_{a,\varepsilon}(x)$ both lie in the convex set
    $\Clos{C_{\kappa}} \cap (\Clos{F_{\kappa}} + \cball 0\delta)$ so
    $\conv\{x,\varphi_{a,\varepsilon}(x)\} \subseteq \Bdry Q + \cball 0\delta$.

    \emph{Proof of~\ref{i:scp:a-cont}}: Set $Y = (l \circ q)^{-1} \lIm X \rIm$,
    $P = (-1/2,1/2)^n$, $R = (-3/4,3/4)^n$. Observe that $E_a =
    \varphi_{a,\varepsilon}^{-1} \lIm X \rIm = f_a^{-1} \lIm Y \rIm$ and recall
    that $f_a$ is a~diffeomorphism for each $a \in \Int Q$. It follows from the
    construction that for any compact set $K \in \Int Q$
    \begin{gather}
        \lim_{\delta \to 0} \sup \bigl\{
        | f_a^{-1}(x) - f_b^{-1}(x) | : x \in \R^n ,\, a,b \in K ,\, |a-b| < \delta 
        \bigr\} = 0 \,;
        \\
        \text{thus,}\quad
        \lim_{b \to a} \HD(f_a^{-1}\lIm Y \rIm, f_{b}^{-1}\lIm Y \rIm) = 0 \,.
    \end{gather}

    For $a \in \Int Q$ let $B_a$ be the topological boundary of $E_a \cap
    T_{\kappa}$ relative to $T_{\kappa}$. Let $a \in P \cap T_{\kappa}$. It
    follows from~\eqref{eq:scp:HD-conv} and the construction that
    \begin{equation}
        {\textstyle \bigcap_{\delta > 0} \bigcup_{b \in \cball{a}{\delta} \cap T_{\kappa}}}
        ((E_a \without E_b) \cup (E_b \cap E_a)) \cap T_{\kappa}
        \subseteq R \cap T_{\kappa} \cap B_a \,.
    \end{equation}
    Without loss of generality we may assume $T_{\kappa} = \lin \{e_1, \ldots,
    e_k\}$. Recall the construction of the maps~$l$ and~$q$ to see that~$Y$ is
    a~convex conical cap over $l^{-1}\lIm X \rIm$ with vertex at the origin.
    Let~$B$ be the topological boundary of $Y \cap T_{\kappa}$ relative
    to~$T_{\kappa}$. Define affine lines $L_{a,i} = \{ a + t e_i : t \in \R \}$
    for $a \in \R^n$ and $i \in \{1,2,\ldots,n\}$. Since $B$ is the boundary of
    a~convex set and has empty interior in~$T_{\kappa}$ we see that there exists
    $i \in \{ 1,2,\ldots,k \}$ such that $B \cap L_{a,i}$ contains at most two
    points for each $a \in T_{\kappa}$ and we may decompose $B$ into two
    disjoint sets $B = B_1 \cup B_2$ so that for each $a \in T_{\kappa}$ if $a +
    t_1 e_i \in B_1 \cap L_{i,a}$ and $a + t_2 e_i \in B_2 \cap L_{i,a}$, then
    $t_1 < t_2$. Define $B_{1,a} = f_{a}^{-1} \lIm B_1 \rIm$ and $B_{2,a} =
    f_{a}^{-1} \lIm B_2 \rIm$ for $a \in T_{\kappa}$. Then $B_a$ equals the
    disjoint sum of~$B_{1,a}$ and~$B_{2,a}$ for each $a \in \Int Q \cap
    T_{\kappa}$ because $f_a$ is a~diffeomorphism. Clearly, it suffices to show
    that if $j \in \{1,2\}$, then for $\HM^k$~almost all $a \in P \cap
    T_{\kappa}$ we have
    \begin{equation}
        \mu\bigl( R \cap T_{\kappa} \cap B_{j,a} \bigr) = 0 \,.
    \end{equation}
    Fix $j = \{1,2\}$. Observe that if $a \in P \cap T_{\kappa}$ and $t \in
    (-3/4,3/4)$, then the map $g_{a,t} : L_{a,i} \cap P \to \R$ given by
    $g_{a,t}(b) = f_{b,i}^{-1}(t)$ is strictly increasing.  In~consequence, if
    $b,c \in L_{a,i} \cap P$ and $b \ne c$, then $R \cap T_{\kappa} \cap B_{j,b}
    \cap B_{j,c} = \varnothing$. Since $\mu$ is Radon, there exists at most
    countably many $b \in L_{a,i} \cap P$ for which $\mu\bigl( R \cap T_{\kappa}
    \cap B_{j,b}\bigr) > 0$. In~particular, we obtain for each $a \in P \cap
    T_{\kappa}$
    \begin{equation}
        \HM^1\bigl( \bigl\{ b \in L_{a,i} \cap P :
        \mu\bigl( R \cap T_{\kappa} \cap B_{j,b} \bigr) > 0
        \bigr\} \bigr) = 0 \,.
    \end{equation}
    Since $\HM^k \restrict T_{\kappa}$ coincides with the Lebesgue
    measure~$\LM^k$ on~$T_{\kappa}$ and $\LM^k$ is the product of~$k$ copies of
    the one dimensional Lebesgue measure (cf.~\cite[2.6.5]{Fed69}), we may use
    the Fubini Theorem~\cite[2.6.2(3)]{Fed69} to conclude the proof.
\end{proof}

The next lemma is a counterpart of~\cite[4.2.7]{Fed69}. Given arbitrary Radon
measures $\mu_1, \ldots, \mu_l$, and numbers $m_1$, \ldots, $m_l$, and
a~$k$-plane $T_{\kappa}$ with $\max\{m_1,\ldots,m_l\} < k \le n$ we prove that
there are enough good points~$a \in [-1/2,1/2]^n \cap T_{\kappa}$ for which the
integral $\int_Q \| \uD \varphi_{a,\varepsilon}\|^{m_i} \ud \mu_i$ is controlled
by $\mu_i(Q)$. Later we shall apply this lemma to measures $\mu$ defined as
the~restriction of~$\HM^{m}$ to some $m$~dimensional set $\Sigma \subseteq \R^n$
with density.

\begin{lemma}
    \label{lem:int-Dpia}
    Suppose
    \begin{gather*}
        k,N \in \nat \,, \quad k \le n \,,
        \quad
        \kappa \in \{-1,0,1\}^n \text{ is such that } \HM^0(\{ j : \kappa_j = 0 \}) = k \,,
        \\
        Q \text{ and } T_{\kappa} \text{ are as in~\ref{mrem:cube-setup}}\,,
        \quad
        \varepsilon \in \bigl(0,\tfrac 14\bigr) \,,
        \quad
        A = T_{\kappa} \cap \bigl[ -\tfrac 12, \tfrac 12 \bigr]^n \,,
        \\
        m_1,\ldots,m_l \in (0,k) \,,
        \quad
        \mu_1,\ldots,\mu_l \text{ are Radon measures over~$\R^n$}
        \,.
    \end{gather*}
    For $a \in \Int Q$ let $\varphi_{a,\varepsilon} : \R^n \without \{a\} \to
    \R^n$ be the map constructed in~\ref{lem:smooth-centr-proj} and set
    \begin{gather*}
        \Gamma(k,m) = \Gamma_{\ref{lem:smooth-centr-proj}} \frac{k \unitmeasure{k}}{k-m} k^{(k-m)/2} 
        \quad \text{for $m \in (0,k)$} \,,
        \\
        E = \left\{
            a \in A : 
            \int_{Q} \|\uD \varphi_{a,\varepsilon}\|^{m_i} \ud \mu_i
            \le l \Gamma(k,m_i) \mu_i(Q)
            \text{ for } i \in \{ 1,2,\ldots,l \}
        \right\} \,.
    \end{gather*}

    Then $\LM^k(E) > 0$.
\end{lemma}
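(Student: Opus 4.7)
The plan is a Fubini and Markov argument, in the spirit of~\cite[4.2.7]{Fed69}. First I would use~\ref{lem:smooth-centr-proj}\ref{i:scp:deriv-bound} together with the observation that $\dist(a, \R^n \without Q) \ge 1/2$ for every $a \in A$ to obtain the pointwise bound
\begin{equation}
    \|\uD \varphi_{a,\varepsilon}(x)\| \le \frac{\Gamma_{\ref{lem:smooth-centr-proj}}}{|x-a|}
    \quad \text{for $a \in A$ and $x \in \Int Q \without \{a\}$} \,.
\end{equation}
Since $a \in T_{\kappa}$ we have $|x - a|^2 \ge |\project{T_{\kappa}} x - a|^2$; hence, writing $y = \project{T_{\kappa}} x$ and noting that $A$ has diameter at most $\sqrt k$ in~$T_{\kappa}$,
\begin{equation}
    \int_A |x - a|^{-m_i} \ud \LM^k(a)
    \le \int_{\cball y{\sqrt k} \cap T_{\kappa}} |y - a|^{-m_i} \ud\LM^k(a)
    = \frac{k \unitmeasure{k}}{k - m_i} k^{(k-m_i)/2} \,,
\end{equation}
by passing to polar coordinates in $T_{\kappa}$ and using $m_i < k$.

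Next I would set
\begin{equation}
    f_i(a) = \int_Q \|\uD \varphi_{a,\varepsilon}\|^{m_i} \ud \mu_i
    \quad \text{for $a \in A$ and $i \in \{1,\ldots,l\}$} \,,
\end{equation}
and apply Fubini together with the previous step (absorbing the factor $\Gamma_{\ref{lem:smooth-centr-proj}}^{m_i}$ into the constant, which is legal as the~statement allows $\Gamma_{\ref{lem:smooth-centr-proj}}$ to be replaced by a larger constant depending on~$n$) to derive
\begin{equation}
    \int_A f_i \ud \LM^k \le \Gamma(k, m_i) \mu_i(Q) \,.
\end{equation}

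Then I would run a union bound with Markov's inequality. Let $I = \{ i : \mu_i(Q) > 0 \}$ and, for $i \in I$, set $B_i = \{ a \in A : f_i(a) > l \Gamma(k,m_i) \mu_i(Q)\}$. Markov's inequality gives the \emph{strict} bound $\LM^k(B_i) < 1/l$ because $\int_A f_i \ud\LM^k \le \Gamma(k,m_i) \mu_i(Q)$ and the defining threshold is $l$ times larger, while the defining inequality is strict. For $i \notin I$ the condition defining~$E$ is automatically fulfilled. Since $|I| \le l$ and $\LM^k(A) = 1$, summing yields
\begin{equation}
    \LM^k(A \without E) \le \sum_{i \in I} \LM^k(B_i) < |I| \cdot \frac 1l \le 1 = \LM^k(A) \,,
\end{equation}
so $\LM^k(E) > 0$, as required. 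The only genuine care needed is in the constant bookkeeping between the pointwise derivative bound and the definition of $\Gamma(k,m)$ in the statement; everything else is straightforward Fubini and Markov.
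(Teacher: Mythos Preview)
Your approach is essentially identical to the paper's: bound $\|\uD\varphi_{a,\varepsilon}(x)\|$ via~\ref{lem:smooth-centr-proj}\ref{i:scp:deriv-bound}, integrate $|x-a|^{-m_i}$ over $a\in A$ using polar coordinates in~$T_\kappa$, then Fubini and a union/Markov bound. The paper phrases the last step as a contradiction ($1<1$), but the content is the same; your handling of the case $\mu_i(Q)=0$ and of the constant $\Gamma_{\ref{lem:smooth-centr-proj}}^{m_i}$ versus $\Gamma_{\ref{lem:smooth-centr-proj}}$ is in fact more careful than the paper's.

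One small slip: with $y=\project{T_\kappa}x$ you only know $y\in T_\kappa\cap[-1,1]^n$, not $y\in A=T_\kappa\cap[-\tfrac12,\tfrac12]^n$, so the containment $A\subseteq\cball{y}{\sqrt{k}}$ can fail (the distance $|y-a|$ may reach $\tfrac32\sqrt{k}$). The paper avoids this by taking $y$ to be the nearest point of~$A$ to~$x$; convexity of~$A$ then gives $|x-a|\ge|y-a|$ for all $a\in A$, and since $y\in A$ one has $A\subseteq\cball{y}{\sqrt{k}}$. Insert this intermediate step (or enlarge the radius to $\tfrac32\sqrt{k}$, adjusting the constant) and your argument is complete.
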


\begin{proof}
    Employing~\ref{lem:smooth-centr-proj}\ref{i:scp:deriv-bound} we have for
    $\varepsilon \in (0,1/4)$, $m \in (0,k)$, $x \in Q$, and $y \in A$
    satisfying $|x-y| = \dist(x,A)$
    \begin{multline}
        \int_{A} \| \uD \varphi_{a,\varepsilon}(x) \|^m \ud \LM^k(a)
        \le \Gamma_{\ref{lem:smooth-centr-proj}} \int_{A} |x-a|^{-m} \ud \LM^k(a)
        \le \Gamma_{\ref{lem:smooth-centr-proj}} \int_{A} |y-a|^{-m} \ud \LM^k(a)
        \\
        \le \Gamma_{\ref{lem:smooth-centr-proj}} \int_{T_{\kappa} \cap \cball 0{\sqrt k}} |y|^{-m} \ud \LM^k(y) 
        = \Gamma_{\ref{lem:smooth-centr-proj}} \frac{k \unitmeasure{k}}{k-m} k^{(k-m)/2} = \Gamma(k,m) \,.
    \end{multline}
    Thus, for $i \in \{ 1,2,\ldots,l \}$, using the Fubini
    Theorem~\cite[2.6.2]{Fed69}, we obtain
    \begin{equation}
        \label{eq:dpia:int-a}
        \int_A \int_{Q} \| \uD \varphi_{a,\varepsilon}(x) \|^{m_i} \ud \mu_i^p(x) \ud \LM^k(a)
        \le \Gamma(k,m_i) \mu_i(Q) \,.
    \end{equation}
    Now, we argue by contradiction. If $\LM^k(E)$ was zero, then we would have
    \begin{displaymath}
        1 = \LM^k(A) 
        < \sum_{i=1}^l \frac{1}{l \Gamma(k,m_i) \mu_i(Q)} \int_A \int_Q
        \| \uD \varphi_{a,\varepsilon}(x) \|^{m_i} \ud \mu_i(x) \ud \LM^k(a)
        \le \sum_{i=1}^l \frac 1l = 1 \,.
        \qedhere
    \end{displaymath}
\end{proof}

Now, given a cube~$K \in \cubes_*$ (of arbitrary dimension and size) and sets
$\Sigma_1$, \ldots, $\Sigma_l$ we combine~\ref{lem:int-Dpia}
and~\ref{lem:smooth-centr-proj} to construct a~deformation of~$\R^n$ which
maps $\Sigma_i \cap K$ into $\cBdry K$ for each $i = 1,2,\ldots,l$ and preserves
all the super-cubes of~$K$ (i.e. those which contain $K$) as well as all the
cubes from~$\cubes_*$ which do not touch $\cInt K$ and have side length
at~least~$\frac 12 \side K$. Of~course we also control the derivative.

\begin{lemma}
    \label{lem:one-cube-deform}
    Suppose
    \begin{gather*}
        l \in \nat \,, \quad
        K \in \cubes_* \,,
        \quad
        k = \dim(K) \,,
        \quad
        m_1, \ldots, m_l \in \bigl\{ 1,2, \ldots, k \bigr\} \,,
        \\
        \text{$\nu_1, \ldots, \nu_l$ are Radon measures over $\R^n$} \,,
        \quad
        \Sigma = \tbcup\{ \spt \nu_i : i = 1,2,\ldots,l \} \,,
        \\
        \text{either $\max\{m_1, \ldots, m_l\} \le k - 1$
          and $\HM^{k}(\Sigma \cap K) = 0$}
        \\
        \text{or $m_1 = \cdots = m_l = k$ and $\Sigma \cap K \ne K$} \,.
    \end{gather*}

    Then for each $\varepsilon_0 \in \bigl(0,\frac 14 \side K\bigr)$ there exist
    $\varepsilon \in (0,\varepsilon_0]$, a~neighbourhood~$U$ of~$\Sigma$
    in~$\R^n$, and a~map~$\varphi \in \cnt^{\infty}(\R^n, \R^n)$ such that
    \begin{enumerate}
    \item 
        \label{i:one:admissible}
        $\varphi \in \adm{K + \oball 0{\varepsilon}}$,
    \item
        \label{i:one:id}
        $\varphi(x) = x$ for $x \in \R^n$ satisfying $\dist(x,K) \ge \varepsilon$,
    \item
        \label{i:one:proj}
        $\varphi \lIm U \rIm  \cap K
        = \varphi \lIm U \cap K \rIm
        \subseteq \cBdry K$,
    \item
        \label{i:one:neighb}
        $\varphi \lIm \cBdry K + \cball 0{\varepsilon} \rIm \subseteq \cBdry K + \cball 0{\varepsilon}$,
    \item
        \label{i:one:small}
        $|\varphi(x) - x| \le \varepsilon$ for $x \in \R^n$ satisfying
        $\project T (x - \centre K) \notin \project T \lIm K \rIm$, where $T =
        \Tan(K,\centre K)$,
    \item
        \label{i:one:faces}
        if $L \in \cubes_*$ satisfies either $\side L \ge \side K$ or
        $\side L \ge \frac 12 \side K$ and $L \cap \cInt K = \varnothing$, then
        $\varphi \lIm L \rIm \subseteq L$.
    \item
        \label{i:one:Dout}
        $\|\uD\varphi(x)\| \le \Gamma$ for $x \in \R^n$ with $\dist(x,\cBdry
        K) \le \varepsilon$,
    \item
        \label{i:one:int}
        if $\max\{m_1, \ldots, m_l\} \le k - 1$, then there exists $\Gamma =
        \Gamma(k,l) \in (1,\infty)$ such that
        \begin{displaymath}
            \int_{K} \|\uD \varphi\|^{m_i} \ud \nu_i
            \le \Gamma \nu_i(K)
            \quad  \text{for $i = 1,2,\ldots,l$} \,.
        \end{displaymath}
    \end{enumerate}
\end{lemma}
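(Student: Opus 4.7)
The plan is to reduce to Lemma~\ref{lem:smooth-centr-proj} by normalising $K$ via a similarity, choose a well-placed projection centre~$a$ using Lemma~\ref{lem:int-Dpia}, and, when $\dim K = k < n$, extend the resulting $k$-dimensional central projection to all of~$\R^n$ via cutoffs in the normal direction that suppress the singularity at~$a$.

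The similarity $\Phi(x) = \centre K + \tfrac12 \side K \cdot R(x)$, with $R \in \orthgroup{n}$ sending $\R^k \times \{0\}^{n-k}$ onto $T := \Tan(K, \centre K)$, lets us assume $K = [-1,1]^k \times \{0\}^{n-k}$ and $T = \R^k \times \{0\}^{n-k}$; the original $\varphi$ is then recovered by conjugating with~$\Phi$, which preserves all of~\ref{i:one:admissible}--\ref{i:one:int} (scales in~\ref{i:one:small} are absorbed by taking $\varepsilon$ smaller in the normalised setting). To pick~$a$: if $\max_i m_i \le k - 1$, apply Lemma~\ref{lem:int-Dpia} with $\kappa$ satisfying $T_{\kappa} = T$ and $\mu_i = \nu_i$ to obtain $E \subseteq T \cap [-1/2, 1/2]^n$ with $\LM^k(E) > 0$ on which the integral bounds hold; using $\HM^k(\Sigma \cap K) = 0$, further require $a \in E \without \Sigma$. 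If instead $m_1 = \cdots = m_l = k$, pick $a \in \cInt K \without \Sigma$ directly using $\Sigma \cap K \ne K$; no integral bound is needed in this case. Put $\delta = \dist(a, \Sigma) > 0$ and choose $\varepsilon \in (0, \min\{\varepsilon_0, \delta/4, \dist(a, \cBdry K)/4\}]$.

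For $k = n$, take $\varphi = \varphi_{a, \varepsilon}$ furnished by Lemma~\ref{lem:smooth-centr-proj}. For $k < n$, apply Lemma~\ref{lem:smooth-centr-proj} inside $T \cong \R^k$ to obtain $\psi \in \cnt^{\infty}(T \without \{a\}, T)$, and, writing $x = y + z$ with $y = \project{T} x \in T$ and $z = \perpproject{T} x \in T^{\perp}$, define
\begin{equation}
    \varphi(y + z) = y + \chi(y)\,\eta(z)\,\bigl(\psi(y) - y\bigr) + z \,,
\end{equation}
where $\chi : T \to [0,1]$ and $\eta : T^{\perp} \to [0,1]$ are smooth cutoffs with $\chi(y) = 0$ for $|y - a| \le \delta/8$, $\chi(y) = 1$ for $|y - a| \ge \delta/4$, $\eta(z) = 1$ for $|z| \le \varepsilon/4$, and $\eta(z) = 0$ for $|z| \ge \varepsilon/2$. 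The factor $\chi$ suppresses the singularity of $\psi$ at~$a$ so that $\varphi \in \cnt^{\infty}(\R^n, \R^n)$, while $\eta$ confines the deformation to a thin tube about~$T$. Take $U = \Sigma + \oball{0}{\delta/2}$; then $U \cap \cball{a}{\delta/2} = \varnothing$, so $\chi \equiv 1$ on $\project{T}\lIm U \rIm$, and for $x \in U \cap K$ one has $z = 0$, $\eta(0) = 1$, $\chi(y) = 1$, whence $\varphi(x) = \psi(y) \in \cBdry K$, which delivers~\ref{i:one:proj}.

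Items~\ref{i:one:admissible}, \ref{i:one:id}, \ref{i:one:neighb}, \ref{i:one:small}, \ref{i:one:Dout} follow from the corresponding clauses of Lemma~\ref{lem:smooth-centr-proj} together with the product form of~$\varphi$: the bounded convex open set witnessing admissibility is a slight enlargement of~$K$ inside $K + \oball{0}{\varepsilon}$; \ref{i:one:small} uses~\ref{lem:smooth-centr-proj}\ref{i:scp:small-out}; \ref{i:one:Dout} uses~\ref{lem:smooth-centr-proj}\ref{i:scp:lip-varphi}. For~\ref{i:one:int} one estimates $\|\uD\varphi(x)\|^{m_i} \le C\bigl(\|\uD\psi(\project{T} x)\|^{m_i} + 1\bigr)$ on the support of $\chi\eta$ and integrates, invoking the choice of~$a$. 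The principal obstacle is~\ref{i:one:faces}: to preserve \emph{every} same-size or non-touching half-size cube $L \in \cubes_*$, the deformation must not push points across the walls of such cubes; this follows from the face- and cube-preserving clauses of Lemma~\ref{lem:smooth-centr-proj}\ref{i:scp:Fk-im} together with the fact that $\varphi$ leaves the $T^{\perp}$-component of~$x$ unchanged, but a careful case analysis of how cubes $L \in \cubes_*$ intersect $K + \oball{0}{\varepsilon}$ and how the faces of $K$ embed in them is required.
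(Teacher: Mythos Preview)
Your approach is essentially the same as the paper's: normalise $K$, invoke Lemma~\ref{lem:int-Dpia} to locate a good centre~$a$, pull the central projection from Lemma~\ref{lem:smooth-centr-proj} back to~$T$, kill the singularity at~$a$ with a radial cutoff, and extend to~$\R^n$ with a cutoff in the $T^\perp$~direction. The paper's formula $\varphi(x) = \perpproject{T}x + \psi(\project T x) + (\project T x - \psi(\project T x))\alpha(|\perpproject{T} x|/\iota)$ is your displayed formula after absorbing $\chi$ into~$\psi$. One organisational difference worth noting: you make $\varepsilon$ depend on $\delta = \dist(a,\Sigma)$, but Lemma~\ref{lem:int-Dpia} already requires a fixed~$\varepsilon$ to define the map~$\varphi_{a,\varepsilon}$ whose derivative is being integrated, so as written the choice of~$a$ and the choice of~$\varepsilon$ are circular. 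The paper avoids this by fixing $\varepsilon = \varepsilon_0$ first (in the case $\max_i m_i \le k-1$), applying Lemma~\ref{lem:int-Dpia} with that~$\varepsilon$, and only afterwards introducing the separate cutoff scale $d = \tfrac12\min\{\iota, \dist(a,\Sigma)\}$ for the singularity suppression; this also keeps the support of the cutoff well inside~$\cInt K$, which you need for~\ref{i:one:Dout}. Finally, the rotation~$R$ in your normalisation is unnecessary (dyadic cubes are already axis-aligned, so $T$ is a coordinate plane) and would otherwise complicate~\ref{i:one:faces} by misaligning the cubes~$L \in \cubes_*$.
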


\begin{proof}
    Let $\varepsilon_0 \in (0, \side{K}/4)$. If $\max\{m_1, \ldots, m_l\} \le k
    - 1$, then set $\varepsilon = \varepsilon_0$. If $m_1 = \cdots = m_l = k$,
    then choose arbitrary $a_0 \in \cInt K \without \Sigma$ and set
    \begin{displaymath}
        \varepsilon = \min \bigl\{ \varepsilon_0, 2^{-8}\dist(a_0,\cBdry K) \bigr\} \,.
    \end{displaymath}
    Translating $\Sigma$ and $K$ by $-\centre K$ we can assume $\centre K =
    0$. Set
    \begin{gather}
        T = \Tan(K,\centre K) \,,
        \quad 
        \iota = \varepsilon / \sqrt 2 \,,
        \quad
        r = \scale{2/\side K} \circ \trans{-\centre K} \,,
        \\
        \mu_i = \bigl( r_{\#}\nu_i \bigr) \restrict T
        \quad \text{ for $i = 1,\ldots,l$} \,.
    \end{gather}
    Note that $r\lIm K \rIm = [-1,1]^n \cap T$. For $a \in K$ let
    $\varphi_{r(a), 2\iota /\side K}$ be the map defined by
    employing~\ref{lem:smooth-centr-proj} with $r(a)$, $2\iota /\side K$ in
    place of $a$, $\varepsilon$ and set
    \begin{displaymath}
        \psi_a = r^{-1} \circ \varphi_{r(a), 2\iota /\side K} \circ r \,.
    \end{displaymath}
    To choose an appropriate $a \in K$, we consider two cases.
    \begin{itemize}
    \item If $\max\{m_1, \ldots, m_l\} \le k - 1$, then we proceed as
        follows. Define $E \subseteq K$ to be the set of all those $a \in K$ for
        which $r(a) \in [-1/2,1/2]^n$ and
        \begin{equation}
            \label{eq:int-dphi}
            \int_{[-1,1]^n}
            \|\uD \varphi_{r(a),2\iota /\side K}\|^{m_i} \ud \mu_i 
            \le l \Gamma_{\ref{lem:int-Dpia}}(k,m_i) \mu_i([-1,1]^n)
            \quad \text{for $i = 1,\ldots,l$}\,.
        \end{equation}
        Apply~\ref{lem:int-Dpia} with $2\iota /\side K$ in place of
        $\varepsilon$ to conclude that $\LM^k(E) > 0$. Since we have
        $\HM^{k}(\Sigma \cap K) = 0$, we may choose $a \in E \without \Sigma$.

    \item If $m_1 = \cdots = m_l = k$, then we set $a = a_0$.
    \end{itemize}
    Since $\Sigma \cap K$ is compact we have
    \begin{displaymath}
        d = \tfrac 12 \min \bigl\{ \iota , \dist(a, \Sigma) \bigr\} > 0 \,.
    \end{displaymath}
    Let $\alpha \in \cnt^{\infty}(\R, \R)$ be such that $\alpha(t) = 1$ for
    $t \ge 7/8$, $\alpha(t) = 0$ for $t \le 1/4$, and $0 < \alpha'(t) < 2$ for
    $t \in (0,1)$. Recall that we assumed $\centre K = 0$; in~particular
    $K \subseteq T$. Set
    \begin{gather*}
        \psi(x) = \left\{
            \begin{array}{ll}
                x & \text{for $x \in \cball a{d/4}$}
                \\
                \alpha(|x-a|/d) \psi_a(x) + \bigl(1 - \alpha(|x-a|/d)\bigr) x
                & \text{for $x \in \R^n \without \cball a{d/4}$}
            \end{array}
        \right.
        \\
        \varphi(x) = \perpproject{T}x + \psi(\project T x) 
        + \bigl(\project T x - \psi(\project T x)\bigr) \alpha(|\perpproject{T} x|/\iota)
        \quad \text{for $x \in \R^n$} \,.
    \end{gather*}
    Clearly $\varphi$ is of class~$\cnt^\infty$. Since $K + \oball
    0{\varepsilon}$ is convex, we see that~\ref{i:one:admissible} is
    satisfied. Set
    \begin{displaymath}
        Q = [-1,1]^n
        \quad \text{and} \quad
        R = r^{-1} \lIm Q \rIm
        \quad \text{and} \quad
        U = \Sigma + \oball{0}{2^{-3}d} \,.
    \end{displaymath}

    \emph{Proof of~\ref{i:one:id}}: If $x \in \R^n$ satisfies $\dist(x,K) \ge
    \varepsilon$, then either $\dist(x, T) \ge \varepsilon/\sqrt 2 \ge \iota$
    and then $\varphi(x) = x$, or $\dist(\project T x, K) \ge \varepsilon/ \sqrt
    2 \ge \iota$ and then $\psi_a(x) = x$,
    by~\ref{lem:smooth-centr-proj}\ref{i:scp:id-outside}, and $\varphi(x) = x$.

    \emph{Proof of~\ref{i:one:proj}}: If $x \in U \cap K$, then
    $\varphi(x) = \psi_a(x)$. Observe that
    $\psi_a(T \without \{a\}) \subseteq T$
    by~\ref{lem:smooth-centr-proj}\ref{i:scp:Fk-im} because $a \in T$. Combining
    this with $\psi_a\lIm R \without \{a\} \rIm \subseteq \Bdry R$, which holds
    due to~\ref{lem:smooth-centr-proj}\ref{i:scp:Q-im}, we see that
    $\varphi \lIm U \cap K \rIm \subseteq \cBdry K$.
    Moreover, by~\ref{lem:smooth-centr-proj}\ref{i:scp:Fk-im} and the definition
    of~$\varphi$ we have
    $\varphi \lIm \R^n \without K \rIm \subseteq \R^n \without K$, so
    $\varphi \lIm U \rIm \cap K = \varphi \lIm U \cap K \rIm$.

    \emph{Proof of~\ref{i:one:neighb}}: Let $x \in \cBdry K + \cball 0{\varepsilon}$.
    Observe that $\perpproject{T}x = \perpproject{T}\varphi(x)$ because
    $\psi_a(\project T x) \in T$ due
    to~\ref{lem:smooth-centr-proj}\ref{i:scp:Fk-im}. Moreover, $\Bdry R \cap (T
    + \cball 0{\varepsilon}) \subseteq \cBdry K + T^{\perp}$, so for any $z \in
    \cBdry K + \cball 0{\varepsilon}$ we have $\dist(z, \cBdry K)^2 =
    \dist(\project T z, \Bdry R)^2 + \dist(z,T)^2$. Noting $|\perpproject{T}x|
    \le \varepsilon$ and
    using~\ref{lem:smooth-centr-proj}\ref{i:scp:Fk-im}\ref{i:scp:bdry-behav} we
    can write
    \begin{multline*}
        \dist(\varphi(x), \cBdry K)^2
        = \dist(\project T \varphi(x), \Bdry R)^2 + \dist(\varphi(x),T)^2
        \\
        = \dist\bigl( (1-t) \psi(\project T x) + t \project T x, \cBdry K\bigr)^2 + |\perpproject{T} x|^2 
        \\
        \le \dist(\project T x, \cBdry K)^2 + |\perpproject{T} x|^2 
        = \dist(x, \cBdry K)^2 \,,
    \end{multline*}
    where $t = \alpha(|\perpproject{T}x|/\iota)$. Thus, $\varphi(x) \in
    \cBdry K + \cball 0{\varepsilon}$.

    \emph{Proof of~\ref{i:one:small}}: Let $x \in \R^n$ be such that $\project T x
    \notin K$. If $\dist(x,K) \ge \varepsilon$, then $\varphi(x) = x$ and there
    is nothing to prove. Assume $\dist(x,K) \le
    \varepsilon$. By~\ref{lem:smooth-centr-proj}\ref{i:scp:small-out} we know
    $|\psi(\project T x) - \project T x| \le \iota$ so for any $t \in [0,1]$ we
    have $|(t \project T x + (1-t) \psi(\project T x)) - \project T x| \le \iota$.
    Setting $t = \alpha(|\perpproject{T}x|/\iota)$ we obtain
    \begin{displaymath}
        |\varphi(x) - x|
        = |\project T \varphi(x) - \project T x|
        \le |(t \project T x + (1-t) \psi(\project T x)) - \project T x| 
        \le \iota \le \varepsilon \,.
    \end{displaymath}

    \emph{Proof of~\ref{i:one:faces}}: Let $L \in \cubes_*$ be such that $\side L \ge
    \frac 12 \side K$ and $L \cap \cInt K = \varnothing$. Observe that if $\side
    L > \frac 12 \side K$, then $L$ is a sum of some cubes from $\cubes_*$ which
    do not intersect $\cInt{K}$ and have side length $\frac 12 \side K$; thus,
    it is enough to prove the claim in case $\side L = \frac 12 \side K$ -- we
    shall assume this holds. Since $\varphi(x) = x$ for $x \in \R^n$ with
    $\dist(x,K) \ge \varepsilon$ and $\varepsilon \le \frac 14 \side K$ we will
    also assume that $L \cap K \ne \varnothing$. For $\kappa \in \{-1,0,1\}^n$
    and $\lambda \in \{-2,-1,1,2\}^n$ let $c_\kappa$, $T_{\kappa}$,
    $R_{\lambda}$, be as in~\ref{mrem:cube-setup}. If $\dim(\project T \lIm L
    \rIm) = \dim(K)$, then $\project T \lIm L \rIm = T \cap r \lIm R_\lambda
    \rIm$ for some $\lambda \in \{-2,-1,1,2\}^n$. If $\dim(\project T \lIm L
    \rIm) < \dim(K)$, then $\project T \lIm L \rIm \subseteq \cBdry K$ and $L$
    is contained in some face of~$R$. In this case let $\kappa \in \{-1,0,1\}^n$
    be such that $L \subseteq r \lIm F_{\kappa} \rIm$ and $F_{\kappa} \subseteq
    F_{\sigma}$ whenever $L \subseteq r \lIm F_{\sigma} \rIm$ for some $\sigma
    \in \{-1,0,1\}^n$. If it happens that $\dim(F_\kappa) > \dim(L)$, then $L$
    must lie inside $T_\sigma$ for some $\sigma \in \{-1,0,1\}^n$. Altogether,
    there exist $\lambda \in \{-2,-1,1,2\}^n \without \{-2,2\}^n$ and $\kappa,
    \sigma \in \{-1,0,1\}^n$ such that
    \begin{displaymath}
        \project T \lIm L \rIm
        = r^{-1} \lIm R_{\lambda} \cap T_\sigma \cap (c_\kappa + T_\kappa) \cap T\rIm  \,.
    \end{displaymath}
    Hence, $\psi\lIm \project T \lIm L \rIm \rIm \subseteq \project T \lIm L
    \rIm$ by~\ref{lem:smooth-centr-proj}\ref{i:scp:Fk-im}. Since $L$ is convex,
    we obtain $\varphi \lIm \project T \lIm L \rIm \rIm \subseteq \project T
    \lIm L \rIm$. Finally, note that $L = \project T \lIm L \rIm +
    \perpproject{T} \lIm L \rIm$ and $\varphi \lIm L \rIm = \varphi \lIm
    \project T \lIm L \rIm \rIm + \perpproject{T} \lIm L \rIm$, which proves
    the claim in case $L \cap \cInt{K} = \varnothing$.

    If $\cInt{K} \cap L \ne \varnothing$ but $\side L \ge \side K$, then $K
    \subseteq \project T \lIm L \rIm$. Clearly $\varphi \lIm K \rIm \subseteq K$
    and $\project T \lIm L \rIm \without K$ is contained in a sum of cubes with
    side length at least $\frac 12 \side K$ which do not intersect~$\cInt
    K$. Hence, $\varphi\lIm \project T \lIm L \rIm \rIm \subseteq \project T
    \lIm L \rIm$ and the claim follows as before.

    \emph{Proof of~\ref{i:one:Dout}}: Assume $x \in \R^n$ satisfies
    $\dist(x, \cBdry K) \le \varepsilon$, then $\dist(\project T x, \cBdry K)
    \le \varepsilon$. Since $d \le \iota/2 \le 2^{-7} \dist(a, \cBdry K)$ we~see
    that $|\project T x - a| \ge (1-2^{-8}) \dist(a, \cBdry K) \ge d$ so
    $\psi(\project T x) = \psi_a(\project T x)$. Recalling $\alpha'(t) \le 2$,
    $\alpha(t) \le 1$ for $t \in \R$, $\iota = \varepsilon/\sqrt 2$,
    and~\ref{lem:smooth-centr-proj}\ref{i:scp:lip-varphi}\ref{i:scp:small-bdry}
    we~get
    \begin{multline*}
        \|\uD\varphi(x)\|
        \le \|\perpproject{T}\| + \|\uD\psi_a(\project T x) \circ T\|
        + \| \project T - \uD\psi_a(\project T x) \circ T \|
        + 2/\iota |\project T x - \psi_a(\project T x)|
        \\
        \le 2 + 10\Gamma_{\ref{lem:smooth-centr-proj}} \,.
    \end{multline*}

    \emph{Proof of~\ref{i:one:int}}: Let us assume $\max\{m_1,\ldots,m_l\}
    \le k-1$; hence, $r(a) \in \bigl[-\frac 12, \frac 12 \bigr]^n \cap T$. Note
    that for $i \in \{1,\ldots,l\}$ and $x \in T \without \cball ad$
    \begin{gather}
        \label{eq:phi-comp}
        \|\uD \varphi(x)\| 
        = \| \perpproject{T} + \uD\psi_a(x) \circ \project T \|
        \le 1 + \|\uD \varphi_{r(a),2\iota /\side K}(r(x))\|
    \end{gather}
    Using~\eqref{eq:phi-comp} and the definition of~$\Sigma$, $U$, and~$\mu_i$
    we get
    \begin{gather}
        \label{eq:phiphi}
        \int_{K} \|\uD \varphi \|^{m_i} \ud \nu_i
        \le 2^{m_i-1} \int_{[-1,1]^n} \|\uD \varphi_{r(a),2\iota /\side K}\|^{m_i} \ud \mu_i + 2^{m_i-1} \nu_i(K)
        \\
        \label{eq:phiphi2}
        \text{and} \quad
        \mu_i([-1,1]^n) = \nu_i(K)
        \quad \text{for $i = 1,\ldots,l$} \,.
    \end{gather}
    Combining~\eqref{eq:int-dphi} with~\eqref{eq:phiphi} and~\eqref{eq:phiphi2}
    yields~\ref{i:one:int}.
\end{proof}

Next, we shall prove our main deformation theorem~\ref{thm:deformation}. Given
a~finite subset $\mathcal A$ of an admissible family $\mathcal F$ of
top~dimensional cubes from~$\cubes$ and some sets $\Sigma_1$, \ldots,
$\Sigma_l$, we deform all these sets onto the $m$~dimensional skeleton
of~$\mathcal A$ using a~smooth deformation of~$\R^n$. Furthermore, we provide
estimates on the measure of the deformed sets (i.e. the images of $\Sigma_i$ for
$i =1,2,\ldots,l$) and, in case $\Sigma_i$ are rectifiable, also on the measure
of the whole deformation (i.e. the images of $[0,1] \times \Sigma_i$ for $i
=1,2,\ldots,l$). The basic idea of the proof is simple: we order all the cubes
of the cubical complex $\CX(\mathcal F)$ which touch the interior of some cube
from~$\mathcal A$ lexicographically with respect to side length and dimension
and then apply~\ref{lem:one-cube-deform} iteratively to each cube. If the
dimensions of $\Sigma_1$, \ldots, $\Sigma_l$ all equal~$m$, then we additionally
ensure that all the $m$-dimensional faces of~$\mathcal A$ are either fully
covered or not covered at all. During this last step we cannot control the
derivative so we actually provide two deformations: one with good estimates
(called ``$g$'') and one without estimates (called ``$f$'') but performing the
last step of cleaning the cubes which are not fully covered.

To be able to estimate the measure of the image of $\Sigma_i$ even if $\Sigma_i$
is not rectifiable, we~need the following simple lemma.

\begin{lemma}
    \label{lem:image-measure}
    Let $S \subseteq \R^n$ be such that $\HM^m(S \cap K) < \infty$ for every
    compact set $K \subseteq \R^n$, $g \in \cnt^1(\R^n,\R^N)$ for some $N \in
    \nat$, and $f \in L^1(\HM^m \restrict g\lIm S \rIm,\R)$ be
    non-negative. Then
    \begin{displaymath}
        \int f \ud \HM^m \restrict g \lIm S \rIm
        \le \int f \circ g \|\uD g\|^m \ud \HM^m \restrict S \,.
    \end{displaymath}
\end{lemma}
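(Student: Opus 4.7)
The plan is to reduce to the case of characteristic functions via a standard simple function approximation and monotone convergence, and then to prove the resulting inequality by a Vitali-type covering argument using only the elementary bound $\HM^m(h \lIm B \rIm) \le \Lip(h)^m \HM^m(B)$, which holds for arbitrary sets $B$ directly from the definition of the Hausdorff measure. In particular, no rectifiability of $S$ and no area formula are needed.

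More precisely, approximating~$f$ from below by non-negative simple functions and using monotone convergence on both sides, it suffices to prove the inequality when $f = \CF_E$ for a Borel set $E \subseteq g \lIm S \rIm$. Setting $A = S \cap g^{-1} \lIm E \rIm$ one has $E \subseteq g \lIm A \rIm$, so the claim is implied by the sharper statement
$$\HM^m(g \lIm A \rIm) \le \int_A \|\uD g\|^m \ud \HM^m \qquad \text{for every Borel set $A \subseteq S$.}$$

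To prove this, I would fix a compact set $K \subseteq \R^n$ and show the estimate with $A$ replaced by $A \cap K$; the global statement then follows by monotone convergence along a compact exhaustion $K_j \uparrow \R^n$. Fix $\varepsilon > 0$. Using continuity of $\uD g$, for every $x \in A \cap K$ pick $r_x > 0$ so small that $\|\uD g(y) - \uD g(x)\| \le \varepsilon$ for all $y \in \cball{x}{r_x}$; the mean value inequality on the convex ball then gives $\Lip(g|_{\cball{x}{r_x}}) \le \|\uD g(x)\| + \varepsilon$, whence
$$\HM^m\bigl(g \lIm A \cap \cball{x}{r_x} \rIm\bigr) \le \bigl(\|\uD g(x)\| + \varepsilon\bigr)^m \HM^m\bigl(A \cap \cball{x}{r_x}\bigr).$$

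Applying the Vitali covering theorem (cf.~\cite[2.8.16, 2.8.18]{Fed69}) to the finite Radon measure $\HM^m \restrict (A \cap K)$ and the family of all such closed balls produces a~countable disjointed subcollection $\{\cball{x_i}{r_i}\}$ whose union covers $\HM^m$-almost all of $A \cap K$. Summing the local estimates and exploiting $\|\uD g(x_i)\| \le \|\uD g(y)\| + \varepsilon$ for $y \in \cball{x_i}{r_i}$ yields
$$\HM^m\bigl(g \lIm A \cap K \rIm\bigr) \le \sum_i \bigl(\|\uD g(x_i)\| + \varepsilon\bigr)^m \HM^m\bigl(A \cap \cball{x_i}{r_i}\bigr) \le \int_{A \cap K} \bigl(\|\uD g\| + 2\varepsilon\bigr)^m \ud \HM^m.$$
Letting $\varepsilon \downarrow 0$ by dominated convergence (the dominant is bounded on $K$ and $\HM^m(A \cap K) < \infty$) and then $K \uparrow \R^n$ by monotone convergence completes the argument. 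There is no real obstacle: the only mildly delicate point is keeping all quantities finite so that the dominated convergence step applies, which is why I~first localise to a compact set~$K$.
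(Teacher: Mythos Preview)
Your proof is correct and follows essentially the same approach as the paper: reduce to characteristic functions, use continuity of $\uD g$ to get local Lipschitz control on small balls, cover, sum the local estimates, and pass to the limit via dominated convergence. The only cosmetic difference is that the paper first reduces $S$ to a compact set and extracts a \emph{finite} open subcover (disjointified by hand), whereas you localise to $A\cap K$ and invoke the Vitali covering theorem to obtain a countable disjoint family; both devices serve the same purpose and neither is materially simpler than the other.
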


\begin{proof}
    Since $S$ can be decomposed into a countable sum of compact sets we can
    assume $S$ is compact. Furthermore, using standard methods of Lebesgue
    integration~\cite[2.3.3, 2.4.8]{Fed69} we can assume $f = \CF_{A}$ for some
    $\HM^m \restrict g \lIm S \rIm$~measurable set $A \subseteq \R^N$. Let
    $\varepsilon > 0$. For each $x \in S$ choose $r_x > 0$ so that
    \begin{displaymath}
        \|\uD g(x)\|^m - \varepsilon \le \Lip(g|_{\cball{x}{r_x}})^m \le \|\uD g(x)\|^m + \varepsilon \,.
    \end{displaymath}
    This is possible since $\uD g$ is continuous. From the family $\{
    \oball{x}{r_x} : x \in S \}$ choose a finite covering~$\mathcal B = \{ B_1,
    \ldots, B_K \}$ of~$S$. For $j = 1,2,\ldots,K$ define $S_j \subseteq S$, and
    $x_j \in \R^n$, and $r_j \in \R$ so that
    \begin{displaymath}
        B_j = \oball{x_j}{r_j}
        \quad \text{and} \quad
        S_j = S \cap B_j \without \tbcup \{ B_i : i = 1,2,\ldots,j-1 \} \,.
    \end{displaymath}
    We obtain
    \begin{multline}
        \label{eq:im:est}
        \int_{g\lIm S \rIm} f \ud \HM^m
        = \HM^m(A \cap g \lIm S \rIm)
        = \HM^m\bigl( \tbcup \{ g \lIm S_j \cap g^{-1} \lIm A \rIm \rIm : j = 1,2,\ldots,K \} \bigr)
        \\
        \le \sum_{j=1}^K \bigl( \|\uD g(x_j)\|^m + \varepsilon \bigr) \HM^m(S_j \cap g^{-1} \lIm A \rIm)
        = \int_S v_{\varepsilon} \ud \HM^m \,,
    \end{multline}
    where 
    \begin{displaymath}
        v_{\varepsilon} = \sum_{j=1}^K \bigl( \|\uD g(x_j)\|^m + \varepsilon \bigr) \CF_{S_j \cap g^{-1} \lIm A \rIm}
        = f \circ g \sum_{j=1}^K \bigl( \|\uD g(x_j)\|^m + \varepsilon \bigr) \CF_{S_j} \,.
    \end{displaymath}
    We obtain the claim by letting $\varepsilon \to 0$ and using the dominated
    convergence theorem; see~\cite[2.4.9]{Fed69}.
\end{proof}

\begin{theorem}
    \label{thm:deformation}
    Suppose
    \begin{gather*}
        \text{$\mathcal F \subseteq \cubes$ is admissible} \,,
        \quad
        \text{$\mathcal A \subseteq \mathcal F$ is finite} \,,
        \quad
        \Sigma_1, \ldots, \Sigma_l \subseteq \R^n \,,
        \quad
        m, m_1, \ldots, m_l \in \nat,
        \\
        \varepsilon_0 = 2^{-4} \min\{ \side R : R \in \mathcal A \}
        \,,
        \quad
        \delta = \max\{ \side R : R \in \mathcal A \} \,,
        \quad
        G_{\varepsilon_0} = \tbcup \mathcal A + \oball{0}{\varepsilon_0} \,,
        \\
        n-1 \ge m = m_1 \ge \ldots \ge m_l \ge 1 \,,
        \quad
        \Sigma = \tbcup \{ \Sigma_i : i = 1,\ldots,l \} \,,
        \quad
        \HM^{m+1}(\Clos{\Sigma} \cap G_{\varepsilon_0}) = 0 \,,
        \\ 
        \text{$\Sigma_i$ is $\HM^{m_i}$~measurable
          and $\HM^{m_i}(\Sigma_i \cap G_{\varepsilon_0}) < \infty$
          for $i = 1,\ldots,l$}
        \,.
    \end{gather*}
    Then for each $\varepsilon \in (0,\varepsilon_0)$, setting $G_{\varepsilon}
    = \tbcup \mathcal A + \oball{0}{\varepsilon}$ and $G_0 = \Int \tbcup
    \mathcal A$, there exist deformations $f,g \in \cnt^{\infty}(\R \times \R^n,
    \R^n$) and a~neighbourhood~$U$ of~$\Sigma \cap G_{\varepsilon}$ in~$\R^n$
    satisfying:
    \begin{enumerate}
    \item
        \label{i:dt:identity}
        $f(t,x) = x$ if either $t = 0$ and $x \in \R^n$ or $t \in \R$ and $x
        \in \R^n \without G_{\varepsilon}$.
    \item
        \label{i:dt:decomp}
        There exist $N,N_0 \in \nat$, $N_0 \le N$, $\varphi_1, \ldots, \varphi_N
        \in \adm{G_{\varepsilon}}$, and $K_1,\ldots,K_N \in \CX(\mathcal F)$
        such that for each $j = 1,\ldots,N$ setting $\psi_0 = \id{\R^n}$ and
        $\psi_j = \varphi_j \circ \psi_{j-1}$ we have
        \begin{gather*}
            \bigl\{ x \in \R^n : \varphi_j(x) \ne x \bigr\} \subseteq K_j + \oball 0{\varepsilon}
            \quad \text{and} \quad
            \varphi_j \lIm \psi_{j-1} \lIm U \rIm \cap K_j \rIm \subseteq \cBdry{K_j} \,.
        \end{gather*}
        Moreover, there exists $s \in \cnt^{\infty}(\R, \R)$ such that $s(0)
        = 0$, and $s(1) = 1$, and $0 \le s'(t) \le 2$ for $t \in \R$, and
        $\uD^{k}s(0) = 0 = \uD^{k}s(1)$ for $k \in \nat$, and
        \begin{displaymath}
            f(t,\cdot) = s(tN-j) \psi_{j+1} + (1 - s(tN-j)) \psi_{j} = g(tN/N_0, x)
        \end{displaymath}
        whenever $j \in \{0,\ldots,N-1\}$ and $t \in \R$ satisfy $j \le tN \le j+1$.
    \item 
        \label{i:dt:admissible}
        $f(t,\cdot) \in \adm{G_{\varepsilon}}$ for each $t \in I$.
    \item
        \label{i:dt:cube-pres}
        If $K \in \CX(\mathcal F)$, then $f(t,\cdot)\lIm K \rIm \subseteq K$ for $t \in I$.
        In particular
        \begin{displaymath}
            f(t,\cdot)\lIm \tbcup \mathcal A \rIm \subseteq \tbcup \mathcal A
            \quad \text{and} \quad
            f(t,\cdot)\lIm \R^n \without \tbcup \mathcal A \rIm \subseteq \R^n \without G_0
            \quad \text{for $t \in I$} \,.
        \end{displaymath}
    \item
        \label{i:dt:m-dim-im}
        $g(1,\cdot)\lIm U \rIm \cap G_0 \subseteq
        \tbcup \bigl\{ K \in \CX(\mathcal F) : K \cap G_0 \ne \varnothing \,,\, \dim(K) = m \bigr\}$.
    \item 
        \label{i:dt:full-faces}
        If $m_1 = \cdots = m_l$, then for each $K \in \CX(\mathcal F) \cap
        \cubes_{m}$ satisfying $K \cap G_0 \ne \varnothing$ there holds
        \begin{displaymath}
            \text{either} \quad
            \cInt{K} \cap f(1,\cdot)[\Sigma \cap \tbcup \mathcal A] = \varnothing
            \quad \text{or} \quad
            K \cap f(1,\cdot)[\Sigma \cap \tbcup \mathcal A] = K \,.
        \end{displaymath}
    \item
        \label{i:dt:estimates}
        There exists $\Gamma = \Gamma(n,m) \in (0,\infty)$ such that for each $t
        \in I$ and $i \in \{1,\ldots,l\}$ and $Q \in \mathcal F$, setting
        $\widetilde Q = \tbcup \{ R \in \mathcal F : R \cap Q \ne \varnothing
        \}$,
        \begin{gather}
            \label{eq:dt:deriv-bound}
            \| \uD f(t,\cdot)(x) \| < \Gamma \quad \text{for $x \in G_{\varepsilon} \without G_0$} \,,
            \\
            \label{eq:dt:im-cube}
            \int_{\Sigma_i \cap Q} \|\uD g(t,\cdot)\|^{m_i} \ud \HM^{m_i}
            < \Gamma \HM^{m_i}\bigl( \Sigma_i \cap (\widetilde Q + \oball{0}{\varepsilon}) \bigr) \,,
            \\
            \label{eq:dt:im-gt}
            \HM^{m_i}(g(t,\cdot) \lIm \Sigma_i \cap G_{\varepsilon} \rIm) 
            \operatorname*{\le}^{\ref{lem:image-measure}}
            \int_{\Sigma_i \cap G_{\varepsilon}} \|\uD g(t,\cdot)\|^{m_i} \ud \HM^{m_i}
            < \Gamma \HM^{m_i}(\Sigma_i \cap G_{\varepsilon}) \,,
            \\
            \label{eq:dt:im-cube-f}
            \HM^{m_i}(f(1,\cdot) \lIm \Sigma_i \rIm \cap Q)
            < \Gamma \HM^{m_i}\bigl( \Sigma_i \cap (\widetilde Q + \oball{0}{\varepsilon}) \bigr) \,,
            \\
            \label{eq:dt:im-f1}
            \HM^{m_i}(f(1,\cdot) \lIm \Sigma_i \cap G_{\varepsilon} \rIm) 
            < \Gamma \HM^{m_i}(\Sigma_i \cap G_{\varepsilon})
            \,;
        \end{gather}
        moreover, if $\Sigma_i$ is $(\HM^{m_i},m_i)$~rectifiable, then
        \begin{equation}
            \label{eq:dt:im-g-full}
            \HM^{m_i+1}(g \lIm I \times (\Sigma_i \cap G_{\varepsilon}) \rIm)
            \le \HM^{m_i+1}(f \lIm I \times (\Sigma_i \cap G_{\varepsilon}) \rIm)
            < \Gamma \delta \HM^{m_i}(\Sigma_i \cap G_{\varepsilon}) \,.
        \end{equation}
    \end{enumerate}
\end{theorem}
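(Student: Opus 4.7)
The plan is to iterate Lemma~\ref{lem:one-cube-deform} along a carefully ordered list of cubes of the cubical complex $\CX(\mathcal F)$, building the discrete sequence of admissible maps $\varphi_1,\ldots,\varphi_N$ of item~\ref{i:dt:decomp}, and then to obtain $f$ by a smooth interpolation between the successive compositions $\psi_j=\varphi_j\circ\cdots\circ\varphi_1$ using a single bump~$s$ with $s(0)=0$, $s(1)=1$, $\uD^{k}s(0)=\uD^{k}s(1)=0$ for all $k$. First I would list the cubes $K\in\CX(\mathcal F)$ such that $K\cap G_0\ne\varnothing$ and order them so that they are processed in \emph{decreasing} dimension, starting at the top-dimensional cubes of $\mathcal A$ and ending at the $m$-dimensional ones; within a fixed dimension the order should be by decreasing side-length. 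The crucial point is Lemma~\ref{lem:one-cube-deform}\ref{i:one:faces}: deforming at $K_j$ preserves every cube $L\in\cubes_*$ with $\side L\ge\side{K_j}$ and every cube of side-length~$\ge\tfrac12\side{K_j}$ disjoint from $\cInt K_j$. Since $\mathcal F$ is admissible, neighbouring cubes have side-length ratio in $[1/2,2]$, so this ordering guarantees that at step~$j$ all previously processed cubes—and in particular all faces of~$\mathcal A$ larger than $K_j$—are invariant under $\varphi_j$.

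For each~$j$ I apply Lemma~\ref{lem:one-cube-deform} to $K=K_j$ with the push-forwards $\psi_{j-1\#}(\HM^{m_i}\restrict \Sigma_i)$ in place of the $\nu_i$ and with~$\varepsilon_0$ chosen small enough that the support of $\varphi_j-\id{}$ stays inside $K_j+\oball 0\varepsilon$. As long as $\dim K_j>m$ the first alternative of Lemma~\ref{lem:one-cube-deform} applies, because $\max m_i\le m<\dim K_j$ and $\HM^{m+1}(\Clos\Sigma\cap G_{\varepsilon_0})=0$ implies $\HM^{\dim K_j}(\psi_{j-1}\lIm\Sigma\rIm\cap K_j)=0$ (measures do not increase under Lipschitz maps in a dimension strictly above $m$). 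The index $N_0$ is taken as the last step at which $\dim K_j>m$, and $g$ is defined by interpolating only among $\psi_0,\ldots,\psi_{N_0}$, which already forces $\psi_{N_0}\lIm U\rIm\cap G_0$ into the $m$-skeleton and hence yields \ref{i:dt:m-dim-im}. For $j>N_0$, $\dim K_j=m$ and we fall in the second alternative of Lemma~\ref{lem:one-cube-deform}; we apply~$\varphi_j$ only when $\psi_{j-1}\lIm\Sigma\rIm\cap K_j\ne K_j$, otherwise we set $\varphi_j=\id{}$. This dichotomy yields~\ref{i:dt:full-faces}: either an $m$-face is already covered and stays covered (by the preservation property~\ref{i:one:faces}), or it gets pushed onto its $(m-1)$-boundary.

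The smoothness and admissibility of $f$ follow from writing $f(t,\cdot)=s(tN-j)\,\psi_{j+1}+\bigl(1-s(tN-j)\bigr)\,\psi_{j}$ on the interval $[j/N,(j+1)/N]$: the flatness of $s$ at the endpoints makes the pieces glue $\cnt^\infty$-smoothly across the break-points, while at each fixed~$t$ the image of $\psi_{j+1}$ and $\psi_j$ in every basic-deformation ball is again inside that same convex ball (because basic deformations map $V$ into~$V$), so the convex combination stays inside it, which gives \ref{i:dt:admissible}. Items \ref{i:dt:identity} and~\ref{i:dt:cube-pres} are straightforward consequences of Lemma~\ref{lem:one-cube-deform}\ref{i:one:id}\ref{i:one:faces} together with the fact that each~$K\in\CX(\mathcal F)$ is convex and invariant under every~$\varphi_j$.

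The true work is then the measure estimates in~\ref{i:dt:estimates}. Away from $G_0$ we use~\ref{lem:one-cube-deform}\ref{i:one:Dout} to bound $\|\uD\varphi_j\|$ by an absolute constant depending only on $n$, and the chain rule together with the fact that only boundedly many cubes meet any given point yields~\eqref{eq:dt:deriv-bound}. For \eqref{eq:dt:im-cube} and \eqref{eq:dt:im-gt}, I~would iterate~\ref{lem:one-cube-deform}\ref{i:one:int}: at each step the integral $\int_Q\|\uD\psi_j\|^{m_i}\,\ud\HM^{m_i}\restrict\Sigma_i$ is increased only by a bounded factor coming from the finitely many cubes of~$\CX(\mathcal F)$ touching~$Q$; admissibility of~$\mathcal F$ bounds the number of such cubes by a constant depending only on~$n$. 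Estimates \eqref{eq:dt:im-cube-f} and \eqref{eq:dt:im-f1} then come from Lemma~\ref{lem:image-measure} applied to $f(1,\cdot)$. The hardest part is~\eqref{eq:dt:im-g-full}: on the rectifiable set $\Sigma_i$ one computes, via the coarea/area formula on $I\times\R^n$, that $\HM^{m_i+1}(g\lIm I\times(\Sigma_i\cap G_\varepsilon)\rIm)$ is bounded by $\int_0^1\!\int_{\Sigma_i\cap G_\varepsilon}|\partial_t g(t,x)|\,\|\tbwedge_{m_i}\uD_x g(t,x)\|\,\ud\HM^{m_i}(x)\,\ud t$, and one controls $|\partial_t g|$ by $\delta$ using the bump structure of $s$ together with the uniform bound $\mathrm{diam}(K_j)\le\sqrt{n}\,\delta$ on the displacements produced at each step. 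This is the point where tracking all the constants is delicate, but it is of the same nature as Almgren's original book-keeping and no new ideas beyond those already in the lemmas of the previous sections are required.
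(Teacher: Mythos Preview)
Your outline matches the paper's proof closely: the same ordering of cubes (decreasing dimension, then decreasing side-length), the same split at $N_0$ between the ``push to the $m$-skeleton'' phase and the ``clean up partially covered $m$-faces'' phase, the same smooth interpolation via a bump~$s$, and the same strategy for the estimates. Two points where the paper is more careful than your sketch and which you should fix: first, the measures fed into Lemma~\ref{lem:one-cube-deform} at step~$j$ must be the \emph{weighted} push-forwards carrying the Jacobian factor $\|\uD\eta\|^{m_i}$, not the plain push-forwards $\psi_{j-1\#}(\HM^{m_i}\restrict\Sigma_i)$ you wrote---otherwise the chain-rule iteration for $\int\|\uD\psi_j\|^{m_i}$ does not close up. Second, to obtain the \emph{localized} bound~\eqref{eq:dt:im-cube} (as opposed to only the global one~\eqref{eq:dt:im-gt}), the paper introduces per-cube auxiliary maps $\eta_{Q,j}$ and feeds into the lemma one weighted push-forward for \emph{each} top-cube $Q\in\mathcal F$ touching~$K_j$ simultaneously, so that the single centre chosen in Lemma~\ref{lem:int-Dpia} is good for all of them at once; a~single global measure would not give the cube-by-cube control without this bookkeeping.
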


\begin{proof}
    Fix $\varepsilon \in (0,\varepsilon_0)$. Without loss of generality we
    assume that $\Sigma_i \subseteq G_{\varepsilon}$ for $i =
    1,2,\ldots,l$. Define
    \begin{displaymath}
        \mathcal C = \bigl\{ K \in \CX(\mathcal F)
        : K \cap G_0 \ne \varnothing \,,\,
        \dim(K) \ge m+1 \bigr\} \,.
    \end{displaymath}
    Let $\Delta = \Delta(n,m) \in \nat$ be so big that
    \begin{displaymath}
        \Delta \ge
        \max\bigl\{
        \HM^0(\{ K \in \CX(\mathcal F) : K \cap L \ne \varnothing \}) 
        : L \in \CX(\mathcal F) \bigr\}  \,.
    \end{displaymath}
    Let $\{ K_1,\ldots,K_{N_0} \} = \mathcal C$ be an enumeration of~$\mathcal
    C$ chosen so that for $1 \le i \le j \le N_0$
    \begin{displaymath}
        \text{either $\dim(K_i) = \dim(K_j)$ and~$\side{K_i} \ge \side{K_j}$}
        \quad
        \text{or~$\dim(K_i) \ge \dim(K_j)$} \,.
    \end{displaymath}
    For each $j = 0,1,\ldots,N_0$ we~shall define inductively maps $\varphi_j$,
    $\psi_j$, $\zeta_{Q,j}$, $\eta_{Q,j}$, measures $\nu_{Q,j,i}$, and open
    sets~$U_j \subseteq \R^n$, where $i \in \{ 1,2,\ldots,l \}$ and $Q \in
    \mathcal F$. First we~set
    \begin{displaymath}
        \psi_0 = \varphi_0 = \eta_{Q,0} = \zeta_{Q,j} = \id{\R^n} \,,
        \quad
        \nu_{Q,0,i} = \HM^{m_i} \restrict \Sigma_i \,,
        \quad 
        U_0 = \R^n \,.
    \end{displaymath}
    Assume that $\varphi_{j-1}$, $\psi_{j-1}$, $\eta_{Q,j-1}$, $U_{j-1}$,
    $\nu_{j-1,1}, \ldots, \nu_{j-1,l}$ are defined for some $j =
    1,2,\ldots,N_0$. We~set
    \begin{equation}
        \label{eq:nu-ji}
        \nu_{Q,j,i} = (\eta_{Q,j-1})_{\#} \bigl( \|\uD\eta_{Q,j-1}\|^{m_i} \HM^{m_i} \restrict \Sigma_i \bigr)
        \quad \text{for $i = 1,2,\ldots,l$ and $Q \in \mathcal F$} \,.
    \end{equation}
    Observe, that all the measures $\nu_{Q,i,j}$ for $i \in \{ 1,2,\ldots,l \}$
    and $Q \in \mathcal F$ are Radon because $\eta_{Q,j-1}$ is smooth and proper
    and $\HM^{m_i} \restrict \Sigma_i$ is finite. Let $\varphi_j$ and~$U_j$ be
    the map and the neighbourhood of~$\Sigma$ constructed by
    employing~\ref{lem:one-cube-deform} with
    \begin{displaymath}
        \begin{aligned}
            K_j \,, \varepsilon/\Delta \,, \HM^0(\{Q \in \mathcal F : Q \cap K_j \ne \varnothing \}) \,,
            &\bigl\{ (m_i, \nu_{Q,i,j}) : i \in \{1,2,\ldots,l\} \,, Q \in \mathcal F \,, Q \cap K_j \ne \varnothing \bigr\}
            \\
            \text{in place of} \quad
            K \,, \varepsilon_0 \,, l \,, &\bigl\{ (m_i,\nu_i) : i \in \{1,2,\ldots,l\} \bigr\} \,.
        \end{aligned}
    \end{displaymath}
    If $Q \cap K_j \ne \varnothing$, we set $\zeta_{Q,j} = \varphi_{j}$, and
    if $Q \cap K_j = \varnothing$, we set $\zeta_{Q,j} = \id{\R^n}$. Next, we
    define
    \begin{equation}
        \psi_j = \varphi_j \circ \psi_{j-i} \,,
        \quad
        \eta_{Q,j} = \zeta_{Q,j} \circ \eta_{Q,j-1} 
        \quad \text{for $Q \in \mathcal F$} \,.
    \end{equation}

    If $m_1 > m_l$, then we set $N = N_0$. If $m = m_1 = \cdots = m_l$, then we
    still have to take care of the cubes of dimension $m$ which are not fully
    covered. In this case we define
    \begin{displaymath}
        \mathcal C' = \left\{
            K \in \CX(\mathcal F) 
            : 
            \begin{gathered}
                K \cap G_0 \ne \varnothing \,,\,
                \dim(K) = m \,,\,
                \\
                K \cap \psi_{N_0}\lIm \Sigma \rIm \ne K \,,\,
                \cInt{K} \cap \psi_{N_0}\lIm \Sigma \rIm \ne \varnothing
            \end{gathered}
        \right\} \,.
    \end{displaymath}
    We enumerate $\mathcal C' = \{ K_{N_0+1}, \ldots, K_{N} \}$ so that for $N_0
    < i \le j \le N$ we have $\side{K_i} \ge \side{K_j}$. For $j = N_0+1,
    \ldots, N$ we define inductively $\varphi_j$, $\psi_j$, $U_j$ similarly as
    before by employing~\ref{lem:one-cube-deform} with $K_j$,
    $\varepsilon/\Delta$, $l$, $\HM^{m} \restrict \psi_{j-1} \lIm \Sigma_1
    \rIm$, \ldots, $\HM^{m} \restrict \psi_{j-1} \lIm \Sigma_l \rIm$, $m$,
    \ldots, $m$ in place of~$K$, $\varepsilon_0$, $l$, $\nu_1$, \ldots, $\nu_l$,
    $m_1$, \ldots, $m_l$ and we set $\psi_j = \varphi_j \circ \psi_{j-i}$.
    
    Let $f(t,\cdot)$ for $t \in I$ be defined as in~\ref{i:dt:decomp}. For $t >
    1$ we set $f(t,\cdot) = f(1,\cdot)$ and for $t < 0$ we set $f(t,\cdot) =
    f(0,\cdot)$. Then we define $g(t,\cdot) = f(tN_0/N,\cdot)$ for all $t \in
    \R$ and we set $U = \tbcap \bigl\{ U_j : j = 1,2,\ldots, N \bigr\}$.
    Clearly~\ref{i:dt:identity} and~\ref{i:dt:decomp} are satisfied.

    \emph{Proof of~\ref{i:dt:admissible}}: First observe that $\varphi_{j} \in
    \adm{K_j + \oball{0}{\varepsilon/\Delta}}$ for each $j = 1,2,\ldots,N$ due
    to~\ref{lem:one-cube-deform}\ref{i:one:admissible}. Since $K_j +
    \cball{0}{\varepsilon/\Delta} \subseteq G_{\varepsilon}$ we see that $\psi_j
    \in \adm{G_{\varepsilon}}$.

    Fix $t \in I$ and choose $j \in \nat$ such that $j \le tN \le j+1$. We have
    \begin{displaymath}
        f(t,\cdot) = \bigl( s(tN-j) \varphi_{j+1} + (1-s(tN-j)) \id{\R^n} \bigr) \circ \psi_{j} \,.
    \end{displaymath}
    Since $\varphi_{j+1} \in \adm{K_j + \oball{0}{\varepsilon/\Delta}}$ and
    $K_{j+1} + \oball{0}{\varepsilon/\Delta}$ is convex we see that
    \begin{displaymath}
        s(tN - j) \varphi_{j+1} + (1-s(tN-j)) \id{\R^n} \in \adm{K_{j+1} + \oball{0}{\varepsilon/\Delta}} \,;
    \end{displaymath}
    hence, $f(t,\cdot) \in \adm{G_{\varepsilon}}$.

    \emph{Proof of~\ref{i:dt:cube-pres}}: Since $\mathcal F$ is admissible
    for $j \in \{1,2,\ldots,N\}$ and $L \in \CX(\mathcal F)$ exactly one of the
    following options holds
    \begin{itemize}
    \item $L \cap K_j = \varnothing$ and then $\varphi_j\lIm L \rIm = L$
        by~\ref{lem:one-cube-deform}\ref{i:one:id};
    \item $L \cap K_j \ne \varnothing$ and $\side{L} \ge \frac 12 \side{K_j}$
        and $L \cap \cInt{K_j} = \varnothing$ and $\varphi_j\lIm L \rIm
        \subseteq L$ by~\ref{lem:one-cube-deform}\ref{i:one:faces};
    \item $L \cap K_j \ne \varnothing$ and $L \cap \cInt{K_j} \ne \varnothing$
        and $\dim(L) > \dim(K_j)$ and $\side{L} \ge \side{K_j}$, because $K_j
        \in \CX(\mathcal F)$, and $\varphi_j\lIm L \rIm \subseteq L$
        by~\ref{def:cube-complex}
        and~\ref{lem:one-cube-deform}\ref{i:one:faces}.
    \end{itemize}
    In consequence, if $L \in \CX(\mathcal F)$, then $\psi_j\lIm L \rIm
    \subseteq L$ for $j \in \{1,\ldots,N\}$ and, since $L$ is convex, we~obtain
    $f(t,\cdot) \lIm L \rIm \subseteq L$ for $t \in I$. In particular
    \begin{equation}
        \label{eq:phi-cube-im}
        \varphi_j\lIm L \rIm \subseteq L
        \quad
        \text{for $j \in \{ 1, 2, \ldots, N \}$ and $L \in \CX(\mathcal F)$} \,.
    \end{equation}

    \emph{Proof of~\ref{i:dt:m-dim-im}}:
    For $j \in \{1,\ldots,N\}$ set
    \begin{displaymath}
        \alpha(j) = \min \bigl\{ i : \dim(K_i) = \dim(K_j) \bigr\}
        \quad \text{and} \quad
        \beta(j) = \max \bigl\{ i : \dim(K_i) = \dim(K_j) \bigr\} \,.
    \end{displaymath}
    For $j = 1, \ldots, N$ set
    \begin{displaymath}
        \mathcal C_j = \bigl\{
        L \in \cubes_* 
        : \exists i \in \nat \quad \alpha(j) \le i \le j
          \text{ and $L$ is a face of $K_i$}
        \bigr\} \,.
    \end{displaymath}
    We shall prove by induction the following claim:
    \begin{equation}
        \label{eq:dt:claim}
        \left|
        \begin{aligned}
            \text{for each $j = 1,2,\ldots,N_0$ if $k = \dim(K_j)$, then}&
            \\
            \psi_j\lIm U \rIm \cap G_0
            \subseteq
            \tbcup \bigl( \mathcal C_j \cap \cubes_{k-1}\bigr) 
            &\cup
            \tbcup \bigl( (\mathcal C_{\beta(j)} \without \mathcal C_j) \cap \cubes_{k}\bigr) \,.
        \end{aligned}
        \right.
    \end{equation}

    If $j=1$, then $k = n$ and $\psi_1 = \varphi_1$ and claim~\eqref{eq:dt:claim}
    follows from~\ref{lem:one-cube-deform}\ref{i:one:proj}\ref{i:one:faces}.

    Assume $j \in \{2,3,\ldots,N_0\}$ and $k = \dim(K_j)$. By inductive
    hypothesis, if $j > \alpha(j)$, then
    \begin{multline*}
        \psi_{j-1}\lIm U \rIm \cap F
        = \tbcup \Bigl(
        \bigl( \mathcal C_{j-1} \cap \cubes_{k-1}\bigr) 
        \cup
        \bigl( (\mathcal C_{\beta(j)} \without \mathcal C_{j-1}) \cap \cubes_{k}\bigr) \Bigr)
        \cap F \cap \psi_{j-1}\lIm U \rIm
        \\
        \subseteq
        \tbcup \bigl( \mathcal C_{j-1} \cap \cubes_{k-1}\bigr) 
        \cup
        \bigl( \psi_{j-1}\lIm U \rIm \cap K_j \bigr)
        \cup
        \bigl(
        \tbcup \bigl( (\mathcal C_{\beta(j)} \without \mathcal C_j) \cap \cubes_{k}\bigr) 
        \cap F \bigr)
    \end{multline*}
    and if $j = \alpha(j)$, then $j-1 = \beta(j-1)$ and
    \begin{multline*}
        \psi_{j-1}\lIm U \rIm \cap F
        = \tbcup \bigl( \mathcal C_{\beta(j-1)} \cap \cubes_k\bigr) 
        \cap F \cap \psi_{j-1}\lIm U \rIm
        = \tbcup \bigl( \mathcal C_{\beta(j)} \cap \cubes_k\bigr) 
        \cap F \cap \psi_{j-1}\lIm U \rIm
        \\
        \subseteq
        \bigl( \psi_{j-1}\lIm U \rIm \cap K_j \bigr)
        \cup
        \bigl(
        \tbcup \bigl( (\mathcal C_{\beta(j)} \without \mathcal C_j) \cap \cubes_{k}\bigr) 
        \cap F \bigr) \,.
    \end{multline*}
    Claim~\eqref{eq:dt:claim} follows now by the following observations:
    \begin{itemize}
    \item if $j > \alpha(j)$ and $Q \in \mathcal C_{j-1} \cap \cubes_{k-1}$,
        then $Q \cap \cInt{K_j} = \varnothing$ so $\varphi_j\lIm Q \rIm
        \subseteq Q$ by~\eqref{eq:phi-cube-im};
    \item if $\alpha(k) \le j \le \beta(j)$, we have $\varphi_j \lIm \psi_{j-1}
        \lIm U \rIm \cap K_j \rIm \subseteq \cBdry{K_j} \subseteq \tbcup
        \bigl( \mathcal C_j \cap \cubes_{k-1}\bigr)$
        by~\ref{lem:one-cube-deform}\ref{i:one:proj};
    \item if $\alpha(k) \le j \le \beta(j)$ and $Q \in (\mathcal C_{\beta(j)}
        \without \mathcal C_j) \cap \cubes_k$, then $\varphi_j\lIm Q \rIm
        \subseteq Q$ by~\eqref{eq:phi-cube-im}.
    \end{itemize}

    \emph{Proof of~\ref{i:dt:full-faces}}: This follows
    by~\ref{i:dt:m-dim-im} and the construction.

    \emph{Proof of~\ref{i:dt:estimates}}: To prove~\eqref{eq:dt:deriv-bound}
    take $x \in G_{\varepsilon} \without G_0$ and note that are at most $\Delta$
    maps amongst $\varphi_1$, \ldots, $\varphi_N$ which move the point $x$ so,
    recalling~\ref{lem:one-cube-deform}\ref{i:one:Dout}, we get
    \begin{equation}
        \label{eq:dt:out-Df}
        \|\uD f(t,\cdot)(x)\| \le \|\uD \psi_{j+1}(x)\| + \|\uD \psi_{j}(x)\| 
        \le 2 \Gamma_{\ref{lem:one-cube-deform}}^{\Delta} 
        \quad \text{whenever $j \le Nt \le j+1$} \,.
    \end{equation}

    To prove~\eqref{eq:dt:im-cube} choose a cube $Q \in \mathcal F$ and $i \in
    \{1,2,\ldots,l\}$. 
    Set
    \begin{displaymath}
        \widetilde Q_{\varepsilon} = \widetilde Q + \oball{0}{\varepsilon}
        = \tbcup \{ R \in \mathcal F : R \cap Q \ne \varnothing \} + \oball{0}{\varepsilon} \,.
    \end{displaymath}
    Observe that for $j \in \{ 1,2,\ldots, N_0 \}$ we have
    \begin{displaymath}
        \eta_{Q,j}^{-1} \lIm \widetilde Q_{\varepsilon} \rIm = \widetilde Q_{\varepsilon}
        \quad \text{and} \quad
        \zeta_{Q,j}^{-1} \lIm \widetilde Q_{\varepsilon} \rIm = \widetilde Q_{\varepsilon} \,.
    \end{displaymath}
    Therefore, we may write
    \begin{multline}
        \label{eq:dt:eta-est}
        \int_{\widetilde Q_{\varepsilon} \cap \Sigma_i}
        \|\uD \eta_{Q,j}\|^{m_i} \ud \HM^{m_i}
        \le \int_{\eta_{Q,j}^{-1} \lIm \widetilde Q_{\varepsilon} \rIm}
        \| (\uD \zeta_{Q,j}) \circ \eta_{Q,j-1} \|^{m_i} \|\uD \eta_{Q,j-1}\|^{m_i}
        \ud \HM^{m_i} \restrict \Sigma_i
        \\
        =  \int_{\zeta_{Q,j}^{-1} \lIm  \widetilde Q_{\varepsilon} \rIm}
        \| \uD \zeta_{Q,j} \|^{m_i} \ud \nu_{Q,j}
        = \int_{\widetilde Q_{\varepsilon}}
        \| \uD \zeta_{Q,j} \|^{m_i} \ud \nu_{Q,j} \,.
    \end{multline}
    If $\zeta_{Q,j} = \id{\R^n}$, then we obtain
    \begin{equation}
        \label{eq:dt:eta-trivial}
        \int_{\widetilde Q_{\varepsilon}}
        \| \uD \zeta_{Q,j} \|^{m_i} \ud \nu_{Q,j}
        = \nu_{Q,j}(\widetilde Q_{\varepsilon})
        = \int_{\widetilde Q_{\varepsilon} \cap \Sigma_i}
        \|\uD \eta_{Q,j-1}\|^{m_i} \ud \HM^{m_i} \,.
    \end{equation}
    If $\zeta_{Q,j} = \varphi_k$ for some $k \in \{1,2,\ldots,N_0\}$, then $K_k
    \subseteq \widetilde Q_{\varepsilon}$ and if $K_k$ is a face of some cube $R
    \in \cubes_*$ with $\dim R > \dim K_k$, then $\spt \nu_{Q,j} \cap \cInt{R} =
    \varnothing$; thus,
    employing~\ref{lem:one-cube-deform}\ref{i:one:id}\ref{i:one:Dout}\ref{i:one:int},
    we get
    \begin{multline}
        \label{eq:dt:eta-non-trivial}
        \int_{\widetilde Q_{\varepsilon}}
        \| \uD \zeta_{Q,j} \|^{m_i} \ud \nu_{Q,j}
        = \int_{\widetilde Q_{\varepsilon} \cap K_k}
        \| \uD \varphi_k \|^{m_i} \ud \nu_{Q,j}
        + \int_{\widetilde Q_{\varepsilon} \without K_k}
        \| \uD \varphi_k \|^{m_i} \ud \nu_{Q,j}
        \\
        \le \Gamma_{\ref{lem:one-cube-deform}} \nu_{Q,j}(\widetilde Q_{\varepsilon})
        = \Gamma_{\ref{lem:one-cube-deform}}  \int_{\widetilde Q_{\varepsilon} \cap \Sigma_i}
        \|\uD \eta_{Q,j-1}\|^{m_i} \ud \HM^{m_i} \,.
    \end{multline}
    Now, the second case (when $\zeta_{Q.j} = \varphi_k$ for some $k \in
    \{1,2,\ldots,N_0\}$) can happen at~most $\Delta$ times so
    combining~\eqref{eq:dt:eta-non-trivial}, \eqref{eq:dt:eta-trivial},
    and~\eqref{eq:dt:eta-est}
    \begin{equation}
        \int_{\widetilde Q_{\varepsilon} \cap \Sigma_i}
        \|\uD \eta_{Q,j}\|^{m_i} \ud \HM^{m_i}
        \le \Gamma_{\ref{lem:one-cube-deform}}^{\Delta} \HM^{m_i}(\Sigma_i \cap \widetilde Q_{\varepsilon}) \,.
    \end{equation}
    We may now finish the proof of~\eqref{eq:dt:im-cube} by writing
    \begin{multline}
        \int_{\Sigma_i \cap Q} \|Dg(t,\cdot)\|^{m_i} \ud \HM^{m_i}
        \le 2^{m_i-1} \int_{\Sigma_i \cap Q} \|\uD \psi_{j+1}(x)\|^{m_i} + \|\uD \psi_{j}(x)\|^{m_i} \ud \HM^{m_i}
        \\
        = 2^{m_i-1} \int_{\Sigma_i \cap Q} \|\uD \eta_{Q,j+1}(x)\|^{m_i} + \|\uD \eta_{Q,j}(x)\|^{m_i} \ud \HM^{m_i}
        \le 2^{m_i} \Gamma_{\ref{lem:one-cube-deform}}^{\Delta} \HM^{m_i}(\Sigma_i \cap \widetilde Q_{\varepsilon}) \,,
    \end{multline}
    whenever $j \in \{1,2,\ldots,N_0\}$ and $t \in \R$ are such that $j \le N_0t \le j+1$.

    To prove~\eqref{eq:dt:im-gt} we write
    \begin{multline}
        \int_{\Sigma_i \cap G_{\varepsilon}} \|Dg(t,\cdot)\|^{m_i} \ud \HM^{m_i}
        \le \sum_{Q \in \mathcal A} \int_{\Sigma_i \cap Q} \|Dg(t,\cdot)\|^{m_i} \ud \HM^{m_i}
        + \int_{\Sigma_i \cap G_{\varepsilon} \without G_0} \|Dg(t,\cdot)\|^{m_i} \ud \HM^{m_i}
        \\
        \le 2^{m_i} \Gamma_{\ref{lem:one-cube-deform}}^{\Delta} \sum_{Q \in \mathcal A} \HM^{m_i}(\Sigma_i \cap \widetilde Q_{\varepsilon})
        + 2^{m_i} \Gamma_{\ref{lem:one-cube-deform}}^{\Delta m_i} \HM^{m_i}(\Sigma_i \cap G_{\varepsilon} \without G_0)
        \\
        \le 2^{m_i} \Delta^2 \Gamma_{\ref{lem:one-cube-deform}}^{\Delta m_i} \HM^{m_i}(\Sigma_i \cap G_{\varepsilon}) \,.
    \end{multline}

    To prove~\eqref{eq:dt:im-cube-f} and~\eqref{eq:dt:im-f1} note that for $Q
    \in \mathcal F$, by construction (in~particular by~\ref{i:dt:full-faces}),
    \begin{displaymath}
        \HM^{m_i}( f(1,\cdot) \lIm \Sigma_i \cap Q \rIm \cap G_0)
        \le \HM^{m_i}( g(1,\cdot) \lIm \Sigma_i \cap Q \rIm \cap G_0)
    \end{displaymath}
    and, recalling~\ref{i:dt:cube-pres} and~\eqref{eq:dt:out-Df},
    \begin{displaymath}
        \HM^{m_i}( f(1,\cdot) \lIm \Sigma_i \cap Q \rIm \without G_0)
        \le 2^{m_i} \Gamma_{\ref{lem:one-cube-deform}}^{\Delta m_i} \HM^{m_i}(\Sigma_i \cap Q \without G_0) \,.
    \end{displaymath}

    Assume now that $\Sigma_i$ is $(\HM^{m_i},m_i)$~rectifiable. Then $I \times
    \Sigma_i$ is $(\HM^{m_i+1},m_i+1)$~rectifiable by~\cite[3.2.23]{Fed69} and
    we may use the area formula~\cite[3.2.20]{Fed69} to
    prove~\eqref{eq:dt:im-g-full}. Define
    \begin{displaymath}
        A_j = G_{\varepsilon} \cap \tbcup
        \left\{
            R \in \mathcal F : R \cap K_{j+1} \ne \varnothing
        \right\} \,.
    \end{displaymath} Let $\tau = (1,0) \in \R \times
    \R^n$ be the ``time direction'' and set $g_t(x) = g(t,x)$ for $(t,x) \in \R
    \times \R^n$. Observe that for $(t,x) \in I \times \Sigma$ and $j \in \nat$
    such that $j \le tN_0 \le j+1$ we have
    \begin{multline}
        \bigl( \HM^{m_i+1} \restrict (I \times \Sigma_i), m_i+1 \bigr) \ap J_{m_i+1} g(t,x)
        \le |\uD g(t,x)\tau| \cdot \|\uD g_t(x)\|^{m_i} 
        \\
        \le 2N_0 |\varphi_{j+1}(\psi_j(x)) - \psi_j(x)| \cdot \|\uD g_t(x)\|^{m_i} \,;
    \end{multline}
    hence, using~\cite[3.2.20, 3.2.23]{Fed69} , \ref{i:dt:decomp},
    \ref{i:dt:cube-pres},
    \ref{lem:one-cube-deform}\ref{i:one:id}\ref{i:one:small}
    \begin{multline}
        \label{eq:dt:full-measure}
        \HM^{m_i+1}\bigl( g \lIm I \times (\Sigma_i \cap G_{\varepsilon}) \rIm \bigr)
        \le \int_0^1 \int_{\Sigma_i \cap G_{\varepsilon}}
        |\uD g(t,x)\tau| \cdot \|\uD g_t(x)\|^{m_i} \ud \HM^{m_i}(x) \ud \LM^1(t)
        \\
        \le \sum_{j=0}^{N_0-1} \int_{j/N_0}^{(j+1)/N_0} \int_{\Sigma_i \cap G_{\varepsilon}} 
        2N_0 |\varphi_{j+1}(\psi_j(x)) - \psi_j(x)| \cdot \|\uD g_t(x)\|^{m_i}  \ud \HM^{m_i}(x) \ud \LM^1(t)
        \\
        \le 2^{m_i} N_0 \sqrt{n} \delta 
        \sum_{j=0}^{N_0-1}
        \int_{j/N_0}^{(j+1)/N_0}  \ud \LM^1(t)
        \int_{\Sigma_i \cap A_j}
        \|\uD\psi_{j+1}(x)\|^{m_i} + \|\uD\psi_{j}(x)\|^{m_i} \ud \HM^{m_i}(x)
        \\
        \le 2^{m_i} \sqrt{n} \delta 2 \Delta^2 \Gamma_{\ref{lem:one-cube-deform}}^{\Delta m_i}
        \HM^{m_i}(\Sigma_i \cap G_{\varepsilon})
        \,.
    \end{multline}
    If $m = m_1 > m_l$, then $N = N_0$ and $f = g$ and there is nothing more to
    prove. Otherwise, we have $m = m_1 = \cdots = m_l$ and $g \lIm I \times
    (\Sigma_i \cap G_{\varepsilon}) \rIm = f \lIm I' \times (\Sigma_i \cap
    G_{\varepsilon}) \rIm$, where $I' = [0,N_0/N]$. Hence, we need to estimate
    $\HM^{m+1}(f \lIm (I \without I') \times (\Sigma_i \cap G_{\varepsilon})
    \rIm)$. Observe, that
    \begin{displaymath}
        f \lIm (I \without I') \times (\Sigma_i \cap G_{0}) \rIm
        \subseteq \tbcup \CX(\mathcal F) \cap \cubes_{m} \,,
    \end{displaymath}
    so $\HM^{m+1}(f \lIm (I \without I') \times (\Sigma_i \cap G_{0}) \rIm) =
    0$. On the other hand $\| \uD f_t(x) \| \le
    \Gamma_{\ref{lem:one-cube-deform}}$ for $x \in G_{\varepsilon} \without G_0$
    and $t \in I \without I'$ so $\HM^{m+1}(f \lIm (I \without I') \times
    (\Sigma_i \cap G_{\varepsilon} \without G_{0}) \rIm)$ can be estimated as
    in~\eqref{eq:dt:full-measure}.
\end{proof}

We finish this section with a small lemma that allows to
apply~\ref{lem:reduce-unrect} to the mapping constructed
in~\ref{thm:deformation}.

\begin{lemma}
    \label{lem:rank-bound}
    If $f \in \cnt^{1}(\R^n,\R^n)$ and $U \subseteq \R^n$ and
    $\dim_{\HM}(f \lIm U \rIm) \le m$, then $\dim \im \uD f(x) \le m$ for $x \in U$.
\end{lemma}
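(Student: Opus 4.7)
The plan is to argue by contradiction. Suppose there exists $x_0 \in U$ with $k := \dim \im \uD f(x_0) \ge m+1$. I would first pick an orthogonal projection $p \in \orthproj{n}{k}$ such that $p|_{\im \uD f(x_0)}$ is a linear isomorphism onto $\R^k$. Such a~$p$ exists because $\im \uD f(x_0)$ is a~$k$-dimensional subspace of~$\R^n$: one may take $p^*$ to have image equal to any $k$-dimensional subspace~$V$ transverse to $(\im \uD f(x_0))^{\perp}$, e.g., $V = \im \uD f(x_0)$ itself.

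Then I would consider $g = p \circ f \in \cnt^1(U,\R^k)$, whose derivative $\uD g(x_0) = p \circ \uD f(x_0)$ is surjective by construction. The constant rank theorem~\cite[3.1.18]{Fed69} (or equivalently, the standard submersion theorem) then provides an open neighborhood $W \subseteq U$ of~$x_0$ such that $g \lIm W \rIm$ contains an open subset of~$\R^k$; in particular $\HM^k(g \lIm W \rIm) > 0$.

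The conclusion would then follow from the fact that orthogonal projections are $1$-Lipschitz, and hence do not increase the $\HM^k$~measure:
\begin{displaymath}
    0 < \HM^k\bigl( g \lIm W \rIm \bigr) = \HM^k\bigl( p \lIm f \lIm W \rIm \rIm \bigr) \le \HM^k\bigl( f \lIm W \rIm \bigr) \le \HM^k\bigl( f \lIm U \rIm \bigr) \,,
\end{displaymath}
which gives $\dim_{\HM}(f \lIm U \rIm) \ge k \ge m+1$, contradicting the hypothesis. I do not expect any serious obstacle, since the argument reduces to two well-known facts: local surjectivity of $\cnt^1$~submersions, and non-expansion of Hausdorff measure under Lipschitz maps.
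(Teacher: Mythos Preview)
Your proposal is correct and follows essentially the same approach as the paper: both argue by contradiction, compose $f$ with an orthogonal projection onto a $k$-dimensional space (the paper uses $\project{L}$ with $L = \im \uD f(x)$, you use $p \in \orthproj nk$), and show the composed map has image of positive $\HM^k$ measure, which then passes back to $f$ since projections are $1$-Lipschitz. The only cosmetic difference is that the paper obtains local surjectivity by a direct first-order Taylor expansion (showing $g\lIm \cball xr \rIm$ contains a translate of $\uD f(x)\lIm \cball 0{r/2} \rIm$), whereas you invoke the submersion/constant rank theorem; both routes are standard and equally short.
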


\begin{proof}
    Assume there exists a point $x \in U$ such that $\dim \im \uD f(x) = k>m$.
    Define $L = \im \uD f(x) \in \grass nk$ and set $g = \project{L} \circ
    f$. Observe that
    \begin{displaymath}
        \HM^{k}(g\lIm \cball xr \rIm) \le \HM^k(f \lIm \cball xr \rIm)
        \quad \text{for $r > 0$} \,.
    \end{displaymath}
    Moreover, since $f(y) = f(x) + \uD f(x)(y-x) + o(|x-y|)$ we see that for small
    enough $r > 0$ we have 
    \begin{displaymath}
        f(x) + \uD f(x) \lIm \cball{0}{r/2} \rIm \subseteq g\lIm \cball xr \rIm \,.
    \end{displaymath}
    Hence, for some $r > 0$ we obtain $\HM^k(f \lIm \cball xr \rIm) > 0$ which
    contradicts $\dim_{\HM}(f \lIm U \rIm) \le m$.
\end{proof}


\section{Slicing varifolds by continuously differentiable functions}
\label{sec:slices}

We recall the theory developed by Almgren in~\cite[I.3]{Alm76}
and~\cite[\S7]{Almgren:Vari}.

In this sections we shall always assume $U \subseteq \R^n$ is open, $m,n,\nu \in
\integers$ are such that $0 \le \nu \le m < n$, $V \in \Var{m}(U)$, $f \in
\cnt^1(U,\R^{\nu})$ is proper, and $\pi : \R^n \times \grass nm \to \R^n$ is the
projection onto the first factor.
\begin{definition}
    \label{def:Vf-mu}
    Whenever $\beta \in \ccspace{U \times \grass n{m-\nu}}$ and $\varphi \in
    \ccspace{\R^{\nu}}$ we set
    \begin{gather*}
        (V,f)(\beta) = \int_{\{(x,S) \in U \times \grass nm : \|{\bigwedge_{\nu}} \uD f(x) \circ \project S\| > 0\}}
        \beta(x,S \cap \ker \uD f(x))
        \|{\textstyle \bigwedge_{\nu}} \uD f(x) \circ \project S\| \ud V(x,S) \,,
        \\
        \mu_{\beta}(\varphi) = (V,f)(\beta \cdot (\varphi \circ f \circ \pi)) \,.
    \end{gather*}
\end{definition}
It was shown in~\cite[I.3(2)]{Alm76} that $(V,f) \in \Var{{}m - \nu}(U)$ and
$\mu_{\beta}$ is a Radon measure for each $\beta \in \ccspace{U \times \grass
  n{m-\nu}}$.
\begin{definition}
    \label{def:slice}
    The \emph{slice of $V$ with respect to $f$ at $t \in \R^{\nu}$} is the
    varifold $\langle V, f, t \rangle \in \Var{{}m - \nu}(U)$, satisfying
    \begin{displaymath}
        \langle V, f, t \rangle (\beta) 
        = \lim_{r \downarrow 0} \frac{\mu_{\beta}(\cball tr)}{\LM^{\nu}(\cball tr)} \,,
        \quad \text{whenever $\beta \in \ccspace{U \times \grass n{m-\nu}}$} \,.
    \end{displaymath}
\end{definition}

\begin{remark}
    \label{rem:slice:flat-norm}
    By~\cite[I.3(2)]{Alm76}, there exists $\langle V, f, t \rangle \in \Var{{}m -
      \nu}(U)$ and, since $f$ is proper, $\spt \|\langle V, f, t \rangle\|$ is
    compact for $\LM^{\nu}$~almost all $t \in \R^{\nu}$. Next, we view $\{ V \in
    \Var{{}m-\nu}(U) : \spt \|V\| \text{ is compact} \}$ as a subset of the
    vectorspace (cf.~\cite[2.5.19]{Fed69})
    \begin{gather}
        \bigl\{ W \in \ccspace{U \times \grass n{m - \nu}}^{*} : \spt W \text{ is compact} \bigr\} \,,
        \quad \text{with the norm}
        \\
        \vvvert W \vvvert = \sup \bigl\{ W(\beta) 
        : \beta \in \ccspace{U \times \grass n{m - \nu}} ,\, 
        \sup \im |\beta| \le 1 ,\,
        \Lip \beta \le 1
        \bigr\} \,.
    \end{gather}
    Then $\{ W \in \ccspace{U \times \grass n{m - \nu}}^{*} : \spt W \text{ is
      compact} \}$ becomes a separable normed vectorspace such that the norm
    topology coincides with the weak topology.
\end{remark}

\begin{definition}[\protect{cf.~\cite[I.3(3)]{Alm76}}]
    \label{def:Lebesgue-set}
    The~\emph{Lebesgue set} of the slicing operator $\langle V, f, \cdot
    \rangle$ is the set of those $t \in \R^{\nu}$ for which
    \begin{displaymath}
        \lim_{r \downarrow 0} r^{-\nu} \int_{\cball tr}
        \vvvert \langle V, f, s \rangle - \langle V, f, t \rangle \vvvert
        \ud \LM^{\nu}(s) = 0 \,.
    \end{displaymath}
\end{definition}

\begin{remark}
    \label{rem:leb-points}
    Note that $\LM^\nu$ almost all $t \in \R^\nu$ are Lebesgue points of $\langle
    V, f, \cdot \rangle$; see~\cite[I.3(3)]{Alm76} for the proof.
\end{remark}

\begin{remark}
    \label{rem:slice-rect}
    Recalling~\cite[I.3(4)]{Alm76} we see that if $S \subseteq U$ is
    $(\HM^m,m)$~rectifiable and $\HM^m$~measurable and bounded, then
    \begin{displaymath}
        \langle \var{m}(S), f, t \rangle
        = \var{{}m-\nu}(S \cap f^{-1} \{t\} )
        \in \RVar{m-\nu}(U)
        \quad \text{for $\LM^\nu$ almost all $t \in \R^\nu$} \,.
    \end{displaymath}
\end{remark}

Next, we define the product of a varifold with a cube as
in~\cite[I.3(5)]{Alm76}. Products of more general varifolds were described also
in~\cite[\S3]{KM2017}.
\begin{definition}
    \label{def:product}
    If $l \in \integers$, and $l \ge 1$, and $I = \{ t \in \R : 0 \le t \le
    1\}$, and $j_l : \R^l \to \R^l \times \R^n$ and $j_n : \R^n \to
    \R^l \times \R^n$ are injections, then
    \begin{displaymath}
        (\var{l}(I^l) \times V)(\alpha) 
        = \int_{I^l} \int
        \alpha((t,x), j_l \lIm \R^l \rIm + j_n \lIm T \rIm)
        \ud V(x,T) \ud \LM^l(t) \,,
    \end{displaymath}
    for $\alpha \in \ccspace{\R^{l} \times U \times \grass{l+n}{l+m}}$.
\end{definition}

\begin{definition}
    \label{def:aux-fun}
    For $t \in \R$ and $\delta \in (0,1)$ and $\rho : \R^n \to \R$ we
    define the functions
    \begin{displaymath}
        i_t : \R^n \to \R \times \R^{n} \,, 
        \quad
        s_{\delta} : \R \to \R \,,
        \quad
        K_{\rho,t,\delta} : U \to \R \times U \,,
    \end{displaymath}
    by requiring that $s_{\delta}$ is of class~$\cnt^{\infty}$ and
    \begin{gather*}
        s_{\delta}(\tau) = \tau \quad \text{for } \delta \le \tau \le 1 - \delta \,,
        \quad
        s_{\delta}(\tau) = 0 \quad \text{for } \tau \le 0 \,,
        \quad
        s_{\delta}(\tau) = 1 \quad \text{for } \tau \ge 1 \,,
        \\
        0 \le s_{\delta}'(\tau) \le 1+\delta \quad \text{for } \tau \in \R \,,
        \quad
        i_t(x) = (t,x) \quad \text{for $x \in \R^n$} \,,
        \\
        K_{\rho,t,\delta}(x) = \bigl( s_{\delta}((t - \rho(x)) / \delta) , x \bigr) 
        \quad \text{for $x \in \R^n$} \,.
    \end{gather*}
\end{definition}

\begin{lemma}
    \label{lem:slice-blow-up}
    Let $V \in \Var{m}(U)$ and~$\rho \in \cnt^1(U,\R)$ be a proper map, and
    $\iota \in (0,\infty)$, and $t \in \R$ be a~Lebesgue point of~$\langle V,
    \rho, \cdot \rangle$ such that $V \restrict \{ (x,S) \in \R^n \times \grass
    nm : t - \iota \le \rho(x) \le t \} \in \RVar{m}(U)$ and
    \begin{equation}
        \label{eq:sl:limit-cond}
        \lim_{\delta \downarrow 0} \|V\|(\{ x \in U : t - \delta \le \rho(x) < t \}) = 0 \,.
    \end{equation}
    Set $V_0 = V \restrict \{ (x,S) \in U \times \grass nm : \rho(x) \ge t \}$
    and $V_1 = V \restrict \{ (x,S) \in U \times \grass nm : \rho(x) < t \}$.
    Then
    \begin{displaymath}
        \lim_{\delta \downarrow 0} K_{\rho,t,\delta\,\#} V 
        = i_{0\,\#} V_0 + i_{1\,\#} V_1 + \var{1}(I) \times \langle V, \rho, t \rangle 
        \in \Var{m}(\R \times U) \,.
    \end{displaymath}
\end{lemma}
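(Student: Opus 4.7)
My plan is to decompose $V$ as $V = V_0' + V_1' + V_\delta'$ where
\begin{gather*}
V_0' = V \restrict \{(x,S) : \rho(x) \ge t\},\quad
V_1' = V \restrict \{(x,S) : \rho(x) \le t-\delta\},\\
V_\delta' = V \restrict \{(x,S) : t-\delta < \rho(x) < t\},
\end{gather*}
and to analyse $K_{\rho,t,\delta\,\#}$ on each summand. On $\{\rho \ge t\}$ one has $s_\delta((t-\rho(x))/\delta) \equiv 0$ and $s_\delta'((t-\rho(x))/\delta) \equiv 0$, so $K_{\rho,t,\delta} = i_0$ and $\uD K_{\rho,t,\delta} = j_n$ identically; consequently $K_{\rho,t,\delta\,\#}V_0' = i_{0\,\#}V_0$ for every $\delta$. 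Symmetrically, $K_{\rho,t,\delta\,\#}V_1' = i_{1\,\#}(V \restrict \{\rho \le t-\delta\})$, and \eqref{eq:sl:limit-cond} together with the continuity of $i_{1\,\#}$ shows that these converge to $i_{1\,\#}V_1$ in $\Var{m}(\R \times U)$ as $\delta \downarrow 0$.

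The heart of the argument is showing that $K_{\rho,t,\delta\,\#}V_\delta' \to \var{1}(I) \times \langle V,\rho,t\rangle$. Once $\delta < \iota$, the rectifiability assumption lets me write $V_\delta' = \var{m}(\theta,\Sigma)$ for a Borel multiplicity~$\theta$ and a countably $(\HM^m,m)$~rectifiable~$\Sigma$ with approximate tangent $T_y$ at $\HM^m$~a.e.~$y$. A direct computation gives $\uD K_{\rho,t,\delta}(y)v = (\lambda(y)\uD\rho(y)v,\,v)$ with $\lambda(y) = -s_\delta'((t-\rho(y))/\delta)/\delta$, hence
\begin{displaymath}
\|{\textstyle\bigwedge_m}\uD K_{\rho,t,\delta}(y) \circ \project{T_y}\|
= \sqrt{1 + \lambda(y)^2 \|{\textstyle\bigwedge_1}\uD\rho(y) \circ \project{T_y}\|^2},
\end{displaymath}
and, wherever $\uD\rho(y) \circ \project{T_y} \ne 0$, the plane $\uD K_{\rho,t,\delta}(y)\lIm T_y\rIm$ tends to $j_1\lIm\R\rIm + j_n\lIm T_y \cap \ker\uD\rho(y)\rIm$ uniformly on compacta as $\delta \downarrow 0$. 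On the locus where $\uD\rho(y) \circ \project{T_y} = 0$ the Jacobian stays bounded and the contribution vanishes with $|\{\rho \in (t-\delta,t)\}|$.

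For a test function $\alpha \in \ccspace{\R \times U \times \grass{n+1}{m}}$ the area formula and coarea formula~\cite[3.2.22]{Fed69} give
\begin{displaymath}
K_{\rho,t,\delta\,\#}V_\delta'(\alpha)
= \int_{t-\delta}^{t} \int \alpha(K_{\rho,t,\delta}(y), \uD K_{\rho,t,\delta}(y)\lIm T_y \rIm) \cdot \frac{J_\delta(y)}{\|{\textstyle\bigwedge_1}\uD\rho(y)\circ \project{T_y}\|}\,\theta(y)\,\ud\HM^{m-1}(y)\ud\LM^1(s),
\end{displaymath}
where the inner integral is over $\Sigma \cap \rho^{-1}\{s\}$ and $J_\delta$ is the factor above. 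Substituting $\tau = (t-s)/\delta$ converts $\delta^{-1}\,ds$ to $d\tau$ and the Jacobian ratio becomes $\sqrt{\delta^2\|\uD\rho\circ\project{T_y}\|^{-2} + s_\delta'(\tau)^2}$, which converges boundedly to $s_\delta'(\tau) \to 1$. Identifying the inner integral with evaluation of the slice $\langle V,\rho,t-\delta\tau\rangle$ by \ref{rem:slice-rect}, this becomes $\int_0^1 \langle V,\rho,t-\delta\tau\rangle(\beta_{\delta,\tau})\,d\tau$ up to negligible errors, where $\beta_{\delta,\tau}(y,P) = \alpha(K_{\rho,t,\delta}(y), \uD K_{\rho,t,\delta}(y)\lIm T_y\rIm)$ reinterpreted on the slice.

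The passage to the limit $\delta \downarrow 0$ is the main obstacle and splits into two independent parts. Geometrically, uniform continuity of $\alpha$ and the tangent-plane convergence above let me replace $\beta_{\delta,\tau}$ by $\beta_\tau(y,P) = \alpha((\tau,y),\,j_1\lIm\R\rIm + j_n\lIm P\rIm)$ with an error $o(1)$ in the flat norm of \ref{rem:slice:flat-norm}. Measure-theoretically, the Lebesgue-point hypothesis (\ref{def:Lebesgue-set}) together with the flat-norm continuity of the slicing operator forces
\begin{displaymath}
\int_0^1 \bigl\langle V,\rho,t-\delta\tau\bigr\rangle(\beta_\tau)\,d\tau \longrightarrow \int_0^1 \langle V,\rho,t\rangle(\beta_\tau)\,d\tau,
\end{displaymath}
which by \ref{def:product} equals $\bigl(\var{1}(I) \times \langle V,\rho,t\rangle\bigr)(\alpha)$. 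Summing the three limits yields the claim.
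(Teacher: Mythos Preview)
Your overall architecture matches the paper's: decompose $V$ into $V_0'$, $V_1'$, $V_\delta'$, dispatch the first two pieces exactly as you do, then rewrite $K_{\delta\,\#}V_\delta'$ via area and coarea and substitute $\tau=(t-s)/\delta$. Up to that point your sketch is correct and coincides with the paper's proof.

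The gap is in the sentence ``the Jacobian ratio becomes $\sqrt{\delta^2\|\uD\rho\circ\project{T_y}\|^{-2} + s_\delta'(\tau)^2}$, which converges boundedly to $s_\delta'(\tau) \to 1$.'' This is false: writing $J_\rho(y)=\|\uD\rho(y)\circ\project{T_y}\|$, the ratio $\sqrt{\delta^2/J_\rho(y)^2 + s_\delta'(\tau)^2}$ is \emph{not} uniformly bounded in~$y$, because on each slice $\rho^{-1}\{t-\delta\tau\}$ there may be points with $J_\rho(y)$ arbitrarily small compared to~$\delta$. For exactly the same reason, your tangent-plane convergence $\uD K_{\delta}(y)\lIm T_y\rIm \to j_1\lIm\R\rIm + j_n\lIm T_y\cap\ker\uD\rho(y)\rIm$ is \emph{not} uniform in~$y$: the paper computes $\|\project{R_\delta(y)}-\project{P(y)}\| = J_{K,\delta}(y)^{-1}$, and this is small only where $J_\rho(y)/\delta$ is large. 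So neither dominated convergence nor uniform continuity of~$\alpha$ lets you pass to the limit in one stroke. A~second, independent non-uniformity is that $s_\delta'(\tau)\to 1$ fails near $\tau\in\{0,1\}$, and the contribution from $\tau\in[0,\delta)\cup(1-\delta,1]$ must be shown to be small.

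The paper resolves both issues by a quantitative $\varepsilon$--$\delta$ argument that splits the slice into three regions: $A_1=\{J_\rho\le \varepsilon^{-2}\delta\}$, where one reverses coarea, bounds $J_{K,\delta}\le 1+\varepsilon^{-4}$, and uses \eqref{eq:sl:limit-cond}; $A_2=\{J_\rho>\varepsilon^{-2}\delta\}$, where $\delta/J_\rho<\varepsilon^2$ forces the ratio within $1+\varepsilon^4$ of $s_\delta'(\tau)$ and the planes within $2\varepsilon^2$; and $A_3=\{\delta J_{K,\delta}<J_\rho\}$, which confines $\tau$ to $[0,\delta)\cup(1-\delta,1]$ and is controlled by the Lebesgue-point hypothesis. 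Your ``up to negligible errors'' is hiding exactly this trichotomy; without it the limit identification is not justified.
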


\begin{proof}
    Since $\rho$ and $t$ are fixed we abbreviate $K_{\delta} =
    K_{\rho,t,\delta}$. For $\delta \in (0,1)$ define
    \begin{gather*}
        V_{1,\delta} = V \restrict \{ (x,S) \in U \times \grass nm : \rho(x) < t - \delta \} \,,
        \\
        V_{2,\delta} = V \restrict \{ (x,S) \in U \times \grass nm : t - \delta \le \rho(x) < t \} \,.
    \end{gather*}
    Clearly
    \begin{displaymath}
        K_{\delta\,\#} V = i_{0\,\#} V_0 + i_{1\,\#} V_{1,\delta} + K_{\delta\,\#} V_{2,\delta} 
    \end{displaymath}
    and $\lim_{\delta \downarrow 0} i_{1\,\#} V_{1,\delta} = i_{1\,\#} V_{1}$ so
    it suffices to prove that $\lim_{\delta \downarrow 0} K_{\delta\,\#}
    V_{2,\delta} = \var{1}(I) \times \langle V, \rho, t \rangle$. To~this end it is
    enough to show that $\lim_{\delta \downarrow 0} \vvvert
    K_{\delta\,\#} V_{2,\delta} - \var{1}(I) \times \langle V, \rho, t
    \rangle \vvvert = 0$.

    Let $j_i : \R \to \R \times U$ and $j_n : U \to \R \times U$ be
    injections and let $\pi : U \times \grass{n}{m-1} \to U$ be the projection
    onto the first factor. For $\|V\|$ almost all $x$ we define $T$,
    $R_{\delta}$, $P$, $J_{K,\delta}$, and $J_{\rho}$ by requiring
    \begin{gather*}
        T(x) = \Tan^m(\|V\|, x) \in \grass nm \,,
        \quad
        R_{\delta}(x) = DK_{\delta}(x) \lIm T(x) \rIm \in \grass{n+1}{m} \,,
        \\
        P(x) = j_1\lIm \R \rIm + j_n \lIm T(x) \cap \ker \uD\rho(x) \rIm \in \grass{n+1}{m}\,,
        \\
        J_{K,\delta}(x) = (\|V\|,m) \ap J_m K_{\delta}(x) \in \R \,,
        \quad
        J_{\rho}(x) = (\|V\|,m) \ap J_1\rho(x) \in \R \,.
    \end{gather*}
    Whenever $x \in \dmn T$ and $Q \in \grass n{m-1}$ and $\tau \in [0,1]$ we also set
    \begin{gather*}
        \gamma_{\tau}(x,Q) = \bigl( (\tau, x ) , j_1\lIm \R \rIm + j_n \lIm Q \rIm \bigr)
        \in (\R \times U) \times \grass{n+1}m \,,
        \\
        \psi_{\tau,\delta}(x) = \bigl( (s_{\delta}(\tau),x) , R_{\delta}(x) \bigr)
        \in (\R \times U) \times \grass{n+1}m \,,
        \\
        W_{\delta} = K_{\delta\,\#} V_{2,\delta} - \var{1}(I) \times \langle V,
        \rho, t \rangle \in \ccspace{\R \times U \times \grass{n+1}m}^{*} \,.
    \end{gather*}
    Let $\varepsilon \in (0,1/2)$. If $\vvvert \langle V, \rho, t \rangle
    \vvvert > 0$, then assume additionally that $\varepsilon \le 2^{-5} \vvvert
    \langle V, \rho, t \rangle \vvvert^{-1}$. Find $\delta_0 \in (0,1)$ such
    that for all $\delta \in (0,\delta_0)$
    \begin{gather}
        \label{eq:sl:delta}
        \delta < \min\bigl\{
        2^{-5} \varepsilon \bigl(\vvvert \langle V, \rho, t \rangle \vvvert + 2^{-4} \varepsilon\bigr)^{-1} 
        ,\, \varepsilon^2/2 ,\, \iota \bigr\} \,,
        \\
        \label{eq:sl:L1-close}
        \frac 1{\delta}\int_{t-\delta}^t \vvvert \langle V, \rho, \tau \rangle 
        - \langle V, \rho, t \rangle \vvvert \ud \LM^1(\tau)
        \le 2^{-4} \varepsilon \,,
        \\
        \label{eq:sl:meas-slice}
        \|V\|(\{ x \in U : t - \delta \le \rho(x) < t \}) 
        \le (1+\varepsilon^{-4})^{-1} 2^{-4} \varepsilon 
        \,.
    \end{gather}
    Such $\delta_0 > 0$ exists because $t$ is a~Lebesgue point of $\langle V,
    \rho, \cdot \rangle$ and we assumed~\eqref{eq:sl:limit-cond}. It~follows
    from~\eqref{eq:sl:L1-close}, applied to $[t-\delta^2,t]$ and
    $[t-\delta+\delta^2,t]$ and $[t-\delta,t]$, and from~\eqref{eq:sl:delta}
    that
    \begin{equation}
        \label{eq:sl:reminder}
        \int_{\{ \tau \in I : \tau < \delta \text{ or } \tau > 1 - \delta \}}
        \| \langle V, \rho, t - \delta \tau \rangle \|(U)
        \ud \LM^1(\tau) 
        \le 2 (\vvvert \langle V, \rho, t \rangle \vvvert + 2^{-4} \varepsilon) \delta
        \le 2^{-4} \varepsilon \,.
    \end{equation}

    For any $\alpha \in \ccspace{\R \times U \times \grass{n+1}m}$ such that
    $\sup \im |\alpha| \le 1$ and $\Lip \alpha \le 1$, employing the co-area
    formula~\cite[3.2.22]{Fed69}, we get
    \begin{multline}
        \label{eq:sl:Wdelta-est}
        |W_{\delta}(\alpha)|
        = \biggl| \int_0^1 \langle V, \rho, t - \delta \tau \rangle
        (\alpha \circ \psi_{\delta,\tau} \circ \pi \cdot (\delta J_{K,\delta} / J_{\rho}) \circ \pi)
        - \langle V, \rho, t \rangle (\alpha \circ \gamma_{\tau})
        \ud \LM^1(\tau) \biggr|
        \\
        \le \biggl| \int_0^1 \langle V, \rho, t - \delta \tau \rangle
        (\alpha \circ \psi_{\delta,\tau} \circ \pi \cdot (\delta J_{K,\delta} / J_{\rho}) \circ \pi
        - \alpha \circ \gamma_{\tau})
        \ud \LM^1(\tau) \biggr|
        \\
        + \biggl| \int_0^1 \bigl( \langle V, \rho, t - \delta \tau \rangle - \langle V, \rho, t \rangle \bigr)
        (\alpha \circ \gamma_{\tau})
        \ud \LM^1(\tau) \biggr|
        = B_1(\alpha, \delta) + B_2(\alpha, \delta) \,.
    \end{multline}
    Since $\Lip \gamma_{\tau} = 1$ for $\tau \in [0,1]$, we have by~\eqref{eq:sl:L1-close}
    \begin{equation}
        \label{eq:sl:B2-est}
        B_2(\alpha, \delta) \le \int_0^1
        \vvvert \langle V, \rho, t - \delta \tau \rangle - \langle V, \rho, t \rangle \vvvert
        \ud \LM^1(\tau) \le 2^{-4} \varepsilon \,.
    \end{equation}
    To estimate $B_1(\alpha,\delta)$ we set
    \begin{gather*}
        X = \bigl\{ (x,S) \in \dmn T \times \grass n{m-1} : S = T(x) \cap \ker \uD\rho(x) \in \grass n{m-1} \bigr\} \,,
        \\
        A_1(\delta) = \bigl\{ (x,S) \in X
        : J_{\rho}(x) \le \varepsilon^{-2} \delta
        \text{ and }
        \delta J_{K,\delta}(x) \ge J_{\rho}(x)
        \bigr\} \,,
        \\
        A_2(\delta) = \bigl\{ (x,S) \in X
        : J_{\rho}(x) > \varepsilon^{-2} \delta 
        \text{ and }
        \delta J_{K,\delta}(x) \ge J_{\rho}(x)
        \bigr\} \,,
        \\
        A_3(\delta) = \bigl\{ (x,S) \in X
        : \delta J_{K,\delta}(x) < J_{\rho}(x) \bigr\} \,.
    \end{gather*}
    Clearly $\sum_{i=1}^3 \langle V, \rho, s \rangle \restrict A_i(\delta) =
    \langle V, \rho, s \rangle$ for $s \in [t-\delta,t]$. We estimate first the
    third part. Straightforward computations (cf.~\cite[I.3(1)]{Alm76}) show
    that if $(x,S) \in X$ and $\tau \in [0,1]$ and $\delta \in (0,1)$ and
    $\rho(x) = t - \delta \tau$, then, setting $v = \project{T(x)}(\grad
    \rho(x)) \in S^{\perp} \cap T$,
    \begin{gather}
        R_{\delta}(x) = \lin\bigl\{j_1(1) s_{\delta}'(\tau) |v|/\delta + j_n(v/|v|) \bigr\}
        + j_n\lIm T(x) \cap \ker \uD \rho(x) \rIm \,,
        \quad
        J_{\rho}(x) = |v| \,,
        \\
        \label{eq:sl:RP-dist}
        \delta J_{K,\delta}(x) = 
        \bigl( \delta^2 + s_{\delta}'(\tau)^2 J_{\rho}(x)^2 \bigr)^{1/2} \,,
        \quad
        \|\project{R_{\delta}(x)} - \project{P(x)}\| = J_{K,\delta}(x)^{-1} \,,
        \\
        \label{eq:sl:gamma-expr}
        \gamma_{\tau}(x,S) = ((\tau,x), P(x)) \,.
    \end{gather}
    Hence, recalling~\ref{def:aux-fun} we see that $(t - \rho(x))/\delta \in [0,
    \delta) \cup (1-\delta,1]$ whenever $(x,S) \in A_3(\delta)$ so
    \begin{multline}
        \label{eq:sl:A3-est}
        \biggl| \int_{0}^{1} (\langle V, \rho, t - \delta \tau \rangle \restrict A_3(\delta))
        (\alpha \circ \psi_{\delta,\tau} \circ \pi \cdot (\delta J_{K,\delta} / J_{\rho}) \circ \pi
        - \alpha \circ \gamma_{\tau})
        \ud \LM^1(\tau) \biggr|
        \\
        \le 2 \int_{\{ \tau \in I : \tau < \delta \text{ or } \tau > 1 - \delta \}}
        \| \langle V, \rho, t - \delta \tau \rangle \|(U)
        \ud \LM^1(\tau)
        \le 2^{-3} \varepsilon
        \quad \text{by~\eqref{eq:sl:reminder}} \,.
    \end{multline}
    For $(x,S) \in A_1(\delta)$ we have $J_{K,\delta}(x) \le (1 +
    \varepsilon^{-4} \delta^2)^{1/2} \le 1 + \varepsilon^{-4}$ and $\delta
    J_{K,\delta}(x) / J_{\rho}(x) \ge 1$ so, using the co-area
    formula~\cite[3.2.22]{Fed69} and $\sup \im |\alpha| \le 1$, we obtain
    \begin{multline}
        \label{eq:sl:A1-est}
        \biggl| \int_{0}^{1} (\langle V, \rho, t - \delta \tau \rangle \restrict A_1(\delta))
        (\alpha \circ \psi_{\delta,\tau} \circ \pi \cdot (\delta J_{K,\delta} / J_{\rho}) \circ \pi
        - \alpha \circ \gamma_{\tau})
        \ud \LM^1(\tau) \biggr|
        \\
        \le 2 \sup \im |\alpha| (1+\varepsilon^{-4}) \|V\|(\{ x \in U : t - \delta \le \rho(x) < t \})
        \le 2^{-3} \varepsilon 
        \quad \text{by~\eqref{eq:sl:meas-slice}} \,.
    \end{multline}
    To deal with $A_2(\delta)$ first observe that $\delta
    J_{K,\delta}(x)/J_{\rho}(x) \ge 1$ and $J_{\rho}(x) > \delta
    \varepsilon^{-2}$ and $\varepsilon \le 1/2$ imply
    \begin{displaymath}
        s_{\delta}'(\tau)^2 \ge 1 - \delta^2/J_{\rho}(x)^2 \ge 1 - \varepsilon^4 \ge 1/2 \,,
    \end{displaymath}
    where $\tau = (t - \rho(x))/\delta$. Therefore, by~\eqref{eq:sl:gamma-expr}
    and~\eqref{eq:sl:RP-dist} and~\eqref{eq:sl:delta},
    \begin{multline}
        \label{eq:sl:sup-est}
        M = \sup\{ |\psi_{\delta,\tau}(x) - \gamma_{\tau}(x,S)|
        : (x,S) \in A_2(\delta) ,\, \delta\tau = t - \rho(x) \}
        \\
        = \delta + \sup\{ \|\project{R_{\delta}(x)} - \project{P(x)}\| 
        : (x,S) \in A_2(\delta) ,\, \delta\tau = t - \rho(x) \}
        \\
        \le \varepsilon^2/2 + \delta / \bigl(\delta^2 + 1/2\delta^2/\varepsilon^4 \bigr)^{1/2} 
        \le 2 \varepsilon^2 \,.
    \end{multline}
    If $(x,S) \in A_2(\delta)$, then $\delta \le \varepsilon^2 J_{\rho}(x)$ so
    $\delta J_{K,\delta}(x) / J_{\rho}(x) \le 1 + \varepsilon^4$ and, using
    $\Lip \alpha \le 1$,
    \begin{multline}
        \label{eq:sl:A2-est}
        \biggl| \int_{0}^{1} (\langle V, \rho, t - \delta \tau \rangle \restrict A_2(\delta))
        (\alpha \circ \psi_{\delta,\tau} \circ \pi \cdot (\delta J_{K,\delta} / J_{\rho}) \circ \pi
        - \alpha \circ \gamma_{\tau})
        \ud \LM^1(\tau) \biggr|
        \\
        \le \bigl( \varepsilon^4 + M \bigr)
        \int_0^1 \| \langle V, \rho, t - \delta \tau \rangle\|(U) \ud \LM^1(\tau) 
        \le 4 \varepsilon^2 \int_0^1 \| \langle V, \rho, t - \delta \tau \rangle\|(U) \ud \LM^1(\tau) \,.
    \end{multline}
    Recall that $\varepsilon < 1/2$ and if $\vvvert \langle V, \rho, t \rangle
    \vvvert > 0$, we assumed $\varepsilon \le 2^{-5} \vvvert \langle V, \rho, t
    \rangle \vvvert^{-1}$. Thus, using~\eqref{eq:sl:L1-close}
    \begin{equation}
        \label{eq:sl:eps-int-est}
        4 \varepsilon^2 \int_0^1 \| \langle V, \rho, t - \delta \tau \rangle\|(U) \ud \LM^1(\tau)
        \le \varepsilon (4\varepsilon \vvvert \langle V, \rho, t \rangle \vvvert + 2^{-2}\varepsilon^2)
        \le 2^{-2} \varepsilon \,.
    \end{equation}
    Finally, combining~\eqref{eq:sl:Wdelta-est}, \eqref{eq:sl:B2-est},
    \eqref{eq:sl:A3-est}, \eqref{eq:sl:A1-est}, \eqref{eq:sl:A2-est},
    \eqref{eq:sl:eps-int-est} we see that $|W_{\delta}(\alpha)| \le \varepsilon$
    for $\delta \in (0,\delta_0)$. Since $\delta_0$ was chosen independently
    of~$\alpha$ we have $\vvvert W_{\delta} \vvvert \le \varepsilon$ for $\delta
    \in (0,\delta_0)$.
\end{proof}

\begin{corollary}
    \label{cor:slice-blow-up-ae}
    Assume $a,b \in \R$ are such that $V \restrict \{ (x,S) \in \R^n \times
    \grass nm : a \le \rho(x) \le b \} \in \RVar{m}(U)$. Then for $\LM^1$~almost
    all $t \in (a,b)$ 
    \begin{displaymath}
        \lim_{\delta \downarrow 0} K_{\rho,t,\delta\,\#} V 
        = i_{0\,\#} V_0 + i_{1\,\#} V_1 + \var{1}(I) \times \langle V, \rho, t \rangle 
        \in \Var{m}(\R \times U) \,.
    \end{displaymath}
\end{corollary}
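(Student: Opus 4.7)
The plan is to reduce the corollary to Lemma~\ref{lem:slice-blow-up} by verifying, for $\LM^1$~almost every $t \in (a,b)$, each of its three pointwise hypotheses: the Lebesgue-point condition, the rectifiability of $V$ on a small slab $\{t-\iota \le \rho \le t\}$, and the vanishing of $\|V\|(\{t-\delta \le \rho < t\})$ as $\delta \downarrow 0$. Once these are in hand for a~given~$t$, the desired limit in $\Var{m}(\R \times U)$ is precisely the conclusion of Lemma~\ref{lem:slice-blow-up}.

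First I would invoke Remark~\ref{rem:leb-points} to discard an $\LM^1$-null set in $(a,b)$ and work only with Lebesgue points of the slicing operator $\langle V, \rho, \cdot \rangle$. Next, for such a point $t \in (a,b)$, I would simply choose $\iota = t-a > 0$; then the slab
\begin{displaymath}
    \{ (x,S) \in U \times \grass nm : t-\iota \le \rho(x) \le t \}
    \subseteq \{ (x,S) \in U \times \grass nm : a \le \rho(x) \le b \}
\end{displaymath}
is contained in the set on which $V$ is assumed rectifiable, so the restriction lies in $\RVar{m}(U)$ as required.

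The remaining condition, $\lim_{\delta \downarrow 0} \|V\|(\{ x \in U : t-\delta \le \rho(x) < t \}) = 0$, I would check via pushforward: set $\mu = \rho_{\#}\|V\|$ on $\R$. Since $V \in \Var{m}(U)$ and $\rho \in \cnt^1(U,\R)$ is proper, preimages of compact sets have finite $\|V\|$-mass, so $\mu$ is a Radon measure on~$\R$. Because $\bigcap_{\delta > 0} [t-\delta, t) = \varnothing$ (the point $t$ itself is excluded) and $\mu([t-1,t]) < \infty$, continuity from above gives $\mu([t-\delta, t)) \downarrow 0$, which is exactly the required statement. Note this holds for \emph{every} $t \in \R$, not just almost every; the only almost-everywhere loss comes from the Lebesgue-point step.

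There is no real obstacle here: the corollary is a bookkeeping exercise turning a pointwise hypothesis of Lemma~\ref{lem:slice-blow-up} into an almost-everywhere statement. The only point that requires a moment's attention is realising that the half-open slab $[t-\delta,t)$ excludes~$t$, so the vanishing condition is automatic for a Radon pushforward and need not be imposed separately in the corollary's hypotheses.
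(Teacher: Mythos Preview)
Your proof is correct and is precisely the argument the paper intends: the corollary is stated without proof as an immediate consequence of Lemma~\ref{lem:slice-blow-up}, and you have supplied exactly the routine verification of its hypotheses for $\LM^1$~almost all $t$. Your observation that the vanishing condition~\eqref{eq:sl:limit-cond} holds for \emph{every}~$t$, not just almost every, is also correct and worth noting.
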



\section{Density ratio bounds}
\label{sec:ludrb}

The main result~\ref{thm:ludrb} of this section gives lower and upper bounds on
the density ratios of~$\|V\|$ for any~$V$ which minimises a bounded
$\cnt^0$~integrand~$F$ (not necessarily elliptic). Our proof follows the ideas
presented in~\cite[2.9(b2)(b3), 3.2(a)(b), 3.4(2) last paragraphs on pp.~347
and~348]{Alm68} as well as in~\cite[7.8, 8.2]{Fle66}.

Let $a \in U \subseteq \R^n$ be fixed, $\rho(x) = |x-a|$, and $M(r) =
\measureball{\|V\|}{\oball ar}$ for $r > 0$. The~key point is to prove the
differential inequalities~\eqref{eq:di:lower} and~\eqref{eq:di:upper}. Assume
$M'(r_0)$ exists and is finite for some $r_0 > 0$ -- this holds for
$\LM^1$~almost all~$r_0 \in (0,\infty)$. Recall that $V$ is a limit of some
sequence of the form $\{ \var{m}(S_i \cap U) : i \in \nat \}$, where $S_i$ belong
to a~good class. Our~strategy is to first choose a compact set $S \subseteq
\R^n$ such that~$\var{m}(S \cap U)$ is weakly close to~$V$, then construct an
admissible deformation~$D$ (using the deformation theorem~\ref{thm:deformation})
of~$S$ such that the $\HM^m(D\lIm S \rIm)$ can be estimated in terms
of~$M'(r_0)$, and finally use minimality of~$V$ to derive estimates on~$M(r_0)$.

More precisely we proceed as follows. We choose a compact set $S \subseteq \R^n$
so that the $4d$-neighbourhood of~$S \cap \rho^{-1}\{r_0\}$ has $\HM^m$~measure
controlled roughly by~$d M'(r_0)$, where $d > 0$ is a~small number. This is
possible because the \emph{mass function} $V \mapsto \|V\|(\R^n)$ is continuous
on the space of varifolds supported in a~fixed compact set. Then, we use the
deformation theorem~\ref{thm:deformation} to ``project'' the part of $S$ lying
inside $2d$-neighbourhood of~$\rho^{-1}\{r_0\}$ onto some $m$~dimensional
cubical complex and we denote the deformed set~$R$. After this step the part
of~$R$ lying in a~$2d$-neighbourhood of~$\rho^{-1}\{r_0\}$
is~$(\HM^m,m)$~rectifiable (as a finite sum of $m$~dimensional cubes) and,
moreover, its measure is still controlled by $d M'(r_0)$ due to the first part
of~\ref{thm:deformation}\ref{i:dt:estimates} which holds for non-rectifiable
sets. Next, we use the co-area formula (valid on rectifiable sets) together with
the Chebyshev inequality and~\ref{rem:leb-points} to find some $r_1 > 0$ in
the~$d$-neighbourhood of~$r_0$ so that the $\HM^{m-1}$~measure of the slice $R
\cap \rho^{-1}\{r_1\}$ is controlled by~$M'(r_0)$ and~$r_1$ is a~Lebesgue point
of the slicing operator $\langle \var{m}(R), \rho, \cdot \rangle$ and $R \cap
\rho^{-1}\{r_1\}$ is $(\HM^{m-1},m-1)$~rectifiable.

To prove~\ref{lem:drb:diff-ineq}\ref{i:di:lower} we cover the slice $R \cap
\rho^{-1}\{r_1\}$ with cubes of equal size $\varepsilon > 0$ and apply the
deformation theorem~\ref{thm:deformation} again to obtain a map $g_2 : I \times
\R^n \to \R^n$. We choose $\varepsilon$ so big that the whole slice $R \cap
\rho^{-1}\{r_1\}$ does not fill, after the deformation, a single $m$-dimensional
cube, which amounts to setting $\varepsilon \approx M'(r_0)^{1/(m-1)}$. This
ensures that $g_2(1,\cdot) \lIm R \cap \rho^{-1}\{r_1\} \rIm$ lies in some
$m-2$~dimensional cubical complex. Since $r_1$ is a Lebesgue point of $\langle
\var{m}(R), \rho, \cdot \rangle$ we may perform a blow-up of the slice
using~\ref{lem:slice-blow-up}. Then we make use of the smoothness of~$g_2$ to
argue that the push-forward $g_{2\#}$ is continuous on the space of varifolds so
that we can compose $g_2$ with the blow-up map~$K_{\delta}$ and pass to the
limit. Next, we estimate the $\HM^m$~measure of the blow-up~limit only by
\emph{one} term, namely the $\HM^{m}$~measure of the image of the whole
deformation of the slice, i.e., $\HM^m(g_2\lIm I \times R \cap \rho^{-1}\{r_1\}
\rIm)$. The other terms drop out because $g_2$ deformed our slice into an
$m-2$~dimensional set. Since $R \cap \rho^{-1}\{r_1\}$ is
$(\HM^{m-1},m-1)$~rectifiable we can use the second part
of~\ref{thm:deformation}\ref{i:dt:estimates} to estimate $\HM^m(g_2\lIm I \times
R \cap \rho^{-1}\{r_1\} \rIm)$ by $\varepsilon M'(r_0) \approx
M'(r_0)^{m/(m-1)}$. Finally, we make use of continuity of the mass to choose one
deformation from the blow-up sequence (without passing to the limit) for which
the desired estimate still holds.

To prove~\ref{lem:drb:diff-ineq}\ref{i:di:lower} we proceed similarly but this
time we choose $\varepsilon \approx \iota > 0$ arbitrarily, we deform the part
of~$R$ lying in the ball $\oball a{r_1}$ rather then just the slice $R \cap
\rho^{-1}\{r_1\}$, and we get two terms in the final estimate. The first term
corresponds to $\iota M'(r_0)$ analogously as before and the second one can be
estimated brutally by the $\HM^m$~measure of the sum of all $m$~dimensional
cubes from $\cubes_m$ touching the ball $\cball a{r_0 + d + 6 \iota \sqrt n}$.
Later, we use scaling to show that this second term depends only on $\iota$ and,
in~\ref{thm:ludrb}, we choose a specific~$\iota$ depending only on $n$, $m$, and
$F$.

We begin with a technical lemma. The estimates
in~\ref{lem:drb:aux}\ref{i:drb:lower-aux}\ref{i:drb:upper-aux} contain an
additional term which includes the parameter~$d$ and which shall be later
absorbed by other terms. The parameter $b$ below shall be set to $M'(r_0)$ in
most cases.

\begin{lemma}
    \label{lem:drb:aux}
    Let $S \subseteq \R^n$, and $a \in \R^n$, and $b,d \in (0,\infty)$, and $r_0
    \in (0,\infty)$. Set $\rho(x) = |x-a|$. Assume $\HM^m(\Clos{S} \cap
      \cball{a}{r_0 + 4d}) < \infty$ and
    \begin{displaymath}
        \label{eq:drb:slice-cond}
        \HM^m( S \cap \oball a{r_0 + 4d} \without \oball a{r_0 - 4d} ) < 9 b d \,.
    \end{displaymath}

    \begin{enumerate}
    \item 
        \label{i:drb:lower-aux}
        There exists a deformation $D \in \cnt^{\infty}(\R^n,\R^n)$ such that
        \begin{gather}
            \label{eq:drb:lower-adm}
            D \in \adm{ \oball a{r_0 + 4d + 40 \sqrt n (\Gamma_{\ref{thm:deformation}}^2 b)^{1/(m-1)}} } \,,
            \\
            \label{eq:drb:lower-est}
            \HM^m( D \lIm S \cap \oball a{r_0 + 4d} \rIm ) 
            \le 9 \Gamma_{\ref{thm:deformation}} b d + 50 \Gamma_{\ref{thm:deformation}}^{2m/(m-1)} b^{m/(m-1)}  \,.
        \end{gather}
    \item 
        \label{i:drb:upper-aux}
        Suppose $\iota \in (0,\infty)$ and $N \in \integers$ satisfy $2^{-N-1} <
        \iota \le 2^{-N}$. There exists a~deformation $F \in
        \cnt^{\infty}(\R^n,\R^n)$ such that setting
        \begin{equation}
            \label{eq:drb:Delta}
            \Delta(\rho,\iota,r_0,d) = 2^{-Nm}
            \HM^0\bigl(
            \bigl\{ K \in \cubes_{m}(N) 
            : K \cap \oball a{r_0 + d + 6 \iota \sqrt n} 
            \ne \varnothing \bigr\}\bigr)
        \end{equation}
        there holds
        \begin{gather}
            \label{eq:drb:upper-adm}
            F \in \adm{ \oball{a}{r_0 + 4d + 8  \iota \sqrt n} }
            \\
            \label{eq:drb:upper-est}
            \HM^m( F \lIm S \cap \oball a{r_0 + 4d} \rIm )
            \le 9 \Gamma_{\ref{thm:deformation}} b d 
            + 10 \Gamma_{\ref{thm:deformation}}^{2m/(m-1)} \iota b
            + \Delta(\rho,\iota,r_0,d) \,.
        \end{gather}
    \end{enumerate}
\end{lemma}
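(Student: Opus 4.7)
The overall plan is to apply the deformation theorem~\ref{thm:deformation} twice in succession. First I~would cover the annulus $A = \oball{a}{r_0 + 2d} \without \oball{a}{r_0 - 2d}$ by a~finite subfamily $\mathcal A_1$ of~$\cubes$ whose cubes all lie inside $\oball{a}{r_0 + 4d} \without \oball{a}{r_0 - 4d}$, and apply~\ref{thm:deformation} with $\Sigma_1 = S$, $m_1 = m$, and $\mathcal A = \mathcal A_1$ to obtain a~smooth map~$g_1$. By~\ref{thm:deformation}\ref{i:dt:m-dim-im} the image $R = g_1(1,\cdot)\lIm S \rIm$ lies, inside~$A$, in an~$m$-dimensional cubical skeleton, so $R \cap A$ is $(\HM^m,m)$~rectifiable there, and by~\eqref{eq:dt:im-gt} its measure is at most $9\Gamma_{\ref{thm:deformation}} b d$. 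Using this rectifiability, the co-area formula combined with Chebyshev's inequality and~\ref{rem:leb-points} let me select a~radius $r_1 \in (r_0 - d, r_0 + d)$ at which the slice $R \cap \rho^{-1}\{r_1\}$ is $(\HM^{m-1},m-1)$~rectifiable with $\HM^{m-1}$~measure of order~$b$, and which is simultaneously a~Lebesgue point of $\langle \var m(R), \rho, \cdot\rangle$.

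For part~\ref{i:drb:lower-aux}, I~cover $R \cap \rho^{-1}\{r_1\}$ by a~finite family~$\mathcal A_2$ of dyadic cubes of equal side length $\varepsilon$ of order $(\Gamma_{\ref{thm:deformation}}^2 b)^{1/(m-1)}$, chosen just large enough that the total $\HM^{m-1}$~measure of the slice is strictly smaller than $\varepsilon^{m-1}$, i.e.\ smaller than the $\HM^{m-1}$~measure of a~single $(m-1)$-face. Applying~\ref{thm:deformation} with $\Sigma_1 = R \cap \rho^{-1}\{r_1\}$, $m_1 = m-1$, and $\mathcal A = \mathcal A_2$, the size choice of~$\varepsilon$ combined with~\ref{thm:deformation}\ref{i:dt:full-faces} forces the resulting map~$g_2$ to push the slice at time~$1$ into the $(m-2)$-skeleton, so $\HM^{m-1}(g_2(1,\cdot)\lIm R \cap \rho^{-1}\{r_1\} \rIm) = 0$.

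The main obstacle, and the reason for bringing in varifold slicing, is that $R$~itself need not be rectifiable inside $\oball{a}{r_1}$, so I~cannot simply cut $R$ at level $r_1$ and compose with~$g_2$. Instead, I~apply~\ref{lem:slice-blow-up} to $V = \var m(R)$, $\rho$, and $t = r_1$: as $\delta \downarrow 0$ one has $K_{\rho,r_1,\delta\,\#}V \to i_{0\#}V_0 + i_{1\#}V_1 + \var 1(I) \times \langle V, \rho, r_1\rangle$. Smoothness of~$g_2$ guarantees that the push-forward $g_{2\#}$ is continuous on the space of varifolds, so I~can propagate this convergence and estimate the mass of $g_{2\#}(K_{\rho,r_1,\delta\,\#}V)$ in the limit by the contribution of the product factor alone, bounded via~\eqref{eq:dt:im-g-full} by $\Gamma_{\ref{thm:deformation}} \varepsilon \HM^{m-1}(R \cap \rho^{-1}\{r_1\})$, which is of order~$b^{m/(m-1)}$. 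Continuity of the mass functional on compactly supported varifolds then lets me pick a~single sufficiently small~$\delta$ for which the bound persists, and composing with~$g_1$ and extending by the identity outside yields the required deformation~$D$, the admissibility~\eqref{eq:drb:lower-adm} following since both deformations are supported in cubical neighbourhoods contained in the prescribed ball.

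Part~\ref{i:drb:upper-aux} follows the same scheme with one modification: $\mathcal A_2$ is now a~family of dyadic cubes of side length $\iota = 2^{-N}$ covering the whole ball $\oball{a}{r_1}$ (with no constraint on fitting into a~single face), and~\ref{thm:deformation} is applied with $m_1 = m$ instead of~$m-1$. The slice contribution from~$g_2$ is then of order $\iota b$ by the same blow-up argument as in part~\ref{i:drb:lower-aux}, while the $m$-dimensional cubes of~$\mathcal A_2$ meeting the enlarged sphere $\cball{a}{r_0 + d + 6\iota\sqrt n}$ contribute the brute-force error term $\Delta(\rho,\iota,r_0,d)$ of~\eqref{eq:drb:Delta}, reflecting that the part of~$R$ near the sphere is projected onto the~$m$-skeleton of these cubes, whose total $\HM^m$~measure is exactly $\Delta(\rho,\iota,r_0,d)$.
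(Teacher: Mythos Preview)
Your approach matches the paper's almost exactly, but there is one genuine gap in part~\ref{i:drb:lower-aux}. You claim that the mass of $g_{2\#}(K_{\rho,r_1,\delta\#}\var m(R))$ in the limit is bounded ``by the contribution of the product factor alone''. This is not true for $g_2$ as you have defined it. The blow-up limit has three pieces: $i_{0\#}V_0$ (outside $\{\rho=r_1\}$), $i_{1\#}V_1$ (inside), and the product $\var 1(I)\times\langle\var m(R),\rho,r_1\rangle$. Pushing forward by $g_2$, the first becomes $g_2(0,\cdot)_\#V_0 = V_0$, which accounts for the $9\Gamma_{\ref{thm:deformation}}bd$ annulus term. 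The product piece gives the $b^{m/(m-1)}$ term via~\eqref{eq:dt:im-g-full}. But the middle term $g_2(1,\cdot)_\#V_1$ does \emph{not} vanish: your map $g_2(1,\cdot)$ was built from cubes $\mathcal A_2$ covering only a neighbourhood of the sphere $\rho^{-1}\{r_1\}$, so it is the identity on most of $\oball a{r_1}$ and $\|g_2(1,\cdot)_\#V_1\|$ is essentially $\HM^m(R\cap\oball a{r_1})$, which you have no control over.

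The paper fixes this by replacing $g_2$ with a map $h:I\times\R^n\to\R^n$ that agrees with $g_2(2t,\cdot)$ for $t\le 1/2$ and then contracts to the origin on $(1/2,1]$, so that $h(1,\cdot)\equiv 0$ and hence $h(1,\cdot)_\#V_1=0$. The contraction adds nothing to the product contribution since $g_2(1,\cdot)$ has already sent the slice into the $(m-2)$-skeleton, so $h\lIm I\times B\rIm = g_2\lIm I\times B\rIm$ up to an $\HM^m$-null set. With this correction your argument goes through.

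A smaller point in part~\ref{i:drb:upper-aux}: to invoke~\eqref{eq:dt:im-g-full} for the slice contribution of order $\iota b$ you need $B=R\cap\rho^{-1}\{r_1\}$ to appear as a separate $\Sigma_2$ with $m_2=m-1$ in the application of~\ref{thm:deformation} (alongside $\Sigma_1=R\cap\oball a{r_1}$ with $m_1=m$); the paper does exactly this. Also, the $\Delta$ term bounds the image of the \emph{entire} interior $R\cap\oball a{r_1}$ under $g_3(1,\cdot)$, not just the part ``near the sphere''.
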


\begin{proof}
    For brevity define $A(d) = \oball{a}{r_0 + d} \without \oball{a}{r_0-d}$ for
    $d \in (0,\infty)$. Set $\varepsilon_1 = (5n)^{-1/2} d$ and find $N_1 \in
    \integers$ such that $2^{-N_1 - 1} < \varepsilon_1 \le 2^{-N_1}$. Define
    \begin{displaymath}
        \mathcal A_1 = \bigl\{
        Q \in \cubes_n(N_1) 
        : Q \cap A(2d) \ne \varnothing
        \bigr\} \,.
    \end{displaymath}
    Note that $\mathcal A_1$ is finite. Apply~\ref{thm:deformation} with $2$,
    $m$, $m$ $S$, $S \cap A(2d)$, $2^{-N_1-4}$, $\cubes_n(N_1)$, $\mathcal A_1$
    in place of $l$, $m_1$, $m_2$, $\Sigma_1$, $\Sigma_2$, $\varepsilon$,
    $\mathcal F$, $\mathcal A$ to obtain the~map $g_1 : I \times \R^n \to \R^n$
    called~``$f$'' there. Observe that
    \begin{gather*}
        \tbcup \mathcal A_1 + \cball 0{2^{-N_1-4}} \subseteq A(4d) 
        \quad \text{and} \quad
        S \cap A(2d) \subseteq A(2d) \subseteq \Int(\tbcup \mathcal A_1) \,.
    \end{gather*}
    In particular, it follows from~\ref{thm:deformation}\ref{i:dt:identity} that
    \begin{equation}
        \label{eq:drb:g1-id}
        g_1(t,x) = x \quad \text{whenever } \rho(x) \ge r_0 + 4d \text{ and } t \in I \,.
    \end{equation}

    Define $R = g_1(1,\cdot) \lIm S \rIm$ and note that $R \cap \Int(\tbcup
    \mathcal A_1)$ is a~finite sum of $m$~dimensional cubes; in~particular it is
    $(\HM^m,m)$~rectifiable. Observe also that if $x \in R \cap A(d)$, then
    there exists an $n$~dimensional cube $K \in \mathcal A_1$ such that $x \in
    K$ and there exists $y \in S$ such that $g_1(y) = x$ and $y \in K$ due
    to~\ref{thm:deformation}\ref{i:dt:cube-pres}. Hence, $|g(y) - y| \le 2
    \varepsilon_1 \sqrt n < d$ and, since $\Lip \rho \le 1$, we get $y \in
    A(2d)$. Therefore,
    \begin{equation}
        \label{eq:drb:d-strip-meas}
        R \cap A(d) \subseteq g \lIm S \cap A(2d) \rIm 
        \quad \text{and} \quad
        \HM^m(R \cap A(d)) < 9 \Gamma_{\ref{thm:deformation}} b d  \,,
    \end{equation}
    by~\ref{thm:deformation}\ref{i:dt:estimates} and~\eqref{eq:drb:slice-cond}.
    Since $R \cap A(d) \subseteq \Int(\bigcup \mathcal A_1)$ is
    $(\HM^m,m)$~rectifiable we may employ the co-area
    formula~\cite[3.2.22]{Fed69} together with $\Lip \rho \le 1$ to obtain
    \begin{displaymath}
        9 \Gamma_{\ref{thm:deformation}} b d 
        > \HM^m(R \cap A(d)) 
        \ge \int_{r_0-d}^{r_0+d} \HM^{m-1}(R \cap \rho^{-1}\{t\}) \ud \LM^1(t) \,.
    \end{displaymath}
    Thus, the Chebyshev inequality gives
    \begin{displaymath}
        \LM^{1}(\{ t \in (r_0 - d, r_0 + d) :
        \HM^{m-1}(R \cap \rho^{-1}\{t\}) \ge 5 \Gamma_{\ref{thm:deformation}} b \})
        \le \tfrac 9{10} 2d \,.
    \end{displaymath}
    Now we see that there exists $r_1 \in (r_0 - d, r_0 + d)$ such that
    \begin{equation}
        \label{eq:drb:r1-slice-meas}
        \HM^{m-1}(R \cap \rho^{-1} \{r_1\}) < 5 \Gamma_{\ref{thm:deformation}} b
    \end{equation}
    and $r_1$ is a Lebesgue point of the slicing operator $\langle \var{m}(R),
    \rho, \cdot \rangle$ (see~\ref{def:Lebesgue-set}) and $\langle \var{m}(R),
    \rho, r_1 \rangle \in \RVar{m-1}(\R^n)$ (see~\ref{rem:slice-rect}). For
    $\delta \in (0,r_1-r_0+d)$ let $K_{\delta} = K_{\rho,r_1,\delta} : \R^n \to
    I \times \R^n$ be defined as in~\ref{def:aux-fun}. Since $R \cap A(d)
    \subseteq \Int(\tbcup \mathcal A_1)$ is a finite sum of $m$~dimensional
    cubes we can apply~\ref{lem:slice-blow-up} to see that
    \begin{multline}
        \label{eq:drb:K-lim}
        \lim_{\delta \downarrow 0} K_{\delta\,\#} \var{m}(R)
        = i_{0\,\#} \var{m}( R \without \oball{a}{r_1} )
        + i_{1\,\#} \var{m}( R \cap \oball{a}{r_1} )
        \\
        + \var{1}([0,1]) \times \langle \var{m}(R), \rho, r_1 \rangle
        \in \Var{m}(\R \times \R^n) \,,
    \end{multline}
    where $i_{0}$ and $i_{1}$ are defined as in~\ref{def:aux-fun}.

    \emph{Proof of~\ref{i:drb:lower-aux}:} For brevity, if $K \in \cubes$, let
    us define $\widehat K$ to be the $n$~dimensional cube with the same centre
    as~$K$ and side length three times as long as $K$. Choose $\varepsilon_2
    \in (0,\infty)$ and $N_2 \in \integers$ so that
    \begin{equation}
        \label{eq:drb:eps2}
        \varepsilon_2^{m-1} 
        = \Gamma_{\ref{thm:deformation}} \HM^{m-1}(R \cap \rho^{-1}\{r_1\}) 
        < 5 \Gamma_{\ref{thm:deformation}}^2 b 
        \quad \text{and} \quad
        2^{-N_2-1} < \varepsilon_2 \le 2^{-N_2} \,.
    \end{equation}
    Define
    \begin{displaymath}
        B = R \cap \rho^{-1}\{r_1\}
        \quad \text{and} \quad
        \mathcal A_2 = \bigl\{ K \in \cubes_n(N_2) : \widehat K \cap \rho^{-1}\{r_1\} \ne \varnothing \bigr\} \,.
    \end{displaymath}
    Apply~\ref{thm:deformation} with $1$, $m-1$, $B$, $2^{-N_2-4}$,
    $\cubes_n(N_2)$, $\mathcal A_2$ in place of $l$, $m_1$, $\Sigma_1$,
    $\varepsilon$, $\mathcal F$, $\mathcal A$ to obtain the~map $g_2 : I \times
    \R^n \to \R^n$ called~``$f$'' there. We define $h : I \times \R^n \to \R^n$
    by setting
    \begin{gather*}
        h(t,x) = g_2(2t,x) \quad \text{for $t \in [0,1/2]$ and $x \in \R^n$} \,,
        \\
        h(t,x) = s_{1/100}(2 - 2t) g_2(1,x) \quad \text{for $t \in (1/2,1]$ and $x \in \R^n$} \,,
    \end{gather*}
    where $s_{1/100}$ is the function defined in~\ref{def:aux-fun}. Due to our
    choice of $\varepsilon_2$ we know,
    from~\ref{thm:deformation}\ref{i:dt:estimates}, that $\HM^{m-1}(g_2\lIm B
    \rIm) < \HM^{m-1}(K)$ for any $(m-1)$~dimensional cube $K \in
    \cubes_{m-1}(N_2)$. We see also that $g_2\lIm B \rIm \subseteq \Int(\tbcup
    \mathcal A_2)$ because $g_2\lIm B \rIm \subseteq \tbcup \{ K \in
    \cubes_n(N_2) : K \cap B \ne \varnothing \}$
    by~\ref{thm:deformation}\ref{i:dt:cube-pres}. Hence,
    by~\ref{thm:deformation}\ref{i:dt:full-faces},
    \begin{displaymath}
        g_2\lIm R \cap \rho^{-1} \{r_1\} \rIm 
        = g_2\lIm B \cap \tbcup \mathcal A_2 \rIm
        \subseteq \tbcup \cubes_{m-2}(N_2) \,,
    \end{displaymath}
    and we obtain
    \begin{gather}
        \label{eq:drb:h0-im}
        h(0,\cdot)_{\#} \var{m}( R \without \oball{a}{r_1} ) 
        = \var{m}( R \without  \oball{a}{r_1} ) \,,
        \\
        \label{eq:drb:h1-im}
        h(1,\cdot)_{\#} \var{m}( R \cap \oball{a}{r_1}) = 0 \,,
        \\
        \label{eq:drb:hI-im}
        h_{\#} \var{m}( I \times R \cap \rho^{-1}\{r_1\} ) 
        = g_{2\,\#} \var{m}( I \times R \cap \rho^{-1}\{r_1\} ) \,.
    \end{gather}
    Since $h$ is of class~$\cnt^{\infty}$ the push-forward $h_{\#}$ is
    continuous on~$\Var{m}(\R \times \R^n)$ so using~\eqref{eq:drb:K-lim}
    together with~\eqref{eq:drb:h0-im}, \eqref{eq:drb:h1-im},
    \eqref{eq:drb:hI-im}
    \begin{multline*}
        \lim_{\delta \downarrow 0} (h \circ K_{\delta})_{\#}
        \var{m}( R \cap \oball{a}{r_0 + 4d} ) 
        = \var{m}(R \cap \oball{a}{r_0 + 4d} \without \oball{a}{r_1} )
        \\
        + g_{2\,\#} \var{m}( I \times R \cap \rho^{-1}\{r_1\} ) \,.
    \end{multline*}
    Since $\rho$ is proper we can find a continuous function $\gamma : \R \times
    \R^n \to \R$ with compact support such that $I \times \oball{a}{r_0 + 4d}
    \subseteq \Int \gamma^{-1}\{1\}$ and use it as a~test function for the weak
    convergence. Thus, recalling that $\langle \var{m}(R), \rho, r_1 \rangle \in
    \RVar{m-1}(\R^n)$ and employing~\ref{thm:deformation}\ref{i:dt:estimates},
    \eqref{eq:drb:r1-slice-meas}, \eqref{eq:drb:d-strip-meas} we obtain
    \begin{multline}
        \label{eq:drb:lim-meas}
        \lim_{\delta \downarrow 0} \HM^m( (h \circ K_{\delta}) \lIm R \cap \oball{a}{r_0 + 4d} \rIm ) 
        \\
        \le \HM^m( R \cap \oball{a}{r_0+4d} \without \oball{a}{r_1} )
        + 2 \varepsilon_2 \Gamma_{\ref{thm:deformation}} \HM^{m-1}(R \cap \rho^{-1}\{r_1\})
        \\
        < 9 \Gamma_{\ref{thm:deformation}} b d + 
        10 \varepsilon_2 \Gamma_{\ref{thm:deformation}}^2 b
        \,.
    \end{multline}
    For $\delta \in (0,r_1 - r_0 + d)$ define $D_{\delta} = h \circ K_{\delta}
    \circ g_1(1,\cdot)$. Using~\eqref{eq:drb:g1-id}, \eqref{eq:drb:eps2},
    \eqref{eq:drb:lim-meas} we can find $\delta_0 \in (0,r_1 - r_0 + d)$ such
    that for all $\delta \in (0,\delta_0]$
    \begin{displaymath}
        \HM^m( D_{\delta} \lIm S \cap \oball{a}{r_0 + 4d} \rIm ) 
        \le 9 \Gamma_{\ref{thm:deformation}} b d + 50 \Gamma_{\ref{thm:deformation}}^{2m/(m-1)} b^{m/(m-1)} \,.
    \end{displaymath}
    Observe that
    \begin{displaymath}
        \tbcup \mathcal A_2 + \cball{0}{2^{-N_2-4}} \subseteq A\bigl(d + 8 \sqrt n (5 \Gamma_{\ref{thm:deformation}}^2 b)^{1/(m-1)}\bigr) \,;
    \end{displaymath}
    hence, $D_{\delta} \in \adm{\conv A(4d + 40 \sqrt n
      (\Gamma_{\ref{thm:deformation}}^2 b)^{1/(m-1)})}$ for $\delta \in
    (0,\delta_0)$ so setting $D = D_{\delta_0}$ finishes the proof
    of~\ref{i:drb:lower-aux}.

    \emph{Proof of~\ref{i:drb:upper-aux}:} Recall that if $K \in \cubes$, then
    $\widehat K$ denotes the $n$~dimensional cube with the same centre as~$K$
    and side length three times as long as $K$. Let $\iota \in (0,\infty)$ and
    $N \in \integers$ satisfy $2^{-N-1} < \iota \le 2^{-N}$. Set
    \begin{displaymath}
        C = R \cap \oball{a}{r_1}
        \quad \text{and} \quad
        \mathcal A_3 = \bigl\{ K \in \cubes_n(N) 
        : \widehat K \cap \oball{a}{r_1} \ne \varnothing \bigr\} \,.
    \end{displaymath}
    Apply~\ref{thm:deformation} with $2$, $m$, $m-1$, $C$, $B$, $2^{-N-4}$,
    $\cubes_n(N)$, $\mathcal A_3$ in place of $l$, $m_1$, $m_2$, $\Sigma_1$,
    $\Sigma_2$, $\varepsilon$, $\mathcal F$, $\mathcal A$ to obtain the~map $g_3
    : I \times \R^n \to \R^n$ called ``$g$'' there. Since $C \subseteq \tbcup
    \bigl\{ K \in \cubes_n(N) : K \cap \oball{a}{r_1} \ne \varnothing \bigr\}$
    we see that $g_3(1,\cdot) \lIm C \rIm \subseteq \Int(\tbcup \mathcal A_3)$,
    by~\ref{thm:deformation}\ref{i:dt:cube-pres}, and conclude
    from~\ref{thm:deformation}\ref{i:dt:m-dim-im} that
    \begin{equation}
        \label{eq:drb:g3-im-meas}
        \HM^m(g_3(1,\cdot) \lIm C \rIm) 
        \le \HM^m(\tbcup \cubes_{m}(N) \cap \tbcup \mathcal A_3) 
        \le \Delta(\rho,\iota,r_0,d) \,.
    \end{equation}
    Therefore, using~\eqref{eq:drb:K-lim}
    and~\ref{thm:deformation}\ref{i:dt:estimates}, \eqref{eq:drb:d-strip-meas},
    \eqref{eq:drb:r1-slice-meas}, \eqref{eq:drb:g3-im-meas} we get
    \begin{multline}
        \label{eq:drb:lim-meas2}
        \lim_{\delta \downarrow 0} \HM^m( (g_3 \circ K_{\delta}) \lIm R \cap \oball{a}{r_0 + 4d} \rIm ) 
        \\
        \le \HM^m( R \cap \oball{a}{r_0+4d} \without \oball{a}{r_1})
        + \HM^m(g_3 \lIm I \times B  \rIm) + \HM^m(g_3(1,\cdot) \lIm C \rIm) 
        \\
        \le 9 \Gamma_{\ref{thm:deformation}} b d + 
        10 \iota \Gamma_{\ref{thm:deformation}}^2 b
        + \Delta(\rho,\iota,r_0,d) \,.
    \end{multline}
    Hence, there exists $\delta_0 \in (0,r_1 - r_0 + d)$ such that $F = g_3
    \circ K_{\delta_0} \circ g_1(1,\cdot)$ satisfies the estimate claimed
    in~\ref{i:drb:upper-aux}. Moreover, we see that
    \begin{displaymath}
        \tbcup \mathcal A_3 + \cball{0}{2^{-N-4}} 
        \subseteq \oball{a}{r_0 + d + 8 \iota \sqrt n} \,;
    \end{displaymath}
    hence, $F \in \adm{ \oball{a}{r_0 + 4d + 8 \iota \sqrt n} }$.
\end{proof}

Now, we can prove the pivotal differential inequalities~\eqref{eq:di:lower}
and~\eqref{eq:di:upper}. There is one technical difficulty that needs to be
taken care of. To be able to employ minimality of~$V$ we need our deformations
to be admissible in an open set~$U \subseteq \R^n$. In~particular, in the proof
of~\eqref{eq:di:lower} we need to perform a~deformation onto cubes of side
length roughly~$M'(r_0)^{1/(m-1)}$ which might be arbitrarily big. This turns
out not to be a problem since big values of the derivative~$M'$ cannot spoil the
lower bound on~$M$. More precisely, we will later use the upper bound on~$M$
proven in~\ref{thm:ludrb}\ref{i:lu:upper} to overcome this difficulty. For the
time being we just assume in~\ref{lem:drb:diff-ineq}\ref{i:di:lower} that some
upper bound on~$M$ holds.

\begin{lemma}
    \label{lem:drb:diff-ineq}
    Assume
    \begin{gather}
        U \subseteq \R^n \text{ is open} \,,
        \quad
        a \in U \,,
        \quad
        \mathcal C \text{ is a good class in $U$} \,,
        \quad
        \{ S_i : i \in \nat \} \subseteq \mathcal C \,,
        \\
        \rho(x) = |x-a| \,,
        \quad 
        r_0, \iota \in (0,\infty) \,,
        \quad
        V = \lim_{i \to \infty} \var{m}(S_i \cap U) \in \Var{m}(U) \,,
        \\
        \label{eq:drb:V-minimal}
        \text{$F$~is a $\cnt^0$ integrand} \,,
        \quad
        \Phi_{F}(V) = \lim_{i \to \infty} \Phi_{F}(S_i \cap U) 
        = \inf\bigl\{ \Phi_{F}(T \cap U) : T \in \mathcal C \bigr\} \,,
        \\
        M(t) = \measureball{\|V\|}{\oball at} \quad \text{for $t \in \R$} \,,
        \quad
        M'(r_0) \text{ exists and is finite} \,,
        \\
        \alpha = \inf F \lIm \cball a{\dist(a,\R^n \without U)} \rIm > 0\,,
        \quad
        \beta = \sup F \lIm \cball a{\dist(a,\R^n \without U)} \rIm < \infty \,,
        \\
        \label{eq:drb:Gamma}
        \Gamma = \Gamma(n,m,F,U,a) = 240 \Gamma_{\ref{thm:deformation}}^{2m/(m-1)} \beta/\alpha \,.
    \end{gather}
    \begin{enumerate}
    \item 
        \label{i:di:lower}
        If $M(r_0) \le \gamma r_0^m$ or $M'(r_0) \le (\gamma/\Gamma)^{1-1/m}
        r_0^{m-1}$ for some $\gamma \in (0,\infty)$,
        \begin{equation}
            \label{eq:di:kappa-low}
            \kappa = \kappa(n,m,\Gamma,\gamma)
            = 1 + (\gamma/\Gamma)^{1/m} \bigl(
            4 \Gamma_{\ref{thm:deformation}}^{(m+1)/(m-1)}
            + 40 \sqrt n \Gamma_{\ref{thm:deformation}}^{2/(m-1)}
            \bigr) \,,
        \end{equation}
        and $\cball a{\kappa r_0} \subseteq U$, then
        \begin{equation}
            \label{eq:di:lower}
            M(r_0) \le \Gamma M'(r_0)^{m/(m-1)} \,.
        \end{equation}

    \item 
        \label{i:di:upper}
        There exists $\gamma = \gamma(n,m,\iota) \in (1,\infty)$ such that setting
        \begin{equation}
            \label{eq:di:kappa-up}
            \kappa = \kappa(n,m,\iota)
            = 1 + \iota \bigl(4 \Gamma_{\ref{thm:deformation}}^{(m+1)/(m-1)} + 8 \sqrt n \bigr)
        \end{equation}
        if $\cball a{\kappa r_0} \subseteq U$, then
        \begin{equation}
            \label{eq:di:upper}
            \frac{M(r_0)}{r_0^m} \le \gamma + \Gamma \iota \frac{M'(r_0)}{r_0^{m-1}} \,.
        \end{equation}
    \end{enumerate}
\end{lemma}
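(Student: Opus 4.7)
The plan is to compare the minimising sequence $\{S_i\}$ with its image under the admissible deformations produced in Lemma~\ref{lem:drb:aux}, converting the measure estimates there into bounds on $M(r_0)$ by combining the two-sided bound $\alpha \le F \le \beta$ with the minimality $\Phi_F(S_i \cap U) \to \mu$. Both parts follow the same scheme; they differ only in which variant of Lemma~\ref{lem:drb:aux} is applied and in how the resulting measure bound is repackaged.

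First I would fix $\varepsilon > 0$, set $b = M'(r_0) + \varepsilon$, and choose $d \in (0, \varepsilon)$ small enough so that $M(r_0 + 4d) - M(r_0 - 4d) < 8 b d$ (using the existence of $M'(r_0)$) and so that $M$ is continuous at $r_0 \pm 4d$ (avoiding the at-most-countable jump set of the monotone function $M$). Weak convergence of $\var{m}(S_i \cap U)$ to $V$ together with the Portmanteau-type estimates for open sets and compact sets then forces, for $i$ sufficiently large, both $\HM^m(S_i \cap U \cap (\oball{a}{r_0+4d} \without \oball{a}{r_0-4d})) < 9 b d$ and $\HM^m(S_i \cap U \cap \oball{a}{r_0+4d}) \to M(r_0+4d)$. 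I would then apply Lemma~\ref{lem:drb:aux}\ref{i:drb:lower-aux} for part (a) or \ref{i:drb:upper-aux} for part (b) to obtain the deformation. For part (a), the key reduction is that one may assume $M'(r_0) \le (\gamma/\Gamma)^{(m-1)/m} r_0^{m-1}$ in both alternatives: either directly, or by the contradiction that if $M(r_0) \le \gamma r_0^m$ but $M(r_0) > \Gamma M'(r_0)^{m/(m-1)}$, then $M'(r_0) < (M(r_0)/\Gamma)^{(m-1)/m} \le (\gamma/\Gamma)^{(m-1)/m} r_0^{m-1}$. This forces $b^{1/(m-1)}$ to be a small multiple of $r_0$, hence makes the admissibility ball in \eqref{eq:drb:lower-adm} fit inside $\oball{a}{\kappa r_0} \subseteq U$, so axiom \ref{i:gc:deformation} gives $D_i\lIm S_i \rIm \in \mathcal{C}$ and minimality yields $\Phi_F(D_i\lIm S_i \rIm \cap U) \ge \mu$. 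The analogous check for part (b) uses \eqref{eq:di:kappa-up} directly to absorb $\iota$.

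Because the constructed map is the identity outside $\oball{a}{r_0 + 4d}$ (which I would verify from the composition $h \circ K_\delta \circ g_1(1,\cdot)$ in the proof of Lemma~\ref{lem:drb:aux} by observing that $s_\delta((r_1-\rho(x))/\delta) = 0$ for $\rho(x) \ge r_0 + 4d$, since $r_1 < r_0 + d$), the $\Phi_F$-contributions outside cancel and the elementary decomposition $\Phi_F(D_i\lIm S_i \rIm \cap U) - \Phi_F(S_i \cap U) \le \beta\HM^m(D_i\lIm S_i \cap \oball{a}{r_0+4d}\rIm) - \alpha\HM^m(S_i \cap U \cap \oball{a}{r_0+4d})$, combined with $\Phi_F(S_i \cap U) \to \mu \le \Phi_F(D_i\lIm S_i \rIm \cap U)$, yields after $i \to \infty$ an inequality of the form $\alpha M(r_0+4d) \le \beta \cdot [\text{measure bound from Lemma~\ref{lem:drb:aux}}]$. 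Letting $d \downarrow 0$ and then $\varepsilon \downarrow 0$ gives, for part (a), $\alpha M(r_0) \le 50 \beta \Gamma_{\ref{thm:deformation}}^{2m/(m-1)} M'(r_0)^{m/(m-1)}$, which is \eqref{eq:di:lower} with the room built into $\Gamma = 240 \Gamma_{\ref{thm:deformation}}^{2m/(m-1)} \beta/\alpha$; for part (b), the same passage gives $\alpha M(r_0) \le \beta[10\Gamma_{\ref{thm:deformation}}^{2m/(m-1)} \iota M'(r_0) + \Delta(\rho,\iota,r_0,0)]$, and \eqref{eq:di:upper} then follows upon defining $\gamma(n,m,\iota)$ to dominate the purely geometric ratio $(\beta/\alpha)\Delta(\rho,\iota,r_0,0)/r_0^m$. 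The main obstacles will be the interplay of weak varifold convergence with sharp spherical boundaries (handled by picking $d$ so that $r_0 + 4d$ is a continuity point of $M$) and the careful verification that the deformation is truly identity outside $\oball{a}{r_0+4d}$, without which the variational comparison would pick up a useless boundary term from the enlarged ball $\oball{a}{r_0+4d+40\sqrt{n}(\Gamma_{\ref{thm:deformation}}^2 b)^{1/(m-1)}}$ of \eqref{eq:drb:lower-adm}.
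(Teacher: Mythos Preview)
Your approach to part~\ref{i:di:lower} is essentially the same as the paper's: both pick $b$ close to $M'(r_0)$ and $d$ small, use weak convergence to transfer the annulus estimate to a member of the minimising sequence, apply Lemma~\ref{lem:drb:aux}\ref{i:drb:lower-aux}, and compare via minimality and the bounds $\alpha \le F \le \beta$. The paper fixes a single $S$ with three quantitative properties and obtains a factor $\tfrac14$ in front of $\alpha M(r_0)$, while you take limits in $i$, then $d$, then $\varepsilon$; both routes land inside the constant $240$. Your verification that $D$ is the identity on $\{\rho \ge r_0+4d\}$ is correct and is exactly what the paper uses.

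Part~\ref{i:di:upper}, however, has a genuine gap. The quantity $\Delta(\rho,\iota,r_0,0)$ counts $m$-cubes of side $\sim\iota$ meeting a ball of radius $\sim r_0$; there are $\sim (r_0/\iota)^n$ such cubes, so $\Delta(\rho,\iota,r_0,0)/r_0^m$ behaves like $(r_0/\iota)^{n-m}$ and cannot be dominated by any $\gamma(n,m,\iota)$ independent of~$r_0$. The same scaling mismatch shows up in the derivative term: your inequality has $\iota M'(r_0)$, but dividing by $r_0^m$ produces $\iota M'(r_0)/r_0^m$, not the required $\iota M'(r_0)/r_0^{m-1}$. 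The paper repairs both issues simultaneously by rescaling: it sets $\overline S = \scale{1/r_0}\circ\trans{-a}\lIm S\rIm$, $\overline b = M'(r_0)/r_0^{m-1}$, $\overline d = d/r_0$, applies Lemma~\ref{lem:drb:aux}\ref{i:drb:upper-aux} at radius~$1$ with the \emph{same}~$\iota$, and only then rescales back. After this, $\Delta$ is evaluated in a ball of radius $\sim 1$, so it depends only on $n,m,\iota$, and the $\iota\overline b$ term becomes $\iota M'(r_0)/r_0^{m-1}$ upon rescaling. Equivalently, you could apply Lemma~\ref{lem:drb:aux}\ref{i:drb:upper-aux} directly at scale~$r_0$ but with $\iota r_0$ in place of~$\iota$; what you cannot do is use~$\iota$ at scale~$r_0$ and then claim the resulting $\Delta/r_0^m$ is uniformly bounded.
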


\begin{proof}
    Define $A(d) = \{ x \in \R^n : r_0 - d \le \rho(x) < r_0 + d\}$ for $d \in (0,\infty)$.
    Choose $b,d \in (0,\infty)$ so that
    \begin{gather}
        M'(r_0) \le b \,,
        \quad
        [r_0 - 4d, r_0 + 4d] \subseteq (0,\infty) \,,
        \\
        \label{eq:drb:VA4d}
        \|V\|(A(4d)) = M(r_0 + 4d) - M(r_0 - 4d) < 9 b d  \,,
        \quad
        \|V\|(\Bdry A(4d)) = 0 \,.
        \\
        \label{eq:drb:d-absorb}
        d < \Gamma_{\ref{thm:deformation}}^{(m+1)/(m-1)} \min \bigl\{ b^{1/(m-1)} ,\, \iota r_0  \bigl\} \,.
    \end{gather}
    It follows from~\eqref{eq:drb:VA4d} that $\lim_{i \to \infty}
    \|\var{m}(S_i)\|(A(4d)) = \|V\|(A(4d))$; hence,
    recalling~\eqref{eq:drb:V-minimal}, we~see that there exists $S \in \{ S_i :
    i \in \nat \}$ such that
    \begin{gather}
        \label{eq:drb:S-approx}
        0 \le \Phi_{F}(S \cap U) - \Phi_{F}(V) < \tfrac 14 \alpha M(r_0) \,,
        \quad
        \HM^m(S \cap A(4d)) < 9 b d  \,,
        \\
        \label{eq:drb:Mr0S}
        M(r_0) < 2 \| \var{m}(S) \| (\{x \in \R^n : \rho(x) < r_0 \})
        \,.
    \end{gather}

    \emph{Proof of~\ref{i:di:lower}:} If $M(r_0) \le \gamma r_0^m$ and $M'(r_0)
    > (\gamma/\Gamma)^{1-1/m} r_0^{m-1}$, then~\eqref{eq:di:lower} follows
    trivially. Thus, we may and shall assume that $M'(r_0) <
    (\gamma/\Gamma)^{1-1/m} r_0^{m-1}$. Suppose also $b \le
    (\gamma/\Gamma)^{1-1/m} r_0^{m-1}$ and define $\kappa$ by~\eqref{eq:di:kappa-low}.

    Now, recalling~\eqref{eq:drb:lower-adm} and~\eqref{eq:drb:d-absorb}, we
    apply~\ref{lem:drb:aux}\ref{i:drb:lower-aux} together
    with~\eqref{eq:drb:d-absorb} to obtain the~deformation $D \in \adm{\oball
      a{\kappa r_0}}$ such that
    \begin{displaymath}
        \HM^m( D \lIm \{ x \in S : \rho(x) < r_0 + 4d \} \rIm ) 
        \le 59 \Gamma_{\ref{thm:deformation}}^{2m/(m-1)} b^{m/(m-1)} \,.
    \end{displaymath}
    Since $D \in \adm{U}$ we have $\Phi_{F}(V) \le \Phi_{F}(D \lIm S \rIm \cap
    U)$. Using~\eqref{eq:drb:Mr0S} and~\eqref{eq:drb:S-approx}, and noting that
    $D(x) = x$ whenever $\rho(x) \ge r_0 + 4d$ we see that
    \begin{multline}
        \label{eq:drb:comp}
        \tfrac 14 \alpha M(r_0)
        \le \alpha \HM^m(\{ x \in S : \rho(x) < r_0 + 4d \}) - \tfrac 14 \alpha M(r_0)
        \\
        \le \Phi_{F}(\{ x \in S : \rho(x) < r_0 + 4d \}) + (\Phi_{F}(V) - \Phi_{F}(S \cap U))
        \\
        = \Phi_{F}(V) - \Phi_{F}(D\lIm \{ x \in S \cap U : \rho(x) \ge r_0 + 4d \} \rIm)
        \\
        \le \Phi_{F}(D\lIm \{ x \in S : \rho(x) < r_0 + 4d \} \rIm) 
        \le 59 \beta \Gamma_{\ref{thm:deformation}}^{2m/(m-1)} b^{m/(m-1)} \,.
    \end{multline}
    Recalling~\eqref{eq:drb:Gamma}, the definition of $\Gamma$, we see that
    \begin{displaymath}
        M(r_0) \le \Gamma b^{m/(m-1)} \,.
    \end{displaymath}
    Clearly $M$ is non-decreasing so $M'(r_0) \ge 0$. If $M'(r_0) > 0$, then the
    proof of~\ref{i:di:lower} is finished by setting $b = M'(r_0)$. If~$M'(r_0)
    = 0$, then we may choose $b > 0$ arbitrarily small to obtain $M(r_0) = 0 \le
    \Gamma M'(r_0) = 0$.

    \emph{Proof of~\ref{i:di:upper}:} From~\ref{i:di:lower} we already know that
    if $M'(r_0) = 0$, then $M(r) = 0$ so we may assume $M'(r_0) > 0$ and set $b
    = M'(r_0)$. Define
    \begin{gather*}
        \overline S = \scale{1/r_0} \circ \trans{-a} \lIm S \rIm \,,
        \quad
        \overline M(s) = \|(\scale{1/r_0} \circ \trans{-a})_{\#}V\|(\oball 0s) = r_0^{-m} M(sr_0)
        \text{ for $s \in (0,\infty)$} \,,
        \\
        \overline \rho(x) = \rho \circ \trans{a}(x) = |x| \text{ for $x \in \R^n$} \,,
        \quad
        \overline b = \overline M'(1) = \frac{M'(r_0)}{r_0^{m-1}} \,,
        \quad
        \overline d = \frac{d}{r_0} < \Gamma_{\ref{thm:deformation}}^{(m+1)/(m-1)} \iota \,.
    \end{gather*}
    Apply~\ref{lem:drb:aux}\ref{i:drb:upper-aux} with $\overline S$, $\overline
    d$, $\overline \rho$, $\overline b$, $\iota$ in place of~$S$, $d$, $\rho$,
    $b$, $\iota$ to obtain the~deformation $F \in \adm{\oball 0{\kappa}}$, where
    $\kappa = \kappa(n,m,\iota)$ is defined by~\eqref{eq:di:kappa-up}.
    Combining~\eqref{eq:drb:d-absorb} and~\eqref{eq:drb:upper-est} we see that
    \begin{displaymath}
        \HM^m( F \lIm \overline S \cap \oball 0{1 + 4 \overline d} \rIm ) 
        \le 19 \Gamma_{\ref{thm:deformation}}^{2m/(m-1)} \iota \overline M'(1)
        + \Delta(\iota) \,,
    \end{displaymath}
    where $\Delta(\iota) = \Delta(\overline \rho,\iota,1,\overline d)$ is
    defined by~\eqref{eq:drb:Delta}. Set $L = \trans{a} \circ \scale{r_0} \circ
    F \circ \scale{1/r_0} \circ \trans{-a}$. Since $\cball a{\kappa r_0}
    \subseteq U$ we have $L \in \adm{U}$ so $\Phi_{F}(V) \le \Phi_{F}(L\lIm S
    \rIm)$ and we can compute as in~\eqref{eq:drb:comp} 
    \begin{multline*}
        \frac{\alpha}{4\beta} M(r_0) \le 
        \HM^m(L \lIm S \cap \oball{a}{r_0(1 + 4 \overline d)}\rIm)
        \\
        = r_0^m \HM^m(F \lIm \overline S \cap \oball{0}{1 + 4 \overline d}\rIm)
        \le r_0^m \bigl(
        19 \Gamma_{\ref{thm:deformation}}^{2m/(m-1)} \iota \overline M'(1) + \Delta(\iota) \bigr)
        \\
        = 19 \Gamma_{\ref{thm:deformation}}^{2m/(m-1)} \iota r_0 M'(r_0) + r_0^m \Delta(\iota) \,.
    \end{multline*}
    Hence, we may set $\gamma = 4 \beta / \alpha \Delta(\iota)$.
\end{proof}

\begin{theorem}
    \label{thm:ludrb}
    Assume
    \begin{gather*}
        U \subseteq \R^n \text{ is open} \,,
        \quad
        \mathcal C \text{ is a good class in $U$} \,,
        \quad
        \{ S_i : i \in \nat \} \subseteq \mathcal C \,,
        \\
        V = \lim_{i \to \infty} \var{m}(S_i \cap U) \in \Var{m}(U) \,,
        \quad
        \text{$F$~is a bounded $\cnt^0$ integrand} \,,
        \\
        \Phi_{F}(V) = \lim_{i \to \infty} \Phi_{F}(S_i \cap U) 
        = \inf\bigl\{ \Phi_{F}(T \cap U) : T \in \mathcal C \bigr\} < \infty \,,
        \\
        \Delta = \sup \bigl\{ \Gamma_{\ref{lem:drb:diff-ineq}}(n,m,F,U,x) : x \in \spt \|V\| \bigr\} \,,
        \quad
        \iota = (2 m \Delta)^{-1} \,,
        \\
        \kappa = \kappa(n,m,F,V,U) = \kappa_{\ref{lem:drb:diff-ineq}\ref{i:di:upper}}(n,m,\iota) \,,
        \quad
        a \in \spt \|V\| \subseteq U \,,
        \quad
        r_0 = \dist(a,\R^n \without U) / \kappa \,.
    \end{gather*}
    Then $\Delta, \iota \in (0,\infty)$ and the following statements hold.
    \begin{enumerate}
    \item 
        \label{i:lu:upper}
        There exists $\Gamma = \Gamma(n,m,F,V,U) \in (0,\infty)$ such that for
        all $r \in (0,r_0)$
        \begin{displaymath}
            r^{-m} \measureball{\|V\|}{\oball ar}
            \le \max\bigl\{ \Gamma,  r_0^{-m} \measureball{\|V\|}{\oball a{r_0}} \bigr\} \,.
        \end{displaymath}
    \item
        \label{i:lu:lower}
        Define $\lambda = \lambda(n,m,F,V,U,a) = \max \{ \kappa,
        \kappa_{\ref{lem:drb:diff-ineq}\ref{i:di:lower}}(n,m,\Gamma_{\ref{lem:drb:diff-ineq}}(n,m,F,U,a),\gamma)
        \}$, where
        \begin{displaymath}
            \gamma = \gamma(n,m,F,V,U,a)
            = \max\bigl\{
            \Gamma_{\ref{thm:ludrb}\ref{i:lu:upper}}(n,m,F,V,U) ,\,
            r_0^{-m} \measureball{\|V\|}{\oball a{r_0}}
            \bigr\} \,.
        \end{displaymath}
        For all $r \in (0,\dist(a,\R^n \without U) / \lambda)$ we~have
        \begin{displaymath}
            r^{-m} \measureball{\|V\|}{\oball ar} \ge m^{-m} \Gamma_{\ref{lem:drb:diff-ineq}}(n,m,F,U,a)^{1-m} \,.
        \end{displaymath}
    \end{enumerate}
\end{theorem}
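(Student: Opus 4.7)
The plan is to set $M(r) = \measureball{\|V\|}{\oball ar}$ and $\phi(r) = M(r) / r^m$, and to integrate the ordinary differential inequalities provided by Lemma~\ref{lem:drb:diff-ineq} in both directions. First I would verify $\Delta < \infty$: since $F$ is bounded in the sense of~\ref{def:Ck-integrand}, the ratio $\beta(x)/\alpha(x)$ appearing in~\eqref{eq:drb:Gamma} is uniformly controlled by $\sup \im F / \inf \im F$, so $\Delta$ is finite and $\iota = (2m\Delta)^{-1} \in (0,\infty)$. The purpose of this particular choice of~$\iota$ is that $\Gamma_{\ref{lem:drb:diff-ineq}}(n,m,F,U,a) \cdot \iota \le 1/(2m)$ for every $a \in \spt\|V\|$, which is exactly the factor needed to derive a clean monotonicity statement in part~\ref{i:lu:upper}.

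For part~\ref{i:lu:upper}, I would apply Lemma~\ref{lem:drb:diff-ineq}\ref{i:di:upper} at every $r \in (0,r_0]$ where $M$ is differentiable (a full-measure subset, since $M$ is non-decreasing); the hypothesis $\cball{a}{\kappa r} \subseteq U$ is satisfied because $r \le r_0 = \dist(a,\R^n\without U)/\kappa$. With $\gamma = \gamma_{\ref{lem:drb:diff-ineq}\ref{i:di:upper}}(n,m,\iota)$, rearranging~\eqref{eq:di:upper} using $\Gamma_{\ref{lem:drb:diff-ineq}} \iota \le 1/(2m)$ gives $M'(r) \ge (2m/r) M(r) - 2m\gamma r^{m-1}$ a.e. Introducing the auxiliary function $h(r) = r^{-2m} M(r) - 2\gamma r^{-m}$, its absolutely continuous derivative equals $r^{-2m}\bigl(M'(r) - (2m/r) M(r) + 2m\gamma r^{m-1}\bigr) \ge 0$ a.e., and the singular part of the distributional derivative of~$h$ equals $r^{-2m}$ times the singular part of~$dM$, which is non-negative because~$M$ is non-decreasing. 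Hence $h$~is non-decreasing, and the inequality $h(r) \le h(r_0)$ rearranges to
\begin{equation*}
    \phi(r) \le (r/r_0)^m \phi(r_0) + 2\gamma \bigl(1 - (r/r_0)^m\bigr)
    \le \max\bigl\{ \phi(r_0) ,\, 2\gamma \bigr\} \,,
\end{equation*}
which yields~\ref{i:lu:upper} with $\Gamma := 2\gamma_{\ref{lem:drb:diff-ineq}\ref{i:di:upper}}(n,m,\iota)$.

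For part~\ref{i:lu:lower}, I would use~\ref{i:lu:upper} to bootstrap the hypothesis of Lemma~\ref{lem:drb:diff-ineq}\ref{i:di:lower}. With $\gamma := \max\{\Gamma_{\ref{thm:ludrb}\ref{i:lu:upper}}, \phi(r_0)\}$, part~\ref{i:lu:upper} gives $M(r) \le \gamma r^m$ for every $r \in (0,r_0]$, so the first alternative in the hypothesis of~\ref{i:di:lower} holds. The definition of~$\lambda$ ensures that $\cball{a}{\kappa' r} \subseteq U$ whenever $r < \dist(a,\R^n\without U)/\lambda$, where $\kappa' = \kappa_{\ref{lem:drb:diff-ineq}\ref{i:di:lower}}$; therefore~\eqref{eq:di:lower} applies and yields $M'(r) \ge \Gamma^{-(m-1)/m} M(r)^{(m-1)/m}$ a.e.\ in the stated range, where $\Gamma = \Gamma_{\ref{lem:drb:diff-ineq}}(n,m,F,U,a)$. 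Because $a \in \spt \|V\|$, $M(r) > 0$ for all $r > 0$, so $G(r) = M(r)^{1/m}$ is well-defined and non-decreasing, with $G'(r) = M'(r) / (m M(r)^{(m-1)/m}) \ge (m \Gamma^{(m-1)/m})^{-1}$ a.e. Since the singular part of $dG$ is non-negative and $G(0^+) \ge 0$, integration yields $G(r) \ge r / (m \Gamma^{(m-1)/m})$, i.e.\ $M(r) \ge m^{-m} \Gamma^{1-m} r^m$. The main obstacle is purely technical: $M$ is only of bounded variation, and the differential inequalities hold only a.e., so one must argue via distributional derivatives and exploit the non-negativity of the jump part of $dM$ (which is automatic from monotonicity of~$M$) to legitimately integrate. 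Beyond this, the remaining care is the bookkeeping needed to guarantee that the containment conditions $\cball{a}{\kappa r} \subseteq U$ required by Lemma~\ref{lem:drb:diff-ineq} hold throughout the stated ranges, which is precisely what the definitions of~$r_0$ and~$\lambda$ encode.
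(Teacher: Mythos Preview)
Your proposal is correct. For part~\ref{i:lu:lower} it essentially matches the paper's proof: both derive $(M^{1/m})'(r) \ge m^{-1}\Gamma_{\ref{lem:drb:diff-ineq}}^{(1-m)/m}$ from~\eqref{eq:di:lower} and integrate, invoking the upper bound from part~\ref{i:lu:upper} to supply the hypothesis $M(r) \le \gamma r^m$ of Lemma~\ref{lem:drb:diff-ineq}\ref{i:di:lower}.

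For part~\ref{i:lu:upper}, however, you take a genuinely different route. The paper argues by contradiction, following Fleming: assuming $\phi(r_1) > 2\gamma$ for some $r_1 < r_0$, it shows that on the maximal interval $[r_1,r_2]$ where $\phi$ stays above this threshold the differential inequality forces $mM(s) < sM'(s)$, hence $(s^{-m}M(s))' > 0$ a.e., hence $\phi$ is increasing there, hence $r_2 = r_0$ and $\phi(r_1) \le \phi(r_0)$. Your integrating-factor approach---proving monotonicity of $h(r) = r^{-2m}M(r) - 2\gamma r^{-m}$ directly---is arguably cleaner: it yields the explicit interpolation bound $\phi(r) \le (r/r_0)^m \phi(r_0) + 2\gamma\bigl(1 - (r/r_0)^m\bigr)$, handles the singular part of $dM$ transparently, and avoids the contradiction scaffolding. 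One small point you should make explicit: the step ``rearranging~\eqref{eq:di:upper} using $\Gamma_{\ref{lem:drb:diff-ineq}}\iota \le 1/(2m)$ gives $M'(r) \ge (2m/r)M(r) - 2m\gamma r^{m-1}$'' follows from the differential inequality only on the set where $\phi(r) \ge \gamma$; on the complementary set the right-hand side is negative and the inequality holds trivially because $M' \ge 0$.
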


\begin{proof}
    Since $F$ is bounded, it attains its supremum and infimum. Thus, recalling
    the definition of~$\Gamma_{\ref{lem:drb:diff-ineq}}(n,m,F,U,\cdot)$ we see
    that $0 < \Delta < \infty$.

    \emph{Proof of~\ref{i:lu:upper}:} Let $a \in \spt \|V\|$ and $r \in
    (0,r_0)$, where $r_0 = \dist(a,\R^n \without U)/\kappa$. Set $M(s) =
    \measureball{\|V\|}{\oball as}$ for $s \in (0,\infty)$.  Define $\gamma =
    \gamma_{\ref{lem:drb:diff-ineq}\ref{i:di:upper}}(n,m,\iota)$ and $\Gamma =
    2\gamma/\unitmeasure{m}$. For~each $s \in (0,r_0)$ for which $M'(s)$ exists
    and is finite we may apply~\ref{lem:drb:diff-ineq}\ref{i:di:upper} to see
    that
    \begin{equation}
        \label{eq:lu:di}
        s^{-m} M(s)
        \le \gamma + \Delta \iota s^{-(m-1)} M'(s)
        \,.
    \end{equation}
    Now we proceed as in~\cite[8.2]{Fle66}. Choose $\eta > \Gamma$ and assume
    there exists $r_1 \in (0,r_0)$ satisfying $M(r_1) > \eta \unitmeasure{m}
    r_1^m$. Let $r_2 \in [r_1,r_0]$ be the largest number in $[r_1,r_0]$ such
    that $M(s) \ge \eta \unitmeasure{m} s^m$ for $s \in [r_1,r_2]$. Since $M$ is
    non-decreasing we see immediately that $r_2 > r_1$. Using~\eqref{eq:lu:di}
    and the definitions of~$\iota$ and~$\Gamma$, we obtain for $\LM^1$~almost
    all $s \in [r_1,r_2]$
    \begin{displaymath}
        M(s) \le s^m \gamma + \Delta \iota s M'(s)
        < \tfrac 12 M(s) + \tfrac 1{2m} s M'(s) \,.
    \end{displaymath}
    Hence, $m M(s) < s M'(s)$ for $\LM^1$~almost all $s \in [r_1,r_2]$ which
    implies that
    \begin{displaymath}
        \bigl( s^{-m} M(s) \bigr)' > 0
        \quad \text{for $\LM^1$ almost all $s \in [r_1,r_2]$} \,.
    \end{displaymath}
    Using~\cite[2.9.19]{Fed69} for each $s_1,s_2 \in [r_1,r_2]$ with $s_1 <
    s_2$ we obtain
    \begin{displaymath}
        0 < \int_{s_1}^{s_2} \bigl( t^{-m} M(t) \bigr)' \ud \LM^1(t) 
        \le s_2^{-m} M(s_2) - s_1^{-m} M(s_1) \,,
    \end{displaymath}
    which shows that $s^{-m} M(s)$ is increasing for~$s \in [r_1,r_2]$; thus,
    $r_2 = r_0$. Since $\eta > \Gamma$ could be arbitrary the claim is proven.

    \emph{Proof of~\ref{i:lu:lower}:} For each $s \in (0,\dist(a,\R^n \without
    U)/\lambda)$ for which $M'(s)$ exists and is finite we may
    apply~\ref{lem:drb:diff-ineq}\ref{i:di:lower} to see that
    \begin{displaymath}
        (M^{1/m})'(s) \ge m^{-1} \Gamma_{\ref{lem:drb:diff-ineq}}^{(1-m)/m} \,.
    \end{displaymath}
    Employing~\cite[2.9.19]{Fed69} we find out that $M'(s)$ exists and is
    finite for $\LM^1$~almost all $s \in (0,r_0)$ and that
    \begin{displaymath}
        \bigl(\measureball{\|V\|}{\oball ar}\bigr)^{1/m}
        = M(r)^{1/m} \ge \int_{0}^r (M^{1/m})'(s) \ud \LM^1(s)
        \ge m^{-1} \Gamma_{\ref{lem:drb:diff-ineq}}^{(1-m)/m} r \,.
        \qedhere
    \end{displaymath}
\end{proof}

\begin{corollary}
    \label{cor:ludrb}
    Let $F$, $V$, and~$U$ be as in~\ref{thm:ludrb} and $\delta > 0$. There
    exist $\Gamma = \Gamma(n,m,F,V,U,\delta) > 1$ and $\kappa =
    \kappa(n,m,F,V,U,\delta) > 1$ such that for all $x \in \spt \|V\| \subseteq U$
    and $r \in (0,\infty)$ satisfying $r < \dist(x,\R^n \without U)/\kappa$ and
    $\dist(x,\R^n \without U) > \delta$ there holds
    \begin{displaymath}
        \Gamma^{-1} r^m \le \measureball{\|V\|}{\cball xr} \le \Gamma r^m \,.
    \end{displaymath}
    In particular, for all $x \in \spt \|V\| \cap E$ we have
    \begin{displaymath}
        0 < \density_*^m(\|V\|,x) \le \density^{*m}(\|V\|,x) < \infty \,.
    \end{displaymath}
    Using~\cite[2.10.19(1)(3), 2.1.3(5)]{Fed69} and Borel regularity of the
    Hausdorff measure~\cite[2.10.2(1)]{Fed69} we further deduce that there
    exists $C = C(n,m,F,V,U,\delta) > 1$ such that for any Borel set $A
    \subseteq \{ x \in U : \dist(x, \R^n \without U) > \delta \}$ we have
    \begin{displaymath}
        C^{-1} \HM^m(A \cap \spt \|V\|) \le \|V\|(A) \le C \HM^m(A \cap \spt \|V\|) \,.
    \end{displaymath}
\end{corollary}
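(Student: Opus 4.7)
The plan is to bootstrap Theorem~\ref{thm:ludrb} into a uniform two-sided density-ratio estimate by verifying that every constant appearing there which \emph{a~priori} depends on the base point $a$ can in fact be controlled solely in terms of $n$, $m$, $F$, $V$, $U$, $\delta$ once we restrict to $E_\delta := \{x \in U : \dist(x, \R^n \without U) > \delta\}$. First, since $F$ is bounded in the sense of~\ref{def:Ck-integrand}, $\alpha_0 := \inf \im F > 0$ and $\beta_0 := \sup \im F < \infty$, whence $\|V\|(U) \le \Phi_F(V)/\alpha_0 < \infty$. The quantity $\Gamma_{\ref{lem:drb:diff-ineq}}(n,m,F,U,a) = 240\,\Gamma_{\ref{thm:deformation}}^{2m/(m-1)} \beta(a)/\alpha(a)$ is bounded uniformly in $a \in U$ by $240\,\Gamma_{\ref{thm:deformation}}^{2m/(m-1)} \beta_0/\alpha_0$, so the constants $\Delta$, $\iota$, and $\kappa = \kappa_{\ref{thm:ludrb}}(n,m,F,V,U)$ from Theorem~\ref{thm:ludrb} depend only on $n$, $m$, and $\beta_0/\alpha_0$.

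For the upper bound, I would apply Theorem~\ref{thm:ludrb}\ref{i:lu:upper} at each $x \in E_\delta \cap \spt \|V\|$ with $r_0 := \dist(x, \R^n \without U)/\kappa \ge \delta/\kappa$. The auxiliary term $r_0^{-m}\,\measureball{\|V\|}{\oball{x}{r_0}}$ is then bounded by $(\kappa/\delta)^m \|V\|(U)$ uniformly in $x$, so there exists $\Gamma_1 = \Gamma_1(n,m,F,V,U,\delta)$ with $r^{-m}\,\measureball{\|V\|}{\oball{x}{r}} \le \Gamma_1$ for all such $x$ and all $r \in (0,\delta/\kappa)$. For the lower bound, I would feed this uniform upper bound into the definition of~$\gamma$ in Theorem~\ref{thm:ludrb}\ref{i:lu:lower}, which makes $\gamma$ and hence $\lambda$ uniform in $x$; the theorem then yields $r^{-m}\,\measureball{\|V\|}{\oball{x}{r}} \ge m^{-m}\Delta^{1-m}$ for $r$ smaller than a uniform positive multiple of $\delta$. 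Combining the two estimates and passing from open to closed balls by monotonicity (at the price of slightly enlarging $\kappa$) yields the inequality $\Gamma^{-1} r^m \le \measureball{\|V\|}{\cball{x}{r}} \le \Gamma r^m$ with constants depending only on the advertised data.

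The pointwise density statement then follows at once by dividing by $\unitmeasure{m} r^m$ and passing to $\liminf$ and $\limsup$ as $r \downarrow 0$. For the Hausdorff-measure comparison I would invoke standard differentiation of measures: by~\cite[2.10.19(3)]{Fed69}, the uniform lower density bound gives $\|V\|(A) \ge C^{-1}\HM^m(A \cap \spt \|V\|)$ for any Borel set $A \subseteq E_\delta$, while~\cite[2.10.19(1)]{Fed69} combined with the upper density bound and~\cite[2.1.3(5)]{Fed69} yields the reverse inequality, using that $\spt\|V\|$ is closed and $\HM^m$ is Borel regular by~\cite[2.10.2(1)]{Fed69}. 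The argument is essentially bookkeeping; the only real task, and so the main (very mild) obstacle, is tracing through the definitions of $\kappa$, $\iota$, $\Delta$, $\gamma$, $\lambda$ in Theorem~\ref{thm:ludrb} to see that their dependence on the base point $a$ can be uniformly absorbed on $E_\delta$.
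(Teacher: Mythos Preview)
Your proposal is correct and follows precisely the approach the paper intends: the corollary is stated without an explicit proof, and the argument is meant to be the straightforward bookkeeping you describe---bounding $\Gamma_{\ref{lem:drb:diff-ineq}}(n,m,F,U,a)$ uniformly via $\beta_0/\alpha_0$, controlling $r_0^{-m}\measureball{\|V\|}{\oball{x}{r_0}}$ by $(\kappa/\delta)^m\|V\|(U)$, and then feeding this into part~\ref{i:lu:lower} to make $\gamma$ and $\lambda$ uniform on $E_\delta$. The Hausdorff-measure comparison via \cite[2.10.19(1)(3), 2.1.3(5), 2.10.2(1)]{Fed69} is exactly what the paper cites in the statement itself.
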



\section{Rectifiability of the support of the limit varifold}
\label{sec:rectifiability}

In~\ref{thm:rect} we prove that the support of a~$\Phi_F$~minimising
varifold~$V$ must be $(\HM^m,m)$~rectifiable inside any compact set~$K \subseteq
U$. Using the density ratio bounds~\ref{cor:ludrb} we also conclude,
in~\ref{rem:tangent-cones}, that the approximate tangent cones of~$\|V\|$
coincide with the classical tangent cones of $\spt\|V\|$ for \emph{all} points
$x \in \spt\|V\| \subseteq U$. In consequence, the cones $\Tan(\spt\|V\|,x)$ are
in~fact $m$-planes for $\HM^m$~almost all $x \in \spt\|V\|$.

In the proof of~\ref{thm:rect} we follow the guidelines presented
in~\cite[2.9(b4), p.~341]{Alm68}. We use only boundedness of~$F$ and make
\emph{no use} of ellipticity of~$F$. The proof is done by contradiction. We
assume that $\spt\|V\|$ is not countably $(\HM^m,m)$~rectifiable and we look at
a density point~$x_0 \in U$ of the unrectifiable part of $\spt\|V\|$. We choose
a scale $\rho_1 > 0$ so that the $\HM^m$~measure of the rectifiable part of
$\spt\|V\| \cap \cball{x_0}{\rho_1}$ is negligible in comparison to the
$\HM^m$~measure of the unrectifiable part. Then we use the deformation
theorem~\ref{thm:deformation} to produce a smooth map~$\phi : \R^n \to \R^n$
which deforms~$\spt\|V\| \cap \cball{x_0}{\rho_1}$ onto an~$m$-dimensional
skeleton of some cubical complex. Next, we apply a~perturbation
argument~\ref{lem:reduce-unrect} to obtain a map~$g$ which almost kills the
$\HM^m$~measure of the unrectifiable part of~$\spt\|V\|$ keeping the
$\HM^m$~measure of the rectifiable part negligible.

At this point we know that $\HM^m(g \lIm \spt\|V\| \cap \cball{x_0}{\rho_1}
\rIm)$ is significantly smaller than $\HM^m(\spt\|V\| \cap \cball{x_0}{\rho_1})$
and we want to contradict minimality of~$V$ but we do not know whether
$\var{m}(\spt\|V\|) = V$, i.e., whether $\Phi_F(V) = \Phi_F(\spt\|V\|)$. Thus,
we~look at $g \lIm S_i \rIm$, where $S_i \in \mathcal C$ is an appropriate
minimising sequence (we need to assume it converges in the Hausdorff metric to
some compact set~$S \subseteq \R^n$ such that $\HM^m(S \cap U \without \spt\|V\|) = 0$;
see~\ref{cor:ud:good-ms}). To compare the measures of $g \lIm \spt\|V\| \cap
\cball{x_0}{\rho_1} \rIm$ and $g\lIm S_i \cap \cball{x_0}{\rho_1} \rIm$ we make
use of a~simple observation: these two sets both lie in the $m$-dimensional
skeleton of a fixed cubical complex and are close in Hausdorff metric so their
$\HM^m$~measures must also be close; see~\eqref{eq:cheap-trick}.

\begin{theorem}
    \label{thm:rect}
	Assume
    \begin{gather}
        U \subseteq \R^n \text{ is open} \,,
        \quad
        \mathcal{C} \text{ is a good class in $U$} \,,
        \quad
        \{ S_i: i \in \nat \} \subseteq \mathcal{C} \,,
        \quad
        S \in \mathcal{C} \,,
        \\
        V = \lim_{i \to \infty} \var{m}(S_i \cap U) \in \Var{m}(U) \,,
        \quad
        F \text{ is a bounded $\cnt^0$ integrand} \,,
        \\
        \lim_{i \to \infty} \HDK{K}(S_i \cap U, S \cap U) = 0 
        \quad \text{for any compact set $K \subseteq U$} \,,
        \\
        \HM^m(S \cap U \without \spt \|V\|) = 0 \,,
        \quad
        \Phi_F(V) = \lim_{i\to \infty} \Phi_F(S_i \cap U) 
        = \inf\{ \Phi_F(R \cap U) : R \in \mathcal{C} \} < \infty \,.
    \end{gather}
    Then $\HM^m(\spt\|V\| \cap K) < \infty$ for any compact set $K \subseteq U$
    and $\spt \|V\|$ is a~countably $(\HM^m,m)$~rectifiable subset of~$U$.
\end{theorem}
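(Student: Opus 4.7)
The local finiteness of $\HM^m$ on $\spt\|V\|$ follows directly from~\ref{cor:ludrb}: the density ratio bounds give a constant $C = C(n,m,F,V,U,\delta)$ such that $\HM^m(A \cap \spt\|V\|) \le C \|V\|(A)$ for every Borel set $A$ whose distance to $\R^n \without U$ exceeds $\delta$, and since $\|V\|$ is Radon and any compact $K \subseteq U$ lies at positive distance from $\R^n \without U$, this yields $\HM^m(\spt\|V\| \cap K) < \infty$. Rectifiability will be proven by contradiction: suppose the $\HM^m$-measurable set $\spt\|V\|$ is decomposed into the rectifiable part $R$ and the purely $(\HM^m,m)$~unrectifiable part $W$ as in~\cite[3.2.14]{Fed69}, and assume $\HM^m(W) > 0$.

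The plan is to pick a compact subset $W_0 \subseteq W$ with $\HM^m(W_0) > 0$ and use the density theorem~\cite[2.10.19(1)(3)]{Fed69} to choose a point $x_0 \in W_0$ which is a density point of $W_0$ with respect to $\HM^m \restrict W$ but a null-density point of $R$. Then, for any prescribed $\varepsilon \in (0,1)$, I select a~radius $\rho_1 > 0$ small enough so that $\cball{x_0}{\rho_1} \subseteq U$ and
\begin{displaymath}
    \HM^m(R \cap \cball{x_0}{\rho_1}) + \HM^m((W \without W_0) \cap \cball{x_0}{\rho_1})
    < \varepsilon \HM^m(W_0 \cap \cball{x_0}{\rho_1}) \,.
\end{displaymath}
Using~\ref{cor:ludrb} once more, the measure of $\spt\|V\|$ in a thin annulus around $\cball{x_0}{\rho_1}$ can be made arbitrarily small by perturbing $\rho_1$. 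I then cover $\cball{x_0}{\rho_1}$ by one, or finitely many, top-dimensional dyadic cubes of side length of order $\rho_1$ and apply the smooth deformation theorem~\ref{thm:deformation} with $\Sigma_1 = R \cap \cball{x_0}{\rho_1}$ and $\Sigma_2 = W \cap \cball{x_0}{\rho_1}$ (both of dimension $m$) to obtain $f, g \in \cnt^{\infty}(I \times \R^n,\R^n)$ supported in a slight enlargement of $\cball{x_0}{\rho_1}$. The map $h_0 = f(1,\cdot)$ has $h_0 \lIm \spt\|V\| \cap \cball{x_0}{\rho_1} \rIm$ lying in a~fixed $m$-dimensional cubical skeleton $\Sigma^{(m)}$.

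Next, I invoke~\ref{lem:reduce-unrect} to modify $h_0$ into $h = h_0 \circ \rho_\eta$ for a diffeomorphism $\rho_\eta$ of $\R^n$ arbitrarily $\cnt^1$-close to the identity, which reduces $\HM^m(h \lIm W_0 \cap \cball{x_0}{\rho_1} \rIm)$ below $\eta \HM^m(W_0 \cap \cball{x_0}{\rho_1})$. The rank hypothesis needed in~\ref{lem:reduce-unrect} is verified by~\ref{lem:rank-bound} since $h_0$ maps $\cball{x_0}{\rho_1}$ into $\Sigma^{(m)}$. Because $\rho_\eta$ is close to the identity, $h \in \adm{U}$ (after concatenating with the admissible homotopy produced by~\ref{thm:deformation}) and therefore each $h\lIm S_i \rIm$ remains in $\mathcal C$. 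Combining this with~\ref{cor:perturb-est} I estimate $\int \|\uD h\|^m \ud \HM^m \restrict R$ by $2^{2m-1} \int \|\uD h_0\|^m \ud \HM^m \restrict R + o(1)$, which by~\eqref{eq:dt:im-gt} is controlled by $2^{2m-1} \Gamma \HM^m(R \cap \cball{x_0}{\rho_1(1+\varepsilon)}) < \text{const} \cdot \varepsilon \HM^m(W_0 \cap \cball{x_0}{\rho_1})$. Together with boundedness of $F$, all of this yields the strict bound
\begin{displaymath}
    \Phi_F(h\lIm \spt\|V\| \cap \cball{x_0}{\rho_1}\rIm)
    < \tfrac{1}{2} \tfrac{\inf\im F}{\sup\im F} \Phi_F(\spt\|V\| \cap \cball{x_0}{\rho_1})
\end{displaymath}
for $\varepsilon, \eta$ sufficiently small (this is where $F$ being bounded and bounded away from zero on~$\cball{x_0}{\rho_1}$ enters).

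The main obstacle, and the place where the ``cheap trick'' alluded to in the introduction is invoked, is the transfer of this inequality from $\spt\|V\|$ to the minimising sequence $S_i$, since we do not know that $\var{m}(\spt\|V\| \cap U) = V$. I~exploit that both $h\lIm \spt\|V\| \cap \cball{x_0}{\rho_1}\rIm$ and $h\lIm S_i \cap \cball{x_0}{\rho_1}\rIm$ lie in the finite union $\Sigma^{(m)}$ of $m$-dimensional cubes; since $S_i \to S \supseteq \spt\|V\|$ (modulo an $\HM^m$-null set) in $\HDK{K}$ on every compact $K \subseteq U$, for large $i$ the set $h\lIm S_i \cap \cball{x_0}{\rho_1}\rIm$ lies in an~arbitrarily small tubular neighbourhood of $h\lIm \spt\|V\| \cap \cball{x_0}{\rho_1}\rIm$ inside $\Sigma^{(m)}$; by elementary covering arguments in an~$m$-dimensional polyhedron this forces $\HM^m$-closeness. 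Combined with~\ref{thm:deformation}\ref{i:dt:cube-pres} and the fact that $h$ is the identity outside $\cball{x_0}{\rho_1} + \oball 0\varepsilon$, this yields, for large $i$,
\begin{displaymath}
    \Phi_F(h\lIm S_i \rIm \cap U) \le \Phi_F(S_i \cap U)
    - \tfrac{1}{4}\tfrac{\inf\im F}{\sup\im F}\, \Phi_F(\spt\|V\| \cap \cball{x_0}{\rho_1}) \,,
\end{displaymath}
contradicting the minimality assumption $\Phi_F(V) = \inf\{\Phi_F(R \cap U) : R \in \mathcal C\}$ since $h\lIm S_i\rIm \in \mathcal C$ by~\ref{def:goodclass}\ref{i:gc:deformation}. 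Controlling the error between $\HM^m$-measures of the two subsets of $\Sigma^{(m)}$ purely from Hausdorff closeness, without any a~priori rectifiability of $S_i$, is the delicate step that requires care.
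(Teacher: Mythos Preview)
Your strategy matches the paper's closely: locate a density point of the unrectifiable part, push a ball onto an $m$-skeleton via~\ref{thm:deformation}, perturb with~\ref{lem:reduce-unrect} to kill the unrectifiable measure, then transfer the estimate to the~$S_i$ via a ``cheap trick'' in the polyhedron and contradict minimality.

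The transfer step has a genuine gap. You assert that $h\lIm S_i \cap \cball{x_0}{\rho_1}\rIm \subseteq \Sigma^{(m)}$ and that it lies in a small tube around $h\lIm \spt\|V\| \cap \cball{x_0}{\rho_1}\rIm$, but~\ref{thm:deformation}\ref{i:dt:m-dim-im} only pushes a \emph{neighbourhood of the chosen $\Sigma_j$} into the skeleton, and you took $\Sigma_1 \cup \Sigma_2 = \spt\|V\| \cap \cball{x_0}{\rho_1}$, not $S$. The hypothesis $\HM^m(S \cap U \without \spt\|V\|) = 0$ permits $S$ to carry $\HM^m$-null whiskers far from $\spt\|V\|$; since the $S_i$ converge in $\HDK K$ to $S$ rather than to $\spt\|V\|$, they may have positive-measure pieces near those whiskers, outside the neighbourhood on which $h_0$ projects to the skeleton. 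Both assertions therefore fail, and with them the covering argument in the polyhedron.

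The paper fixes this by including $S$ itself as one of the $\Sigma_j$ (it takes $\Sigma_2 = S$, alongside $\Sigma_1 = E_r \cap \cball{x_0}{\rho_1}$ and an annular $\Sigma_3$). Then the neighbourhood $W \supseteq S$ from~\ref{thm:deformation} satisfies $\phi\lIm W \cap \cball{x_0}{\rho_3}\rIm \subseteq A_m$, and local Hausdorff convergence forces $S_i \cap \cball{x_0}{\rho_3} \subseteq W$ for large~$i$. The cheap trick is implemented via outer regularity of $\HM^m \restrict A_m$: enclose $g\lIm S \cap \cball{x_0}{\rho_3}\rIm$ in an open $V_\iota$ with $\HM^m(V_\iota \cap A_m)$ close to $\HM^m(g\lIm S\rIm)$, set $W_\iota = W \cap g^{-1}\lIm V_\iota\rIm$, and use $S_i \cap \cball{x_0}{\rho_3} \subseteq W_\iota$ to conclude $g\lIm S_i\rIm \subseteq V_\iota \cap A_m$; the passage from $g\lIm S\rIm$ to $g\lIm \spt\|V\|\rIm$ then uses only that $g$ is Lipschitz and $\HM^m(S \without \spt\|V\|) = 0$. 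A minor further point: the integral bound~\eqref{eq:dt:im-gt} you invoke is stated for the map called~$g$ in~\ref{thm:deformation}, not for $f(1,\cdot)$, and the paper accordingly works with that map.
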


\begin{proof}
    For $\delta > 0$ set $U_{\delta} = \{ x \in U : \dist(x,\R^n \without U) >
    \delta \}$. Note that for each $\delta > 0$, by~\ref{cor:ludrb}, there
    exists some number $C(\delta) > 1$ such that 
    \begin{gather}
        C(\delta)^{-1} \HM^m \restrict (\spt \|V\| \cap U_{\delta})
        \le \|V\| \restrict U_\delta 
        \le C(\delta) \HM^m \restrict (\spt \|V\| \cap U_{\delta}) \,;
        \\
        \text{hence,} \quad
        \infty > \Phi_F(V) 
        \ge \inf \im F \|V\|(U_{\delta})
        \ge C(\delta)^{-1} \inf \im F \HM^m(\spt \|V\| \cap U_{\delta}) \,.
    \end{gather}
    This proves the first part of~\ref{thm:rect}. We shall prove the second part
    by contradiction.

    Assume that $\spt\|V\|$~is not countably $(\HM^m,m)$~rectifiable.  Then
    there exists $\delta > 0$ such that $E = \spt\|V\| \cap U_{\delta}$ is not
    countably $(\HM^m,m)$~rectifiable. We decompose~$E$ into a disjoint sum $E =
    E_r \cup E_u$, where $E_r$ is $(\HM^m,m)$~rectifiable, and $E_u$ is purely
    $(\HM^m,m)$~unrectifiable, and~$\HM^m(E_u) > 0$.
    Employing~\cite[2.10.19(2)(4)]{Fed69} we choose $x_0 \in E_u$ such that
	\begin{equation}
        \label{eq:bad-point-dens}
		\density^{m}(\HM^m \restrict E_r, x_0) = 0
        \quad \text{and} \quad
        \density^{*m}(\HM^m \restrict E_u,x_0) = b > 0 \,.
	\end{equation}
	Since $F$ is bounded, there exist $0 < C_1 < C_2 < \infty$ such that
	\begin{equation}
        \label{eq:F-bounded}
		C_1 \le F(x,T) \le C_2
        \quad \text{for $(x,T) \in \R^n \times \grass nm$} \,.
	\end{equation}
    From~\ref{cor:ludrb} we see that there exist numbers $0 < C_3 < C_4 <
    \infty$ (depending on $\delta$) such that
	\begin{equation}
        \label{eq:V-bounds}
		C_3 \HM^m(A) \le \|V\|(A) \le C_4 \HM^m(A) 
        \quad \text{for any Borel set $A \subset \spt \|V\| \cap U_{\delta}$} \,.
	\end{equation}
	Next, we fix a small number $\varepsilon>0$ such that
	\begin{equation}
        \label{eq:eps-choice}
        \frac{C_2\bigl( \varepsilon (1+\varepsilon) 
          + \varepsilon (4^m \Gamma_{\ref{thm:deformation}} + \varepsilon) (C_4 + 2)  \bigr)}
        {(1-\varepsilon)^2 C_1 C_3} <  1 \,.
	\end{equation}
    Since $\HM^m(E) < \infty$, we see that $\HM^m \restrict E$ is a Radon measure; hence,
    \begin{equation}
        \label{eq:slice-meas}
		\HM^{m}(E \cap \Bdry \cball{x_0}{\rho} ) > 0
        \quad \text{for at most countably many $\rho > 0$} \,.
    \end{equation}
    Employing~\eqref{eq:bad-point-dens} and~\eqref{eq:slice-meas} we choose $0 <
    \rho_1 < \rho_3 < \rho_2$ such that
	\begin{gather}
        \label{eq:rho-choice}
		\rho_3 = (\rho_1 + \rho_2)/2 \,,
        \quad
        \rho_1 \ge (1 - \varepsilon)^{1/m} \rho_2 \,,
        \quad
        \cball{x_0}{\rho_2} \subseteq U_{\delta} \,,
        \\
        \label{eq:rho-slice}
		\HM^{m}(E \cap \Bdry \cball{x_0}{\rho_i} ) = 0 \quad \text{for $i = 1, 2$} \,,
        \\
        \label{eq:rho-meas}
		(1 - \varepsilon) b \rho_1^m
        \le \HM^m(E_u \cap \cball{x_0}{\rho_1}) 
        \le (1 + \varepsilon) b \rho_1^m 
        \\
        \label{eq:unrect5}
		\HM^m(E_r \cap \cball{x_0}{\rho_1} ) \le \varepsilon b \rho_1^m \,,
        \quad
		\HM^m \bigl( E\cap \cball{x_0}{\rho_2} \without \oball{x_0}{\rho_1} \bigr)
        \le \varepsilon b \rho_2^m.
	\end{gather}
    Choose $k \in \nat$ such that $2^{-k} \le (\rho_2 - \rho_1) / 16 < 2^{-k+1}$.
    Define
    \begin{gather}
        \widetilde Q = \tbcup \bigl\{ R \in \cubes_n(k) : R \cap Q \ne \varnothing \bigr\} 
        \quad \text{for $Q \in \cubes_n(k)$} \,,
        \\
        \mathcal{A} = \bigl\{ Q \in \cubes_n(k) : \widetilde Q \cap \cball{x_0}{\rho_3} \ne \varnothing \bigr\} \,,
        \\
        \Sigma_1 = E_r \cap \cball{x_0}{\rho_1} \,,
        \quad
        \Sigma_2 = S \,,
        \quad
        \Sigma_3 = E \cap \cball{x_0}{\rho_2} \without \oball{x_0}{\rho_1} \,.
    \end{gather}
    Then
    \begin{displaymath}
        \cball{x_0}{\rho_3}
        \subseteq \tbcup \bigl\{ Q \in \mathcal{A} : Q \subseteq \Int \tbcup \mathcal A \bigr\}
        \subseteq \tbcup \mathcal{A} \subseteq \oball{x_0}{\rho_2} \,.
    \end{displaymath}
    Next, apply~\ref{thm:deformation} with $\cubes_n(k)$, $\mathcal{A}$,
    $\Sigma_1$, $\Sigma_2$, $\Sigma_3$, $3$, $m$, $m$, $m$, $2^{-k+8}$ in place
    of $\mathcal F$, $\mathcal A$, $\Sigma_1$, $\Sigma_2$, $\Sigma_3$, $l$,
    $m_1$, $m_2$, $m_3$, $\varepsilon$ to obtain the~map $f : \R \times \R^n \to
    \R^n$ of class~$\cnt^{\infty}$ called ``$g$'' there. Define
    \begin{displaymath}
        A_n = \tbcup \mathcal A \,,
        \quad
        A_m = \tbcup \{ Q \in \cubes_m(k) : Q \subseteq A_n \} \,,
        \quad
        \phi = f(1,\cdot) \,.
    \end{displaymath}
    Recalling~\ref{thm:deformation}\ref{i:dt:identity}\ref{i:dt:m-dim-im}\ref{i:dt:estimates}
    we see that there exists an open set $W \subseteq \R^n$ such that
    \begin{gather}
        \label{eq:phi-W-im}
        S \subseteq W \,,
        \qquad
        \phi \lIm W \cap \cball{x_0}{\rho_3} \rIm \subseteq A_m \,,
        \\
        f(t,x)=x \quad \text{for $x\in \R^n\without \oball{x_0}{\rho_2}$} \,,
        \\
        \int_{\Sigma_i}\|\uD\phi(x)\|^m\ud\HM^m < \Gamma_{\ref{thm:deformation}} \HM^m( \Sigma_i )
        \quad \text{for $i \in \{ 1, 2, 3 \}$} \,.
    \end{gather}
    For each $\iota \in (0,1)$ recall~\ref{lem:rank-bound} and
    apply~\ref{lem:reduce-unrect} with $\oball{x_0}{\rho_3}$, $E_u \cap
    \cball{x_0}{\rho_1}$, $\phi$, $\iota$ in place of $U$, $K$, $f$,
    $\varepsilon$ to get a~diffeomorphism~$\varphi_{\iota}$ of~$\R^n$. Since
    $\phi$ is of~class~$\cnt^{\infty}$, recalling~\ref{cor:perturb-est}, we can
    find $\iota > 0$ such that, setting
    \begin{equation}
        \label{eq:tau-choice}
        \varphi = \varphi_{\iota} \,,
        \quad
        g = \phi \circ \varphi \,,
        \quad
        \tau = \tfrac 14 \min \bigl\{ \varepsilon ,\, \inf\{ |x-y| : x \in S ,\, y \in \R^n \without W \} \bigr\} \,,
    \end{equation}
    we obtain
    \begin{gather}
        \label{eq:Lip-varphi}
        \varphi(x) = x \quad \text{for $x \in \R^n \without \oball{x_0}{\rho_3}$} \,,
        \quad
        \Lip(\varphi - \id{\R^n}) \le \tau \,,
        \\
        \label{eq:varphi-id}
        | \varphi(x) - x | \le \tau
        \quad \text{and} \quad
        \|\uD g(x)\|^m \le 4^{m} \|\uD f(x)\|^m + \tau 
        \quad \text{for $x \in \R^n$} \,,
        \\
        \label{eq:kill-Eu}
        \HM^m\bigl( g[E_u\cap \cball{x_0}{\rho_1}] \bigr)
        \le \tau \HM^m\bigl( E_u \cap \cball{x_0}{\rho_1} \bigr).
    \end{gather}
    Thus, employing \eqref{eq:kill-Eu}, \eqref{eq:varphi-id},
    \eqref{eq:rho-meas}, \eqref{eq:unrect5} we get
	\begin{multline}
        \label{eq:unrect11}
        \HM^m\bigl( g \lIm E \cap \cball{x_0}{\rho_2} \rIm \bigr) 
        \le \tau \HM^m(E_u \cap \cball{x_0}{\rho_1})
        + \int_{\Sigma_1 \cup \Sigma_3} \|\uD g\|^m \ud \HM^m
        \\
        \le \tau (1 + \varepsilon) b \rho_1^m
        + 4^{m} \int_{\Sigma_1 \cup \Sigma_3} \|\uD f\|^m \ud \HM^m
        + \tau \HM^m(\Sigma_1 \cup \Sigma_3)
        \\
        \le \tau (1 + \varepsilon) b \rho_1^m + (4^{m} \Gamma_{\ref{thm:deformation}} + \tau ) \HM^m(\Sigma_1 \cup \Sigma_3)
        \le \tau (1 + \varepsilon) b \rho_1^m + 2 \varepsilon ( 4^{m} \Gamma_{\ref{thm:deformation}} + \tau ) b \rho_2^m \,.
	\end{multline}
    Observe that $\varphi\lIm S \rIm \subseteq W$ by~\eqref{eq:tau-choice},
    \eqref{eq:varphi-id}. Recalling~\eqref{eq:Lip-varphi}, \eqref{eq:phi-W-im},
    we see that $g \lIm S \cap \cball{x_0}{\rho_3} \rIm \subseteq A_m$ so for
    each $\iota > 0$ we can find an~open set $V_{\iota} \subseteq \R^n$ such
    that
	\begin{equation}
        \label{eq:cheap-trick}
        g \lIm S \cap \cball{x_0}{\rho_3} \rIm \subseteq V_{\iota}
        \quad \text{and} \quad
		\HM^m( V_{\iota} \cap A_m) \le \HM^m( g \lIm S \cap \cball{x_0}{\rho_3} \rIm ) + \iota \,.
	\end{equation}
    For $\iota > 0$ set $W_{\iota} = W \cap g^{-1} \lIm V_{\iota} \rIm$ and note
    that $W_{\iota}$ is open and $S \cap \cball{x_0}{\rho_3} \subseteq
    W_{\iota}$. Since $\HDK{K}(S_i \cap U, S \cap U) \to 0$ as $i \to \infty$
    for all compact sets $K \subseteq U$ we see that for $i \in \nat$ large
    enough $S_i \cap \cball{x_0}{\rho_3} \subseteq W_\iota$; thus,
	\begin{equation}
		\limsup_{i \to \infty} \HM^m( g \lIm S_i \cap \cball{x_0}{\rho_3} \rIm )
        \le \HM^m(V_\iota \cap A_m)
        \le \HM^m( g \lIm S \cap \cball{x_0}{\rho_3} \rIm ) + \iota \,.
	\end{equation}
    But $\iota > 0$ can be chosen arbitrarily small, so
	\begin{equation}
        \label{eq:inner-est}
		\limsup_{i \to \infty} \HM^m( g \lIm S_i \cap \cball{x_0}{\rho_3} \rIm )
        \le \HM^m( g \lIm S \cap \cball{x_0}{\rho_3} \rIm ) \,.
	\end{equation}
	Moreover, recalling that~$\var{m}(S_i \cap U) \to V$ as $i \to \infty$ and
    using~\eqref{eq:rho-slice} together with~\cite[2.6(2)(d)]{All72},
    \eqref{eq:V-bounds}, \eqref{eq:varphi-id},
    and~\ref{thm:deformation}\ref{i:dt:estimates} we obtain
	\begin{multline}
        \label{eq:annulus-est}
        \limsup_{i \to \infty} \HM^m( g \lIm S_i \cap \cball{x_0}{\rho_2} \without \oball{x_0}{\rho_1} \rIm )
        \\
        \le \lim_{i \to \infty} \int_{\cball{x_0}{\rho_2} \without \oball{x_0}{\rho_1}} \|\uD g\|^m \ud (\HM^m \restrict S_i)
        = \int_{\cball{x_0}{\rho_2} \without \oball{x_0}{\rho_1}} \|\uD g\|^m \ud \|V\|
        \\
        \le C_4 \int_{\Sigma_3} \|\uD g\|^m \ud \HM^m
        \le 4^m \Gamma_{\ref{thm:deformation}} C_4 \HM^m(\Sigma_3) + C_4 \tau \HM^m(\Sigma_3) \,.
	\end{multline}
    Combining~\eqref{eq:inner-est} and~\eqref{eq:annulus-est} we get
	\begin{equation}
		\limsup_{i\to \infty} \HM^m(g \lIm S_i \cap \cball{x_0}{\rho_2} \rIm)
        \le \HM^m( g \lIm S \cap \cball{x_0}{\rho_3} \rIm )
        + (4^m \Gamma_{\ref{thm:deformation}} + \tau) C_4 \HM^m(\Sigma_3) \,.
	\end{equation}
    Note that $\HM^m(S \cap \cball{x_0}{\rho_2}) = \HM^m(E \cap
    \cball{x_0}{\rho_2})$. In consequence, using~\eqref{eq:F-bounded},
    \eqref{eq:unrect11}, \eqref{eq:unrect5}, \eqref{eq:rho-meas},
    \eqref{eq:rho-choice}, and finally~\eqref{eq:F-bounded},
    \eqref{eq:V-bounds}, \eqref{eq:eps-choice}, we get
	\begin{multline}
        \label{eq:unrect21}
		\limsup_{i\to \infty} \Phi_{F}\left(g \lIm S_i\cap \cball{x_0}{\rho_2} \rIm \right)
        \\
        \le C_2 \HM^m( g \lIm E \cap \cball{x_0}{\rho_2} \rIm )
        + (4^m \Gamma_{\ref{thm:deformation}} + \tau) C_2 C_4 \HM^m(\Sigma_3) 
        \\
        \le \gamma (1 - \varepsilon) C_1 C_3 b \rho_1^m
        \le \gamma C_1 C_3 \HM^m(S \cap \cball{x_0}{\rho_2})
        \le \gamma \Phi_F(V \restrict \cball{x_0}{\rho_2} \times \grass nm) \,,
	\end{multline}
    where
    \begin{equation}
        \label{eq:gamma-le-1}
        \gamma = \frac{C_2\bigl( \tau (1+\varepsilon)
          + \varepsilon (4^m \Gamma_{\ref{thm:deformation}} + \tau) (C_4 + 2)  \bigr)}
        {(1-\varepsilon)^2 C_1 C_3} < 1 \,.
    \end{equation}
	Recall that $g(x) = x$ for $x \in \R^n \without \oball{x_0}{\rho_2}$ and $g
    \lIm \cball{x_0}{\rho_2} \rIm \subseteq \cball{x_0}{\rho_2}$ and $\var{m}(S_i
    \cap U) \to V$ as $i \to \infty$. Hence, using~\eqref{eq:unrect21},
    \eqref{eq:gamma-le-1}, and~\eqref{eq:rho-slice} together
    with~\eqref{eq:V-bounds}, we obtain
    \begin{multline}
        \label{eq:absurd}
        \limsup_{i \to \infty} \Phi_F(g \lIm S_i \rIm \cap U)
        = \lim_{i \to \infty} \Phi_F(S_i \cap U \without \cball{x_0}{\rho_2})
        + \limsup_{i \to \infty} \Phi_F(g \lIm S_i \cap \cball{x_0}{\rho_2} \rIm)
        \\
        < \Phi_F \bigl(V \restrict (U \without \cball{x_0}{\rho_2}) \times \grass nm \bigr)
        + \Phi_F\bigl( V \restrict \cball{x_0}{\rho_2} \times \grass nm \bigr)
        = \Phi_F(V) \,.
    \end{multline}
    Clearly $g \in \adm{U}$ so $g\lIm S_i \rIm \in \mathcal C$ for each $i \in
    \nat$ but~\eqref{eq:absurd} yields
    \begin{displaymath}
        \Phi_F(g \lIm S_i \rIm \cap U) < \Phi_F(V) \quad \text{for large enough $i \in \nat$} \,,
    \end{displaymath}
    which contradicts $\Phi_F(V) = \inf\{ \Phi_F(R \cap U) : R \in \mathcal C \}$.
\end{proof}

\begin{lemma}
    \label{le:ARtan}
    Let $\mu$ be a Radon measure over~$\R^n$ and $a \in \R^n$. Assume there
    exist $C \in (0,\infty)$ and $r_0 \in (0,\infty)$ such that for all $x \in
    \cball a{r_0}$ and~$r \in (0,r_0)$
    \begin{displaymath}
        \mu(\cball xr) \ge Cr^m \,.
    \end{displaymath}
    Then $\Tan^m(\mu,a) = \Tan(\spt \mu, a)$, i.e, the approximate tangent cone
    of~$\mu$ at~$a$ equals the classical tangent cone of the support of~$\mu$
    at~$a$.
\end{lemma}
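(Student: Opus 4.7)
The plan is to establish both inclusions $\Tan^m(\mu,a) \subseteq \Tan(\spt\mu,a)$ and $\Tan(\spt\mu,a) \subseteq \Tan^m(\mu,a)$ using the cone-based formulation of the approximate tangent cone from~\cite[3.2.16]{Fed69}. For $v \in \R^n \without \{0\}$ and $\varepsilon > 0$ set
\begin{displaymath}
    E(a,v,\varepsilon) = \bigl\{ x \in \R^n :
    |\project{\lin\{v\}^{\perp}} (x-a) | < \varepsilon |x-a| \text{ and } (x-a) \bullet v > 0 \bigr\} \,,
\end{displaymath}
so $v \in \Tan^m(\mu,a)$ if and only if $\udensity^m(\mu \restrict E(a,v,\varepsilon), a) > 0$ for every $\varepsilon > 0$.

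For the inclusion $\Tan^m(\mu,a) \subseteq \Tan(\spt\mu,a)$, which does not use the density assumption, I would argue as follows. If $v \in \Tan^m(\mu,a) \without \{0\}$, then for each $k \in \nat$ there exist $r_k \in (0,1/k)$ and $x_k \in \spt\mu \cap \cball{a}{r_k} \cap E(a,v,1/k)$, because otherwise the measure of~$\cball{a}{r_k} \cap E(a,v,1/k)$ would vanish for small~$r_k$, contradicting positive upper density. Then $x_k \to a$, and setting $t_k = |v| / |x_k - a|$ we have $t_k (x_k - a) \to v$ because the directional deviation tends to zero; hence $v \in \Tan(\spt\mu,a)$.

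The reverse inclusion $\Tan(\spt\mu,a) \subseteq \Tan^m(\mu,a)$ is where the lower density hypothesis enters. Let $v \in \Tan(\spt\mu,a) \without \{0\}$ and choose $x_k \in \spt\mu$ with $x_k \to a$ and $t_k > 0$ such that $t_k(x_k - a) \to v$; equivalently, writing $r_k = |x_k - a|$, we get $r_k \to 0$ and $(x_k - a)/r_k \to v/|v|$. Fix $\varepsilon \in (0,1/8)$. The key geometric observation is that for all sufficiently large~$k$ the ball $\cball{x_k}{\varepsilon r_k / 4}$ lies inside $\cball{a}{2 r_k} \cap E(a,v,\varepsilon)$: indeed, for $y \in \cball{x_k}{\varepsilon r_k/4}$ we control
\begin{displaymath}
    |\project{\lin\{v\}^{\perp}}(y-a)| \le |\project{\lin\{v\}^{\perp}}(x_k-a)| + |y - x_k|
\end{displaymath}
by $\varepsilon r_k / 2$ once $x_k$ lies deep enough in $E(a,v,\varepsilon/4)$, while $|y-a| \ge r_k/2$. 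Using the hypothesis we then obtain
\begin{displaymath}
    (2r_k)^{-m} \mu\bigl(\cball{a}{2r_k} \cap E(a,v,\varepsilon)\bigr)
    \ge (2r_k)^{-m} \mu\bigl(\cball{x_k}{\varepsilon r_k / 4}\bigr)
    \ge C (\varepsilon/8)^m \,,
\end{displaymath}
whenever $r_k < r_0 / 2$, so $\udensity^m(\mu \restrict E(a,v,\varepsilon), a) > 0$. Since $\varepsilon$ was arbitrary, $v \in \Tan^m(\mu,a)$.

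The only subtle point is to pin down precisely how small~$\varepsilon$ relative to the chosen~$k$ makes $\cball{x_k}{\varepsilon r_k/4} \subseteq E(a,v,\varepsilon)$; this is a purely geometric computation once one writes $y - a = (x_k - a) + (y - x_k)$ and applies the triangle inequality in the directions $v$ and $v^{\perp}$, so I do not anticipate any serious obstacle beyond careful bookkeeping of the cone constants.
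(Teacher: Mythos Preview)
Your proposal is correct and follows essentially the same route as the paper: both argue the easy inclusion $\Tan^m(\mu,a)\subseteq\Tan(\spt\mu,a)$ first, and for the reverse take a sequence $x_k\in\spt\mu$ approaching $a$ in the direction of $v$, then use the lower density hypothesis to fit a ball $\cball{x_k}{c\varepsilon r_k}$ inside the relevant cone intersected with $\cball{a}{2r_k}$, yielding a uniform positive lower bound on the density ratio. The only cosmetic difference is that the paper works with Federer's cone $\mathbf{E}(a,v,\varepsilon)=\{x:\exists r>0,\ |r(x-a)-v|<\varepsilon\}$ rather than your half-cone based on $\project{\lin\{v\}^\perp}$, but these are interchangeable for this argument.
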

\begin{proof}
    Following~\cite[3.2.16]{Fed69} if $a \in \R^n$, and $\varepsilon \in (0,1)$,
    and $v \in \R^n$, then we define the cone
    \begin{equation}
        \label{eq:def-E}
        \mathbf{E}(a,v,\varepsilon)
        = \bigl\{ x \in \R^n : \exists r > 0 \ \ |r(x-a) - v| < \varepsilon \bigr\} \,.
    \end{equation}
    Notice that if $|v| < \varepsilon$, then $\mathbf{E}(a,v,\varepsilon) =
    \R^n$ and if $0 < \varepsilon \le |v|$, then we may set
    \begin{equation}
        \label{eq:def-E-r}
        r = \frac{(x-a)}{|x-a|^2} \bullet v
        \quad \text{in~\eqref{eq:def-E}} \,.
    \end{equation}
    Let $S = \spt \mu$. By definition (see~\cite[3.2.16, 3.1.21]{Fed69}) we have
    \begin{gather}
        \label{eq:def-apTan}
        v \in \Tan^m(\mu,a)
        \quad \iff \quad
        \forall \varepsilon > 0 \ \ 
        \density^{*m}(\mu \restrict \mathbf{E}(a,v,\varepsilon), a) > 0
        \\
        \label{eq:def-Tan}
        \text{and} \quad
        v \in \Tan(S,a)
        \quad \iff \quad
        \forall \varepsilon > 0 \ \ 
        S \cap \mathbf{E}(a,v,\varepsilon) \cap \oball a{\varepsilon} \ne \varnothing \,.
    \end{gather}
    Clearly $\Tan^m(\mu,a) \subseteq \Tan(S,a)$ so we only need to show the
    reverse inclusion. Let $v \in \Tan(S,a)$ and $\varepsilon \in (0,1)$ be such
    that $\varepsilon \le |v|$. From~\eqref{eq:def-Tan} we see that there exists
    a~sequence $\{ x_k \in \R^n : k \in \nat \}$ such that
    \begin{equation}
        x_k \in S \cap \mathbf{E}(a,v,1/k) \cap \oball a{1/k} \,.
    \end{equation}
    Let us set $r_k = |x_k - a|$ for $k \in \nat$. Observe that whenever $1/k <
    \min\{ r_0, \varepsilon \} /2$, and $r_k < \varepsilon/2$, and $z \in
    \cball{x_k}{r_k \varepsilon/2}$, then setting $s = (x_k - a) \bullet v |x_k
    - a|^{-2}$ we obtain
    \begin{gather}
        |s(z-a) - v| \le |s(x_k - a) - v| + |s(z - x_k)| < \varepsilon \,;
        \\
        \text{hence,} \qquad
        \cball{x_k}{r_k\varepsilon/2} \subseteq \mathbf{E}(a,v,\varepsilon) \cap \oball{a}{2r_k} \,.
    \end{gather}
    Therefore,
    \begin{multline}
        \density^{*m}(\mu \restrict \mathbf{E}(a,v,\varepsilon),a)
        \ge \lim_{k \to \infty} (2r_k)^{-m} \mu(\mathbf{E}(a,v,\varepsilon) \cap \cball{a}{2r_k})
        \\
        \ge \lim_{k \to \infty} (2r_k)^{-m} \mu(\cball{x_k}{r_k\varepsilon/2})
        \ge  C 2^{-m} (\varepsilon/2)^{m} > 0 \,.
    \end{multline}
    Since $0 < \varepsilon \le |v|$ could be chosen arbitrarily, we see that $v
    \in \Tan^m(\mu,a)$.
\end{proof}

\begin{remark}
    \label{rem:tangent-cones}
	Let $U$ and $V$ be as in~\ref{thm:rect} and set $E = \spt\|V\| \subseteq
    U$. Then for each $a \in E$ one can find $0 < r_0 < \dist(a, \R^n \without
    U)$ such that $\|V\| \restrict \cball{a}{r_0}$ satisfies the conditions
    of~\ref{le:ARtan} at~$a$; thus, for \emph{all} points $a \in E$ we have
    $\Tan^m(\|V\|,a) = \Tan(E,a)$. In~particular, $\Tan(\spt\|V\|,x) \in \grass
    nm$ for $\HM^m$~almost all $x \in \spt\|V\| \subseteq U$;
    see~\cite[3.2.19]{Fed69}.
\end{remark}

\begin{lemma}
    \label{lem:kill-unrect}
    Let $G \subseteq \R^n$ be open and bounded, $S \subseteq \R^n$ be closed
    with $\HM^m(S \cap G) < \infty$ and such that $\HM^m(S \cap \Bdry G) = 0$,
    and let $\varepsilon > 0$. Decompose $S \cap G$ into a disjoint sum $S \cap
    G = S_u \cup S_r$, where $S_u$ is purely $(\HM^m,m)$~unrectifiable and $S_r$
    is $(\HM^m,m)$~rectifiable.

    There exists a number $\Gamma = \Gamma(n,m) \ge 1$ and a
    $\cnt^{\infty}$~smooth map $g \in \adm{G}$ such that
    \begin{displaymath}
        \HM^m(g \lIm S_u \rIm) \le \varepsilon \HM^m(S_u) 
        \quad \text{and} \quad
        \HM^m(g \lIm S_r \rIm) \le \Gamma \HM^m(S_r) \,.
    \end{displaymath}
\end{lemma}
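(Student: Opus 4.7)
The plan is to combine the deformation theorem~\ref{thm:deformation} with the perturbation lemma~\ref{lem:reduce-unrect}, closely following the strategy used in the proof of~\ref{thm:rect}. First, since $\HM^m \restrict (S \cap G)$ is a Radon measure and $\HM^m(S \cap \Bdry G) = 0$, I would choose a finite family $\mathcal A$ of top-dimensional dyadic cubes of some sufficiently small common size with $\tbcup \mathcal A + \cball 0{\varepsilon_0} \subseteq G$ for some $\varepsilon_0 > 0$, and such that $\HM^m(S \cap G \without \Int \tbcup \mathcal A) \le \eta \HM^m(S_u)$, where $\eta > 0$ is a small parameter to be fixed later depending on $\varepsilon$. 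Applying~\ref{thm:deformation} with $\mathcal F = \cubes_n(N)$ (containing $\mathcal A$), $l=1$, $\Sigma_1 = S \cap G$, $m_1 = m$, I obtain a smooth homotopy called~``$g$'' there; set $\phi$ equal to its value at $t=1$. The map~$\phi$ belongs to $\adm{G_\varepsilon} \subseteq \adm{G}$ and there is an open neighbourhood $U$ of $\Sigma \cap G_\varepsilon$ such that $\phi \lIm U \rIm \cap G_0 \subseteq A_m$, where $A_m$ is the $m$~dimensional skeleton inside $\tbcup \mathcal A$ and $G_0 = \Int \tbcup \mathcal A$. Moreover, \eqref{eq:dt:im-gt} and \eqref{eq:dt:deriv-bound} control $\int \|\uD\phi\|^m$ over $S_r$ and bound $\|\uD\phi\|$ in $G_\varepsilon \without G_0$ by $\Gamma_{\ref{thm:deformation}}$.

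Next, set $U' = U \cap G_0$. Since $\phi\lIm U' \rIm \subseteq A_m$ has Hausdorff dimension at most~$m$, \ref{lem:rank-bound} yields $\dim \im \uD \phi(x) \le m$ for all $x \in U'$. As $\phi \in \cnt^\infty$, \ref{rem:Sard} makes~\eqref{eq:reduce:sard} automatic. I would then apply~\ref{lem:reduce-unrect} with $K = S_u \cap U'$ and open set~$U'$ to produce, for any prescribed $\iota \in (0,1)$, a~$\cnt^\infty$~diffeomorphism $\rho_\iota$ of~$\R^n$ equal to the identity outside $U' \subseteq G_0$, satisfying $\Lip(\rho_\iota - \id{\R^n}) \le \iota$ and $\HM^m(\phi \circ \rho_\iota \lIm S_u \cap U' \rIm) \le \iota \HM^m(S_u)$. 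By construction (see the proof of~\ref{lem:reduce-unrect}), $\rho_\iota$ is a composition of basic deformations supported in balls contained in $G_0 \subseteq G$; hence $g := \phi \circ \rho_\iota$ belongs to $\adm{G}$.

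For the unrectifiable part, I would split $S_u = (S_u \cap U') \cup (S_u \without U')$. The first piece contributes at most $\iota \HM^m(S_u)$ to $\HM^m(g \lIm S_u \rIm)$. The second piece lies inside the $\eta$-defect of the cube cover, hence has $\HM^m$~measure at most $\eta \HM^m(S_u)$; on it $g$ equals $\phi$ (since $\rho_\iota$ is the identity there), whose Lipschitz constant is at most $\Gamma_{\ref{thm:deformation}}$ in $G_\varepsilon \without G_0$ and exactly~$1$ outside $G_\varepsilon$, so its $g$-image has measure at most $\Gamma_{\ref{thm:deformation}}^m \eta \HM^m(S_u)$ by~\ref{lem:image-measure}. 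Choosing $\iota$ and $\eta$ so that $\iota + \Gamma_{\ref{thm:deformation}}^m \eta \le \varepsilon$ yields $\HM^m(g \lIm S_u \rIm) \le \varepsilon \HM^m(S_u)$. For the rectifiable part, \ref{lem:image-measure} reduces everything to estimating $\int_{S_r} \|\uD g\|^m \ud \HM^m$. Using~\ref{cor:perturb-est} together with~\eqref{eq:dt:im-gt} on $S_r \cap G_\varepsilon$, the bound $\|\uD g\| \le 1$ on $S_r \without G_\varepsilon$, and the uniform bound on $\|\uD \phi\|$ inside $G_\varepsilon \without G_0$, we obtain $\HM^m(g \lIm S_r \rIm) \le \Gamma \HM^m(S_r)$ for a constant $\Gamma = \Gamma(n,m)$ (independent of $\iota$, $\eta$, $\varepsilon$), provided $\iota$ is small enough that $\omega(\iota) \le 1$, where $\omega$ is the modulus of continuity of~$\uD\phi$.

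The main obstacle is the technical bookkeeping of the three regions $G_0$, $G_\varepsilon \without G_0$ and $\R^n \without G_\varepsilon$ for both $S_u$ and $S_r$, and in particular verifying that the ``bad'' part $S_u \without G_0$ is negligible compared to $\varepsilon \HM^m(S_u)$; this is forced by the initial inner approximation of $\HM^m \restrict (S \cap G)$ by the compact $\tbcup \mathcal A$.
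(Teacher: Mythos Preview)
Your overall strategy is correct and matches the paper's: deformation theorem followed by the perturbation lemma~\ref{lem:reduce-unrect}, with a split between an interior part and a thin boundary layer. There is, however, a genuine gap in the $S_r$ estimate.

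You apply~\ref{thm:deformation} with $l=1$ and $\Sigma_1 = S \cap G$. The estimate~\eqref{eq:dt:im-gt} then reads
\[
\int_{(S\cap G)\cap G_\varepsilon} \|\uD\phi\|^m \ud\HM^m < \Gamma_{\ref{thm:deformation}}\,\HM^m\bigl((S\cap G)\cap G_\varepsilon\bigr),
\]
not $\int_{S_r} \|\uD\phi\|^m < \Gamma_{\ref{thm:deformation}}\,\HM^m(S_r)$. The centres in~\ref{lem:int-Dpia} are selected so as to control precisely the input measures; with the single input $\HM^m \restrict (S\cap G)$ you only get a bound in terms of $\HM^m(S\cap G) = \HM^m(S_r) + \HM^m(S_u)$, and $\HM^m(S_u)$ can be arbitrarily large compared to $\HM^m(S_r)$. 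Hence your claimed inequality $\HM^m(g[S_r]) \le \Gamma\,\HM^m(S_r)$ does not follow. The fix is small: feed $S_r$ in as its own set, e.g.\ take $l=2$, $\Sigma_1 = S\cap G$, $\Sigma_2 = S_r$, $m_1=m_2=m$. Then $U$ is still a neighbourhood of all of $S\cap G$ (so your rank argument on $U'=U\cap G_0$ survives) while~\eqref{eq:dt:im-gt} for $i=2$ gives exactly the $S_r$-bound you need.

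For comparison, the paper takes the other route: it applies~\ref{thm:deformation} with $l=1$ and $\Sigma_1 = S_r$ alone (so the $S_r$-estimate is immediate) and then runs~\ref{lem:reduce-unrect} on the fixed open set $W = \Int\tbcup\{Q\in\mathcal A : Q\subseteq\Int\tbcup\mathcal A\}$ rather than on the deformation theorem's neighbourhood~$U$. It also works inside the Whitney family $\WF(G)$ instead of cubes of a single scale; your uniform-cube inner approximation is a legitimate alternative. One last detail: your opening choice ``$\HM^m(S\cap G\without\Int\tbcup\mathcal A)\le\eta\,\HM^m(S_u)$'' is impossible to satisfy when $\HM^m(S_u)=0$; that case should be dispatched trivially (e.g.\ $g=\id{\R^n}$), as the paper does by introducing $M=\max\{\HM^m(S_u),1\}$.
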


\begin{proof}
    Let $\mathcal F = \WF(G)$ be the Whitney family associated to~$G$. If
    $\HM^m(S_u) > 0$, set $M = \HM^m(S_u)$ and if $\HM^m(S_u) = 0$, set $M =
    1$. Choose $N \in \nat$ so that
    \begin{equation}
        \HM^m(\{ x \in S : \dist(x,\R^n \without G) < 2^{-N} \}) < 2^{-100} \varepsilon M \,.
    \end{equation}
    Define
    \begin{displaymath}
        \mathcal A = \{ Q \in \mathcal F : \side{Q} \ge 2^{-N-10} \,,\, \widetilde Q \cap S \ne \varnothing \} \,,
    \end{displaymath}
    where $\widetilde Q = \tbcup \{ R \in \mathcal F : R \cap Q \ne \varnothing
    \}$ for $Q \in \mathcal F$. In~particular, we obtain
    \begin{gather}
        S \cap G \without \Int \tbcup \mathcal A \subseteq \{ x \in S : \dist(x,\R^n \without G) < 2^{-N} \}
        \\
        \text{and} \quad
        \label{eq:kr:bdry}
        \HM^m(S \cap G \without \Int \tbcup \mathcal A) < 2^{-100} \varepsilon M \,.
    \end{gather}
    Apply the deformation theorem~\ref{thm:deformation} with $\mathcal F$,
    $\mathcal A$, $S_r$, $1$, $m$, $2^{-N-30}$ in place of $\mathcal F$,
    $\mathcal A$, $\Sigma_1$, $l$, $m$, $\varepsilon$ to obtain the map $f \in
    \adm{G}$ of class~$\cnt^{\infty}$ called ``$g$'' there. Let $\omega$ be the
    modulus of continuity of~$\|\uD f\|$ as defined in~\eqref{eq:mod-cont-Df}
    and find $\bar \varepsilon > 0$ such that $\omega(\bar \varepsilon) \le 1$
    and $\bar \varepsilon < 2^{-100} \varepsilon$.  Next,
    recall~\ref{lem:rank-bound} to apply the perturbation
    lemma~\ref{lem:reduce-unrect} with
    \begin{displaymath}
        S_u \,, f \,,
        W = \Int \tbcup \bigl\{ Q \in \mathcal A : Q \subseteq \Int \tbcup \mathcal A \bigr\} \,,
        \bar \varepsilon 
        \quad \text{in place of} \quad
        K \,, f \,, U \,, \varepsilon 
    \end{displaymath}
    and obtain the map $\rho$ called ``$\rho_{\varepsilon}$'' there. Set $g = f
    \circ \rho$ and $A = \tbcup \mathcal A + \cball{0}{2^{-N-30}}$ and $\Gamma =
    2^{2m-1} (\Gamma_{\ref{thm:deformation}} + 1)$. To~estimate $\HM^m(g \lIm
    S_r \rIm)$ we employ~\ref{lem:image-measure} and~\ref{cor:perturb-est}
    \begin{multline}
        \HM^m(g \lIm S_r \rIm) 
        \le \HM^m(S_r \without A) + \int_{S_r \cap A} \|Dg\|^m \ud \HM^m
        \\
        \le \HM^m(S_r \without A) 
        + 2^{2m-1} \int_{S_r \cap A} \|Df\|^m \ud \HM^m 
        + 2^{2m-1} \HM^m(S_r \cap A)
        \\
        \le \HM^m(S_r \without A) 
        + 2^{2m-1} (\Gamma_{\ref{thm:deformation}} + 1) \HM^m(S_r \cap A) 
        \le \Gamma \HM^m(S_r)  \,.
    \end{multline}
    To~estimate $\HM^m(g \lIm S_u \rIm)$ we
    employ~\ref{lem:reduce-unrect} and~\eqref{eq:kr:bdry}
    \begin{multline}
        \HM^m(g \lIm S_u \rIm) 
        \le \HM^m(S_u \without W) 
        + \bar \varepsilon \HM^m(S_u \cap W) 
        \\
        \le 2^{-100} \varepsilon \HM^m(S_u)
        + 2^{-100} \varepsilon \HM^m(S_u \cap W) 
        \le \varepsilon \HM^m(S_u) 
        \qedhere
    \end{multline}
\end{proof}


\section{Unit density of the limit varifold}
\label{sec:ud}

In this section we finish the proof of~\ref{thm:main}. 

We first prove a general ``hair-combing'' lemma~\ref{lem:ud:combing}, which
allows to choose a sequence of sets $\{ S_i : i \in \nat \}$ such that
$\var{m}(S_i \cap U) \to V \in \Var{m}(U)$ and, additionally, $S_i$ converge
locally in Hausdorff metric to some set $S$ such that $\HM^m(S \cap U \without
\spt \|V\|) = 0$. This is achieved by using first the deformation
theorem~\ref{thm:deformation} inside Whitney type cubes covering $U \without
\spt \|V\|$ and then applying the Blaschke Selection Theorem. Since the deformed
sets lie in a fixed grid of cubes we know that the ``hair'' does not accumulate
in the limit.

After that, we basically follow the guidelines presented in~\cite[3.2(d), p. 348
paragraph starting with ``We now verify that\ldots'', p. 349 l. 10--12]{Alm68}
to prove that $\VarTan(V,x) = \{ \var{m}(\Tan(\spt\|V\|,x)) \}$ for each $x \in
\spt \|V\| \subseteq U$ such that $\Tan(\spt\|V\|,x) \in \grass nm$ is
an~$m$-plane and $\density^m(\|V\|,x)$ exits and is finite. That means, we take
a sequence of radii $r_j$ converging to~$0$ and look at the blow-up limit
$(\scale{1/r_j})_{\#}V$. At each scale we use a smooth deformation to project
the part of~$V$ inside $\cball{x}{r_j}$ onto $x+\Tan(\spt\|V\|,x)$. Then we use
ellipticity of~$F$ and minimality of~$V$ to compare $V$ with the deformed~$V$
and conclude that $\density^m(\|V\|,x) = 1$. Of~course we cannot actually work
with~$V$ itself but we need to always look at the minimising sequence, because
$\spt\|V\|$ might not be a member of the good class~$\mathcal C$. After proving
that $\density^m(\|V\|,x) = 1$ we still need to show that $T =
\Tan(\spt\|V\|,x)$ for $V$~almost all $(x,T) \in U \times \grass nm$. To this
end we employ the area formula and a~well known relation between the tilt-excess
and the measure-excess (see~\ref{lem:jacobian-tilt}). Since the measure-excess
vanishes in the limit, so does the tilt-excess and the theorem is proven.

Since in the definition of ellipticity~\ref{def:elliptic}\ref{i:ell:S} we use
more general maps then admissible deformations defined in~\ref{def:adm-deform}
we need to prove that in our case one can replace the former with the latter.
We~do that in~\ref{lem:ell-adm}.

Also because we admit unrectifiable competitors and we used $\Psi_F$ instead of
$\Phi_F$ in~\ref{def:elliptic} but, in the end, we want to minimise $\Phi_F$ so
we need to show that the $\HM^m$~measure of the unrectifiable part of any
minimising sequence vanishes in the limit. This is done
in~\ref{lem:unrect-vanish}.

We emphasis that the proof of~\ref{thm:density}\ref{i:dens:upper-bound} is the
\emph{only} place in the whole paper where we make use of ellipticity of~$F$.

\begin{lemma}
    \label{lem:ud:combing}
    Let $U \subseteq \R^n$ be open. Assume $\{ S_i \subseteq \R^n : i \in \nat
    \}$ is a sequence of closed sets such that $\HM^m(S_i \cap U) < \infty$ for
    $i \in \nat$ and there exists a limit~$V = \lim_{i \to \infty} \var{m}(S_i \cap
    U) \in \Var{m}(U)$.

    Then there exist a closed set $X \subseteq \R^n$, and $g_i \in \adm{U}$, and
    a~subsequence $\{ S_i' : i \in \nat \}$ of $\{ S_i : i \in \nat \}$ such
    that, setting $E = \spt\|V\| \cup (\R^n \without U)$ and $X_i = g_i\lIm S_i'
    \rIm$ for $i \in \nat$, we~obtain
    \begin{gather}
        \lim_{i \to \infty} \HDK{K}(X_i \cap U, X \cap U) = 0 
        \quad \text{for each compact set $K \subseteq U$} \,,
        \\
        \lim_{i \to \infty} \sup \bigl\{ r \in \R : \HM^m(\{ x \in X_i \cap K : \dist(x,E) \ge r \}) > 0 \bigr\} = 0
        \quad \text{for $K \subseteq \R^n$ compact} \,, 
        \\
        \lim_{i \to \infty} \var{m}(X_i \cap U) = V \,,
        \quad
        \HM^m(X \cap U \without \spt \|V\|) = 0 \,.
    \end{gather}

    Furthermore, if $E$ is bounded and $S_i$ is compact for each $i \in \nat$
    and $\sup \{ \HM^m(S_i \cap U) : i \in \nat \} < \infty$, then
    \begin{equation}
        \sup \{ \diam X_i : i \in \nat \} < \infty \,.
    \end{equation}
\end{lemma}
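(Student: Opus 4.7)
Set $E = \spt\|V\| \cup (\R^n \without U)$. I would construct each $g_i \in \adm{U}$ as a composition of applications of Theorem~\ref{thm:deformation} at nested dyadic scales $k \in \integers$ inside pairwise disjoint annular shells surrounding~$E$. For each $k$ set
\begin{displaymath}
    A_k = \bigl\{x \in \R^n : 3 \cdot 2^{-k} \le \dist(x,E) \le 6 \cdot 2^{-k}\bigr\} \cap \cball{0}{2^{|k|+5}}
\end{displaymath}
and let $\mathcal G_k$ be the finite collection of those $Q \in \cubes_n(k)$ with $Q \subseteq A_k$. The $(2^{-k-4})$-neighborhoods of $\tbcup \mathcal G_k$ for distinct $k$ are pairwise disjoint, and $\bigcup_k \tbcup \mathcal G_k$ exhausts $U \without \spt\|V\|$. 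For every fixed $k$ the closure of $\tbcup \mathcal G_k + \cball 0{2^{-k-4}}$ is a compact subset of $U \without \spt\|V\|$, so upper semicontinuity of mass on compact sets gives $\HM^m\bigl(S_i \cap (\tbcup \mathcal G_k + \cball 0{2^{-k-4}})\bigr) \to 0$ as $i \to \infty$. A diagonal argument extracts a subsequence $\{S_i'\}$ and integers $K(i), k(i) \to \infty$ with
\begin{equation}
    \HM^m\bigl(S_i' \cap (\tbcup \mathcal G_k + \cball 0{2^{-k-4}})\bigr)
    < \min\bigl\{ 2^{-km}/\Gamma_{\ref{thm:deformation}} ,\, 2^{-i-|k|} \bigr\}
    \quad \text{for } -K(i) \le k \le k(i) \,.
\end{equation}

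Next I would apply Theorem~\ref{thm:deformation} successively for $k = -K(i), -K(i)+1, \ldots, k(i)$, each time with $\mathcal F = \cubes_n(k)$, $\mathcal A = \mathcal G_k$, $\varepsilon = 2^{-k-4}$, and $\Sigma_1$ the current iterate of $S_i'$. Since the supports at distinct scales are disjoint, the measure bound above transfers from $S_i'$ to every intermediate iterate; by estimate~\ref{thm:deformation}\ref{i:dt:im-cube-f} no top-dimensional cube in $\mathcal G_k$ becomes fully covered, so~\ref{thm:deformation}\ref{i:dt:full-faces} forces the deformed iterate inside $\tbcup \mathcal G_k$ to lie in the $(m-1)$-skeleton of $\mathcal G_k$. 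Let $g_i \in \adm{U}$ be the resulting composition (admissibility is preserved under composition) and set $X_i = g_i\lIm S_i' \rIm$; each $X_i$ is closed. Blaschke's selection theorem together with a diagonal argument over a compact exhaustion of~$U$ then produces a further subsequence along which $X_i \cap U$ converges locally in Hausdorff metric to a closed set $X \cap U$.

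Varifold convergence $\var{m}(X_i \cap U) \to V$ follows since $g_i$ is the identity outside the union $R_i$ of the $(2^{-k-4})$-neighborhoods of the $\tbcup \mathcal G_k$ used at step~$i$ (all inside $U \without \spt\|V\|$), and the second bound in the diagonal choice yields $\HM^m(S_i' \cap R_i) \le \sum_{k=-K(i)}^{k(i)} 2^{-i-|k|} \le 2^{2-i} \to 0$; hence also $\HM^m(X_i \cap R_i) \to 0$ by~\ref{lem:image-measure} and~\ref{thm:deformation}\ref{i:dt:estimates}. For the ``hair'' assertion and the $\HM^m$-bound on~$X$, given compact $K \subseteq U$ and $r > 0$, once $k(i) > \log_2(8/r)$ the set $X_i \cap K \cap \{\dist(\cdot,E) \ge r\}$ lies in a~\emph{fixed} finite union $\mathcal R_r$ of $(m-1)$-dimensional cube faces (only scales $k \le \log_2(8/r)$ contribute), hence has $\HM^m$-measure zero. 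Passing to the Hausdorff limit, $X \cap K \cap \{\dist(\cdot,E) \ge r\} \subseteq \mathcal R_r$ still has $\HM^m$-measure zero, and a countable union over $r = 1/j$, together with the observation that for $x \in K$ compact in $U$ one has $\dist(x,E) > 0$ whenever $\dist(x,\spt\|V\|) > 0$, yields $\HM^m(X \cap U \without \spt\|V\|) = 0$.

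\textbf{Main obstacle.} The essential difficulty is preventing the Hausdorff limit $X$ from acquiring positive $\HM^m$-content in $U \without \spt\|V\|$. A~single deformation at the finest scale $k(i)$ would push each $X_i$ into an $(m-1)$-skeleton whose cubes shrink with $i$, and whose Hausdorff limit could be large. The nested multi-scale construction overcomes this by ensuring that, for each fixed $r > 0$, the part of \emph{every} large-$i$ set $X_i$ at distance $\ge r$ from $E$ lies in the \emph{same} finite $(m-1)$-rectifiable set $\mathcal R_r$. The boundedness addendum requires in addition that the coarsest scale $-K(i)$ be chosen so that the corresponding cubes enclose all of $S_i'$; the uniform $\HM^m$-bound on $S_i$ combined with~\ref{thm:deformation}\ref{i:dt:im-cube-f} at that scale then forces $X_i$ into the $(m-1)$-skeleton of a cube of uniformly bounded size, giving $\sup_i \diam X_i < \infty$.
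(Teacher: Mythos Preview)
Your overall strategy---deform the minimising sets into a \emph{fixed} $(m-1)$-dimensional complex away from $E$ so that the Hausdorff limit inherits $\HM^m$-nullity there---is the right one and matches the paper's. But your implementation via disjoint annular shells has a genuine gap. You assert simultaneously that the $(2^{-k-4})$-neighbourhoods of $\tbcup\mathcal G_k$ are pairwise disjoint \emph{and} that $\bigcup_k\tbcup\mathcal G_k$ exhausts $U\without\spt\|V\|$. These two requirements are incompatible for shells of the form $\{a\cdot 2^{-k}\le\dist(\cdot,E)\le b\cdot 2^{-k}\}$ with $Q\subseteq A_k$: coverage by cubes of side $2^{-k}$ forces $b\ge 2a+3\sqrt n$ (so that every point lies well inside some shell), whereas disjointness of the $\varepsilon$-supports forces $b<2a$. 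With your constants $a=3$, $b=6$ neither holds once $n\ge 1$. Whatever constants you pick, there will be ``seam'' regions between consecutive shells where $g_i=\id{}$; there $X_i=S_i'$, and although $\HM^m(S_i'\cap\text{seam})\to 0$, the Hausdorff limit of such sets can have positive $\HM^m$-measure. So your key claim that $X_i\cap K\cap\{\dist(\cdot,E)\ge r\}$ eventually lies in a fixed $(m-1)$-complex $\mathcal R_r$ fails, and with it the conclusion $\HM^m(X\cap U\without\spt\|V\|)=0$.

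The paper resolves this by replacing your stack of uniform grids $\cubes_n(k)$ with the single admissible family $\mathcal F=\WF(U\without\spt\|V\|)$ of Whitney cubes and applying Theorem~\ref{thm:deformation} \emph{once} per index $j$ with a growing finite $\mathcal A_j\subseteq\mathcal F$. The Whitney cubes tile $U\without\spt\|V\|$ seamlessly with sizes adapted to $\dist(\cdot,E)$, and the deformation theorem is designed for exactly such families (this is why it allows $\mathcal F$ to be merely admissible rather than a uniform grid). The target $(m-1)$-skeleton is then the fixed set $\tbcup(\CX(\mathcal F)\cap\cubes_{m-1})$, and one shows that for each fixed Whitney cube $Q$ and all large $j$, $f_j\lIm P_j\rIm\cap Q$ lies in it. Your boundedness argument is also incomplete: landing in an $(m-1)$-skeleton at the coarsest scale does not bound $\diam X_i$, because the cubes containing $S_i'$ can sit arbitrarily far from the origin. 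The paper handles this with a separate step, composing with admissible affine projections onto the faces of a fixed large cube $[-M_1,M_1]^n$ after first checking that $W_j$ already has zero $\HM^m$-measure outside that cube.
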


\begin{proof}
    Let $\mathcal F = \{ Q_j : j \in \nat \} = \WF(U \without \spt\|V\|)$ be the
    Whitney family defined in~\ref{def:whitney}.
    For brevity of the notation, if $Q \in \mathcal F$, we define
    \begin{equation}
        \operatorname{N}(Q,0) = \{ Q \} \,,
        \quad
        \operatorname{N}(Q,i) = \bigl\{ R \in \mathcal F : R \cap \tbcup \operatorname{N}(Q,i-1) \ne \varnothing \bigr\} 
        \quad \text{for $i = 1,2,\ldots$}\,,
    \end{equation}
    and we set $\widetilde Q = \tbcup \operatorname{N}(Q,1)$. Since $\var{m}(S_i
    \cap U) \to V$ in $\Var{m}(U)$ as $i \to \infty$,
    using~\cite[2.6.2(c)]{All72}, we~see that
    \begin{equation}
        \label{eq:dens:meas-vanish}
        \limsup_{i \to \infty} \HM^m( S_i \cap \tbcup \operatorname{N}(Q_j,3)) \le \|V\|(\tbcup \operatorname{N}(Q_j,3) ) = 0
        \quad \text{for each $j \in \nat$} \,.
    \end{equation}
    Set $S_i^0 = S_i$ for $i \in \nat$ and define inductively $S_i^j$ for $j \in
    \nat$ by requiring that $\{ S_i^j : i \in \nat \}$ be a~subsequence
    of~$\{S_i^{j-1} : i \in \nat\}$ satisfying
    \begin{equation}
        \label{eq:dens:Sij-subs}
       \HM^m(S_i^j \cap \tbcup \operatorname{N}(Q_j,3) ) < \frac{\side{Q_j}^m}{2^{m+i} \Gamma_{\ref{thm:deformation}}}
       \quad \text{for $i \in \nat$} \,.
    \end{equation}
    For $j \in \nat$ define $P_j = S_j^j$ and $\mathcal A_j \subseteq \mathcal
    F$ to consist of all the cubes $Q \in \mathcal F$ with $Q \subseteq \cball
    0{2^j}$ and satisfying
    \begin{equation}
        \begin{gathered}
            \text{either}  \quad
            P_j \cap \widetilde Q \ne \varnothing
            \text{ and }
            \HM^m(P_j \cap \tbcup \operatorname{N}(Q,3))
            < \frac{\side{Q}^m}{2^{m+j} \Gamma_{\ref{thm:deformation}}}
            \text{ and }
            \side{Q} \ge \frac{1}{2^j}
            \\
            \text{or} \quad
            P_j \cap \widetilde Q \ne \varnothing
            \text{ and }
            \side{Q} \ge 1 \,.
        \end{gathered}
    \end{equation}
    Clearly $\mathcal A_j$ are finite. For each $j \in \nat$
    apply~\ref{thm:deformation} with $\mathcal F$, $\mathcal A_j$, $P_j$, $m$,
    $1$, $2^{-j-8}$ in place of $\mathcal F$, $\mathcal A$, $\Sigma_1$, $m_1$,
    $l$, $\varepsilon$ to obtain the map $\bar f_j \in \cnt^{\infty}(\R \times
    \R^n,\R^n)$ called ``$f$'' there. Set
    \begin{displaymath}
        f_j = \bar f_j(1,\cdot) \in \adm{U}
        \quad \text{and} \quad
        W_j = f_j \lIm P_j \rIm \,.
    \end{displaymath}
    We shall prove that $\lim_{j \to \infty} \var{m}(W_j \cap U) = V$ in $\Var{m}(U)$.

    Let $\varphi \in \ccspace{U \times \grass nm}$ and for $j \in \nat$ let
    $\zeta_j \in \cnt^{\infty}(\R^n,[0,1])$ be such that
    \begin{displaymath}
        \zeta_j(x) = 1 \quad \text{if $\dist(x, \tbcup \mathcal A_j) \le 2^{-j-4}$} \,,
        \quad
        \zeta_j(x) = 0 \quad \text{if $\dist(x, \tbcup \mathcal A_j) \ge 2^{-j-2}$} \,.
    \end{displaymath}
    This choice ensures
    \begin{gather}
        \spt (\CF_{\R^n} - \zeta_j) 
        \subseteq \bigl\{ x \in \R^n : \dist(x,\tbcup \mathcal A_j) > 2^{-j-6} \bigr\} 
        \subseteq \{ x \in \R^n : f_j(x) = x \} \,,
        \\
        \spt \zeta_j \subseteq \tbcup \bigl\{ \widetilde Q : Q \in \mathcal A_j \bigr\} \subseteq U \,.
    \end{gather}
    We set $\bar \zeta_j(x,T) = \zeta_j(x)$ for $(x,T) \in \R^n \times \grass
    nm$ and then
    \begin{multline}
        \var{m}(W_j \cap U)(\varphi) 
        = \var{m}(W_j \cap U)(\varphi \bar \zeta_j) + \var{m}(P_j \cap U)(\varphi (\CF_{\R^n \times \grass nm} - \bar \zeta_j))
        \\
        = \var{m}(P_j \cap U)(\varphi) + \var{m}(W_j \cap U)(\varphi \bar \zeta_j) - \var{m}(P_j \cap U)(\varphi \bar \zeta_j) \,.
    \end{multline}
    Since $\{ P_j : j \in \nat \}$ is a subsequence of $\{ S_j : j \in \nat\}$
    and $\var{m}(S_j \cap U) \to V$ in $\Var{m}(U)$ as $j \to \infty$, we only need
    to show that $\lim_{j \to \infty} \var{m}(W_j \cap U)(\varphi \bar \zeta_j) =
    0$ and $\lim_{j \to \infty} \var{m}(P_j \cap U)(\varphi \bar \zeta_j) = 0$. Set
    \begin{displaymath}
        \mathcal G = \bigl\{ Q \in \mathcal F : \widetilde Q \cap \spt \varphi \ne \varnothing \,,\, \side{Q} < 1 \bigr\} 
        \quad \text{and} \quad
        \mathcal J = \bigl\{ Q \in \mathcal F : \widetilde Q \cap \spt \varphi \ne \varnothing \,,\, \side{Q} \ge 1 \bigr\} \,,
    \end{displaymath}
    then $\mathcal G$ and $\mathcal J$ are finite and do not depend on $j \in
    \nat$. Moreover, $\spt \varphi \cap \spt \zeta_j \subseteq \tbcup \{
    \widetilde Q : Q \in \mathcal A_j \cap \mathcal G \} \cup \tbcup \{
    \widetilde Q : Q \in \mathcal J \}$, and $\dist(\tbcup \{ \widetilde Q : Q
    \in \mathcal J \}, \spt \|V\|) > 0$; hence,
    \begin{multline}
        \label{eq:dens:vPj}
        \lim_{j \to \infty} \var{m}(P_j \cap U)(\varphi \bar \zeta_j)
        \le \lim_{j \to \infty} \sup \im |\varphi| \HM^m(\spt \varphi \cap \spt \zeta_j \cap P_j)
        \\
        \le \sup \im |\varphi| \lim_{j \to \infty} \biggl(
            \sum_{Q \in \mathcal G \cap \mathcal A_j} \HM^m(P_j \cap \widetilde Q)
            + \sum_{Q \in \mathcal J} \HM^m(P_j \cap \widetilde Q)
        \biggr)
        \\
        \le \sup \im |\varphi| \biggl( \lim_{j \to \infty} \HM^0(\mathcal G) \Gamma_{\ref{thm:deformation}}^{-1} 2^{-j}
        + \sum_{Q \in \mathcal J} \lim_{j \to \infty} \HM^m(P_j \cap \widetilde Q) \biggr) = 0 \,.
    \end{multline}
    To deal with $\var{m}(W_j \cap U)(\varphi \bar \zeta_j)$ we first note that whenever
    $Q \in \mathcal F$, then,
    recalling~\ref{thm:deformation}\ref{i:dt:estimates},
    \begin{multline}
        \label{eq:dens:fjPcapQ}
        \HM^m(f_j \lIm P_j \rIm \cap \widetilde Q)
        \le 
        \sum_{R \in \operatorname{N}(Q,1)} \HM^m(f_j \lIm P_j \rIm \cap R)
        \\
        \le \sum_{R \in \operatorname{N}(Q,1)} \Gamma_{\ref{thm:deformation}} \HM^m\bigl( P_j \cap ( \widetilde R + \oball{0}{2^{-j-8}}) \bigr) 
        \le \Delta^2 \Gamma_{\ref{thm:deformation}} \HM^m(P_j \cap \tbcup \operatorname{N}(Q,3)) \,,
    \end{multline}
    where $\Delta = 4^n \ge \sup \{ \HM^0(\operatorname{N}(T,1)) : T \in
    \mathcal F \}$. Now, we can estimate as in~\eqref{eq:dens:vPj}
    using~\eqref{eq:dens:fjPcapQ}
    \begin{multline}
        \lim_{j \to \infty} \var{m}(W_j \cap U)(\varphi \bar \zeta_j)
        \le \sup \im |\varphi| \lim_{j \to \infty} \biggl(
            \sum_{Q \in \mathcal G \cap \mathcal A_j} \HM^m(f_j \lIm P_j \rIm \cap \widetilde Q)
            + \sum_{Q \in \mathcal J} \HM^m(f_j \lIm P_j \rIm \cap \widetilde Q)
        \biggr)
        \\
        \le \sup \im |\varphi| \biggl( \lim_{j \to \infty} \HM^0(\mathcal G) \Delta^2 2^{-j}
        + \Delta^2 \Gamma_{\ref{thm:deformation}} \sum_{Q \in \mathcal J} \lim_{j \to \infty} \HM^m(P_j \cap \tbcup \operatorname{N}(Q,3)) \biggr) = 0 \,.
    \end{multline}
    This finishes the proof that $\lim_{j \to \infty} \var{m}(W_j \cap U) = V$.

    Let $\{ K_i \subseteq U : i \in \nat \}$ be a sequence of compacts sets such
    that $\tbcup \{ K_i \subseteq U : i \in \nat \} = U$ and $K_i \subseteq
    K_{i+1}$ for $j \in \nat$. We set $W^0_j = W_j$ for $j \in \nat$. For $i \in
    \nat$ we define $\{ W^i_j : j \in \nat \}$ to be a subsequence of $\{
    W^{i-1}_j : j \in \nat \}$ such that the sequence $\{ W^i_j \cap K_i : j \in
    \nat \}$ converges in the Hausdorff metric to some compact set $F_i$ -- this
    can be done employing the Blaschke Selection Theorem;
    see~\cite{Pri40}. Finally, we set $R_i = W^i_i$ for $i \in \nat$ and $R =
    \tbcup \{ F_i : i \in \nat \}$. We see that for any compact set $K \subseteq
    U$ we have $\HDK{K}(R_i \cap U, R \cap U) \to 0$ as $i \to \infty$.

    Now, we need to show that $\HM^m(R \cap U \without \spt \|V|) = 0$. It will
    be enough to prove that $\HM^m(R \cap Q) = 0$ for each $Q \in \mathcal F$.
    Recall that $R \cap Q$ is the limit, in the Hausdorff metric, of some
    subsequence of $\{ f_j \lIm P_j \rIm \cap Q : j \in \nat \}$. We claim that
    for big enough $j \in \nat$ the set $f_j \lIm P_j \rIm \cap Q$ lies in the
    $m-1$-dimensional skeleton of $\mathcal F$, i.e., $f_j \lIm P_j \rIm \cap Q
    \subseteq \tbcup \cubes_{m-1} \cap \CX(\mathcal F)$, which implies that $R
    \cap Q \subseteq \tbcup \cubes_{m-1} \cap \CX(\mathcal F)$ and $\HM^m(R \cap
    Q) = 0$. To prove our claim let $j_0, j_1, j_2 \in \integers$ be such that
    $Q = Q_{j_0}$ and $\side{Q} = 2^{-j_1}$ and $Q \subseteq \cball 0{2^{j_2}}$,
    and assume $j > \max\{j_0,j_1,j_2\}$.

    In case $\widetilde Q \cap P_j = \varnothing$ we have $f_j \lIm P_j \rIm
    \cap Q = \varnothing$, by~\ref{thm:deformation}\ref{i:dt:cube-pres}, and
    there is nothing to prove. Thus, we assume that $\widetilde Q \cap P_j \ne
    \varnothing$ which implies that $Q \in \mathcal A_j$ due
    to~\eqref{eq:dens:Sij-subs} and $j > \max\{ j_0, j_1, j_2 \}$. For $R \in
    \operatorname{N}(Q,1)$ such that $R \cap P_j \ne \varnothing$ we~estimate
    using~\ref{thm:deformation}\ref{i:dt:estimates}
    \begin{multline}
        \label{eq:dens:fjPjcapR}
        \HM^m(f_j \lIm P_j \rIm \cap R)
        \le \Gamma_{\ref{thm:deformation}} \HM^m\bigl( P_j \cap ( \widetilde R + \oball{0}{2^{-j-8}}) \bigr)
        \\
        \le \Gamma_{\ref{thm:deformation}} \HM^m\bigl( P_j \cap \tbcup \operatorname{N}(Q,3)\bigr)
        < 2^{-m} \side{Q}^m \le \side{R}^m \,.
    \end{multline}
    If $R \cap P_j \ne \varnothing$ and $R \in \operatorname{N}(Q,1)$, then it
    follows that $R \subseteq \Int \tbcup \mathcal A_j$; hence,
    combining~\eqref{eq:dens:fjPjcapR}
    with~\ref{thm:deformation}\ref{i:dt:full-faces}, we see that $f_j \lIm P_j
    \rIm \cap R \subseteq \tbcup \cubes_{m-1} \cap \CX(\mathcal F)$. Next,
    observe that
    \begin{equation}
        \label{eq:dens:fjPjm-1}
        f_j \lIm P_j \rIm \cap Q \subseteq
        \tbcup \{ f_j \lIm R \rIm : R \in \operatorname{N}(Q,1) ,\, R \cap P_j \ne \varnothing \}
        \subseteq \tbcup \cubes_{m-1} \cap \CX(\mathcal F) \,.
    \end{equation}

    Let $K \subseteq \R^n$ be compact. Observe that for each $k \in \nat$, there
    are only finitely many cubes in $\mathcal F$ which touch~$K$ and have side
    length at least $2^{-k}$. If $j_0 \in \nat$ is the maximal index of such
    cube and $j_1 \in \nat$ is such that $Q_{j_0} \subseteq \cball 0{2^{j_1}}$,
    then for $j > \max\{j_0,j_1,k\}$ the estimate~\eqref{eq:dens:fjPjcapR} holds
    for any $R \in \operatorname{N}(Q,1)$ whenever $Q \in \mathcal F$ satisfies
    $\side{Q} \ge 2^{-k}$ and $Q \cap K \ne \varnothing$. In~consequence, as
    in~\eqref{eq:dens:fjPjm-1},
    \begin{displaymath}
        f_j\lIm P_j \rIm \cap \tbcup \bigl\{ Q \in \mathcal F : \side{Q} \ge 2^{-k} \,,\, Q \cap K \ne \varnothing \bigr\}
        \subseteq \tbcup \CX(\mathcal F) \cap \cubes_{m-1} \,.
    \end{displaymath}
    Recalling that $\mathcal F = \WF(U \without \spt\|V\|)$ was the Whitney
    family associated to $\R^n \without E$, we see that there exists $\{ r_j \in
    (0,\infty) : j \in \nat \}$ such that $r_j \downarrow 0$ as $j \to \infty$
    and
    \begin{displaymath}
        \HM^m(\{ x \in f_j\lIm P_j \rIm \cap K : \dist(x, E) \ge r_j \}) = 0 
        \quad \text{for $j \in \nat$} \,.
    \end{displaymath}

    Observe that for each $i \in \nat$ there exists $j(i) \in \nat$ such that
    $R_i = f_{j(i)}\lIm P_{j(i)} \rIm$, so setting $S'_i = P_{j(i)}$ for $i \in
    \nat$ we obtain a subsequence of $\{S_i : i \in \nat\}$. Hence, we may
    finish the proof of the first part of~\ref{lem:ud:combing} by setting $X =
    R$ and $g_i = f_{j(i)}$ and $X_i = R_i$ for $i \in \nat$.

    Assume now that $E$ is bounded and $S_i$ is compact for $i \in \nat$ and
    $\sup \{ \HM^m(S_i \cap U) : i \in \nat \} < \infty$. In this case we need
    to further modify the sets $R_i$ to ensure that all the resulting sets $X_i$
    are bounded. To this end we shall simply project the sets $R_j$ onto the
    cube $[-M_1,M_1]^n$. Since our definition of admissible mappings allows only
    for deformations inside convex sets we need to perform the projection in
    several steps. Using the fact that outside $[-M_1,M_1]^n$ the sets $R_j$ lie
    in the $m-1$ dimensional skeleton of $\mathcal F$ we~will deduce that the
    projected sets $X_j$ give rise to the same varifolds as $R_j$, i.e.,
    $\var{m}(R_j \cap U) = \var{m}(X_j \cap U)$.

    Suppose there exists $M_0 > 1$ such that
    \begin{displaymath}
        E = \spt\|V\| \cup (\R^n \without U) \subseteq [-M_0,M_0]^n
        \quad \text{and} \quad
        \sup \{ \HM^m(S_i \cap U) : i \in \nat \} < M_0 \,.
    \end{displaymath}
    Then $\sup \{ \HM^m(R_i \cap U) : i \in \nat \} < M_0$ and we can find $M_1 >
    2^{10} M_0$ such that
    \begin{displaymath}
        \tbcup \{ Q \in \mathcal F : \side{Q}^m \le \Gamma_{\ref{thm:deformation}} M_0 \} \subseteq [-2^{-10} M_1, 2^{-10} M_1]^n \,.
    \end{displaymath}

    Let $e_1,\ldots,e_n$ be the standard basis of~$\R^n$. For $i \in \{ -n,
    \ldots, -1, 1, \ldots, n\}$ and $j \in \nat$ we define
    \begin{itemize}
    \item $L_i = \{ x \in \R^n : x \bullet e_{|i|} = \tfrac{i}{|i|} M_1 \}$,
    \item $H_i = \{ x \in \R^n : x \bullet e_{|i|} \tfrac{i}{|i|} \ge M_1 \}$,
    \item $p_i$ to be the affine orthogonal projection onto the affine plane $L_i$,
    \item $Y_{j,i} \subseteq H_i$ to be a a large cube containing $[-M_1,M_1]^n
        \cap L_i$ and such that
        \begin{displaymath}
            R_j \cap H_i \subseteq Y_{j,i} \,,
        \end{displaymath}
    \item $\varphi_{i,j} \in \cnt^{\infty}(\R^n)$ to be a cut-off function such
        that $0 \le \varphi_{i.j}(x) \le 1$ for $x \in \R^n$ and
        \begin{displaymath}
            Y_{j,i} \subseteq \varphi_{j,i}^{-1} \{1\}
            \quad \text{and} \quad
            \spt \varphi_{j,i} \subseteq \R^n \without [-2^{-1}M_1, 2^{-1}M_1]^n \text{ is compact and convex} \,,
        \end{displaymath}
    \item $h_{j,i} = p_i \varphi_{j,i} + (\CF_{\R^n} - \varphi_{j,i}) \id{\R^n} \in \adm{U}$,
    \item $h_j = h_{j,-n} \circ \cdots \circ h_{j,-1} \circ h_{j,1} \circ \cdots \circ h_{j,n}  \in \adm{U}$.
    \end{itemize}
    We set $X_i = h_i \lIm R_i \rIm$ and $g_i = h_i \circ f_{j(i)}$ for $i \in
    \nat$. Clearly $X_j \subseteq [-M_1,M_1]^n$ for each $j \in \nat$ so,
    employing the Blaschke Selection Theorem, we can assume that $X_j$ converges
    in the Hausdorff metric to some compact set~$X$.  Observe that if $i,j \in
    \nat$ and $R_i = f_j\lIm P_j \rIm$ and $Q \in \mathcal F$ is such that $Q
    \cap P_j \ne \varnothing$ and $Q \cap \R^n \without [-2^{-1} M_1, 2^{-1}
    M_1]^n \ne \varnothing$, then $\side{Q}^m > \Gamma_{\ref{thm:deformation}}
    M_0 > 1$ and $Q \in \mathcal A_j$ and $\Gamma_{\ref{thm:deformation}}
    \HM^m(P_j \cap \tbcup \operatorname{N}(Q,3)) \le
    \Gamma_{\ref{thm:deformation}} M_0 < \side{Q}^m$; hence, $f_j \lIm P_j \rIm
    \cap Q \subseteq \CX(\mathcal F) \cap \cubes_{m-1}$,
    by~\ref{thm:deformation}\ref{i:dt:full-faces}, and $\HM^m(f_j \lIm P_j \rIm
    \cap Q) = 0$ so $\HM^m(h_i \lIm R_i \rIm \cap Q) = 0$. Since $X_i \cap
    [-2^{-1} M_1, 2^{-1} M_1]^n = R_i \cap [-2^{-1} M_1, 2^{-1} M_1]^n$ for $i
    \in \nat$, we see that $\var{m}(R_i \cap U) = \var{m}(X_i \cap U)$ for $i \in
    \nat$ and $\var{m}(X_i \cap U) \to V \in \Var{m}(U)$ as $i \to \infty$.
\end{proof}

\begin{corollary}
    \label{cor:ud:good-ms}
    Assume
    \begin{gather}
        U \subseteq \R^n \quad \text{is open} \,,
        \quad
        \text{$F$ is a bounded $\cnt^0$ integrand} \,,
        \\
        \text{$\mathcal{C}$ is a good class in~$U$} \,,
        \quad
        \mu = \inf\{ \Phi_F(R \cap U) : R \in \mathcal{C} \} \in (0, \infty) \,.
    \end{gather}

    Then there exist $\{ S_i : i \in \nat \} \subseteq \mathcal{C}$, $S \in
    \mathcal C$, $V \in \Var{m}(U)$, and $E \subseteq \R^n$ such that
    \begin{gather}
        E = \spt \|V\| \cup (\R^n \without U) \,,
        \quad
        \HM^m(S \cap U \without \spt \|V\|) = 0 \,,
        \\
        \lim_{i \to \infty} \HDK{K}(S_i \cap U, S \cap U) = 0 
        \quad \text{for $K \subseteq U$ compact} \,,
        \\
        \lim_{i \to \infty} \sup \bigl\{ r \in \R : \HM^m(\{ x \in S_i \cap K : \dist(x,E) \ge r \}) > 0 \bigr\} = 0
        \quad \text{for $K \subseteq \R^n$ compact} \,, 
        \\
        \sup \{\HM^m(S_i \cap U) : i \in \nat \} < \infty \,,
        \quad
        \lim_{i \to \infty} \var{m}(S_i \cap U) = V \,,
        \quad
        \lim_{i \to \infty} \Phi_F(S_i \cap U) = \mu \,.
    \end{gather}

    Furthermore, if $B = \R^n \without U$ is compact and there exists
    a~minimising sequence consisting of compact elements of~$\mathcal C$, then
    \begin{displaymath}
        \spt \|V\| \quad \text{is bounded}
        \quad \text{and} \quad
        \sup \{\diam S_i : i \in \nat \} < \infty \,,
    \end{displaymath}
\end{corollary}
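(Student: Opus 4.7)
\emph{Plan.} The strategy is to extract a varifold limit from a minimising sequence and then bootstrap it via Lemma~\ref{lem:ud:combing} to obtain the Hausdorff convergence and hair-concentration properties. Since $F$ is bounded with $\alpha = \inf \im F > 0$, any sequence $\{T_i : i \in \nat\} \subseteq \mathcal{C}$ with $\Phi_F(T_i \cap U) \to \mu$ satisfies $\HM^m(T_i \cap U) \le \Phi_F(T_i \cap U)/\alpha$, which is uniformly bounded. By compactness of Radon varifolds with locally bounded mass, pass to a subsequence so that $\var{m}(T_i \cap U)$ converges weakly to some $V \in \Var{m}(U)$. Lower semicontinuity of $\Phi_F$ under weak varifold convergence, obtained by approximating $F$ by bounded continuous functions with compact support in the $x$ variable, then yields $\Phi_F(V) \le \liminf_i \Phi_F(T_i \cap U) = \mu$.

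Next, apply Lemma~\ref{lem:ud:combing} to the subsequence to obtain a further subsequence $\{T_i' : i \in \nat\}$, admissible deformations $g_i \in \adm{U}$, a closed set $X \subseteq \R^n$, and $X_i = g_i\lIm T_i'\rIm$ satisfying the convergence and hair-combing properties listed there. Set $S = X$ and $S_i = X_i$. Property~\ref{def:goodclass}\ref{i:gc:deformation} gives $S_i \in \mathcal{C}$, and property~\ref{def:goodclass}\ref{i:gc:hd} combined with the local Hausdorff convergence $\HDK{K}(S_i \cap U, S \cap U) \to 0$ forces $S \in \mathcal{C}$. The varifold convergence $\var{m}(S_i \cap U) \to V$, the equality $\HM^m(S \cap U \without \spt\|V\|) = 0$, and the hair-concentration condition are supplied directly by the combing lemma. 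The uniform bound on $\HM^m(S_i \cap U)$ follows from the uniform bound on $\HM^m(T_i' \cap U)$ together with the measure estimate in~\ref{thm:deformation}\ref{i:dt:estimates} applied to the deformation steps defining each $g_i$.

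The delicate point is verifying $\Phi_F(S_i \cap U) \to \mu$. The lower bound $\Phi_F(S_i \cap U) \ge \mu$ is immediate since $S_i \in \mathcal{C}$. For the upper bound, the plan is to show $\limsup_i \Phi_F(S_i \cap U) \le \Phi_F(V)$, which combined with $\Phi_F(V) \le \mu$ closes the loop. This reduces to a tightness argument: upper semicontinuity of mass on compacta under weak varifold convergence (see~\cite[2.6(2)(d)]{All72}) handles $\HM^m(S_i \cap K')$ for compact $K' \subseteq U$, while the hair-concentration condition from~\ref{lem:ud:combing}, together with $\dist(K', \R^n \without U) > 0$, rules out escape of mass to $\R^n \without U$. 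The principal obstacle lies precisely in this tightness control, since $V \in \Var{m}(U)$ is defined only over the open set $U$ and mass could in principle leak towards $\R^n \without U$; the hair-concentration condition is tailored exactly to prevent this.

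Finally, for the addendum: assume $B = \R^n \without U$ is compact and a minimising sequence of compact elements of $\mathcal{C}$ exists, and begin the argument with such a sequence. Having extracted $V$ as above, the density lower bound in Theorem~\ref{thm:ludrb}\ref{i:lu:lower} combined with finiteness of $\|V\|(U) \le \mu/\alpha$ forces $\spt\|V\|$ to be bounded: each point of $\spt\|V\|$ sufficiently far from $B$ carries a ball of unit radius with $\|V\|$-mass bounded below by a fixed positive constant, so only finitely many such well-separated points can occur. Consequently $E = \spt\|V\| \cup B$ is bounded, and the final clause of Lemma~\ref{lem:ud:combing} then yields $\sup_i \diam(S_i) < \infty$.
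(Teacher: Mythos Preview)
Your overall route matches the paper's: pick a minimising sequence, pass to a varifold-convergent subsequence, and feed it into Lemma~\ref{lem:ud:combing}. The paper's proof is very terse and does not explicitly justify $\lim_i \Phi_F(S_i \cap U) = \mu$; you correctly identify this as the delicate point. However, your proposed ``tightness'' argument for $\limsup_i \Phi_F(S_i \cap U) \le \Phi_F(V)$ does not work. The hair-concentration output of~\ref{lem:ud:combing} only says that, on each compact set, $S_i$ lies (up to $\HM^m$-null sets) within a shrinking neighbourhood of $E = \spt\|V\| \cup (\R^n \without U)$. It does \emph{not} bound the amount of $\HM^m$-mass of $S_i$ near $\R^n \without U$ or outside any compact set, so it cannot by itself force $\Phi_F(S_i \cap U \without K') \to 0$. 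Upper semicontinuity on compacta handles the interior, but the escape-of-mass half of your argument is a genuine gap.

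The natural way to close it is to compare $\Phi_F(S_i \cap U)$ directly with $\Phi_F(P_i \cap U)$, where $P_i$ is the pre-combed minimising subsequence for which $\Phi_F(P_i \cap U) \to \mu$ by construction. The combing maps $g_i = f_{j(i)}$ differ from the identity only on the deformation region $G_{j(i)}$, a finite union of Whitney cubes of $U \without \spt\|V\|$ contained in $\cball{0}{2^{j(i)}}$, so the discrepancy is bounded by $\sup \im F$ times $\HM^m(f_{j(i)}[P_i] \cap G_{j(i)}) + \HM^m(P_i \cap G_{j(i)})$; one then has to extract from the construction in~\ref{lem:ud:combing} (the small-measure selection of the diagonal subsequence and the estimates \eqref{eq:dens:vPj}, \eqref{eq:dens:fjPcapQ}) that these quantities tend to~$0$. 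This is the substance the paper suppresses.

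There is also an ordering issue in your addendum. You invoke Theorem~\ref{thm:ludrb}\ref{i:lu:lower} \emph{after} the combing to conclude $\spt\|V\|$ is bounded, and then appeal to the ``final clause'' of~\ref{lem:ud:combing}. But that clause requires $E$ bounded as an \emph{input} to the combing, so you would have to re-run~\ref{lem:ud:combing}. The paper avoids this by applying~\ref{cor:ludrb} to the original varifold-convergent subsequence $\{R_i'\}$ \emph{before} combing, establishing boundedness of $\spt\|V\|$ first and then feeding that into~\ref{lem:ud:combing}. Note that applying~\ref{cor:ludrb} (or~\ref{thm:ludrb}) requires the hypothesis $\Phi_F(V) = \lim_i \Phi_F(\,\cdot\,) = \mu$, which in turn hinges on resolving the point above.
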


\begin{proof}
    Let $\{ R_i : i \in \nat \}$ be any minimising sequence in $\mathcal C$, i.e.,
    \begin{displaymath}
        \Phi_F(R_i \cap U) \to \inf\{ \Phi_F(R \cap U) : R \in \mathcal{C} \} = \mu
        \quad \text{as $i \to \infty$} \,.
    \end{displaymath}
    Observe that since $F$ is bounded and $\mu$ is finite we have $\HM^m(R_i
    \cap U) < 2 (\inf \im F)^{-1} \mu$ for all but finitely many $i \in \nat$ --
    we~shall assume it holds for all $i \in \nat$. In~consequence, we can choose
    a~subsequence $\{ R_i' : i \in \nat \}$ of $\{ R_i : i \in \nat \}$ such
    that $\var{m}(R_i' \cap U)$ converges as $i \to \infty$ to some $V \in
    \Var{m}(U)$.  If $B = \R^n \without U$ is compact, then we
    use~\ref{cor:ludrb} too see that $\spt \|V\|$ must be bounded; hence, $E$ is
    bounded. Next, we~apply~\ref{lem:ud:combing} to $\{ R_i' : i \in \nat \}$ to
    obtain a~subsequence $\{ P_i : i \in \nat \}$ of $\{ R_i' : i \in \nat \}$
    and maps $\{ g_i \in \adm{U} : i \in \nat \}$ and $S \in \mathcal
    C$. Finally, we~set $S_i = g_i\lIm P_i \rIm$.
\end{proof}

\begin{remark}
    \label{rem:summary}
    Let us summarise what we know so far.

    Under the assumptions of~\ref{cor:ud:good-ms} we obtain a minimising
    sequence $\{ S_i : i \in \nat\} \subseteq \mathcal C$, and $V \in
    \Var{m}(U)$, and $S \in \mathcal C$ satisfying
    \begin{gather}
        \lim_{i \to \infty} \Phi_F(S_i \cap U) = \mu
        \quad \text{and} \quad
        V = \lim_{i \to \infty} \var{m}(S_i \cap U) 
        \quad \text{and} \quad
        \HM^m(S \cap U \without \spt\|V\|) = 0
        \\
        \text{and} \quad
        \lim_{i \to \infty} \HDK{K}(S \cap U, S_i \cap U) = 0
        \quad \text{for $K \subseteq U$ compact} \,.
    \end{gather}
    Next, employing~\ref{thm:rect} we see that $\spt\|V\|$ is countably
    $(\HM^m,m)$~rectifiable and has locally finite $\HM^m$~measure.
    In~particular, $\density^m(\HM^m \restrict \spt\|V\|,x) = 1$ for
    $\HM^m$~almost all $x \in \spt \|V\| \subseteq U$ by~\cite[3.2.19]{Fed69}
    so, using~\ref{cor:ludrb} and~\cite[2.8.18, 2.9.5]{Fed69}, also the density
    $\density^m(\|V\|,x)$ exists and is finite for $\|V\|$~almost all~$x \in U$.
    Recalling~\ref{rem:tangent-cones} we see that for $\HM^m$~almost all $x \in
    \spt \|V\|$ the \emph{classical} tangent cone $\Tan(\spt \|V\|,x)$ is an
    $m$-dimensional subspace of~$\R^n$.
\end{remark}

In the next lemma we relate the \emph{$L^2$~tilt-excess} to the
\emph{measure-excess}; see~\eqref{eq:ud:tm-excess}. Similar upper bound was also
proven in~\cite[3.13]{snulmenn.poincare}.

\begin{lemma}
    \label{lem:jacobian-tilt}
    Let $P,Q \in \grass nm$. Then
    \begin{equation}
        \tfrac 12 \|\project P - \project Q\|^2  
        \le 1 - \| \tbwedge_m \project P \circ \project Q \| 
        \le 2^{2m+3} \|\project P - \project Q\|^2 \,.
    \end{equation}
\end{lemma}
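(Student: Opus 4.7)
The plan is to reduce both sides of the inequality to symmetric functions of the principal angles between~$P$ and~$Q$ and then to verify the required bounds by elementary trigonometry. Concretely, I~would first construct, by iterating the procedure from~\ref{lem:rotation} (or by a direct SVD argument applied to the map $\project P|_Q : Q \to P$), orthonormal bases $e_1,\ldots,e_m$ of~$Q$ and $f_1,\ldots,f_m$ of~$P$, together with angles $\theta_1,\ldots,\theta_m \in [0,\pi/2]$, such that $\project P e_i = \cos\theta_i\, f_i$ for $i = 1,\ldots,m$.

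With this decomposition the ``Jacobian'' side becomes explicit: since $e_1\wedge\cdots\wedge e_m$ has unit norm and $\tbwedge_m\project P\circ\project Q$ sends it to $\cos\theta_1\cos\theta_2\cdots\cos\theta_m\, f_1\wedge\cdots\wedge f_m$, we~obtain
\begin{equation}
    \| \tbwedge_m \project P \circ \project Q \| = \prod_{i=1}^m \cos\theta_i \,.
\end{equation}
For the operator-norm side I~would check that $\|\project P - \project Q\| = \sin\theta_{\max}$, where $\theta_{\max} = \max_i \theta_i$. The lower bound $\ge \sin\theta_{\max}$ is immediate by testing on the vector~$e_j$ attaining the maximum, since $|\project P e_j - \project Q e_j|^2 = |\cos\theta_j f_j - e_j|^2 = \sin^2\theta_j$. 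For the matching upper bound one writes an arbitrary unit vector~$v$ as $v = \project Q v + \perpproject Q v$ and uses that $\perpproject P \lIm Q \rIm$ and $\perpproject Q \lIm P \rIm$ share the same principal angles (duality $(P,Q) \leftrightarrow (P^\perp,Q^\perp)$), yielding $|\project P v - \project Q v|^2 \le \sin^2\theta_{\max}\,|v|^2$.

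Once both quantities are identified in terms of the~$\theta_i$, the two inequalities reduce to one-variable trigonometric estimates. For the lower bound I~would use
\begin{equation}
    1 - \prod_{i=1}^m \cos\theta_i \ge 1 - \cos\theta_{\max} = 2\sin^2(\theta_{\max}/2) \ge \tfrac{1}{2}\sin^2\theta_{\max} \,,
\end{equation}
where the last step uses $\sin\theta = 2\sin(\theta/2)\cos(\theta/2)$ together with $\cos(\theta/2)\le 1$. For the upper bound I~would prove by induction that $1 - \prod_{i=1}^m a_i \le \sum_{i=1}^m (1-a_i)$ for $a_i\in[0,1]$; applied with $a_i = \cos\theta_i$ and combined with $1-\cos\theta \le \sin^2\theta$ for $\theta\in[0,\pi/2]$ (which follows from $1-\cos\theta = 2\sin^2(\theta/2)$ and $\sin(\theta/2) \le \sin\theta/\sqrt 2$ on that range), this yields
\begin{equation}
    1 - \prod_{i=1}^m \cos\theta_i \le \sum_{i=1}^m \sin^2\theta_i \le m\,\sin^2\theta_{\max} \le 2^{2m+3}\,\|\project P - \project Q\|^2 \,,
\end{equation}
the last step being the very wasteful bound $m\le 2^{2m+3}$.

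The main obstacle is the principal-angle decomposition and, in particular, the identity $\|\project P - \project Q\| = \sin\theta_{\max}$: once that is in place, everything reduces to one-variable calculus. The constructive proof is, however, essentially the same as that of~\ref{lem:rotation}, so I~would either cite the computation there or reproduce the few lines needed; after that the trigonometric estimates are routine.
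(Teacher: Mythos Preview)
Your argument is correct. The principal-angle decomposition exists (SVD of $\project P|_Q : Q \to P$), the identity $\|\bigwedge_m \project P \circ \project Q\| = \prod_i\cos\theta_i$ is immediate from it, and your trigonometric estimates are valid on $[0,\pi/2]$. The one step you should make precise is the equality $\|\project P - \project Q\| = \sin\theta_{\max}$: your test-vector computation gives $\ge$, and for $\le$ you can simply invoke \cite[8.9(3)]{All72} (which the paper already uses) to get $\|\project P - \project Q\| = \sup\{|\perpproject P u| : u \in Q,\ |u|=1\}$ and then compute this supremum in the principal basis; your duality sketch is correct in spirit but not needed once that reference is in hand.

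Your route differs from the paper's mainly in the upper bound. For the lower bound the two proofs are essentially the same: the paper picks the Allard vector $u \in Q$ with $|\perpproject P u| = \|\project P - \project Q\|$, bounds $\|\bigwedge_m \project P \circ \project Q\| \le |\project P u| = (1-\|\project P - \project Q\|^2)^{1/2}$, and uses $1-(1-x)^{1/2} \ge x/2$; this is exactly your $1-\cos\theta_{\max} \ge \tfrac12\sin^2\theta_{\max}$. For the upper bound the paper instead expands $\|\bigwedge_m \project P \circ \project Q\|^2 = \det(\id{\R^m} - q\circ\perpproject P\circ q^*)$ via \cite[1.4.5]{Fed69} and controls the higher-order terms of the characteristic polynomial, which is where the constant $2^{2m+3}$ comes from. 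Your telescoping bound $1-\prod a_i \le \sum(1-a_i)$ combined with $1-\cos\theta \le \sin^2\theta$ is both simpler and sharper, yielding the constant~$m$ rather than~$2^{2m+3}$; the paper's approach has the minor advantage of not requiring the full simultaneous diagonalisation, only the single extremal direction from \cite[8.9(3)]{All72}.
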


\begin{proof}
    Employ \cite[8.9(3)]{All72} to find $u \in Q$ such that $|u| = 1$ and
    $\|\project P - \project Q\| = |\perpproject P u|$. Let $u_1, \ldots, u_m$
    be an orthonormal basis of~$Q$ such that $u_1 = u$. We have
    \begin{equation}
        \| \tbwedge_m \project P \circ \project Q \|
        = | \project P u_1 \wedge \cdots \wedge \project P u_m |
        \le | \project P u_1 | = \bigl( 1 - \|\project P - \project Q\|^2 \bigr)^{1/2} \,.
    \end{equation}
    In consequence, since $(1-x)^{1/2} \le 1 - \frac 12 x$ for $x \in [0,1]$, we obtain
    \begin{equation}
        1 - \| \tbwedge_m \project P \circ \project Q \| 
        \ge 1 - \bigl( 1 - \|\project P - \project Q\|^2 \bigr)^{1/2} 
        \ge \tfrac 12 \|\project P - \project Q\|^2 \,.
    \end{equation}

    Next, we shall derive the upper estimate. Let $q \in \orthproj nm$ be such
    that $\im q^* = Q$. Since $\project P \circ \project Q = \project Q -
    \perpproject P \circ \project Q$ and $q^* \circ q = \project Q$ and
    $(\perpproject P)^* = \perpproject P$ we obtain, using~\cite[1.7.6. 1.7.9,
    1.4.5]{Fed69},
    \begin{multline}
        \| \tbwedge_m \project P \circ \project Q \|^2
        =| \tbwedge_m \project P \circ \project Q |^2
        = \bigl| \tbwedge_m \bigl( q^* - \perpproject P \circ q^* \bigr) \circ q \bigr|^2
        = \bigl| \tbwedge_m \bigl( q^* - \perpproject P \circ q^* \bigr) \bigr|^2
        \\
        = \trace \bigl( \tbwedge_m \bigl( q^* - \perpproject P \circ q^* \bigr)^* \circ \bigl( q^* - \perpproject P \circ q^* \bigr) \bigr)
        = \det \bigl( \id{\R^m} - q \circ \perpproject P \circ q^* \bigr)
        \\
        = \sum_{j=0}^m (-1)^j \trace \bigl( \tbwedge_j q \circ \perpproject P \circ q^* \bigr)
        = \sum_{j=0}^m (-1)^j \trace \bigl( \tbwedge_j \bigl( \perpproject P \circ q^* \bigr)^* \circ \bigl( \perpproject P \circ q^* \bigr) \bigr)
        \\
        = \sum_{j=0}^m (-1)^j \bigl| \tbwedge_j \bigl( \perpproject P \circ q^* \bigr) \bigr|^2
        = 1 - \bigl| \perpproject P \circ \project Q \bigr|^2 + E \,,
    \end{multline}
    where $E = \sum_{j=2}^m (-1)^j \bigl| \tbwedge_j \bigl( \perpproject P \circ
    q^* \bigr) \bigr|^2$. Note that $1 - x \le (1-x)^{1/2}$ for $x \in [0,1]$;
    hence,
    \begin{equation}
        \label{eq:et:two-side-with-E}
        1 - \| \tbwedge_m \project P \circ \project Q \|
        = 1 - \bigl( 1 - \bigl| \perpproject P \circ \project Q \bigr|^2 + E  \bigr)^{1/2}
        \le \bigl| \perpproject P \circ \project Q \bigr|^2 - E  \,.
    \end{equation}
    Employing~\cite[1.7.6, 1.7.9, 1.3.2]{Fed69} together
    with~\cite[8.9(3)]{All72} we get
    \begin{gather}
        \label{eq:et:norms2}
        \bigl| \tbwedge_j \bigl( \perpproject P \circ q^* \bigr) \bigr|^2
        \le \binom mj \| \project P - \project Q\|^{2j} \quad \text{for $j = 0,1,\ldots,m$} \,.
    \end{gather}
    If $\|\project P - \project Q\|^2 \le 2^{-(m+2)}$, then
    \begin{multline}
        \label{eq:et:small}
        |E| \le \sum_{j=2}^m \binom mj \| \project P - \project Q\|^{2j}
        \le 2^{m} \sum_{j=2}^m \| \project P - \project Q\|^{2j}
        \\
        \le 2^{m} \frac{\| \project P - \project Q\|^{4} - \| \project P - \project Q\|^{2m}}{1 - \| \project P - \project Q\|^{2}}
        \le \tfrac 12 \| \project P - \project Q\|^{2} \,.
    \end{multline}
    If $2^{-(m+2)} < \|\project P - \project Q\|^2 = \| \perpproject P \circ \project Q\|^2 \le 1$, then
    \begin{equation}
        \label{eq:et:big}
        |E| \le \sum_{j=2}^m \binom mj \| \project P - \project Q\|^{2j}
        \le 2^{m}
        \le 2^{2m+2} \| \project P - \project Q\|^{2} \,.
    \end{equation}
    Combining~\eqref{eq:et:two-side-with-E}, \eqref{eq:et:norms2},
    \eqref{eq:et:small}, and~\eqref{eq:et:big} we obtain
    \begin{equation}
        1 - \| \tbwedge_m \project P \circ \project Q \| \le 2^{m+3} \|\project P - \project Q\|^2 \,.
        \qedhere
    \end{equation}
\end{proof}

The following lemma relates Lipschitz maps from the definition of an elliptic
integrand~\ref{def:elliptic} to admissible maps defined in~\ref{def:adm-deform}.
Given~$S$ and~$D$ as in~\ref{def:elliptic} and a Lipschitz deformation which
maps~$S$ onto the relative boundary of~$D$ the lemma provides conditions on the
set~$S$ under which one can construct an \emph{admissible} map deforming~$S$
onto the relative boundary of~$D$.

\begin{lemma}
    \label{lem:ell-adm}
    Assume
    \begin{gather*}
        T \in \grass nm \,,
        \quad
        S \subseteq \cball 01 \text{ is closed} \,,
        \quad
        S \cap \Bdry \cball 01 \subseteq T \cap \Bdry \cball 01 = R \,,
        \\
        \delta = \tfrac 12 \sup\left\{ \rho \in [0,1] : 
            \begin{gathered}
                S \cap (R + \cball 0{\rho}) \subseteq T \,,\, 
                \\
                (S + \cball 0{\rho}) \cap (\R^n \without \oball 01) \without (R + \cball 0{\rho}) = \varnothing
            \end{gathered}
        \right\} > 0 \,.
    \end{gather*}
    Suppose there exists a map $f : \R^n \to \R^n$ satisfying
    \begin{displaymath}
        \Lip f < \infty \,,
        \quad
        f(x) = x \quad \text{for $x \in R$} \,,
        \quad 
        f \lIm S \rIm \subseteq R \,.
    \end{displaymath}
    Then there exist $\Gamma = \Gamma(\Lip f,\delta) \in (0,\infty)$ and a map
    $g : \R^n \to \R^n$ such that
    \begin{displaymath}
        \Lip g < \Gamma \,,
        \quad
        g \lIm S \rIm \subseteq R \,,
        \quad
        g \lIm \cball 01 \rIm \subseteq \cball 01 \,,
        \quad
        g(x) = x \quad \text{for $x \in \R^n \without \oball 01$} \,.
    \end{displaymath}
    Moreover for each $\varepsilon > 0$ there exists $h \in \adm{\oball
      0{1+\varepsilon}}$ such that
    \begin{displaymath}
        \Lip h \le \Gamma 
        \quad \text{and} \quad
        h\lIm S \rIm \subseteq R \,.
    \end{displaymath}
\end{lemma}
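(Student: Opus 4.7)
The plan is to first construct the Lipschitz map $g$ by a Kirszbraun-type extension combined with a nearest-point projection onto $\cball 01$, and then to derive the admissible map $h$ by a careful smoothing of $g$ followed by an admissible retraction onto $R$.

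\emph{Construction of $g$.} I would first define $F \colon S \cup (\R^n \without \oball 01) \to \R^n$ by setting $F|_S = f$ and $F|_{\R^n \without \oball 01} = \id{\R^n}$; the two formulas agree on the overlap $R = S \cap (\R^n \without \oball 01)$ because $f|_R = \id{R}$. The heart of the argument is to verify that $F$ is Lipschitz with a constant depending only on $\Lip f$ and $\delta$. The only nontrivial case is $x \in S \without R$ and $y \in \R^n \without \oball 01$. For $|x-y| \ge \delta/4$, the crude bound $|f(x) - y| \le 1 + |y| \le 2 + |x-y|$, combined with $|x-y| \ge \delta/4$, yields a linear estimate. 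For $|x-y| < \delta/4$, applying the two conditions defining $\delta$ with $\rho = \delta/4$ and $\rho = \delta/2$ forces $\dist(y, R) \le \delta/4$, $\dist(x, R) \le \delta/2$, and $x \in T$. Decomposing $y = \project{T} y + \perpproject{T} y$ and using $|y| \ge 1$, $|x| \le 1$, one obtains the geometric bound $\dist(y, R) \le \sqrt 2 \, |x-y|$. Since $f(x) \in R$ and $f$ fixes $R$, the estimate $|f(x) - y| \le \Lip f \cdot |x - y'| + |y' - y|$ with $y' \in R$ nearest to $y$ then gives the required Lipschitz control.

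Once $F$ is Lipschitz, I extend it to $\tilde F \colon \R^n \to \R^n$ by Kirszbraun's theorem (preserving the Lipschitz constant up to a universal factor) and define $g$ by declaring $g(x)$ to be the nearest-point projection of $\tilde F(x)$ onto $\cball 01$ for $x \in \cball 01$, and $g(x) = x$ for $x \in \R^n \without \cball 01$. Continuity across $\Bdry \cball 01$ holds because $\tilde F(x) = x$ there, so the projection returns $x$ itself. All four required properties of $g$ follow immediately: $\Lip g \le \Lip \tilde F$, $g\lIm S \rIm \subseteq R$ (because $\tilde F|_S = f$ and $f\lIm S \rIm \subseteq R \subseteq \cball 01$), $g\lIm \cball 01 \rIm \subseteq \cball 01$ by construction, and $g = \id$ outside $\oball 01$.

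\emph{Construction of $h$.} Given $\varepsilon \in (0,\infty)$, I would mollify $\tilde F$ at a scale $\rho$ much smaller than both $\delta$ and $\varepsilon$ and compose with a smoothed version of the nearest-point projection onto $\cball 01$. The resulting map $g_\rho$ is of class~$\cnt^\infty$, equals the identity outside $\cball 0{1+\rho}$, and satisfies $g_\rho\lIm V \rIm \subseteq V$ for the bounded convex open set $V = \oball 0{1 + 3\varepsilon/4}$; hence $g_\rho$ is a basic deformation in $\oball 0{1+\varepsilon}$ in the sense of~\ref{def:adm-deform}. However $g_\rho\lIm S \rIm$ lies only in an $O(\rho)$~neighbourhood of $R$, not on $R$ itself. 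I would therefore post-compose with an admissible retraction of a tubular neighbourhood of the smooth compact submanifold $R$ onto $R$, constructed by covering such a neighbourhood by finitely many bounded convex open subsets of $\oball 0{1+\varepsilon}$ and performing a basic deformation in each (using central-projection constructions in the spirit of~\ref{cor:proj+}). Choosing $\rho$ smaller than the tubular radius of $R$ ensures $g_\rho\lIm S \rIm$ lies in the domain of the retraction, giving $h\lIm S \rIm \subseteq R$ exactly.

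The main obstacle is realizing the tubular retraction onto $R$ as a finite composition of basic deformations whose Lipschitz constant depends only on $\Lip f$ and $\delta$, independently of $\varepsilon$. This amounts to a quantitative finite-cover argument for a tubular neighbourhood of the smooth $(m-1)$-sphere $R \subseteq T \cap \Bdry \cball 01$ by convex pieces compatible with the admissibility constraint, coordinated with the choice of the mollification scale $\rho$.
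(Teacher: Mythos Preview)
Your construction of $g$ is essentially the paper's: define the map on $S \cup (\R^n \without \oball 01)$, verify a Lipschitz bound (the paper uses the slick observation that for $x \in S \cap (R + \cball 0\delta) \subseteq T$ the point $x/|x|$ lies in $R$ and is fixed by $f$, but your nearest-point-to-$y$ argument works too), extend, then force the image into $\cball 01$. The paper does the last step by scaling $\tilde g$ down by a factor $1/(3L)$ and composing with an outward radial projection rather than the nearest-point projection, but either works.

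The place where you and the paper diverge is the second part, and the ``obstacle'' you flag is not actually there. You worry about realizing the tubular retraction onto $R$ as a finite composition of basic deformations supported in convex sets. But a \emph{single} basic deformation suffices: take $V = \oball 0{1+\varepsilon}$ itself as the bounded convex set and use
\[
k(x) = \lambda(\dist(x,R))\,\frac{\project T x}{|\project T x|} + \bigl(1 - \lambda(\dist(x,R))\bigr)\,x
\]
with a cutoff $\lambda$ supported in $[0, 2^{-7}\varepsilon]$. Since $k(x)$ lies on the segment from $x$ to a point of $R \subseteq \cball 01$, convexity gives $k\lIm V \rIm \subseteq V$; and $\dist(x,R) \ge \varepsilon$ for $x \notin V$ forces $k = \id$ there. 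The crucial point for the $\varepsilon$-independent Lipschitz bound is the identity $\bigl|\frac{\project T x}{|\project T x|} - x\bigr| = \dist(x,R)$, which makes the large derivative $|\lambda'| \lesssim \varepsilon^{-1}$ harmless after multiplication by $\dist(x,R) \lesssim \varepsilon$. So mollify $g$ (not $\tilde F$) at scale $\iota \ll \varepsilon$ to get $\bar h \in \adm{\oball 0{1+\iota}}$, then set $h = k \circ \bar h$; no finite-cover machinery is needed.
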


\begin{proof}
    Define a map $\bar g : \R^n \without \oball 01 \cup S \to \R^n$ by setting
    \begin{displaymath}
        \bar g(x) = x \quad \text{for $x \in \R^n \without \oball 01$}
        \quad \text{and} \quad
        \bar g(x) = f(x) \quad \text{for $x \in S$} \,.
    \end{displaymath}
    We shall check that $\Lip \bar g < \infty$. Since $S \without (R + \cball
    0{\delta})$ does not touch $\Bdry \cball 01$ we see that $\bar g$ is
    Lipschitz continuous on each of the sets
    \begin{displaymath}
        S \,,
        \quad
        \R^n \without \oball 01 \,,
        \quad
        (\R^n \without \oball 01) \cup S \without (R + \cball 0{\delta}) \,.
    \end{displaymath}
    Note, however, that the Lipschitz constant of $\bar g$ on the last set
    depends on~$\delta$. Now, it suffices to estimate $|\bar g(x) - \bar g(y)|$
    for $x \in S \cap (R + \cball 0{\delta}) \subseteq T$ and $y \in \R^n
    \without \oball 01$. Note that $x/|x| \in R$, so $f(x/|x|) = x/|x|$ and
    \begin{displaymath}
        \bigl| \bar g(x) - \bar g(y) \bigr| \le \bigl| f(x) - f(x/|x|) \bigr| + \bigl| x/|x| - y \bigr|
        \le \Lip f \bigl| x - x/|x| \bigr| + \bigl| x - y \bigr| + \bigl| x/|x| - x \bigr| \,.
    \end{displaymath}
    Clearly $|x - x/|x|| \le 2 |x-y|$, so $\Lip \bar g < \infty$.

    Next, we extend $\bar g$ to a Lipschitz map $\tilde g : \R^n \to \R^n$ using
    a~standard procedure; see~\cite[3.1.1]{EvansGariepy1992}. Let $L = \Lip
    \tilde g \in [1,\infty)$. Since $\tilde g(x) = x$ for $x \in \Bdry \cball
    01$ we know $\tilde g \lIm \cball 01 \rIm \subseteq \cball 0{2L + 1}$.
    Define the map $l : \R^n \to \R^n$ by setting
    \begin{displaymath}
        l(x) = \left\{
            \begin{aligned}
                &x &&\text{for $x \in (\R^n \without \cball 01) \cup \cball 0{1/(7L)}$} \,,
                \\
                &\frac{x}{|x|} &&\text{for $x \in \cball 01 \without \cball 0{1/(4L)}$} \,,
                \\
                &\sigma(|x|)\frac{x}{|x|} + (1-\sigma(|x|)) x &&\text{for $x \in \cball 0{1/(4L)} \without \cball 0{1/(7L)}$} \,,
            \end{aligned}
            \right.
    \end{displaymath}
    where $\sigma(t) = (28 L t - 4)/3$. Finally set
    \begin{displaymath}
        g(x) = \left\{
            \begin{aligned}
                &l \circ \scale{1/(3L)} \circ \tilde g(x) &&\text{for $x \in \cball 01$} \,,\\
                &x &&\text{for $x \in \R^n \without \cball 01$} \,.
            \end{aligned}
            \right.
    \end{displaymath}

    To prove the second part of the lemma assume $\varepsilon \in (0,2^{-7})$.
    Choose $\iota \in (0,2^{-12}\varepsilon)$ so small that $g\lIm \cball
    x{\iota}\rIm \subseteq \oball {g(x)}{2^{-12}\varepsilon}$ for all $x \in
    S$. Let $\varphi : \R^n \to \R$ be a mollifier such that $\spt \varphi \in
    \oball 0{\iota}$ and $\varphi(x) = \bar \varphi(|x|)$ for some smooth
    function $\bar \varphi : \R \to [0,\infty)$. Let $\bar h = \varphi *
    g$. Clearly $\bar h \in \adm{\oball 0{1+\iota}}$ and $\bar h \lIm S \rIm
    \subseteq R + \cball 0{2^{-12}\varepsilon}$. To map $S$ onto $R$ we shall
    compose $\bar h$ with the following map
    \begin{displaymath}
        k(x) = \lambda(\dist(x,R)) \frac{\project T x}{|\project T x|}
        + \left(1 - \lambda(\dist(x,R))\right) x \,,
    \end{displaymath}
    where $\lambda : \R \to \R$ is a $\cnt^\infty$~smooth map such that
    \begin{displaymath}
        \lambda(t) = 1 \quad \text{for $t < 2^{-10}\varepsilon$} \,,
        \quad 
        \lambda(t) = 0 \quad \text{for $t > 2^{-7}\varepsilon$} \,,
        \quad \text{and} \quad
        -2^{8}/\varepsilon \le \lambda'(t) \le 0
        \,.
    \end{displaymath}
    Noting that $|(\project T x)/|\project T x| - x| = \dist(x,R)$ we see that
    $\Lip k$ \emph{does not} depend on~$\varepsilon$. Therefore we may set $h =
    k \circ \bar h$.
\end{proof}

\begin{remark}
    \label{rem:ell-adm}
    Note that if $S$ was allowed to approach $\Bdry \cball 01$ tangentially,
    i.e., if we did not assume $S \cap (R + \cball 0{\delta}) \subseteq T$, then
    the auxiliary map $\bar g$ constructed in the proof above might have not
    been Lipschitz continuous.
\end{remark}

In the next lemma we basically prove that the $\HM^m$~measure of the
unrectifiable part of elements of any minimising sequence must vanish in the
limit.

\begin{lemma}
    \label{lem:unrect-vanish}
    Assume
    \begin{gather}
        U \subseteq \R^n \quad \text{is open} \,,
        \quad
        \text{$F$ is a bounded $\cnt^0$ integrand} \,,
        \quad
        \text{$\mathcal{C}$ is a good class in~$U$} \,,
        \\
        \{ S_i : i \in \nat \} \subseteq \mathcal C \,,
        \quad
        V = \lim_{i \to \infty} \var{m}(S_i \cap U) \,,
        \quad
        \mu = \Phi_F(V) = \inf\{ \Phi_F(R \cap U) : R \in \mathcal{C} \} \in (0, \infty) \,.
    \end{gather}
    Denoting the purely $(\HM^m,m)$~unrectifiable part of~$S_i \cap U$ by $\bar
    S_i$ we~have
    \begin{gather}
        \label{eq:uv:density}
        \lim_{r \downarrow 0} \lim_{i \to \infty} \frac{\HM^m(\bar S_i \cap \cball xr)}{\unitmeasure{m} r^m} = 0 
        \quad \text{for $\|V\|$ almost all~$x$} \,,
        \\
        \label{eq:uv:Phi-Psi}
        \lim_{i \to \infty} \HM^m(\bar S_i) = 0 \,;
        \quad \text{hence,} \quad
        \lim_{i \to \infty} \Phi_F(S_i \cap U) = \lim_{i \to \infty} \Psi_F(S_i \cap U) \,.
    \end{gather}
\end{lemma}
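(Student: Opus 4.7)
The proof is by contradiction. We shall show that if either~(1) or~(2) fails, a localised deformation of the minimising sequence produces a set with strictly smaller $\Phi_F$, violating minimality. The key observation is that by~\ref{def:var-S} the unrectifiable part of $\var{m}(S)$ is assigned tangent planes uniformly distributed over $\grass nm$, so $\Phi_F$ costs $\bar F(x) = \int F(x, P) \ud \boldsymbol{\gamma}_{n,m}(P) > 0$ per unit $\HM^m$ of unrectifiable mass, whereas after projecting onto a single $m$-plane $T$ the cost drops to $F(x, T)$ per unit $\HM^m$; this asymmetry yields a strict gain whenever the unrectifiable part has positive density, independently of the anisotropy of~$F$.

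Suppose (1) fails. Applying weak compactness to the Radon measures $\HM^m \restrict \bar S_i$, which are dominated by $\|\var{m}(S_i \cap U)\| \to \|V\|$, we extract a subsequence along which $\HM^m \restrict \bar S_i \to \mu_u$ in the sense of Radon measures, with $\mu_u \le \|V\|$ and $\mu_u \ne 0$. By~\ref{thm:rect} and~\ref{cor:ludrb}, $\spt\|V\|$ is $(\HM^m,m)$~rectifiable and $\|V\|$ is locally comparable to $\HM^m \restrict \spt\|V\|$; hence $\mu_u$ admits a Radon--Nikod\'ym density $g(x) = \density^m(\mu_u, x)$ positive on a set of positive $\HM^m$-measure on $\spt\|V\|$. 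Select $x_0 \in \spt\|V\|$ where $T = \Tan(\spt\|V\|, x_0) \in \grass nm$ (via~\ref{rem:tangent-cones}); the densities $D(x_0) = \density^m(\|V\|, x_0) > 0$ and $g(x_0) > 0$ exist; $F(x_0, \cdot)$ is continuous and $\bar F(x_0) > 0$; and the tilt-excess of $\var{m}(S_i)$ relative to~$T$ at~$x_0$ vanishes in the limit $i \to \infty$, $r \to 0$ (arranged by a diagonal extraction exploiting that $V$ has classical tangent plane $T$ at $x_0$).

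For small $r$, we construct an admissible deformation $\phi_i \in \adm{U}$ equal to the orthogonal projection $\pi$ onto the affine tangent plane $x_0 + T$ on $\cball{x_0}{r}$, equal to the identity outside $\oball{x_0}{r + \iota_i}$, with a smooth interpolation in between; choose $\iota_i > 0$ small enough that $\HM^m(S_i \cap (\cball{x_0}{r+\iota_i} \without \cball{x_0}{r})) \to 0$ as $i \to \infty$, which is possible because each $\HM^m \restrict S_i$ is a fixed finite Radon measure. Applying~\ref{lem:reduce-unrect} with $\phi_i|_{\oball{x_0}{r}}$ (rank at most~$m$) and $K = \bar S_i \cap \cball{x_0}{r}$, we obtain a diffeomorphism $\rho_\eta$ arbitrarily close to the identity in $\cnt^1$ such that $\HM^m((\phi_i \circ \rho_\eta)\lIm \bar S_i \cap \cball{x_0}{r} \rIm) \le \eta \HM^m(\bar S_i \cap \cball{x_0}{r})$. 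Estimating $\Phi_F$ via the area formula, \ref{lem:jacobian-tilt}, and~\ref{def:var-S}, then passing to the limits $i \to \infty$, $\eta \to 0$, $r \to 0$ in that order, the change is
\begin{displaymath}
    \Phi_F((\phi_i \circ \rho_\eta) \lIm S_i \rIm \cap U) - \Phi_F(S_i \cap U) \le \bigl[- \bar F(x_0) g(x_0) + o(1) \bigr] \unitmeasure{m} r^m  \,,
\end{displaymath}
which is strictly negative for suitable parameters, since the projection contributes $F(x_0,T)\eta g(x_0) + F(x_0,T)(1 + o(1))(D(x_0) - g(x_0))$ per unit volume at scale~$r$ whereas the original contributes $\bar F(x_0) g(x_0) + F(x_0, T)(D(x_0) - g(x_0))$. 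Since $(\phi_i \circ \rho_\eta)\lIm S_i \rIm \in \mathcal C$ by~\ref{def:goodclass}\ref{i:gc:deformation}, this contradicts minimality, proving~(1). Statement~(2) then follows from~(1) by exhausting $U$ with compact sets, using the uniform bound $\HM^m(S_i \cap U) \le \mu/\inf F$ and the upper density bound from~\ref{cor:ludrb} to exclude escape of $\HM^m \restrict \bar S_i$ to $\Bdry U$ or to infinity.

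The main obstacle is the control of $\phi_i$ in the transition annulus $\cball{x_0}{r+\iota_i} \without \cball{x_0}{r}$: a naive interpolation between $\pi$ and the identity produces derivatives of order $1/\iota_i$ with potential measure blow-up of order $\iota_i^{1-m}$. By exploiting that for each fixed~$i$ the measure $\HM^m \restrict S_i$ is a finite Radon measure, $\iota_i$ can be chosen so small that the annular contribution vanishes as $i \to \infty$ regardless of the Lipschitz blow-up. The simultaneous selection of a point $x_0$ that is a Lebesgue point of $g$, carries a tangent $m$-plane for $\spt\|V\|$, and satisfies the tilt-excess vanishing condition for the sequence $\var{m}(S_i)$ requires a careful diagonal extraction.
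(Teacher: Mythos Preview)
Your approach has a genuine gap that makes the argument circular. You claim that ``the tilt-excess of $\var{m}(S_i)$ relative to~$T$ at~$x_0$ vanishes in the limit $i \to \infty$, $r \to 0$ (arranged by a diagonal extraction exploiting that $V$ has classical tangent plane~$T$ at~$x_0$).'' This inference is incorrect. Knowing $\Tan(\spt\|V\|, x_0) = T$ constrains only $\|V\|$, not the Grassmannian part of~$V$: at this stage of the paper (after~\ref{thm:rect} and~\ref{cor:ludrb} but before~\ref{thm:density}) nothing rules out that $V$ restricted to a neighbourhood of~$x_0$ looks like $\HM^m \restrict \spt\|V\| \otimes \boldsymbol{\gamma}_{n,m}$, for which the tilt-excess is bounded away from zero. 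And even if $V$ itself had small tilt-excess, the rectifiable parts $\hat S_i$ could be fine corrugations whose tangent planes stay far from~$T$ while $\var{m}(\hat S_i)$ converges weakly. The vanishing of varifold tilt-excess is exactly what~\ref{thm:density}\ref{i:dens:VarTan} establishes---\emph{using} the present lemma via~\eqref{eq:uv:Phi-Psi}---so invoking it here is circular.

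Without tilt-excess control your energy comparison breaks down. Your asserted ``before'' cost $F(x_0,T)(D(x_0)-g(x_0))$ for the rectifiable part presumes the tangents of $\hat S_i$ are near~$T$; in reality $\Phi_F(\hat S_i \cap \cball{x_0}{r}) = \int F(y,T_y)\,\mathrm{d}\HM^m$ with $T_y$ uncontrolled, so projecting onto~$T$ can \emph{increase} $\Phi_F$ on the rectifiable part by as much as $(\sup\im F - \inf\im F)(D(x_0)-g(x_0))\unitmeasure{m} r^m$. Since $g(x_0)/D(x_0)$ may be arbitrarily small at a generic density point of~$\mu_u$, this loss can dominate the gain $\bar F(x_0)g(x_0)\unitmeasure{m} r^m$ from killing the unrectifiable part, and no contradiction follows. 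The paper circumvents this entirely: it never projects onto the tangent plane. Instead it applies a Besicovitch covering to~$\bar R_{j,i}$ itself to find balls in which the unrectifiable part is at least a $(1-\iota)$-fraction of the total, then invokes~\ref{lem:kill-unrect} (cubical deformation plus the perturbation lemma) inside the union of those balls. There the rectifiable part has measure at most $\iota\BFconst(n)$ times the total, so its growth under the deformation---bounded by the fixed constant~$\Gamma_{\ref{lem:kill-unrect}}$ regardless of its tangent directions---is absorbed by choosing $\iota$ small. No information about~$T$ or the Grassmannian structure of~$V$ is needed; only that $\density^m(\|V\|,x_0)$ exists and is finite.
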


\begin{proof}
    Let $x_0 \in U$ be such that $\density^m(\|V\|,x_0) \in (0,\infty)$ -
    from~\ref{rem:summary} we know that $\|V\|$ almost all~$x_0$ satisfy this
    condition. Without loss of generality we shall assume $x_0 = 0$.
    Suppose~\eqref{eq:uv:density} is not true at~$x_0$. Then there exists
    a~subsequence $\{ R_i : i \in \nat\}$ of $\{ S_i : i \in \nat\}$, a sequence
    of radii $\{ r_j : j \in \nat \}$ with $r_j \downarrow 0$ as $j \to \infty$,
    and $\delta \in (0,\infty)$ such that denoting $R_{j,i} = \scale{1/r_j}\lIm
    R_i \rIm \cap \cball 01$ we get
    \begin{equation}
        \label{eq:ud:contr}
        \forall j \in \nat \ \exists i_0(j) \ \forall i \ge i_0 \quad
        \delta < \HM^m(\bar R_{j,i}) < 2 \delta \,,
    \end{equation}
    where $\bar R_{j,i}$ denotes the purely $(\HM^m,m)$~unrectifiable part
    of~$R_{j,i}$. Set $\hat R_{j,i} = R_{j,i} \without \bar R_{j,i}$.
    Using~\cite[2.9.11, 2.8.18]{Fed69} we may and shall assume that
    \begin{displaymath}
        \lim_{t \downarrow 0} \frac{\HM^m(\bar R_{j,i} \cap \cball xt)}{\HM^m(R_{j,i} \cap \cball xt)} = 1
        \quad \text{for \emph{all} $x \in \bar R_{j,i}$} \,.
    \end{displaymath}
    Set $\xi_1 = \inf \im F$ and $\xi_2 = \sup \im F$ and let $\BFconst(n)$ be
    the optimal constant from the Besicovitch-Federer covering
    theorem~\cite[2.8.14]{Fed69}. Choose $\iota \in (0,2^{-144})$ so that
    \begin{equation}
        \label{eq:ud:iota}
        2 \iota \unitmeasure{m} \density^m(\|V\|,0) \xi_2 \Gamma_{\ref{lem:kill-unrect}} \BFconst(n) < 2^{-144} \xi_1 \delta \,.
    \end{equation}
    Define
    \begin{displaymath}
        \bar{\mathcal B}_{j,i} =
        \left\{
            \cball xt :
            \begin{gathered}
                t \in (0,\iota) \,,\,
                x \in \bar R_{j,i} \,,\,
                \HM^m(R_{j,i} \cap \Bdry \cball xt) = 0 \,,\,
                \\
                \HM^m(\hat R_{j,i} \cap \cball xt) \le \iota \HM^m(R_{j,i} \cap \cball xt) \,,\,
                \\
                \HM^m(\bar R_{j,i} \cap \cball xt) \ge (1 - \iota) \HM^m(R_{j,i} \cap \cball xt)
            \end{gathered}
        \right\} \,.
    \end{displaymath}
    For each $i,j \in \nat$ with $i \ge i_0(j)$ employ the Besicovitch-Federer
    covering theorem~\cite[2.8.14]{Fed69} to obtain at most countable subfamily
    $\mathcal B_{j,i}$ of $\bar{\mathcal B}_{j,i}$ such that
    \begin{displaymath}
        \bar R_{j,i} \subseteq \tbcup \mathcal B_{j,i} \cap \cball 01
        \quad \text{and} \quad
        \CF_{\bigcup \mathcal B_{j,i}} \le \sum_{B \in \mathcal B_{j,i}} \CF_{B} \le \BFconst(n) \CF_{\bigcup \mathcal B_{j,i}} \,.
    \end{displaymath}
    Define $E_{j,i} = \tbcup \mathcal B_{j,i} \cap \cball 01$ and observe that
    \begin{equation}
        \HM^m(\hat R_{j,i} \cap E_{j,i})
        \le \iota \sum_{B \in \mathcal B_{j,i}} \HM^m(R_{j,i} \cap B \cap \cball 01)
        \le \iota \BFconst(n) \HM^m(R_{j,i} \cap E_{j,i}) \,.
    \end{equation}
    Employ~\ref{lem:kill-unrect} with $\iota$ in place of $\varepsilon$ to
    obtain the map $f_{j,i} \in \adm{\Int E_{j,i}}$ of class~$\cnt^{\infty}$
    such that
    \begin{gather}
        \HM^m(f_{j,i} \lIm E_{j,i} \cap \bar R_{j,i} \rIm) 
        \le \iota \HM^m(E_{j,i} \cap \bar R_{j,i}) = \iota \HM^m(\bar R_{j,i}) \le 2 \iota \delta 
        \\
        \text{and} \quad
        \HM^m(f_{j,i} \lIm E_{j,i} \cap \hat R_{j,i} \rIm)
        \le \Gamma_{\ref{lem:kill-unrect}} \HM^m(E_{j,i} \cap \hat R_{j,i}) 
        \le \iota \Gamma_{\ref{lem:kill-unrect}} \BFconst(n) \HM^m(R_{j,i} \cap \cball 01) \,.
    \end{gather}
    Observe that we have
    \begin{equation}
        \label{eq:ud:ud}
        \lim_{j \to \infty} \lim_{i \to \infty} \HM^m(R_{j,i} \cap \cball 01)
        = \unitmeasure{m} \density^m(\|V\|,0) \in (0,\infty) \,;
    \end{equation}
    hence, we choose $j,i_1 \in \nat$ such that $i_i \ge i_0(j)$ and
    \begin{displaymath}
        \HM^m(R_{j,i} \cap \cball 01) < 2 \unitmeasure{m} \density^m(\|V\|,0)
        \quad \text{for all $i \ge i_1$} \,.
    \end{displaymath}
    In~consequence, recalling~\eqref{eq:ud:iota} and
    assuming~\eqref{eq:ud:contr} we obtain for all $j \in \nat$
    \begin{multline}
        \limsup_{i \to \infty} \Phi_{F}(\scale{r_j} \lIm f_{j,i} \lIm R_{j,i} \rIm \rIm \cap U ) 
        = \limsup_{i \to \infty} \Phi_{F_j}(f_{j,i} \lIm R_{j,i} \rIm \cap \scale{1/r_j} \lIm U \rIm)
        \\
        \le \limsup_{i \to \infty} \bigl( \Phi_{F}(S_{i} \cap U) - \Phi_{F_j}(\bar R_{j,i} \cap E_{j,i})
        + \Phi_{F_j}(f_{j,i} \lIm \hat R_{j,i} \cap E_{j,i} \rIm) + \Phi_{F_j}(f_{j,i} \lIm \bar R_{j,i} \cap E_{j,i} \rIm) \bigr)
        \\
        \le \limsup_{i \to \infty} \bigl( \Phi_{F}(S_i \cap U) + r_j^m \bigl( - \xi_1 \delta 
        + 2 \iota \unitmeasure{m} \density^m(\|V\|,0) \xi_2 \Gamma_{\ref{lem:kill-unrect}} \BFconst(n)
        + 2 \iota \delta \bigr) \bigr) 
        \\
        < \lim_{i \to \infty} \Phi_{F}(S_i \cap U) = \mu \,.
    \end{multline}
    This contradicts the definition of~$\mu$ so~\eqref{eq:ud:contr} cannot hold
    and~\eqref{eq:uv:density} is proven.

    We will now show the second claim of the lemma. Assume~\eqref{eq:uv:Phi-Psi}
    is not true. Then there exists a subsequence $\{ Q_i : i \in \nat\}$ of
    $\{S_i : i \in \nat\}$ and $\vartheta \in (0,\infty)$ such that
    \begin{equation}
        \label{eq:ud:contr2}
        \vartheta < \HM^m(\bar Q_i) < 2 \vartheta
        \quad \text{for all $i \in \nat$} \,,
    \end{equation}
    where $\bar Q_i$ denotes the purely $(\HM^m,m)$~unrectifiable part of $Q_i
    \cap U$. Let $\{\nu_i : i \in \nat\}$ be a convergent subsequence of $\{
    \HM^m \restrict \bar Q_i : i \in \nat\}$. Observe that~\eqref{eq:ud:contr2}
    implies that $\nu = \lim_{i \to \infty} \nu_i$ is a finite non-zero Radon
    measure over~$U$. Recalling~\eqref{eq:uv:density} we see that
    \begin{displaymath}
        \density^m(\nu,x) = 0 \quad \text{for $\|V\|$ almost all~$x$} \,.
    \end{displaymath}
    Moreover, $\nu$ is absolutely continuous with respect to~$\|V\|$ which,
    employing~\ref{cor:ludrb}, is absolutely continuous with respect to $\sigma
    = \HM^m \restrict \spt \|V\|$. Thus, recalling~\eqref{eq:RNder}
    and~\ref{rem:summary}, and using~\cite[2.9.7, 2.8.18, 2.9.5]{Fed69},
    we~deduce that
    \begin{displaymath}
        0 < \nu(U) 
        = \int_U \mathbf{D}(\nu, \sigma, x) \ud \sigma(x)
        = \int_U \density^m(\nu,x) \ud \sigma(x) = 0 \,.
    \end{displaymath}
    Hence, $\nu$ could not be non-zero and~\eqref{eq:uv:Phi-Psi} is proven.
\end{proof}

\begin{theorem}
    \label{thm:density}
    Assume $U$, $F$, $\mathcal C$, $\{ S_i : i \in \nat \} \subseteq \mathcal C$, $V$, $\mu$ are as in~\ref{cor:ud:good-ms} and
    \begin{displaymath}
        x_0 \in \spt \|V\| \subseteq U \,,
        \quad
        T = \Tan(\spt \|V\|, x_0) \in \grass nm \,,
        \quad
        \density^m(\|V\|,x_0) \in (0,\infty) \,.
    \end{displaymath}
    Then
    \begin{enumerate}
    \item
        \label{i:dens:lower-bound}
        $\density^m(\|V\|,x_0) \ge 1$.
    \end{enumerate}
    Moreover, if $F$ is elliptic, then
    \begin{enumerate}[resume]
    \item
        \label{i:dens:upper-bound}
         $\density^m(\|V\|,x_0) = 1$;
     \item
         \label{i:dens:VarTan}
         $\VarTan(V,x_0) = \{ \var{m}(T) \}$.
    \end{enumerate}
\end{theorem}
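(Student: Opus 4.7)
The plan is to follow the outline of~\cite[3.2(d)]{Alm68}. Fix a minimising sequence $\{S_i\} \subseteq \mathcal C$ and a~limit set $S \in \mathcal C$ as provided by Corollary~\ref{cor:ud:good-ms}, so that $S_i \to S$ locally in Hausdorff distance, $\var{m}(S_i \cap U) \to V$, $\Phi_F(S_i \cap U) \to \mu$, and $\HM^m(S \cap U \setminus \spt\|V\|) = 0$. Remark~\ref{rem:summary} gives that $\spt\|V\|$ is $(\HM^m, m)$-rectifiable with $\|V\|$ equivalent to $\HM^m \restrict \spt\|V\|$, and Lemma~\ref{lem:unrect-vanish} ensures that $\HM^m$-measures of the purely unrectifiable parts of~$S_i$ vanish. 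Set $M(r) = \|V\|(\cball{x_0}{r})$; pick $r_j \downarrow 0$ at which $M$ is differentiable and $\|V\|(\Bdry \cball{x_0}{r_j}) = 0$, and such that the co-area formula applied on $\spt\|V\|$ controls the slice measure $\HM^{m-1}(\spt\|V\| \cap \Bdry \cball{x_0}{r_j}) \le 2 M'(r_j)$. A~diagonal extraction then yields $i(j)$ with $S_{i(j)}$ inheriting analogous slice bounds and $\HM^m(S_{i(j)} \cap \cball{x_0}{r_j})$ converging to~$M(r_j)$.

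For~\ref{i:dens:lower-bound}, which does not use ellipticity, I would use lower semicontinuity of $\HM^m$ along the sequence $\{S_i\}$ combined with the density-$1$ property of the rectifiable support. The sets $S_i$ satisfy uniform upper density bounds derived from Corollary~\ref{cor:ludrb} applied to~$V$ and the Hausdorff closeness $S_i \to S$, so a~Golab-type lower-semicontinuity argument gives $\HM^m(\spt\|V\| \cap \cball{x_0}{r}) \le \HM^m(S \cap \cball{x_0}{r}) \le \liminf_i \HM^m(S_i \cap \cball{x_0}{r}) = \|V\|(\cball{x_0}{r})$ for $r$ with $\|V\|(\Bdry \cball{x_0}{r}) = 0$. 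Dividing by $\unitmeasure{m} r^m$ and using that $\HM^m \restrict \spt\|V\|$ has density~$1$ at~$x_0$ (since $\spt\|V\|$ is rectifiable and $\Tan(\spt\|V\|, x_0) \in \grass nm$; see~\cite[3.2.19]{Fed69}) yields $\density^m(\|V\|, x_0) \ge 1$.

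For~\ref{i:dens:upper-bound} I argue by contradiction, assuming $\theta := \density^m(\|V\|, x_0) > 1$; this is the sole step that uses ellipticity. First, apply the deformation theorem~\ref{thm:deformation} to a~family of cubes of side $\sim \eta r_j$ covering a~neighbourhood of $\Bdry \cball{x_0}{r_j}$ so as to replace $S_{i(j)}$ near the sphere by a~set whose trace on $\Bdry \cball{x_0}{r_j}$ lies in a~lower-dimensional skeleton. Then invoke Lemma~\ref{lem:ell-adm} to construct an admissible $g_j \in \adm{U}$ with $g_j\lIm S_{i(j)} \rIm \in \mathcal C$ and $g_j\lIm S_{i(j)} \rIm \cap \cball{x_0}{r_j} = D_j := (x_0 + T) \cap \cball{x_0}{r_j}$. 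Ellipticity~\ref{def:elliptic}, applied with $c_0 = c(x_0) > 0$ after rescaling and using invariance under diffeomorphisms (Remark~\ref{rem:elliptic}), gives
\begin{equation*}
\Psi_{F^{x_0}}(S_{i(j)} \cap \cball{x_0}{r_j}) - \Psi_{F^{x_0}}(D_j)
\ge c_0 \bigl( \HM^m(S_{i(j)} \cap \cball{x_0}{r_j}) - \unitmeasure{m} r_j^m \bigr) \,;
\end{equation*}
continuity of~$F$ replaces $\Psi_{F^{x_0}}$ by $\Psi_F$ with $o(r_j^m)$ error, Lemma~\ref{lem:unrect-vanish} turns $\Psi_F$ into~$\Phi_F$ in the limit, and minimality of~$V$ against the competitor $g_j\lIm S_{i(j)} \rIm$ then yields $(\theta - 1) c_0 \unitmeasure m \le o(1)$, contradicting $\theta > 1$.

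For~\ref{i:dens:VarTan}, from~\ref{i:dens:upper-bound} and $\Tan(\spt\|V\|, x_0) = T$, any $W \in \VarTan(V, x_0)$ (whose existence follows from Corollary~\ref{cor:ludrb}) has $\spt\|W\| \subseteq T$ and $\|W\|(\cball{0}{1}) = \unitmeasure{m}$; combined with the equivalence of $\|V\|$ with $\HM^m \restrict \spt\|V\|$, this forces $\|W\| = \HM^m \restrict T$. To identify the varifold structure of~$W$, I represent $V$ near~$x_0$ as $\var{m}(\spt\|V\|)$ (justified by density~$= 1$ and by rectifiability with $\Tan(\spt\|V\|, x) \in \grass n m$ for $\HM^m$-a.e.~$x$, Remark~\ref{rem:tangent-cones}), and use Lemma~\ref{lem:jacobian-tilt} to bound the $L^2$~tilt excess by the measure excess: since $M(r)/(\unitmeasure{m} r^m) \to 1$ by~\ref{i:dens:upper-bound}, the measure excess is $o(1)$ at scale~$r$, hence so is the average deviation of $\project{\Tan(\spt\|V\|, x)}$ from $\project T$; passing to the blow-up limit $(\scale{1/r_j} \circ \trans{-x_0})_{\#} V$, every such $W$ has its tangent-plane part concentrated on $(x, T)$ for $x \in T$, yielding $W = \var{m}(T)$. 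The main obstacle will be the construction in~\ref{i:dens:upper-bound}: producing a~smooth admissible deformation whose image inside $\cball{x_0}{r_j}$ is exactly the flat disc~$D_j$ and whose Lipschitz constant is uniformly controlled, so that the ellipticity comparison can be cleanly invoked; Lemma~\ref{lem:ell-adm} combined with the deformation theorem~\ref{thm:deformation} supplies the needed machinery, but aligning the slice of the deformed set with $\Bdry D_j$ requires the careful slice-control on~$S_{i(j)}$ secured in the setup.
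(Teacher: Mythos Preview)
Your argument for~\ref{i:dens:lower-bound} has a genuine gap. The ``Go\l\k{a}b-type lower-semicontinuity'' step $\HM^m(S \cap \cball{x_0}{r}) \le \liminf_i \HM^m(S_i \cap \cball{x_0}{r})$ is false in general for $m \ge 2$: Go\l\k{a}b's theorem is a one-dimensional phenomenon, and the sets $S_i$ carry no uniform Ahlfors-regularity that would substitute for it (Corollary~\ref{cor:ludrb} gives density bounds for $\|V\|$, not for the competitors $S_i$). More fundamentally, your proof of~\ref{i:dens:lower-bound} never invokes the minimality of~$V$, yet without minimality the conclusion is simply wrong---a weak limit of unit-density varifolds can have density below~$1$. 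The paper's proof of~\ref{i:dens:lower-bound} uses minimality in an essential way: it builds smooth maps $p_j, q_j$ that squash a neighbourhood of the sphere onto~$T$, sets $W_{j,i} = \scale{1/r_j} \circ q_j[S_i]$, and shows that $W_{j,i} \cap \cball 0{\rho_j}$ \emph{cannot} be Lipschitz-retracted onto $R_j = T \cap \Bdry\cball 0{\rho_j}$, because if it could, Lemma~\ref{lem:ell-adm} would produce an admissible map whose image has strictly smaller $\Phi_F$, contradicting $\mu = \inf \Phi_F$. Non-retractability then forces $\project T[W_{j,i}]$ to cover the full disc~$T \cap \cball 0{\rho_j}$, which yields the lower density bound. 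This non-retractability is also what makes the ellipticity comparison in~\ref{i:dens:upper-bound} legitimate (the set $\scale{1/\rho_j}[Y_{j,i}]$ satisfies condition~\ref{def:elliptic}\ref{i:ell:S}), so your separate treatment of~\ref{i:dens:upper-bound} inherits the same missing ingredient.

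Your sketch for~\ref{i:dens:VarTan} also contains a circularity: you write ``represent $V$ near~$x_0$ as $\var{m}(\spt\|V\|)$,'' but knowing $\density^m(\|V\|,x_0)=1$ only identifies the weight measure $\|V\|$ with $\HM^m \restrict \spt\|V\|$; it says nothing about the Grassmannian disintegration of~$V$, which is precisely what~\ref{i:dens:VarTan} asserts. The paper avoids this by running the tilt-excess estimate (Lemma~\ref{lem:jacobian-tilt}) not on~$V$ but on the rectifiable parts $\hat S_{j,i}$ of the rescaled minimising sequence, where tangent planes are available by definition, and then passes to the limit in the varifold topology.
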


\begin{proof}
    \textit{Proof of~\ref{i:dens:lower-bound}.}
    Define $E = \spt \|V\|$ and $B = \R^n \without U$. Without loss of
    generality, we assume $x_0 = 0$. Employing~\ref{cor:ud:good-ms} we shall
    also assume that $\{S_i : i \in \nat\}$ satisfies all the conclusions
    of~\ref{cor:ud:good-ms}. In~particular, for some $S \in \mathcal C$ and all
    compacts sets $K \subseteq U$
    \begin{gather}
        \lim_{i \to \infty} \HDK{K}(S_i \cap U, S \cap U) = 0  \,,
        \\
        \label{eq:ud:HMmfar}
        \lim_{i \to \infty} \sup \bigl\{ r \in \R : \HM^m(\{ x \in S_i \cap K : \dist(x,E \cup B) \ge r \}) > 0 \bigr\} = 0 \,.
    \end{gather}
    Define
    \begin{displaymath}
        \delta(r) = \sup\left\{
            \frac{\dist(x,T)}{|x|} : x \in E \cap \oball{x}{2r} \without \{ 0 \}
        \right\}
        \quad \text{for $r \in (0,\infty)$} \,.
    \end{displaymath}
    Recall~\cite[3.4(1)]{All72} and $\density^m(\|V\|,x_0) \in (0,\infty)$ to
    see that $\VarTan(V,0)$ is compact and nonempty so we can choose $C \in
    \VarTan(V,0)$ and $\{ r_j \in \R : j \in \mathscr{P}\}$ such that $r_j
    \downarrow 0$ as $j \to \infty$, and $\delta(r_1) < 1$, and
    $\oball{0}{3r_1}\subseteq U$, and
    \begin{equation}
        \label{eq:ud:rj-choice}
        C = \lim_{j \to \infty} (\scale{1/r_j})_{\#} V \,,
        \quad \text{and} \quad
        \|V\| ( \Bdry \cball{0}{r_j} ) = 0
        \quad \text{for $j \in \nat$} \,.
    \end{equation}
    Set $\delta_j = \delta(r_j)$ and $\varepsilon_j = 10 \delta_j^{1/2}$. For
    $j \in \nat$ let $f_j, g_j, h_j \in \cnt^{\infty}(\R,\R)$ be such that
    \begin{gather}
        f_j(t) = 1 \quad \text{for $t \le (1-\varepsilon_j)r_j$} \,,
        \quad
        f_j(t) = 0 \quad \text{for $t \ge (1-\varepsilon_j/2)r_j$} \,,
        \\
        0 \le f_j(t) \le 1
        \quad \text{and} \quad
        |f_j'(t)| \le 4/(\varepsilon_jr_j) \quad \text{for $t \in \R$} \,,
        \\
        g_j(t) = 0
        \quad \text{for $t \le (1-3\varepsilon_j)r_j$ or $t \ge (1-\varepsilon_j/2)r_j$} \,,
        \\
        g_j(t) = 1
        \quad \text{for $(1-2\varepsilon_j)r_j \le t \le (1-\varepsilon_j)r_j$} \,,
        \\
        0 \le g_j(t) \le 1 
        \quad \text{and} \quad
        |g_j'(t)| \le 4/(\varepsilon_jr_j)
        \quad \text{for $t \in \R$} \,,
        \\
        h_j(t) = 1 \quad \text{for $t \le 2 \delta_j r_j$} \,,
        \quad
        h_j(t) = 0 \quad \text{for $t \ge 3 \delta_j r_j$} \,,
        \\
        0 \le h_j(t) \le 1
        \quad \text{and} \quad
        |h_j'(t)| \le 2/(\delta_j r_j) \,.
    \end{gather}
    We define $p_j \in \cnt^{\infty}(\R^n,\R^n)$ and $q_j \in
    \cnt^{\infty}(\R^n,\R^n)$ so that
    \begin{gather}
        p_j(x) = \project{T}(x) + \bigl( 1- f_j(|\project{T}(x)|) h_j(|\perpproject{T}(x)|) \bigr) \perpproject{T}(x)
        \\
        \text{and} \quad
        q_j(x) = \project{T}(x) + \bigl( 1- g_j(|\project{T}(x)|) h_j(|\perpproject{T}(x)|) \bigr) \perpproject{T}(x) 
        \quad \text{for $x \in \R^n$}\,.
    \end{gather}
    We set $B_j = \oball{0}{2r_j} \cap \bigl( T + \cball{0}{2\delta_jr_j} \bigr)$. Then
    \begin{equation}
        \label{eq:ud:Lip-pq}
        \Lip \bigl( p_j|_{B_j} \bigr)
        \le 6 + \frac{8\delta_j}{\varepsilon_j} \le 6 +  \delta_j^{1/2}
        \quad \text{and} \quad
        \Lip \bigl( q_j\vert_{B_j} \bigr)
        \le 6 + \frac{8\delta_j}{\varepsilon_j} \le 6 + \delta_j^{1/2} \,.
    \end{equation}
    Using~\eqref{eq:ud:HMmfar} and possibly passing to a subsequence of~$\{ S_i
    : i \in \nat\}$ we shall further assume that for $i,j \in \nat$ with $i \ge
    j$ there holds
    \begin{equation}
        \label{eq:ud:far-no-Hm}
        \HM^m\bigl( S_i \cap \oball{0}{3r_j/2} \without \bigl(T + \cball{0}{2\delta_jr_j} \bigr) \bigr) = 0 \,.
    \end{equation}
    For $j \in \nat$ the map $p_j$ is clearly admissible so for $i \in \nat$ we
    have $p_j \lIm S_i \rIm \in \mathcal C$ and
    \begin{equation}
        \label{eq:densityliminf}
        \Phi_F\big( p_j \lIm S_i \rIm \cap U \big) \ge \mu
        \quad \text{and} \quad
        \liminf_{i \to \infty} \left( \Phi_F \big( p_j \lIm S_i \rIm \cap U \big) - \Phi_F(S_i \cap U) \right) \ge 0 \,.
    \end{equation}
    Define $A_j = \cball{0}{r_j} \without \oball{0}{(1-3\varepsilon_j)r_j}$ for
    $j \in \nat$ and $\xi_1 = \inf \im F > 0$ and $\xi_2 = \sup \im F < \infty$.
    For $i,j \in \nat$ with $i \ge j$, recalling~\eqref{eq:ud:Lip-pq}, we have
    \begin{align}
      q_j \lIm S_i \rIm 
      &= \big(S_i \without \cball{0}{r_j}\big) 
        \cup \big(q_j \lIm S_i \cap A_j \rIm \big)
        \cup \big(S_i \cap \oball{0}{(1-3\varepsilon_j}r_j) \big) \,;
      \\
      \text{thus,} \quad
      \Phi_F\big( q_j \lIm S_i \rIm \cap U \big)
      &= \Phi_F(S_i \cap U) + \Phi_F\big( q_j \lIm S_i \cap A_j \rIm \big) - \Phi_F(S_i\cap A_j)
      \\
      &\le \Phi_F(S_i \cap U) + \kappa_j\HM^m(S_i\cap A_j) \,.
    \end{align}
    where $\kappa_j = \xi_2 \big( 6 + \delta_j^{1/2} \big)^m - \xi_1$ and
    $\kappa_{\infty} = 6^m\xi_2 - \xi_1$. In~consequence
    \begin{displaymath}
        \limsup_{i \to \infty}\left( \Phi_F \big( q_j \lIm S_i \rIm \cap U \big) - \Phi_F(S_i \cap U) \right) 
        \le \kappa_j \limsup_{i \to \infty} \HM^m(S_i\cap A_j) \,.
    \end{displaymath}
    Since $\var{m}(S_i \cap U) \to V \in \Var{m}(U)$ as $i \to \infty$ and $A_j$
    is compact, we have
    \begin{gather}
        \label{eq:appsman}
        \limsup_{i \to \infty} \HM^m (S_i \cap A_j) \le \|V\|(A_j) \,;
        \\
        \label{eq:densitylimsup}
        \text{thus,} \quad
        \limsup_{i \to \infty} \left( \Phi_F \big( q_j \lIm S_i \rIm \cap U \big) - \Phi_F(S_i \cap U) \right)
        \le \kappa_j \|V\|(A_j) \,.
    \end{gather}
    Since $(1-\varepsilon_j/2)r_j+3\delta_jr_j< r_j$, we get
    \begin{gather}
        q_j|_{\R^n \without \oball{0}{r_j}} =
        p_j|_{\R^n \without \oball{0}{r_j}} = \id{\R^n\without\oball{0}{r_j}}
        \\
        \text{and} \quad
        q_j \lIm \oball{0}{r_j} \rIm \subseteq \oball{0}{r_j} \,,
        \quad
        p_j \lIm \oball{0}{r_j} \rIm \subseteq \oball{0}{r_j} \,,
    \end{gather}
    so we obtain for $i,j \in \nat$ with $i \ge j$
    \begin{gather}
        q_j \lIm S_i \rIm \cap (\R^n\without \oball{0}{r_j})
        = S_i \cap (\R^n \without \oball{0}{r_j}) = p_j\lIm S_i \rIm \cap (\R^n\without \oball{0}{r_j})
        \\
        \text{and} \quad
        q_j \lIm S_i \rIm \cap \oball{0}{r_j} = q_j \lIm S_i\cap \oball{0}{r_j} \rIm \,,
        \quad
        p_j \lIm S_i \rIm \cap \oball{0}{r_j} = p_j \lIm S_i\cap \oball{0}{r_j} \rIm \,.
    \end{gather}
    Hence, recalling~\eqref{eq:densityliminf} and~\eqref{eq:densitylimsup}, we
    see that for $j \in \nat$
    \begin{equation}
        \label{eq:DeCo}
        \limsup_{i\to\infty} \left(
            \Phi_F \big( q_j \lIm S_i \cap \oball{0}{r_j} \rIm \big)
            - \Phi_F \big( p_j \lIm S_i \cap \oball{0}{r_j} \rIm \big)
        \right)
        \le \kappa_j \|V\|(A_j) \,.
    \end{equation}
    For $i,j \in \nat$ define, recalling~\ref{def:pull-back},
    \begin{gather}
        \widetilde{p}_j = \scale{1/r_j} \circ p_j \circ\scale{r_j} \,,
        \quad
        \widetilde{q}_j = \scale{1/r_j} \circ q_j \circ \scale{r_j} \,,
        \quad
        S_{j,i} = \scale{1/r_j} \lIm S_i \rIm \,,
        \\
        \widetilde{A}_j = \cball 01 \without \oball{0}{1-3\varepsilon_j} \,,
        \quad
        W_{j,i} = \scale{1/r_j} \circ q_j \lIm S_i \rIm \,,
        \quad
        Z_{j,i} = \scale{1/r_j} \circ p_j \lIm S_i \rIm \,,
        \\
        F_j = \scale{r_j}^{\#}F \,, 
        \quad \text{i.e.,} \quad
        F_j(x,T) = r_j^{m} F(r_j x,T) \quad \text{for $(x,T) \in \R^n \times \grass nm$} \,.
    \end{gather}
    Then 
    \begin{gather}
        \scale{1/r_j} \big\lIm p_j \lIm S_i \rIm \cap \oball{0}{r_j} \big\rIm
        = Z_{j,i} \cap \oball 01 \,,
        \quad
        \scale{1/r_j} \big\lIm q_j \lIm S_i \rIm \cap \oball{0}{r_j} \big\rIm
        = W_{j,i}\cap \oball 01 \,,
        \\
        \Phi_{F_j}(X) = \Phi_F\bigl( (\scale{r_j})_{\#} X \bigr)
        \quad \text{for $X \in \Var{m}(\R^n)$} \,.
    \end{gather}
    Since $\var{m}(\scale{1/r_j} \lIm S_i \rIm) = (\scale{1/r_j})_{\#}
    \var{m}(S_i)$ by~\ref{rem:pull-back}, we~get for $j \in \nat$
    using~\eqref{eq:DeCo}
    \begin{multline}
        \limsup_{i \to \infty} r_j^{-m} \bigl(
            \Phi_{F_j} \left( W_{j,i} \cap \oball 01 \right)
            - \Phi_{F_j} \left( Z_{j,i}\cap \oball 01 \right)
        \bigr)
        \le r_j^{-m} \kappa_j \|V\| (A_j)
        \\
        = \kappa_j \bigl\| (\scale{1/r_j})_{\#}V \bigr\| ( \widetilde{A}_j ) \,;
    \end{multline}
    hence, recalling~$\density^m(\|V\|,x_0) \in (0,\infty)$
    and~\cite[3.4(2)]{All72},
    \begin{multline}
        \limsup_{j \to \infty} \limsup_{i \to \infty}
        r_j^{-m} \bigl(
            \Phi_{F_j} \left( W_{j,i} \cap \oball 01 \right)
            - \Phi_{F_j} \left( Z_{j,i} \cap \oball 01 \right)
        \bigr)
        \\
        \le \kappa_{\infty} \|C\|(\Bdry \oball 01) = 0 \,.
    \end{multline}
    Employing~\eqref{eq:ud:far-no-Hm} we obtain for $i,j \in \nat$ with $i \ge j$
    \begin{equation}
        \label{eq:EsHauCo}
        \HM^m \bigl( S_{j,i} \cap \oball 0{3/2} \without (T + \cball{0}{2\delta_j}) \bigr) = 0 \,.
    \end{equation}

    For $i,j \in \nat$ define
    \begin{equation}
        \label{eq:def:Yji}
        \rho_j = 1 - 3\varepsilon_j/2
        \quad \text{and} \quad
        Y_{j,i} = \cball 0{\rho_j} \cap \spt (\HM^m \restrict W_{j,i}) \,.
    \end{equation}
    Since $W_{j,i}$ is closed we have $Y_{j,i} \subseteq W_{j,i} \cap \cball
    0{\rho_j}$ and $\HM^m(\cball 0{\rho_j} \cap W_{j,i} \without Y_{j,i}) = 0$.
    Roughly speaking, $Y_{j,i}$ equals $W_{j,i} \cap \cball 0{\rho_j}$ with
    removed ``hair''. We will now check that for $i,j \in \nat$ big enough with
    $i \ge j$, one cannot deform $Y_{j,i}$ onto $R_j = T \cap \Bdry \cball
    0{\rho_j}$ by any Lipschitz continuous map $\R^n \to \R^n$ which
    fixes~$R_j$. Assume, by~contradiction, that there exists such a~retraction
    $\bar \phi_j$. Observe that whenever $\gamma \in (0,\varepsilon_j/2)$
    \begin{multline*}
        r_j^{-m} \HM^m(q_j \lIm S_i \rIm \cap \cball 0{r_j(\rho_j + \gamma)} - \oball 0{r_j\rho_j})
        \\
        \le r_j^{-m} \HM^m(T \cap \cball 0{r_j(\rho_j + \gamma)} - \oball 0{r_j\rho_j})
        = \unitmeasure{m} ( (\rho_j + \gamma)^m - \rho_j^m ) \xrightarrow{\gamma \downarrow 0} 0 \,.
    \end{multline*}
    For each $i,j \in \nat$ we choose $\gamma_{j,i} \in (0,\varepsilon_j/2)$ such that
    \begin{multline}
        \label{eq:gamma-j}
        \xi_2 \Gamma_{\ref{lem:ell-adm}}(\Lip \bar \phi_j,\varepsilon_j/2)^m
        \HM^m(q_j \lIm S_i \rIm \cap \cball 0{r_j(\rho_j + \gamma_{j,i})} - \oball 0{r_j\rho_j})
        \\
        < \Phi_{F}(q_j \lIm S_i \rIm \cap \oball 0{r_j \rho_j} \without \oball 0{(1-3\varepsilon_j)r_j}) \,.
    \end{multline}
    Recalling~\eqref{eq:ud:far-no-Hm} and that $\varepsilon_j/2 = 5
    \delta_j^{1/2}$ we see that
    \begin{gather*}
        Y_{j,i} \cap \Bdry \cball 0{\rho_j} \subseteq R_j \,,
        \quad 
        Y_{j,i} \cap (R_j + \cball 0{\varepsilon_j/2}) \subseteq T \,,
        \\
        \text{and} \quad
        (Y_{j,i} + \cball 0{\varepsilon_j/4}) \cap (\R^n \without \oball 0{\rho_j})
        \without (R_j + \cball 0{\varepsilon_j/4}) = \varnothing \,.
    \end{gather*}
    Hence, by~lemma~\ref{lem:ell-adm}, there exists $\phi_j \in \adm{\oball
      0{\rho_j + \gamma_{j,i}}}$ such that $\phi_j \lIm Y_{j,i} \rIm \subseteq
    R_j$ and $\Lip \phi_j \le \Gamma_{\ref{lem:ell-adm}}(\Lip \bar \phi_j,
    \varepsilon_j/2)$. Clearly $\scale{r_j} \circ \phi_j[W_{j,i}] \in
    \mathcal{C}$ so using~\eqref{eq:gamma-j} we get
    \begin{multline}
        \label{eq:ud:cut-out}
        \mu \le \Phi_{F}( \scale{r_j} \circ \phi_j \lIm W_{j,i} \rIm  \cap U)
        = \Phi_{F}( \scale{r_j} \lIm W_{j,i} \rIm  \cap U ) 
        \\
        - \Phi_{F}( \scale{r_j} \lIm W_{j,i} \cap \oball{0}{\rho_j + \gamma_{j,i}} \rIm )
        + \Phi_{F}( \scale{r_j} \circ \phi_j \lIm W_{j,i} \cap \oball{0}{\rho_j + \gamma_{j,i}} \without \oball 0{\rho_j} \rIm )
        \\
        \le \Phi_{F}( q_j \lIm S_i \rIm \cap U ) - \Phi_{F}( q_j \lIm S_i \rIm \cap \oball{0}{(1-3\varepsilon_j)r_j} )
        \\
        = \Phi_{F}(S_i \cap U) - \left(
            \Phi_{F}( S_i \cap \oball{0}{(1-3\varepsilon_j)r_j} ) 
            - \bigl( \Phi_{F}( q_j \lIm S_i \rIm \cap U ) - \Phi_{F}(S_i \cap U) \bigr)
        \right)  \,.
    \end{multline}
    We choose $j_0 \in \nat$ so big that for $j \ge j_0$ we have
    \begin{equation}
        \label{eq:ud:j0-choice}
        r_j^{-m} \xi_1 \measureball{\|V\|}{\oball{0}{(1-3\varepsilon_j)r_j}}
        - r_j^{-m} 2 \kappa_j \|V\|(A_j)
        >  2^{-4} \xi_1 \unitmeasure{m} \density^m(\|V\|,0) \,,
    \end{equation}
    which is possible because
    \begin{displaymath}
        \lim_{j \to \infty} r_j^{-m} \kappa_j \|V\|(A_j) = 0
        \quad \text{and} \quad
        \lim_{j \to \infty} r_j^{-m} \measureball{\|V\|}{\oball{0}{(1-3\varepsilon_j)r_j}} = \unitmeasure{m} \density^m(\|V\|,0) > 0 \,.
    \end{displaymath}
    For each $j \ge j_0$ we select $i_0 = i_0(j) \in \nat$ such that $i_0 \ge j$
    and for $i \ge i_0$
    \begin{equation}
        \label{eq:ud:i0-j0}
        \begin{gathered}
            \Phi_{F}(S_i \cap U) - \mu < 2^{-7} r_j^m \xi_1 \unitmeasure{m} \density^m(\|V\|,0)
            \\ \text{and} \quad
            \Phi_F(q_j \lIm S_i \rIm \cap U) - \Phi_F(S_i \cap U) \le 2 \kappa_j \|V\|(A_j) \,,
        \end{gathered}
    \end{equation}
    which is possible because $\{ S_i : i \in \nat \}$ is a minimising sequence
    and due to~\eqref{eq:densitylimsup}. Combining~\eqref{eq:ud:cut-out},
    \eqref{eq:ud:j0-choice}, and~\eqref{eq:ud:i0-j0} we get for $j \ge j_0$ and
    $i \ge i_0(j)$ the following contradictory estimate
    \begin{displaymath}
        \mu \le \mu + r_j^m \xi_1 \unitmeasure{m} \density^m(\|V\|,0) \bigl( 2^{-7} - 2^{-4} \bigr) < \mu \,.
    \end{displaymath}

    We now know that $W_{j,i}$ cannot be deformed onto $R_j = T \cap \Bdry
    \cball{0}{\rho_j}$ by any Lipschitz continuous map $\R^n \to \R^n$
    fixing~$R_j$. In~consequence we get
    \begin{equation}
        \label{eq:ud:full-disc}
        \project T \lIm W_{j,i} \cap \cball 01 \rIm \cap \cball{0}{\rho_j}
        = Z_{j,i} \cap \cball{0}{\rho_j}
        = \widetilde p_j \lIm W_{j,i} \rIm \cap \cball{0}{\rho_j}
        = R_j
    \end{equation}
    because otherwise we could deform $W_{j,i}$ onto $R_j$. In~particular,
    \begin{equation}
        \label{eq:us:lower-measure}
        \liminf_{j \to \infty} \liminf_{i \to \infty} \HM^m(W_{j,i} \cap \cball 01) 
        \ge \lim_{j \to \infty} \HM^m(T \cap \cball{0}{\rho_j})
        = \unitmeasure{m} \,,
    \end{equation}
    so~\ref{i:dens:lower-bound} is now proven.

    \medskip

    \textit{Proof of~\ref{i:dens:upper-bound}.} Define $X_j = \oball
    01 \without \cball{0}{\rho_j}$ and let $F^0$ be defined as
    in~\ref{def:Fx-integrand}. Recall~\eqref{eq:def:Yji}
    and~\eqref{eq:us:lower-measure}. Note that whenever $A \subseteq R^n$ is
    closed and satisfies $\HM^m(A \cap K) < \infty$ for all compact $K \subseteq
    \R^n$, and $j \in \nat$, then
    \begin{displaymath}
        \Psi_{F_j^0}(A) = \Psi_{F^0}\bigl( \scale{r_j} \lIm A \rIm \bigr) = r_j^m \Psi_{F^0}(A) \,.
    \end{displaymath}
    Since $F$ is elliptic and $\scale{1/\rho_j} \lIm Y_{j,i} \rIm$
    satisfies~\ref{def:elliptic}\ref{i:ell:S} employing~\eqref{eq:ud:far-no-Hm}
    we can find $\xi_3 > 0$ such that for $i,j \in \nat$ with $j \ge j_0$ and $i
    \ge i_0(j)$
    \begin{multline}
        0 \le \HM^m( W_{j,i} \cap \oball{0}{\rho_j} ) 
        - \HM^m( T \cap \oball{0}{\rho_j} )
        \\
        \le \xi_3 r_j^{-m} \bigl(
        \Psi_{F^0}( \scale{r_j} \lIm W_{j,i} \cap \oball{0}{\rho_j} \rIm )
        - \Psi_{F^0}( \scale{r_j} \lIm T \cap \oball{0}{\rho_j} \rIm )
        \bigr)
        \\
        \le \xi_3 r_j^{-m} \bigl(
        \Psi_{F_j^0}( W_{j,i} \cap \oball 01 )
        - \Psi_{F_j^0}( Z_{j,i} \cap \oball 01 )
        \bigr)
        + r_j^{-m} \xi_3 \Psi_{F_j^0}( Z_{j,i} \cap X_j ) \,.
    \end{multline}
    Since $\widetilde{q}_j( \oball 01 ) \subseteq \oball 01$ and
    $\widetilde{q}_j(x) = x$ for $x \in \oball{0}{1-3\varepsilon_j} \cup
    (\R^n \without \oball 01)$, we see that
    \begin{gather}
        W_{j,i} \cap \oball 01
        \supseteq \big(S_{j,i}\cap \cball{0}{1-3\varepsilon_j}\big) \cup
        \big(W_{j,i}\cap \widetilde{A}_j\big) \,;
        \\
        \text{thus,} \quad
        \HM^m \bigl( W_{j,i} \cap \oball 01 \bigr)
        \ge \HM^m \bigl( S_{j,i} \cap \oball 01 \bigr)
        - \HM^m \bigl( S_{j,i} \cap \widetilde{A}_j \bigr)
        + \HM^m \bigl( W_{j,i} \cap \widetilde{A}_j \bigr) \,.
    \end{gather}
    Hence, we get 
    \begin{multline}
        \label{eq:ud:step1}
        \bigl| \HM^m \bigl( S_{j,i} \cap \oball 01 \bigr) - \HM^m(T\cap \oball 01) \bigr|
        \\
        \le \bigl| \HM^m\bigl( W_{j,i}\cap \oball 01 \bigr) - \HM^m(T\cap \oball 01) \bigr|
        + \HM^m\bigl( S_{j,i} \cap \widetilde{A}_j \bigr)
        + \HM^m\bigl( W_{j,i} \cap \widetilde{A}_j \bigr)
        \\
        \le \bigl( \HM^m\bigl( W_{j,i} \cap \oball{0}{\rho_j} \bigr) - \HM^m( T \cap \oball{0}{\rho_j} ) \bigr)
        + \HM^m\bigl( S_{j,i} \cap \widetilde{A}_j \bigr)
        + 2 \HM^m\bigl( W_{j,i} \cap \widetilde{A}_j \bigr)
        \\
        + \HM^m( T \cap \oball 01\without \oball{0}{\rho_j} )
        \\
        \le \xi_3 r_j^{-m} \bigl( \Psi_{F_j^0} \bigl( W_{j,i} \cap \oball 01 \bigr)
        - \Psi_{F_j^0}( Z_{j,i} \cap \oball 01) \bigr)
        + r_j^{-m} \xi_3 \Psi_{F_j^0} \bigl( Z_{j,i} \cap X_j \bigr)
        \\
        + 2 \HM^m \bigl( W_{j,i}\cap \widetilde{A}_j \bigr)
        + \HM^m \bigl( S_{j,i}\cap \widetilde{A}_j \bigr)
        + \HM^m( T \cap \oball 01\without \oball{0}{\rho_j} )
        \,.
    \end{multline}
    Recalling~\eqref{eq:ud:Lip-pq} and~\ref{rem:Psi-Phi} we see that
    \begin{gather}
        \label{eq:ud:error-terms1}
        r_j^{-m} \xi_3 \Psi_{F_j^0} \bigl( Z_{j,i} \cap X_j \bigr)
        \le \xi_3 \xi_2 (6 + \delta_j^{1/2})^m \HM^m(S_{j,i} \cap \widetilde A_j) \,,
        \\
        \label{eq:ud:error-terms2}
        \HM^m \bigl( W_{j,i}\cap \widetilde{A}_j \bigr) 
        \le (6 + \delta_j^{1/2})^m \HM^m(S_{j,i} \cap \widetilde A_j) \,.
    \end{gather}
    We observe that
    \begin{multline}
        \label{eq:ud:errors}
        \limsup_{j \to \infty} \limsup_{i \to \infty} \HM^m(S_{j,i} \cap \widetilde A_j)
        \le \limsup_{j \to \infty} r_j^{-m} \|V\| (\widetilde A_j)
        \\
        = \limsup_{j \to \infty} \bigl\| (\scale{1/r_j})_{\#}V \bigr\| ( \widetilde A_j )
        \le \|C\|(\Bdry \cball 01) = 0 \,.
    \end{multline}
    Let us define $\omega : (0,\infty) \to \R$ by the formula
    \begin{equation}
        \omega(r) = \sup \bigl\{ |F(0,T) - F(x,T)| + |\sup \im F^0 - \sup \im F^x | : x \in \cball 0r \,,\, T \in \grass nm \bigr\} \,.
    \end{equation}
    Then, we may write
    \begin{multline}
        \label{eq:ud:F0-to-F}
        r_j^{-m} \bigl( \Psi_{F_j^0} \bigl( W_{j,i} \cap \oball 01 \bigr)
        - \Psi_{F_j^0}( Z_{j,i} \cap \oball 01) \bigr)
        \\
        \le r_j^{-m}  \bigl( \Psi_{F_j} \bigl( W_{j,i} \cap \oball 01 \bigr)
        - \Psi_{F_j}( Z_{j,i} \cap \oball 01) \bigr)
        \\
        + \omega(r_j) \bigl( \HM^m(W_{j,i} \cap \oball 01) + \HM^m(Z_{j,i} \cap \oball 01) \bigr) \,.
    \end{multline}
    Using again~\eqref{eq:ud:Lip-pq} we have
    \begin{multline}
        \label{eq:ud:Wji-bound}
        \limsup_{j \to \infty} \limsup_{i \to \infty} \HM^m(W_{j,i} \cap \oball 01) + \HM^m(Z_{j,i} \cap \oball 01)
        \\
        \le \limsup_{j \to \infty} \limsup_{i \to \infty} 2 (6 + \delta_j^{1/2})^m \HM^m(S_{j,i} \cap \oball 01)
        \le 12 \|C\| \cball 01 < \infty \,.
    \end{multline}
    Since $F$ is continuous and $\grass nm$ is compact we see that $\lim_{r \to
      0} \omega(r) = 0$; hence, combining~\eqref{eq:ud:step1}
    with~\eqref{eq:ud:error-terms1}, \eqref{eq:ud:error-terms2},
    \eqref{eq:ud:errors}, \eqref{eq:ud:F0-to-F}, and~\eqref{eq:ud:Wji-bound} we
    obtain
    \begin{multline}
        \label{eq:ud:step2}
        \limsup_{j \to \infty} \limsup_{i \to \infty}
        \bigl| \HM^m \bigl( S_{j,i} \cap \oball 01 \bigr) - \HM^m(T\cap \oball 01) \bigr|
        \\
        \le \limsup_{j \to \infty} \limsup_{i \to \infty}
        \xi_3 r_j^{-m} \bigl( \Psi_{F_j} \bigl( W_{j,i} \cap \oball 01 \bigr)
        - \Psi_{F_j}( Z_{j,i} \cap \oball 01) \bigr) \,.
    \end{multline}
    For any $i,j \in \nat$ we have
    \begin{multline}
        \label{eq:ud:Zji-Si}
        \Psi_F(\scale{r_j}\lIm Z_{j,i} \rIm \cap U)
        = \Psi_F(S_i \cap U)
        + \bigl( \Psi_{F_j}(Z_{j,i} \cap \oball 01) - \Psi_{F_j}(W_{j,i} \cap \oball 01)  \bigr)
        \\
        + \Psi_{F_j}(W_{j,i} \cap \widetilde A_j) - \Psi_{F_j}(S_{j,i} \cap \widetilde A_j) \,.
    \end{multline}
    Since $V = \lim_{i \to \infty} \var{m}(S_i \cap U)$ is minimising, we obtain
    \begin{displaymath}
        \mu = \Phi_F(V) \le \Phi_F(\scale{r_j}\lIm Z_{j,i} \rIm \cap U) \le \Psi_F(\scale{r_j}\lIm Z_{j,i} \rIm \cap U)
        \quad \text{ for each $i,j \in \nat$} \,;
    \end{displaymath}
    hence, transforming~\eqref{eq:ud:Zji-Si} we get
    \begin{multline}
        r_j^{-m} \bigl( \Psi_{F_j}(W_{j,i} \cap \oball 01) - \Psi_{F_j}(Z_{j,i} \cap \oball 01) \bigr)
        \le r_j^{-m} \bigl( \Psi_F(S_i \cap U) - \Phi_F(V) \bigr)
        \\
        + r_j^{-m} \Psi_{F_j}(W_{j,i} \cap \widetilde A_j) 
        + r_j^{-m} \Psi_{F_j}(S_{j,i} \cap \widetilde A_j) \,.
    \end{multline}
    Estimating as in~\eqref{eq:ud:error-terms1}, \eqref{eq:ud:error-terms2},
    \eqref{eq:ud:errors} and using~\eqref{eq:uv:Phi-Psi} we reach the conclusion
    \begin{equation}
        \limsup_{j \to \infty} \limsup_{i \to \infty}
        r_j^{-m} \bigl( \Psi_{F_j}(W_{j,i} \cap \oball 01) - \Psi_{F_j}(Z_{j,i} \cap \oball 01) \bigr)
        = 0 \,.
    \end{equation}
    Plugging this into~\eqref{eq:ud:step2} we obtain
    \begin{multline}
        0 = \limsup_{j \to \infty} \limsup_{i \to \infty}
        \bigl| \HM^m \bigl( S_{j,i} \cap \oball 01 \bigr) - \HM^m(T\cap \oball 01) \bigr| 
        \\
        = \lim_{j \to \infty} r_j^{-m} \measureball{\|V\|}{\oball{0}{r_j}} - \unitmeasure{m} 
        = \unitmeasure{m} \bigl( \density^m(\|V\|,0) - 1 \bigr) \,.
    \end{multline}
    Hence, $\|C\| = \HM^m \restrict T$ by~\cite[3.4(2)]{All72}. To have $C =
    \var{m}(T)$ we still need to show that $P = T$ for $C$ almost all $(x,P)$.

    \medskip

    \textit{Proof of~\ref{i:dens:VarTan}.} Let $\hat S_{j,i}$ denote
    the $(\HM^m,m)$~rectifiable part of $S_{j,i} \cap \cball 01$.
    From~\ref{lem:unrect-vanish} and~\ref{i:dens:upper-bound} we know that
    \begin{equation}
        \label{eq:ud:ud-rect}
        \lim_{j \to \infty} \lim_{i \to \infty} \HM^m(\hat S_{j,i})
        = \unitmeasure{m} \density^m(\|V\|,0) = \unitmeasure{m} \,.
    \end{equation}
    Set $p_{j,i} = \project{T}|_{\hat S_{j,i}}$. Since $p_{j.i}(x) = \widetilde
    p_j(x)$ for $x \in S_{j,i} \cap \cball 01 \without \widetilde A_j$ and
    recalling~\eqref{eq:ud:errors} and~\eqref{eq:ud:full-disc} we see that
    \begin{displaymath}
        \lim_{j \to \infty} \lim_{i \to \infty} \HM^m(p_{j,i} \lIm \hat S_{j,i} \rIm) = \unitmeasure{m} \,.
    \end{displaymath}
    Clearly $\Lip p_{j,i} \le 1$ so $1 - \ap J_m p_{j,i}(x) \ge 0$ for $\HM^m$
    almost all $x \in \hat S_{j,i}$. Employing the area
    formula~\cite[3.2.20]{Fed69} we have
    \begin{multline}
        \label{eq:ud:tm-excess}
        0 \le \lim_{j \to \infty} \lim_{i \to \infty} \int_{\hat S_{j,i}} 1 - \ap J_m p_{j,i} \ud \HM^m
        \\
        = \lim_{j \to \infty} \lim_{i \to \infty} 
        \HM^m(\hat S_{j,i}) 
        - \int_{p_{j,i} \lIm \hat S_{j,i} \rIm} \HM^0(p_{j,i}^{-1} \{y\}) \ud \HM^m(y)
        \\
        \le \lim_{j \to \infty} \lim_{i \to \infty} \bigl( \HM^m(\hat S_{j,i}) 
        - \HM^m(p_{j,i} \lIm \hat S_{j,i} \rIm) \bigr) = 0 \,.
    \end{multline}
    Hence, employing~\ref{lem:jacobian-tilt}, we get
    \begin{equation}
        \lim_{j \to \infty} \lim_{i \to \infty} \int_{\cball 01} \| \project P - \project T \|^2 \ud \var{m}(\hat S_{j,i})(x,P) = 0 \,.
    \end{equation}
    Define $\varphi(x,P) = \CF_{\cball 01}(x) \| \project P - \project T \|^2$
    for $(x,P) \in \R^n \times \grass nm$. Recalling~\ref{lem:unrect-vanish} and
    noting that $\|C\|(\Bdry \cball 01) = 0$, we see that
    \begin{equation}
        0 = \lim_{j \to \infty} \lim_{i \to \infty} \var{m}(S_{j,i})(\varphi) 
        = C(\varphi) 
        = \int_{\cball 01} \| \project P - \project T \|^2 \ud C(x,P) \,.
    \end{equation}
    Therefore, $P = T$ for $C$ almost all $(x,P) \in \cball 01 \times \grass nm$.

    Now, since $C \in \VarTan(V,0)$ was chosen arbitrarily we obtain for
    \emph{all} $C \in \VarTan(V,0)$
    \begin{equation}
        \label{eq:ud:C-on-ball}
        C(\psi) = \int_T \psi(x,T) \ud \HM^m(x)
        \quad \text{whenever $\psi \in \ccspace{\oball 01 \times \grass nm}$} \,.
    \end{equation}
    In particular, if $C \in \VarTan(V,0)$ and $\rho \in (0,1)$, then $C' =
    (\scale{\rho})_{\#}C \in \VarTan(V,0)$ also satisfies~\eqref{eq:ud:C-on-ball};
    hence, for all $C \in \VarTan(V,0)$
    \begin{equation}
        C(\psi) = \int_T \psi(x,T) \ud \HM^m(x)
        \quad \text{whenever $\psi \in \ccspace{\R^n \times \grass nm}$} \,.
        \qedhere
    \end{equation}
\end{proof}

Now we have all the ingredients to prove our main theorem.

\begin{proof}[\protect{Proof of~\ref{thm:main}}]
    First we recall~\ref{rem:summary} too obtain $S$ and $\{ S_i : i \in \nat
    \}$ and too see that we may apply~\ref{thm:density} at $\|V\|$ almost
    all~$x_0$. We then get $\density^m(\|V\|,x) = 1$ and $\Tan(\spt\|V\|,x) = T$
    for $V$~almost all $(x,T) \in U \times \grass nm$. We know $\HM^m(S \cap U
    \without \spt\|V\|) = 0$, which means that $V = \var{m}(S \cap U)$ and that
    $S$ and $\{ S_i : i \in \nat \}$ satisfy all the conditions
    of~\ref{thm:main}.
\end{proof}


\section{An example of a good class: homological spanning}
\label{sec:cech-spanning}

Let us fix an abelian coefficient group~$G$. We shall work in the category
$\mathcal A_1$ of~\emph{arbitrary pairs and their maps} as defined
in~\cite[I,1]{MR0050886}. This means that $(X,A)$ is an object in~$\mathcal A_1$
if $X$ is a topological space and $A \subseteq X$ is an arbitrary subset with
the relative topology. Morphisms in $\mathcal A_1$ between $(X,A)$ and $(Y,B)$
are continuous functions $f : X \to Y$ such that $f \lIm A \rIm \subseteq
B$. If~$(X,A)$, $(Y,B)$ are objects in~$\mathcal A_1$ and $f : (X,A) \to (Y,B)$
is a morphism in~$\mathcal A_1$, then the symbol $\Ch_k(X,A;G)$ shall denote the
$k^{\mathrm{th}}$~\v{C}ech homology group~\cite[IX,3.3]{MR0050886} of the
pair~$(X,A)$ with coefficients in~$G$ and $\Ch_k(f) : \Ch_k(X,A;G) \to
\Ch_k(Y,B;G)$ the induced morphism of abelian groups. In~case $A = \varnothing$,
we write $\Ch_k(X;G) = \Ch_k(X,\varnothing;G)$. For any sets $X \subseteq Y
\subseteq \R^n$, we~will denote by~$i_{X,Y}$ the inclusion map $X
\hookrightarrow Y$.

\begin{definition}
	\label{def:spans}
	Let $B$ be a closed subset of~$\R^{n}$, $L$ a subgroup of
    $\Ch_{m-1}(B;G)$. We say that a closed set $E \subseteq \R^n$ spans~$L$ if
    $L \subseteq \ker\bigl( \Ch_{m-1}(i_{B, B\cup E}) \bigr)$. In~other words,
    $E$ spans~$L$ if the composition
    \begin{displaymath}
        L \hookrightarrow \Ch_{m-1}(B;G) \xrightarrow{\Ch_{m-1}(i_{B, B\cup E})} \Ch_{m-1}(B \cup E;G)
    \end{displaymath}
    is zero.

    We denote by $\check{\mathcal{C}}(B,L,G)$ the collection of all closed
    subsets of~$\R^n$ which span~$L$.
\end{definition}

We shall prove that $\check{\mathcal{C}}(B,L,G)$ is a good class in the sense of
definition~\ref{def:goodclass}.

\begin{lemma}\label{le:DLS}
	Let $B$ be a closed subset of~$\R^n$, $L$ a subgroup of
    $\Ch_{m-1}(B;G)$. Let $\{ E_{k} \subseteq \R^n : k \in \nat \}$ be a
    decreasing sequence of closed sets. If $B \subseteq E_{k+1} \subseteq E_k$
    and $L \subseteq \ker \bigl( \Ch_{m-1}(i_{B,E_{k}}) \bigr)$ for all $k \geq
    1$, then, by setting $E = \bigcap_{k \geq 1} E_k$, we have $L \subseteq
    \ker\bigl( \Ch_{m-1}(i_{B,E}) \bigr)$.
\end{lemma}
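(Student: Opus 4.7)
The plan is to exploit the continuity of \v{C}ech homology with respect to the inverse system $\{E_k\}$. Let $j_k : E \hookrightarrow E_k$ denote the inclusion. Since $B \subseteq E \subseteq E_k$ for every $k$, the inclusions factor as $i_{B, E_k} = j_k \circ i_{B, E}$, so functoriality gives
\[
\Ch_{m-1}(i_{B,E_k}) = \Ch_{m-1}(j_k) \circ \Ch_{m-1}(i_{B,E}).
\]
Writing $\tilde c := \Ch_{m-1}(i_{B,E})(c)$ for $c \in L$, the hypothesis $L \subseteq \ker \Ch_{m-1}(i_{B, E_k})$ becomes $\Ch_{m-1}(j_k)(\tilde c) = 0$ for every $k$. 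Hence it suffices to establish the injectivity of the natural homomorphism
\[
\varphi : \Ch_{m-1}(E;G) \longrightarrow \varprojlim_k \Ch_{m-1}(E_k;G),
\]
as then $\tilde c \in \ker \varphi = 0$, forcing $c \in \ker \Ch_{m-1}(i_{B,E})$ as required.

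To verify the injectivity of $\varphi$ I would unfold the definition $\Ch_{m-1}(E;G) = \varprojlim_{\mathcal V} H_{m-1}(N(\mathcal V);G)$, with $\mathcal V$ ranging over open covers of $E$ ordered by refinement and $N(\mathcal V)$ the nerve. Fix $\tilde c \in \ker \varphi$ and an open cover $\mathcal V = \{V_\alpha\}$ of $E$; I must show $\tilde c_{\mathcal V} = 0 \in H_{m-1}(N(\mathcal V);G)$. Choose open sets $W_\alpha \subseteq \R^n$ with $V_\alpha = W_\alpha \cap E$ and form
\[
\tilde{\mathcal V}_k = \{W_\alpha \cap E_k\}_\alpha \cup \{E_k \without E\},
\]
which is an open cover of $E_k$ (since $\{W_\alpha\} \cup \{\R^n \without E\}$ covers $\R^n$). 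By construction $\tilde{\mathcal V}_k$ restricts to $\mathcal V$ on $E$, so there is a~subcomplex inclusion $N(\mathcal V) \hookrightarrow N(\tilde{\mathcal V}_k)$, and the vanishing of $\Ch_{m-1}(j_k)(\tilde c)$ supplies bounding $m$-chains $\gamma_k \in C_m(N(\tilde{\mathcal V}_k);G)$ for the image of $\tilde c_{\mathcal V}$ in the larger complex.

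The main obstacle is to manufacture from the $\gamma_k$ a~bounding chain living inside $N(\mathcal V)$ itself. The key observation is that the ``excess'' simplices in $N(\tilde{\mathcal V}_k) \without N(\mathcal V)$ are of two kinds: those involving the distinguished vertex indexing $E_k \without E$, and those corresponding to tuples $\{\alpha_i\}$ with $\bigcap_i W_{\alpha_i} \cap E_k \ne \varnothing$ but $\bigcap_i W_{\alpha_i} \cap E = \varnothing$. Because $E_k \searrow E$, on any fixed finite portion of the nerve both kinds of simplices are eliminated once $k$ is large enough. A diagonal/stabilization argument (or, equivalently, a~Mittag--Leffler analysis of the inverse system $\{H_m(N(\tilde{\mathcal V}_k);G)\}_k$) then promotes the bounding chains $\gamma_k$ to a~bounding chain in $N(\mathcal V)$. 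This stabilization step is the hardest part of the argument and is essentially the content of the continuity axiom of \v{C}ech theory applied to the decreasing system $\{E_k\}$; it is a~classical consequence of the set-up in \cite{MR0050886} once one works with the standard inverse-limit description using finite open covers.
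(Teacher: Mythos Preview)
Your approach coincides with the paper's: both reduce the claim to the continuity of \v{C}ech homology, i.e., that the natural map $\varphi : \Ch_{m-1}(E;G) \to \varprojlim_k \Ch_{m-1}(E_k;G)$ is an isomorphism (you need only its injectivity). The paper simply invokes this isomorphism as a black box from Eilenberg--Steenrod and finishes via the universal property of the inverse limit, whereas your last two paragraphs attempt a nerve-level rederivation that you yourself recognise as ``essentially the content of the continuity axiom''---an unnecessary detour, but not a wrong turn.
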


\begin{proof}
	Since $E = \bigcap_{k\geq 1} E_k$, we have that $E=\varprojlim E_k$, see for
	example \cite[Theorem 2.5 on p.~260]{MR0050886}. We let
	\begin{displaymath}
		\varphi:\Ch_{m-1}(E;G)\to\varprojlim\Ch_{m-1}(E_k;G)
	\end{displaymath}
	be the natural isomorphism, and let 
	\begin{displaymath}
		\pi_k:\varprojlim\Ch_{m-1}(E_k;G)\to \Ch_{m-1}(E_k;G)
	\end{displaymath}
	be the natural projections. Then
	\begin{displaymath}
		\Ch_{m-1}(i_{E,E_k})=\varphi\circ \pi_k.
	\end{displaymath}

	Since 
	\begin{displaymath}
		\Ch_{m-1}(i_{B,E_j})=\Ch_{m-1}(i_{E_k,E_j}) \circ\Ch_{m-1}(i_{B,E}),
	\end{displaymath}
	by the universal property of inverse limit, there exist a homomorphism 
	\begin{displaymath}
		\psi:\Ch_{m-1}(B;G)\to \varprojlim\Ch_{m-1}(E_k;G)
	\end{displaymath}
	such that 
	\begin{displaymath}
		\Ch_{m-1}(i_{B,E_k})=\pi_k\circ\psi.
	\end{displaymath}
	Then 
	\begin{displaymath}
		\pi_k\circ\psi(L)=\Ch_{m-1}(i_{B,E_k})(L)=0,
	\end{displaymath}
	thus $\psi(L)=0$.
	We see that 
	\begin{displaymath}
		\Ch_{m-1}(i_{B,E})=\varphi^{-1}\circ \psi,
	\end{displaymath}
	thus
	\begin{displaymath}
		\Ch_{m-1}(i_{B,E})(L)=\varphi^{-1}\circ\psi(L)=0.
        \qedhere
	\end{displaymath}
\end{proof}

\begin{remark}
	\label{re:defclo}
	For any continuous map $\varphi:\R^{n}\to\R^{n}$ with $\varphi\vert_{B}=\id
    B$, we have 
	\begin{displaymath}
		\varphi[S]\in \check{\mathcal{C}}(B,L,G)
        \quad \text{whenever $S\in \check{\mathcal{C}}(B,L,G)$} \,.
	\end{displaymath}

    In particular, $\check{\mathcal{C}}(B,L,G)$ satisfies
    condition \ref{i:gc:deformation} of definition~\ref{def:goodclass}.
\end{remark}

\begin{lemma}
    \label{le:CechGood}
	Let $B$, $L$ and $\check{\mathcal{C}}(B,L,G)$ be given as in Definition
    \ref{def:spans}. Then $\check{\mathcal{C}}(B,L,G)$ is a good class in the
    sense of~\ref{def:goodclass}.
\end{lemma}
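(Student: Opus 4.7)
My plan is to verify conditions \ref{i:gc:nonempty}--\ref{i:gc:hd} of Definition \ref{def:goodclass}, treating $\check{\mathcal C}(B,L,G)$ as a good class in $U = \R^n \without B$. Condition \ref{i:gc:nonempty} holds because $\R^n \in \check{\mathcal C}(B,L,G)$: for $m \ge 2$, $\R^n$ is contractible and $\Ch_{m-1}(\R^n;G) = 0$, so $L$ is trivially contained in $\ker \Ch_{m-1}(i_{B,\R^n})$ (the $m = 1$ case is similar under the natural admissibility $L \subseteq \ker \Ch_0(i_{B,\R^n})$). Condition \ref{i:gc:compact} is built into the definition. For \ref{i:gc:deformation}, every $f \in \adm{U}$ is a composition of basic deformations, each supported in a bounded convex $V \subseteq U$ and therefore disjoint from $B$; hence $f|_B = \id{B}$ and Remark \ref{re:defclo} gives $f \lIm S \rIm \in \check{\mathcal C}(B,L,G)$.

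The substantial case is \ref{i:gc:hd}, which I plan to handle via Lemma \ref{le:DLS}. Given $S_i \in \check{\mathcal C}(B,L,G)$ and a closed $S \subseteq \R^n$ with $\HDK{K}(S_i \cap U, S \cap U) \to 0$ for every compact $K \subseteq U$, I would fix a compact exhaustion $K_1 \subseteq K_2 \subseteq \cdots$ of $U$ with $K_k \subseteq \Int K_{k+1}$ and $\bigcup_k K_k = U$, and set
\begin{displaymath}
E_k = \bigl((B \cup S) + \cball{0}{1/k}\bigr) \cup (\R^n \without \Int K_k).
\end{displaymath}
Each $E_k$ is closed (Minkowski sum of a closed set with a compact ball is closed), the family $\{E_k\}$ is decreasing, and $B \cup S \subseteq E_k$. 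I would then check $\bigcap_k E_k = B \cup S$: any $x \notin B \cup S$ has $\dist(x,B \cup S) > 0$, so it escapes the first piece for large $k$; since $\R^n \without U = B \subseteq B \cup S$, such $x$ must lie in $U$, hence in $\Int K_k$ for large $k$, so it escapes the second piece as well.

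To show each $E_k$ spans $L$, by local Hausdorff convergence in $U$ I would pick $j_k$ with $S_{j_k} \cap K_k \subseteq (S \cap U) + \oball{0}{1/k}$. Then
\begin{displaymath}
S_{j_k} = (S_{j_k} \cap K_k) \cup (S_{j_k} \without K_k) \subseteq \bigl((B \cup S) + \cball{0}{1/k}\bigr) \cup (\R^n \without \Int K_k) = E_k \,,
\end{displaymath}
and $B \subseteq E_k$ trivially, so $B \cup S_{j_k} \subseteq E_k$. The inclusion $B \hookrightarrow E_k$ therefore factors as $B \hookrightarrow B \cup S_{j_k} \hookrightarrow E_k$; the first arrow kills $L$ because $S_{j_k} \in \check{\mathcal C}(B,L,G)$, hence so does the composition. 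Applying Lemma \ref{le:DLS} yields $L \subseteq \ker \Ch_{m-1}(i_{B, B \cup S})$, which is precisely $S \in \check{\mathcal C}(B,L,G)$.

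The main obstacle I anticipate is accommodating possibly unbounded competitors $S_i$ in condition \ref{i:gc:hd}: local Hausdorff convergence only controls $S_i$ on compact subsets of $U$, so the naive choice $E_k = (B \cup S) + \cball{0}{1/k}$ need not contain $B \cup S_{j_k}$ for any $j_k$. The remedy is to append $\R^n \without \Int K_k$ to each $E_k$; this piece automatically absorbs the faraway portion of $S_{j_k}$, and its intersection-limit $\R^n \without U = B$ lies inside $B \cup S$, so the enlargement does not spoil the identity $\bigcap_k E_k = B \cup S$. The choice $U = \R^n \without B$ is precisely what makes these two features compatible.
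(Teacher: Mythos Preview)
Your proof is correct and shares the paper's overall structure: conditions~\ref{i:gc:nonempty}--\ref{i:gc:deformation} are handled identically (via $\R^n\in\check{\mathcal C}(B,L,G)$ and Remark~\ref{re:defclo}), and for~\ref{i:gc:hd} both arguments funnel through Lemma~\ref{le:DLS}. The difference lies in the choice of the decreasing family. The paper simply takes
\[
E_k=\Clos\Bigl(B\cup\textstyle\bigcup_{i\ge k}S_i\Bigr)=B\cup S\cup\textstyle\bigcup_{i\ge k}S_i,
\]
so the inclusion $B\cup S_k\subseteq E_k$ is automatic; the work there is to check that this set is already closed and that $\bigcap_k E_k=B\cup S$, both of which follow from local Hausdorff convergence in~$U$. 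Your family $E_k=\bigl((B\cup S)+\cball{0}{1/k}\bigr)\cup(\R^n\without\Int K_k)$ reverses the division of labour: closedness, monotonicity, and $\bigcap_k E_k=B\cup S$ are immediate from the construction, while the inclusion $B\cup S_{j_k}\subseteq E_k$ requires choosing $j_k$ via the convergence hypothesis. Your approach makes the treatment of the unbounded part of the competitors explicit (the piece $\R^n\without\Int K_k$), whereas the paper absorbs this issue silently into the closure. Both are equally short once written out; the paper's version avoids the auxiliary exhaustion, while yours is perhaps more transparent about why non-compactness causes no trouble.
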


\begin{proof}
    Observe that $\R^n \in \check{\mathcal{C}}(B,L,G)$ so
    $\check{\mathcal{C}}(B,L,G)$ is non-empty and contains only closed sets by
    definition. Recalling~\ref{re:defclo} we only need to check
    condition~\ref{i:gc:hd} of definition~\ref{def:goodclass}.

    If $\{ S_k:k\in \nat \}\in \check{\mathcal{C}}(B,L,G)$ is a sequence such
    that $S_i \to S$ locally in Hausdorff distance for some closed set $S$, then
    by putting
    \begin{displaymath}
        E_k = \Clos{(B \cup \textstyle{ \bigcup_{i \geq k}} S_k)} 
        = B \cup S \cup \textstyle{ \bigcup_{i \geq k}} S_k \,,
    \end{displaymath}
    we have that  
    \begin{displaymath}
        L\subseteq \ker \Ch_{m-1}\left( i_{B,E_k} \right).
    \end{displaymath}
    By Lemma \ref{le:DLS}, we get that $L\subseteq \ker \Ch_{m-1}\left(i_{B,B\cup S} \right)$.
\end{proof}

\begin{remark}
	Replacing {\v C}ech homology group with {\v C}ech cohomology group, we let
    $L$ be a~subgroup of $\Ch^{m-1}(B;G)$, $\mathcal{C}$ a collection of closed
    sets $E$ such that the composition
	\begin{displaymath}
        \Ch^{m-1}(B \cup E;G) \xrightarrow{\Ch^{m-1}(i_{B,B \cup E})} \Ch^{m-1}(B;G) \twoheadrightarrow \Ch^{m-1}(B;G) / L
	\end{displaymath}
    is zero.  By continuity of {\v C}ech cohomology theory, see for example
    \cite[Theorem 3.1 in p.~261]{MR0050886}, we get also that $\mathcal{C}$ is
    a~good class. Indeed, we have a similar result as Lemma~\ref{le:DLS}, but
    with {\v C}ech cohomology groups instead of {\v C}ech homology groups. For
    the proof we refer the reader to~\cite[Proposition (2.7)]{MR879567}.
\end{remark}


\subsection*{Acknowledgement}
Both authors acknowledge the perfect working conditions they had while working
at the Max~Planck Institute for Gravitational Physics (Albert Einstein
Institute) in Potsdam-Golm.

Part of this work has been done while the authors were visiting Xiangyu Liang at
the Institut Camille Jordan, Universit{\'e} Claude Bernard Lyon~1.
We~acknowledge her kind hospitality.

The authors are also thankful to Ulrich Menne for giving the incentive to start
this project, guidance through intricate paths of Almgren's brilliant ideas,
and~constant support along the~way.

The second author was partially supported by NCN Grant no. 2013/10/M/ST1/00416
\emph{Geometric curvature energies for subsets of the Euclidean space.} 

{\small
\addcontentsline{toc}{section}{\numberline{}References}
\bibliography{ohne_duplikate}
\bibliographystyle{halpha}
}

\medskip
{\small \noindent
  Yangqin Fang
  \begin{CJK*}{UTF8}{gbsn}
      (方扬钦)
  \end{CJK*}
  \\
  School of Mathematics and Statistics,
  \\
  Huazhong University of Science and Technology,
  \\
  430074, Wuhan, P.R. China
  \\
  \texttt{yangqinfang@hust.edu.cn}
}

\vspace{1em}

{\small \noindent
  S{\l}awomir Kolasi{\'n}ski
  \\
  Instytut Matematyki,
  Uniwersytet Warszawski
  \\
  ul. Banacha 2, 02-097 Warszawa, Poland
  \\
  \texttt{s.kolasinski@mimuw.edu.pl}
}

\end{document}